\documentclass[11pt, english, reqno]{amsart}
\usepackage{fullpage}
\usepackage[utf8]{inputenc}
\usepackage{amsmath,amssymb,stmaryrd,array,multirow,dsfont,mathrsfs}
\usepackage{enumerate}
\usepackage{xcolor}
\usepackage{color}
\usepackage{hyperref}
\usepackage{indentfirst}
\usepackage[T1]{fontenc}
\usepackage{microtype}
\usepackage{graphics}
 
\setlength{\parskip}{0.5em}

\graphicspath{{Images/}}

\xdefinecolor{darkgreen}{rgb}{0,0.4,0}

\setlength{\parindent}{2em}

\setlength{\parskip}{0.5em}

\usepackage{hyperref}
\usepackage{indentfirst}
\usepackage{cite}
\usepackage[utf8]{inputenc}
\usepackage[T1]{fontenc}
\usepackage{microtype}

\usepackage{graphics}
 
 \markboth{Acknowledgements}{Acknowledgements}
 \usepackage{amsmath,amssymb,stmaryrd,array,multirow,dsfont,mathrsfs}
\usepackage{amsthm}
\usepackage{xcolor}
\usepackage{color}
\usepackage{hyperref}
\usepackage{tikz}

%\frame{
%\hyperlink{theorem1}{\beamergotobutton{Jump to Theorem \#1}}
%\hypertarget{theorem1}{}
%}

\graphicspath{{Images/}}

\xdefinecolor{darkgreen}{rgb}{0,0.4,0}

\setlength{\parindent}{2em}

\setlength{\parskip}{0.5em}

\newcommand{\beq}{\begin{equation}}
\newcommand{\eeq}{\end{equation}}
\newcommand{\beqs}{\begin{equation*}}
\newcommand{\eeqs}{\end{equation*}}
\newcommand{\ben}{\begin{eqnarray}}
\newcommand{\een}{\end{eqnarray}}
\newcommand{\beno}{\begin{eqnarray*}}
\newcommand{\eeno}{\end{eqnarray*}}
\renewcommand{\Re}{{\rm Re}\,}
\renewcommand{\Im}{{\rm Im}\,}

\newcommand{\Id}{{\rm Id}}
\newcommand{\Supp}{{\rm Supp}\,}

\newcommand{\Sup}{\displaystyle \sup}

\newcommand{\Rmnum}[1]{\uppercase\expandafter{\romannumeral #1} }
 \numberwithin{equation}{section}
\usepackage{mathtools}

\DeclarePairedDelimiterX{\inp}[2]{\langle}{\rangle}{#1, #2}

\newtheorem{thm}{Theorem}[section]
\newtheorem{lem}[thm]{Lemma}
\newtheorem{prop}[thm]{Proposition}
\newtheorem{rmk}[thm]{Remark}
\newtheorem{cor}[thm]{Corollary}

\def\op {\mathrm{Op}}
\def\curl{\mathop{\rm curl}\nolimits}

%LETTRES RONDES
\def \d {\mathrm {d}}

\def\cE{{\mathcal E}}
\def\cF{{\mathcal F}}

\def\cH{{\mathcal H}}
\def\cI{{\mathcal I}}
\def\cJ{{\mathcal J}}
\def\cK{{\mathcal K}}
\def\cL{{\mathcal L}}

\def\cO{{\mathcal O}}
\def\cP{{\mathcal P}}
\def\cQ{{\mathcal Q}}
\def\cR{{\mathcal R}}
\def\cS{{\mathcal S}}
\def\cT{{\mathcal T}}

\def\cY{{\mathcal Y}}

\let\f=\frac
\def \p {\partial}

\def\mR {\mathbb{R}}

\def\ep{\epsilon}

\def \pt {\partial_{t}}

\def \div {\,\mathrm{div}}
\def \vp {\varphi}
\def \cX {\mathcal{X}} 
\def \bI {\mathbb{I}}
\def \sech {\mathrm{sech}}
\def \kdv {\textnormal{\scalebox{0.8}{KdV}}}

\def \kp {
\textnormal{\scalebox{0.8}{KP}}} 

\def \diag {\textnormal{diag}}

\def \na{\nabla}

\title{Transverse linear stability of one-dimensional solitary gravity water waves}
\date{\today}

\begin{document}
\author{Fr\'ed\'eric Rousset}
\address{Universit\'e Paris-Saclay,  CNRS, Laboratoire de Math\'ematiques d'Orsay (UMR 8628),  91405 Orsay Cedex, France}
\email{frederic.rousset@universite-paris-saclay.fr }

\author{Changzhen Sun}
\address{Universit\'e Bourgogne Franche-Comt\'e, Laboratoire de Mathématiques de Besançon, UMR CNRS 6623}
\email{changzhen.sun@univ-fcomte.fr}

\begin{abstract}

In this paper, we establish the transverse linear asymptotic stability of  one-dimensional small-amplitude solitary waves of the  gravity water-waves system.
More precisely, we show   that the semigroup of the linearized operator about the solitary wave  decays exponentially within a spectral  subspace supplementary  to the  space generated by the spectral projection on   continuous resonant modes. The key element of the proof  is to establish suitable  uniform resolvent estimates.
To achieve this, we use  different arguments depending on the size of the transverse frequencies,
for high transverse frequencies,  we use reductions based on pseudodifferential calculus, for intermediate ones, we use 
 an energy-based approach relying on  the design  of various appropriate energy functionals for different regimes of  longitudinal 
 frequencies  and  for low frequencies, we use the KP-II approximation. As a corollary of our main result, we also get the spectral stability in the unweighted energy space.
 
%In this paper, we prove the transverse linear asymptotic stability for one-dimensional small amplitude solitary gravity water waves, in the exponentially weighted space. It is found that the semigroup of the linearized operator around the solitary wave decays exponentially on the subspace orthogonal to the space consisting of continuous resonant modes. The proof relies on the uniform resolvent estimates which is the heart of the paper. To accomplish this, we use semiclassical analysis in the uniform high transverse frequency regime, energy-based approach achieved through designation of appropriate energy functionals in the intermediate transverse frequency regime and KP-II approximation in the low transverse frequency regime.

\vspace{1em}

\noindent{\it Keywords}: solitary waves;  asymptotic stability; water waves system; resolvent estimates.

\noindent{\it 2010 MSC}: 35B40, 35C08, 35Q31, 76B15, 76B25.
\end{abstract}

\maketitle
\tableofcontents

\section{Introduction}

In this paper, we study  the transverse linear asymptotic stability of line solitons for the three-dimensional gravity water-waves system  in a channel with finite depth. The equations of motions are given by 
\beq\label{o-in}
    \left\{ \begin{array}{l}
         \pt U+U \cdot \na U+\na P +g e_3=0, \\[3pt]
          \div\, U=0,\, \curl U=0.
    \end{array}
    \right. (t,X,z)\in \Omega_t \,.
\eeq
Here the fluid domain $\Omega_t$ which is also  unknown is defined as
\beqs 
\Omega_t=\{-h\leq  z\leq \zeta(t,X)\}, \quad X=(x,y)\in \mR^2\, ,
\eeqs
    where $h$ is a fixed finite constant and $\zeta$ is the free surface, which satisfies the kinematic condition 
\beq\label{o-sur}
\p_t \zeta (t,X) +U_X(t,X, \zeta(X))\cdot \na_X \zeta  (t,X)-U_3(t,X, \zeta(X))=0\,.
\eeq
We also impose that the pressure $P$ is constant, for example zero on the free boundary.
At the bottom of the fluid, we add  the boundary condition:
\beq\label{o-bot}
U_3|_{z=-h}=0\,.
\eeq
Given the mathematical challenges posed by its fully nonlinear nature, the water wave problem has been the subject of numerous studies over the past four decades. After the breakthrough works  of S. Wu, many results  
have emerged concerning the local  well-posedness and  the long-term behavior of the water wave system,  with or without surface tension,
  \cite{ A-B-Z-invention, ABZ-Gravity-nonlocalized,alazard-delort-2d-gravity, AS-Lannes,3d-surfacetension, Vicol-kukavica, GMS-3dgravity-infinite, fabio-ionescu-2dglobal, Lannes-JAMS, Tataru-almostglobal2d, Tataru-lowregu,Tataru-ctvorticity,su-2dpointvor,Chao-Mei-corner, Wang-3dgravity-finitedepth,Wang-capillary-finitedepth, wu-local-2d, Wu-3dwaterwavelocal,Wu-3dglobal,zhang-zhang,zheng-longtimegravity}. 
In this list, all the global in time  results can  be  viewed as proofs of  the asymptotic  stability of the constant solution
 to the water waves problem, that is to say the stationary  flat surface. Note that a byproduct of these results is the non-existence
  of small localized non-decaying  in time patterns  either in infinite depth or in finite depth  in dimension two
  (in the sense that the free surface is two-dimensional).

Nevertheless, there are many  physically  interesting nontrivial patterns  such as solitary  waves which are spatially localized traveling waves, we refer for example to  \cite{Lavrent-existnecesolitary, Beale, Iooss-Kirch, F-H-existence-solitary,Groves-Sun,Chen-eistence-with-vorticity}
for rigorous mathematical existence results
 or periodic  waves \cite{stokes1847theory,Struik-exis-periodic,levi1925determination,whalen-doublyperiodic,toland} (also referred to as Stokes waves). %Given their prevalence 
To justify  their observation   in nature, the  study of the dynamics around these non-trivial  patterns is  also of much importance. For example,  recently, there has been significant progress about  the spectral and modulational nonlinear instability of the Stokes waves \cite{Mielke-BF, chen-su, Nguyen-Strauss-periodic, Berti-Maspero,Hur-Yang, Haragus-Wahlen}. %Important progress has been made in recent years in understanding the spectral and modulational nonlinear instability of Stokes waves \cite{Mielke-BF, chen-su, Nguyen-Strauss-periodic, Berti-Maspero, Hur-Yang}. 
%Turning to the stability of solitary waves, which are spatially localized traveling waves,

The study of the stability of  solitary water waves has a long history in the mathematical literature.
 Let us first review some results for one-dimensional water waves (we use again  this terminology for the case that the free surface
 is one-dimensional) submitted to one-dimensional perturbations. 
For capillary-gravity water waves, the spectral stability of one-dimensional solitary waves subject to finite wave-length perturbation was established in \cite{Mariana-Arnd}. The nonlinear orbital stability  has been also  obtained conditional to  the global existence of the solution by  Mielke \cite{Mielke-conditionedorbital} and Buffoni \cite{Buffoni-B}  in the case of strong surface tension, while Groves and Wahl\'en proved it for weak surface tension \cite{Groves-whalen-conditioned-weak}. These proofs rely 
either on the general framework of Grillakis-Shatah-Strauss  \cite{GSS}, or on the variational approach of  \cite{Cazenave-Lions}
 and  hence on the definiteness up to a a finite dimensional subspace  of the energy-momentum functional. For the purely gravity water waves, such a variational approach does not apply due to the strong undefiniteness  of the energy-momentum functional so that a direct study of the linearized equation and of the spectrum of the linear operator must be performed. Lin proved in \cite{Lin-ins-largewave} the linear instability of large amplitude solitary waves. For small amplitude gravity solitary waves,  Pego and Sun \cite{Pego-Sun}, in the spirit of  the  results  previously obtained on  the KdV equation and related models  in \cite{Pego-Weinstein-kdv}, \cite{Pego-Weinstein-L}, obtained the linear asymptotic stability by relying on  the KdV approximation in the long wave region in the framework of  exponentially weighted spaces. They also deduced spectral stability in the unweighted energy space. 
The question of  nonlinear stability  for dispersive Hamiltonian PDE for which the local Cauchy problem involves the control of norms
of higher regularity than the energy space remains a widely open problem.
%{\color{red} quote recent works for semi linear equations NLS and KG:  Cuccagna, Bambusi, Schlag, Germain-Pusateri, Lindblad Rodnianski, Martel-Merle, Martel-Munoz}.
%The nonlinear orbital stability of small amplitude solitary waves still remains open, which is believed to be a challenging problem. Let us mention that for two dimensional gravity water waves in finite channel, even the global stability of the trivial solution has not been proven. The main difficulties arise from the weak dispersion and large resonances in low frequencies. Nevertheless, given the stronger dispersive effect, %

Here, we shall study the stability of  one-dimensional gravity solitary waves submitted to two-dimensional localized perturbations, 
this is usually referred as the study of transverse stability.
The study of the transverse stability of one-dimensional solitary waves also  has a long history in dispersive PDE.
In the early 1970's,  by using the theory of integrable systems Zakharov \cite{Zakharov-K} obtained the transverse instability of the soliton of the KdV equation considered as a one-dimensional solution of the (two-dimensional) KP-I equation.
A general criterion allowing to get linear transverse instability of solitary waves has now  been obtained in \cite{RT-MRL}. This criterion applies to a large class of systems like the KP-I equation and the water-waves system with strong surface tension
(Bond number bigger than $1/3$).
 One can then obtain  nonlinear instability, see  \cite{RT-JMPA}, \cite{RT-AIHP}  and in particular \cite{R-T-Transinstability} for the transverse
 nonlinear instability of solitary water-waves with strong surface tension.
 When the algebraic criterion of \cite{RT-MRL} is not matched,  there are various model situations.
  For the hyperbolic Schr\"odinger equation (where the usual  Laplacian is replaced by $\partial_{x}^2 - \Delta_{y}$), as established
  in \cite{Zakharov-R}, there is still transverse linear instability of the one-dimensional solitary wave whereas for the KP-II
  equation, there is stability \cite{Alexander-Pego-Sachs}.
  One can then conjecture that for small amplitude solitary waves, when the weakly transverse long-wave regime is given by 
  the KP-II equation, which is the case of the gravity water waves system (see \cite{Lannes-Saut} for the justification for example)  one should get transverse stability.   This heuristics was recently justified by Mizumachi \cite{Mizumachi-BL-linear} for the Benney-Luke equation which is another
  asymptotic model that can be obtained from the water-waves system while keeping the KP-II equation as
  a long-wave weakly transverse model. Note that  for  these semilinear equations,  which are globally well-posed in the energy space, 
  one can obtain nonlinear asymptotic stability  \cite{Mizumachi-KP-nonlinear}, \cite{Mizumachi-BL-nonlinear}.
  This remains a very challenging question for the gravity water-waves system.
  Here, we thus address the question of the linear transverse stability of small amplitude solitary waves for the pure gravity water waves system. Since the solitary wave does not depend on the transverse variable, one can Fourier transform in the transverse variable
  and study a family of  linear problems indexed by the transverse frequency parameter $\eta$. Due to the strongly nonlinear and nonlocal
  features of the water waves system, the dependence with respect to $\eta$ is stronger than in semilinear models like the Benney-Luke
  equation studied in  \cite{Mizumachi-BL-linear} and cannot be handled in a perturbative way especially in a  high frequency regime
  so that new ideas need to be introduced. This will be explained in more details after the statement of the main results.

%fact that the dispersion is stronger in the higher dimension,
%one could anticipate that 
%the nonlinear stability would be relatively easier to achieve. 
%This motivates us to consider three dimensional gravity water waves that admits solitary waves  depending only on one variable---referred to as line solitary waves---and study the stability of this one dimensional object. Since the perturbation depends also on the transverse variable, this is commonly referred to as 
%transverse stability.  In this paper, our main objective is to establish transverse linear stability, paving the way for subsequent exploration into nonlinear transverse stability.
\subsection{The line solitary waves} 

As the velocity $U$ is divergence and curl free, we can  write $U=\na \Phi,$ where the velocity potential $\Phi$ solves  the elliptic equation:
\beq
    \left\{ \begin{array}{l}
         \Delta \Phi=0, \quad (X,z)\in \mR^2\times \{-h<z<\zeta(t,X)\}, \\[3pt]
           \Phi|_{z=\zeta}=\varphi(t,X), \quad \p_z\Phi|_{z=-h}=0.
    \end{array}
    \right. 
\eeq
We then define the Dirichlet-Neumann operator
\beqs 
G[\zeta]\varphi=(\p_z \Phi-\na_X\zeta\cdot\na_X \Phi)|_{z=\zeta(t,X)}=\sqrt{1+|\na_X\zeta|^2}\,\na \Phi (X,\zeta(t,X))\cdot n(t,X)
\eeqs
where 
\beqs 
n(t,X)= \f{1}{\sqrt{1+|\na_X\zeta|^2}}\big(-\na_X\zeta,1\big)^t
\eeqs
is the unit outward normal vector to the free surface.
The system \eqref{o-in}-\eqref{o-bot} is thus equivalent to the following system on $(\zeta, \varphi)^t$ evaluated on the surface: 
\beq\label{ww-sur}
    \left\{ \begin{array}{l}
         \pt \zeta = G[\zeta]\varphi, \\
         \pt \varphi=-\f12 |\na_X\varphi|^2+\f12 \f{\big(G[\zeta]\varphi+\na_X\vp\cdot\na_X\zeta\big)^2}{\sqrt{1+|\na_X\zeta|^2}}-g\zeta. 
    \end{array}
    \right. 
\eeq
Since we shall study the solitary waves with speed $c>0,$ we first make change of variable $(x,y)\rightarrow (x-ct,y)$ to change \eqref{ww-sur} as: 
\beqs 
    \left\{ \begin{array}{l}
         \pt \zeta =c\p_x \zeta+ G[\zeta]\varphi, \\
         \pt \varphi=c\p_x\vp-\f12 |\na_X\varphi|^2+\f12 \f{\big(G[\zeta]\varphi+\na_X\vp\cdot\na_X\zeta\big)^2}{\sqrt{1+|\na_X\zeta|^2}}-g\zeta. 
    \end{array}
    \right. 
\eeqs
As is classical in the study of existence of solitary waves, we perform the following change of variable to make 
the equation non-dimensional:
%introduce a noon-dimensional version
\beq\label{changeofvarible} 
\hat{t}={ct}/{h}, \quad 
\hat{X}={X}/{h}, \quad  \tilde{\zeta}=\zeta/h, \quad  \tilde{\vp}=\vp/ch. %\zeta(t,X)=h \tilde{\zeta}
% t\rightarrow {ct}/{h}, \quad X \rightarrow {X}/{h}, \quad
\eeq
Then the equation becomes (we omit tilde for the sake of notational convenience) 
\beq \label{sur-mov-uni}
    \left\{ \begin{array}{l}
         \pt \zeta =\p_x \zeta+ G[\zeta]\varphi\,, \\
         \pt \varphi=\p_x\vp-\f12 |\na_X\varphi|^2+\f12 \f{(G[\zeta]\varphi+\na_X\vp\cdot\na_X\zeta)^2}{\sqrt{1+|\na_X\zeta|^2}}-\gamma\zeta\, ,
    \end{array}
    \right. 
\eeq
where $\gamma=\f{gh}{c^2}.$   Note that the above Zakharov-Craig-Sulem  formulation of the water waves system has the following Hamiltonian structure: 
\beqs
\pt \left( \begin{array}{l}
     \zeta  \\
      \varphi
\end{array} \right)=J\, \delta  \cH [ \zeta, \varphi], 
\eeqs
with 
\begin{align*}
J\,&:=\,\begin{pmatrix}0&-1\\1&0\end{pmatrix}\,,&
\mathcal{H}[\zeta, \varphi]
&=: \frac12 \int G[\eta] \varphi \cdot \varphi +\gamma \zeta^2-2\,\zeta\, \p_x \varphi\,.
\end{align*}

The existence of  one dimensional solitary waves, which are stationary solutions to \eqref{sur-mov-uni} depending only on the one-dimensional  variable $x$, is established in \cite{Beale,Kir-existence-grivatywaves}:
\begin{thm}[Restatement of Beale's result \cite{Beale}]
\label{bealetheo}
   Assuming $\gamma=1-\ep^2,$
 there exists $\ep_0$ such that for every $\ep\in (0, \ep_0)$  there is a stationary solution 
  $\zeta_{c}(x), \varphi_{c}(x)$ (independent of $y$), given by  
  \beq\label{solitarywave-def}
\zeta_{c}(x)=\ep^2\Theta(\ep x, \ep) , \qquad \vp_{c}(x)=\ep\Phi(\ep x, \ep) ,
  \eeq
  where $\Theta$ and $\Phi$ satisfy the following: \\[2pt]
  (1)
  There exists $d>0$, for any integers $k\geq 0 , \, \ell\geq 1, $ there exists $C_k, \, C_{\ell}$ such that: 
  \beqs 
|\p_{\hat{x}}^k \Theta (\hat{x},\ep)|\leq C_k \, e^{-d\,|\hat{x}|}, \qquad |\p_{\hat{x}}^{\ell} \Phi (\hat{x},\ep)|\leq C_{\ell} \, e^{-d\,|\hat{x}|} .
  \eeqs
 (2) It holds that when $\ep=0,$
  \beq \label{solitarywave-0}
\Theta (\hat{x}, 0)=\p_{\hat{x}} \Phi (\hat{x}, 0)=\Psi_{\kdv}(\hat{x})%=\sech^2\bigg(\f{\sqrt{3}}{2} \hat{x}\bigg).
  \eeq
  where $\Psi_{\kdv}(\hat{x})=\sech^2\bigg(\f{\sqrt{3}}{2} \hat{x}\bigg)$. \end{thm}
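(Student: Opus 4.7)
The plan is to construct $(\Theta, \Phi)$ by a bifurcation argument from the Korteweg--de Vries long-wave limit. First, I would substitute the ansatz $\zeta(x) = \epsilon^2 \Theta(\epsilon x, \epsilon)$, $\varphi(x) = \epsilon \Phi(\epsilon x, \epsilon)$ together with $\gamma = 1 - \epsilon^2$ into the stationary, $y$-independent version of \eqref{sur-mov-uni}, and expand the Dirichlet--Neumann operator in the long-wave scaling: at leading order
\[
G[\epsilon^2 \Theta](\epsilon \Phi) = -\epsilon^3 \,\p_{\hat{x}}\bigl((1+\epsilon^2\Theta)\p_{\hat{x}} \Phi\bigr) + \tfrac{\epsilon^5}{3}\,\p_{\hat{x}}^4 \Phi + O(\epsilon^7),
\]
with $\hat{x} = \epsilon x$. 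Introducing $\Psi := \p_{\hat{x}} \Phi$ and matching the leading nontrivial orders in the two equations gives the compatibility $\Psi = \Theta + O(\epsilon^2)$ and the KdV travelling-wave profile equation
\[
-\Psi + \tfrac{3}{2}\,\Psi^2 + \tfrac{1}{3}\,\Psi'' = 0,
\]
whose unique even, positive, localized solution is precisely $\Psi_{\kdv}(\hat{x}) = \sech^2\bigl(\tfrac{\sqrt{3}}{2}\hat{x}\bigr)$, giving \eqref{solitarywave-0}.

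Second, I would recast the full stationary system as $\cF(\Theta, \Phi, \epsilon) = 0$ on a pair of even, exponentially weighted Sobolev spaces (working with $\Psi = \p_{\hat{x}}\Phi$ to accommodate the fact that $\Phi$ itself has distinct limits at $\pm \infty$), so that $\cF$ is smooth in its arguments and $\cF(\Psi_{\kdv}, \Phi_0, 0) = 0$ for a primitive $\Phi_0$ of $\Psi_{\kdv}$. The partial linearization at $\epsilon = 0$ reduces, in triangular form, to the KdV linearized operator $L_{\kdv} = \tfrac{1}{3}\p_{\hat{x}}^2 - 1 + 3\Psi_{\kdv}$, which is Fredholm of index zero with one-dimensional kernel $\Span(\p_{\hat{x}}\Psi_{\kdv})$. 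Since this kernel element is odd, restricting the problem to even functions removes it, and an implicit function theorem in this restricted class produces a smooth branch $(\Theta(\cdot,\epsilon), \Phi(\cdot,\epsilon))$ for $|\epsilon| < \epsilon_0$.

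The main obstacle is the quantitative control of $G[\zeta]$ in the long-wave scaling: the loss of derivatives in $\zeta$ and the fully nonlocal, quasilinear character of the operator prevent a direct application of the implicit function theorem in ordinary Sobolev spaces, and force either a Nash--Moser scheme (as in Beale's original argument \cite{Beale}) or a paradifferential/tame-estimate framework in order to justify the expansion above with remainders of the stated order. Once the branch has been produced, the exponential decay in (1) follows from a Combes--Thomas/asymptotic ODE argument: at spatial infinity the equation reduces to the constant-coefficient linear ODE $\tfrac{1}{3}\Psi'' - \Psi = 0$ (perturbed in $\epsilon$), whose characteristic rate $d = \sqrt{3} + O(\epsilon)$ governs the decay of $\Theta$ and of $\p_{\hat{x}}\Phi$, with the decay of all higher derivatives inherited by differentiating the equation and invoking interior regularity.
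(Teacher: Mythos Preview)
The paper does not actually prove this theorem: it is stated as a restatement of Beale's existence result \cite{Beale}, and the only remark made is that ``the existence is proven in \cite{Beale} by using the Levi-Civita formulation but the result can be easily transformed into the Zakharov-Craig-Sulem formulation stated above.'' So there is no proof in the paper to compare your proposal against.

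That said, your sketch is a reasonable outline of the standard long-wave bifurcation strategy, and correctly identifies the KdV profile equation, the role of the even-function restriction in killing the translational kernel, and the main analytic difficulty (the quasilinear, nonlocal nature of $G[\zeta]$). One point of divergence worth noting: Beale's original argument does not work in the Zakharov--Craig--Sulem variables at all, but in the Levi-Civita (conformal/hodograph) formulation, where the free boundary is straightened and the problem becomes a nonlinear equation for a single scalar function on a fixed domain; this sidesteps much of the Dirichlet--Neumann expansion machinery you describe. Your proposed route via a direct expansion of $G[\zeta]$ in the long-wave scaling is closer in spirit to later approaches (e.g.\ Kirchg\"assner, Groves--Sun), and would indeed require either a Nash--Moser scheme or careful tame estimates, as you note. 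Either way, the statement is taken as input in this paper, not derived.
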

  Note that $\Psi_{\kdv}$ is the steady solution to the  KdV equation in the moving frame at unit speed: 
  \beqs 
-\p_x\Psi+3\Psi\p_x\Psi+\f13 \p_x^3 \Psi =0.
  \eeqs

Note that the existence is proven in \cite{Beale} by using the Levi-Civita formulation but the result can  be easily transformed into the Zakharov-Craig-Sulem formulation stated above. 

%In the following, we will adopt the no
%to it as two dimensional water wave system.
%Since we have reduced the original water wave problem to the equation for the surface elevation  $\zeta$ and  the trace of the velocity potential  $\vp$ \eqref{ww-sur}
%from now on we will refer the dimension of water wave system as that of the system satisfied by $(\zeta, \vp).$
As mentioned previously, the linear stability analysis of small-amplitude solitary waves of the  one dimensional water wave system has been established in \cite{Pego-Sun} by Pego and Sun. The core of the analysis is performed  within the framework of an exponentially weighted energy space. In this paper,  we shall  prove  the linear stability of the line solitary wave  within the context of the two-dimensional system described by \eqref{sur-mov-uni}. 
%Under the long-wave regime, the one-dimensional water waves system can be effectively approximated  by using the classical Korteweg-de Vries (KdV) equation, while the two-dimensional gravity water waves  can be approximated by the Kadomtsev-Petviashvili II (KP-II) equation, we refer to  {\color{red} quote Lanne book as a reference}. Recently, Mizumachi has elucidated the transverse linear and nonlinear stability of line solitons within the KP-II equation in \cite{Mizumachi-KP-nonlinear}.  
%Based on these insights, it is reasonable to anticipate that the line soliton for water waves, which can be well approximated by the line soliton of the KdV equation, exhibits transverse linear stability in the two-dimensional scenario. At this stage, it's worth mentioning that when strong surface tension is considered, the line solitary wave is found to be transversely unstable \cite{pego-sun-transinsta-linear, R-T-Transinstability}, consistent with results for model equations. That is, the KdV soliton is  transversely unstable for the KP-I equation \cite{R-T-KP-I}.

\subsection{Linearization of \eqref{sur-mov-uni} about line solitary waves}

We now linearize the system \eqref{sur-mov-uni}  about a  solitary wave $(\zeta_{c}, \vp_{c})^t$ given by Theorem \ref{bealetheo}. By using the expression of the derivative of the Dirichlet-Neumann operator with respect to the surface [Theorem 3.2.1, \cite{Lannes}], we  find the following linearized system:
\begin{align*}
\left\{ \begin{array}{l}
     \displaystyle  \pt \zeta= \p_x \zeta+ G[\zeta_c]\varphi-G[\zeta_{c}](Z_{\ep}\zeta)-\p_x(v_{c}\zeta),   \\ [5pt]
     \pt \vp =(1-v_{c})\p_x\vp+Z_{c}G[\zeta_{c}](\vp-Z_{c}\zeta)-(\gamma+Z_{c}\p_x v_{c})\eta\, ,
\end{array}\right.   
\end{align*}
with 
\beq \label{defZv}
Z_{c}=\f{G[\zeta_{c}]\varphi_{c}+\p_x\zeta_{c}\p_x\vp_{c}}{1+|\p_x\zeta_{c}|^2}, \qquad \qquad v_{c}=\p_x \vp_{c}-Z_{c}\p_x\zeta_{c}.
\eeq
We then set  ${V}_1=\zeta, 
{V}_2=\vp-Z_{c}\zeta$,  and then obtain that $
{V}=({V}_1,{V}_2)^t$ solves: 
\beq\label{def-L}
\pt {V}=%J 
{L} {V}, \qquad \qquad {L}= \left( \begin{array}{cc}
   \p_x(d_c\cdot)  &  G[\zeta_{c}]   \\[5pt]
 -  w_c& d_c\p_x
\end{array}\right)\, ,
\eeq
where
\begin{equation}
\label{defdgamm}
d_c=1-v_{c}, \quad  w_c=\gamma-d_c\p_x Z_{c}.
\end{equation}   

Let us now introduce the adapted functional framework. For any $a\in \mR,$ we shall use the weighted $L^2$ space
 $L_a^2 (\mR^2)=\colon L^2 (\mR^2; e^{2ax}\d x\d y\,) \,$  endowed with the norm:  % $L^2 (\mR^2)$  space with the norm %defined as:  
\beqs 
\|f\|_{L_a^2 (\mR^2) }=\colon \bigg(\int |f(x)|^2  e^{2ax} \d x\d y\bigg)^{\f{1}{2}}.
\eeqs
We also define the weighted space $H_{a,\star}^{{1}/{2}}(\mR^2)$ as the closure of smooth compactly supported functions endowed
 with the norm 
\begin{align*}
 % H_{a,\star}^{{1}/{2}}(\mR^2)= \bigg\{ f  \,\big|\, 
 \|f\|_{ H_{a,\star}^{{1}/{2}}(\mR^2)}= \colon \bigg(\int \big|\f{\nabla}{\langle D \rangle^{1/2} } f(x,y)\big|^2  e^{2ax} \d x\d y\bigg)^{\f{1}{2}}. %<+\infty\bigg\}. 
\end{align*}
Let us point out  that $\|\cdot\|_{H_{a,\star}^{{1}/{2}}(\mR^2)}$ is indeed a norm thanks to the Poincar\'e inequality 
 $$\|\na f\|_{L_a^2(\mR^2)}\geq |a| \|f\|_{L_a^2(\mR^2)}.$$
We shall  investigate the linear asymptotic behavior of the system \eqref{def-L} in the weighted space:
 \begin{equation}
 \label{defXaspace}X_a=\colon L_a^2(\mR^2)\times 
 H_{a,\star}^{{1}/{2}}(\mR^2).
 \end{equation} We consider $L$ as a linear operator on $X_a$ with  domain %$H_{a}^1(\mR^2)\times H_{a,\star}^{{3}/{2}}(\mR^2),$ where 
 \beqs 
%H_{a}^1(\mR^2)
H_{a}^1(\mR^2)\times H_{a,\star}^{{3}/{2}}(\mR^2)=:\big\{ F  \,\big|\, \langle D\rangle F \in X_a  \big\}.
 \eeqs
To state our main results, it is convenient to also   use    weighted energy spaces in the one-dimensional  variable $x$ only but involving
in a quantitative way a parameter $\eta \in \mathbb{R}$ which stands for the transverse frequency:
$$L_a^2(\mR)=\colon L^2(\mR; e^{2ax} \d x), $$
\begin{align}\label{Hhalfeta}
  H_{a,\star,\eta}^{{1}/{2}}(\mR)= \bigg\{ f  \,\big|\, \|f\|_{ H^{{1}/{2}}_{a,\star}(\mR)}= \colon \bigg(\int \big|\f{(\p_x, \eta)}{\langle (D_x, \eta) \rangle^{1/2} } f(x)\big|^2  e^{2ax} \d x\bigg)^{\f{1}{2}} <+\infty\bigg\}
\end{align}
and set  
\begin{equation}
\label{defspaceY}
Y_{a,\eta}=L_a^2(\mR)\times H_{a,\star,\eta}^{{1}/{2}}(\mR).
\end{equation}
We then consider the following unbounded operator  $L({\eta})$ on $Y_{a,\eta}$
 defined  for any smooth function $U(x)$ by  
$ L({\eta})(U(x))=e^{-iy\eta}L(e^{iy\eta}U(x)),$ so that $L({\eta})$ takes the form
\beq\label{def-Leta}
L({\eta})= \left( \begin{array}{cc}
   \p_x(d_c\cdot)  &  G_{\eta}[\zeta_{c}]  \\[5pt]
 -  w_c& d_c\p_x
\end{array}\right)
\eeq
with $G_{\eta}[\zeta_c]=e^{-i y\eta} G[\zeta_c] e^{i y\eta}$ being defined as 
$G_{\eta}[\zeta_c]f(x)=\sqrt{1+|\na \zeta_{c}|^2}(-\p_x\zeta_c\cdot\p_x\Phi_{\eta}+\p_z\Phi_{\eta})$ where $
\Phi_{\eta}(x,z)$ solves the elliptic problem in $\Omega=\mR\times[-1,\zeta_c]:$
$$(\Delta_{x,z}-\eta^2)\Phi_{\eta}=0, \quad \Phi_{\eta}|_{z=\zeta_c}=f, \quad \p_z \phi_{\eta}|_{z=-1}=0.$$
An important motivation for working in this weighted framework is  that the essential spectrum of the operator $L({\eta})$ which is
the imaginary axis in unweighted spaces is pushed to the left so that the first step in order to establish
 linear stability is to investigate the presence of eigenvalues close to the  imaginary axis.

We can  observe that when $\eta=0,$ $L(0)$ is the linearized operator for 1D water waves about  the line soliton $(\zeta_c, \, \vp_c-Z_c\zeta_c)$ studied in \cite{Pego-Sun}.  By the translational and Galilean invariances, there is  an eigenvalue $\lambda = 0$ with algebraic multiplicity two (see appendix). Consequently, for nonzero and small $\eta,$  we expect  that the operator $L(\eta)$  
 will have  two small  eigenvalues $\lambda(\eta)$ and $\overline{\lambda(\eta)}$ in the space $Y_{a,\eta}$ bifurcating from $\lambda(0)=0.$ 
 %{\color{red} the fact that the second eigenvalue has to be  $\lambda(-\eta)$ is hard to understand in the beginning...}
 This will be our first result:
%The first result concerns the resonant modes of $L({\eta})$ when focusing on the low transverse frequencies $\big\{\eta\big|\,|\eta|\leq {\eta}_0  \big\}. $ %\hat{\eta}_0 \ep^2\big\}.$
\begin{thm}\label{thm-resolmodes}
Let $0<a=\hat{a}\ep < \f{\sqrt{3}}{4}\ep.$ 
There exist $\epsilon_0>0,
\hat{\eta}_0=\hat{\eta}_0(\hat{a})>0$
such that for any $\ep\in (0,\ep_0], \eta \in[-\ep^2\hat{\eta}_0, \ep^2 \hat{\eta}_0],$ %any %$\hat{a}\in (0, \f12), \hat{\eta} \in (0, \hat{\eta}_0),$ 
the operators $L(\eta)$ %and $L^{*}(\eta)$
has two pairs of eigenmodes 
$(\lambda(\pm \eta), \, U(\cdot, \pm \eta))$ in the space $Y_{a, \eta},$ where
%\beqsL({\eta})U(\cdot,\eta)=\lambda(\eta)U(\cdot,\eta), \quad L({\eta})^{*}U^{*}(\cdot,\eta)=\lambda(-\eta)U^{*}(\cdot,\eta).\eeqs
\beqs
\lambda(\eta)\in C^{\infty}\big([-\ep^2\hat{\eta}_0 , \ep^2\hat{\eta}_0 ]\big) \quad U(\cdot, \eta)\in  C^{\infty}\big(%|\eta|\leq \ep^2 \hat{\eta}_0
[-\ep^2\hat{\eta}_0 , \ep^2\hat{\eta}_0 ], Y_{a,\eta}\big)\, . %\quad U^{*}(\cdot, \eta)\in  C^{\infty}(|\eta|\leq  \ep^2\hat{\eta}_0; Y_{-a,\eta})
\eeqs
Moreover, for any $\eta \in [-\ep^2\hat{\eta}_0 , \ep^2\hat{\eta}_0 ],$ it holds that
\begin{align*}
  & \lambda(\eta)=i\lambda_1\eta-\lambda_2\,\eta^2+\cO(\eta^2)\,, \\
  &   U(\cdot,\eta)=%v'_{\ep}(\cdot)+i\eta\, \vp_{1\ep}'(\cdot)
U^1+i\lambda_1\eta\, U^2+\cO(\eta^2) \, \,\,\text{ in } Y_{a,\eta}\, , \\
% \qquad \forall\, 
%& U^{*}(\cdot,\eta)=%v_{\ep}(\cdot)+i\eta \, \int_{-\infty}^x \vp'_{1\ep}(\cdot)U^{*, 1}-i\, \eta\, U^{*, 2}+\cO(\eta^2) \text{ in } Y_{-a,\eta}
&\overline{\lambda(\eta)}=\lambda(-\eta), \quad \overline{U(\cdot, \eta)}=U(\cdot, -\eta), 
\end{align*}
where $\lambda_1, \lambda_2 $ are two positive constants.  

In the same way, the adjoint operator $L^{*}(\eta)$ has  eigenmodes under  the form $(\lambda(\mp \eta), U^{*}(\cdot, \pm \eta))$ where 
$\overline{U^{*}(\cdot, \eta)}=U^{*}(\cdot, -\eta). $

\begin{rmk}
By the KP-II approximation, it is expected that the resonant modes in the long wave weakly transverse scaling  $U(\ep^{-1}\cdot, \ep^2\hat{\eta})$ should  be approximated by the resonant modes of the  linearized KP-II equation (see appendix \ref{approx-resonants}). Note that, as shown in \cite{Mizumachi-KP-nonlinear}, the latter is exponentially growing at $-\infty$ once $\hat{\eta}\neq 0$ (see \eqref{resomode-kp})
and thus  $U(\cdot, \eta)$ 
does not belong to the unweighted space $L^2(\mR).$

%We expect that $U(\cdot, \eta), U^{*}(\cdot, \eta)$ do  not lie in the unweighted space $Y_{0,\eta}.$ 
 %As will be seen in the proof, $U(\cdot, \eta), U^{*}(\cdot, \eta)$ do  not lies in the unweighted space $Y_{0,\eta}.$ %This explains  
 %and they indeed tend to some constants at $-\infty$ and $+\infty$ respectively. 
\end{rmk}

\end{thm}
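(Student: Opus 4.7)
The plan is to reduce the eigenvalue problem for $L(\eta)$ near $\lambda=0$ to a $2\times 2$ matrix via a Riesz spectral projection, exploiting two structural observations. First, since $\Phi_\eta$ solves $(\Delta_{x,z}-\eta^2)\Phi_\eta=0$ with $\eta$-independent boundary conditions, the map $\eta \mapsto G_\eta[\zeta_c]$, and hence $\eta \mapsto L(\eta)$, depends analytically on $\eta^2$, yielding a series
\beqs L(\eta)=L(0)+\eta^2 L^{(2)}+\eta^4 L^{(4)}+\cdots. \eeqs
Second, at $\eta=0$ the translational and Galilean symmetries (see the appendix and \cite{Pego-Sun}) produce a Jordan block at the origin: $L(0)$ has $0$ as an isolated eigenvalue of geometric multiplicity one, spanned by $U^1=\p_x(\zeta_c,\vp_c-Z_c\zeta_c)^t$, and of algebraic multiplicity two with $L(0)U^2=U^1$ for an explicit $U^2$. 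A parallel Jordan basis $\{U^{1,*},U^{2,*}\}$ is chosen biorthogonally for $L^*(0)$.

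For $|\eta|\leq \ep^2 \hat{\eta}_0$, the uniform resolvent bounds that constitute the core of the paper give a circle $|\lambda|=r$, with $r$ larger than the anticipated $\cO(\ep^2\hat\eta)$ size of the bifurcated eigenvalues but well separated from the rest of the spectrum of $L(0)$, on which $\lambda-L(\eta)$ is uniformly invertible. The Riesz projection
\beqs
P(\eta)=-\f{1}{2\pi i}\oint_{|\lambda|=r}(\lambda-L(\eta))^{-1}\,\d\lambda
\eeqs
is then smooth in $\eta$ and of constant rank two, and applying $P(\eta)$ to $U^1,U^2$ gives a smooth basis $\{e_1(\eta),e_2(\eta)\}$ of $\Ran P(\eta)$, with a biorthogonal basis $\{e_1^*(\eta),e_2^*(\eta)\}$ in $\Ran P^*(\eta)$. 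One also verifies that the $\eta$-dependent norm on $Y_{a,\eta}$ is, for $|\eta|$ small, uniformly equivalent to its $\eta=0$ version on this finite-dimensional range.

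The spectral problem for $L(\eta)$ on $\Ran P(\eta)$ thereby reduces to a $2\times 2$ matrix $M(\eta)=\bigl(\langle L(\eta)e_j(\eta),e_i^*(\eta)\rangle\bigr)_{i,j}$, and the $\eta^2$-analyticity together with the Jordan structure give
\beqs
M(\eta)=\begin{pmatrix}0&1\\0&0\end{pmatrix}+\eta^2 A+\cO(\eta^4), \quad A=(a_{ij}).
\eeqs
The characteristic polynomial $\lambda^2-\eta^2\mathrm{tr}(A)\lambda-\eta^2 a_{21}+\cO(\eta^4)=0$ with the ansatz $\lambda=\eta\mu$ gives $\mu^2=a_{21}$ at leading order and a correction $\mathrm{tr}(A)/2$ at the next order, so
\beqs
\lambda(\eta)=\pm\eta\sqrt{a_{21}}+\tfrac{\eta^2}{2}\mathrm{tr}(A)+o(\eta^2).
\eeqs
The positivity $\lambda_1=\sqrt{-a_{21}}>0$, $\lambda_2=-\mathrm{tr}(A)/2>0$ is extracted through the long-wave scaling $\hat x=\ep x$, $\hat\eta=\ep^{-2}\eta$: in this regime the reduced matrix converges to its analogue for the linearization of KP-II about the KdV soliton $\Psi_{\kdv}$, for which the positivity of the corresponding coefficients is classical (see the appendix on KP-II resonant modes). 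The eigenvector expansion $U(\cdot,\eta)=U^1+i\lambda_1\eta U^2+\cO(\eta^2)$ then follows by inserting $\mu$ into the eigenvector equation for $M(\eta)$ and re-expressing in the basis $\{e_1(\eta),e_2(\eta)\}$.

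The conjugation relations $\overline{\lambda(\eta)}=\lambda(-\eta)$ and $\overline{U(\cdot,\eta)}=U(\cdot,-\eta)$ follow from $\overline{L(\eta)f}=L(-\eta)\bar f$ (a consequence of $L(\eta)=e^{-iy\eta}Le^{iy\eta}$ and the realness of $L$), and the adjoint statement is obtained by applying the same scheme to $L^*(\eta)$. I expect the main obstacle to be the uniform-in-$\eta$ resolvent bound on the contour $|\lambda|=r$: one has to rule out any further eigenvalue of $L(\eta)$ inside the enclosed disk, which is precisely where the technology announced in the introduction (pseudodifferential reductions for large $\eta$, energy functionals for intermediate $\eta$, and the KP-II approximation for small $\eta$) is indispensable. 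Once this uniform resolvent bound is granted, the bifurcation analysis itself is a standard finite-dimensional analytic perturbation computation.
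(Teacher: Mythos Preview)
Your Riesz-projection strategy is conceptually sound and the $2\times 2$ computation is correct, but the route you propose for the key input --- the resolvent bound on the contour $|\lambda|=r$ --- is circular. The uniform resolvent estimates that ``constitute the core of the paper'' (Theorem~\ref{thm-resol-L}) are stated only on the complementary subspace $\mathbb{Q}(\ep^2\hat\eta_0)X_a$, and this projection is \emph{defined} via the eigenmodes whose existence you are trying to establish. In the paper's logical order, Theorem~\ref{thm-resolmodes} is proved first and independently (Section~2), and only then is $\mathbb{Q}$ available for Sections~3--4; you cannot run this backwards. The correct input for your contour bound is the one-dimensional spectral result: at $\eta=0$, the operator $L(0)$ has $0$ as an isolated eigenvalue of algebraic multiplicity two in $Y_{a,0}$ (this is Pego--Sun, or equivalently the scalar reduction at $\ep\to 0$ is linearized KdV), and then analytic perturbation in $\eta^2$ extends the resolvent bound to $L(\eta)$ for small $|\eta|$. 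No pseudodifferential or energy-functional machinery is needed at this stage.

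The paper in fact takes a genuinely different route from yours. Rather than Riesz-projecting the $2\times 2$ system, it eliminates $\zeta$ to obtain a scalar equation for $\psi=\p_{\hat x}\phi$, applies the long-wave scaling $(\hat x,\hat\eta,\Lambda)=(\ep x,\ep^{-2}\eta,\ep^{-3}\lambda)$, and runs a Lyapunov--Schmidt reduction on the resulting operator $\cT(\ep,\hat\eta,\Lambda)=L_0+\Lambda L_1+\Lambda^2 L_2$ about the two-dimensional generalized kernel of $L_0(\ep,0)$. The advantage of this route is twofold: the $\ep$-dependence becomes uniform because the scaled problem has $\cT(0,\hat\eta,\Lambda)=-2(\Lambda-L_{\kp}(\hat\eta))$ as an honest limit, and the sign conditions $\Lambda_{1\ep}^2=\tfrac13+\cO(\ep)>0$ and $\Lambda_{2\ep}>0$ are computed directly from explicit KdV-soliton integrals rather than inferred by passing to a limit. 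Your approach, once the contour bound is correctly sourced, would still require tracking the $\ep$-scales by hand: the spectral gap of $L(0)$ in $Y_{a,0}$ is only $\cO(\ep^3)$, so the contour radius and the resolvent norm on it are $\ep$-dependent, and the perturbation $L(\eta)-L(0)=\cO(\eta^2)=\cO(\ep^4\hat\eta_0^2)$ must be balanced against a resolvent of size $\ep^{-3}$ or worse near the Jordan block.
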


As shown in the Section 2, one can then choose two smooth in $\eta$ functions
$(g_1(\cdot, \eta), g_2 (\cdot, \eta))$  such that  for $\eta \neq 0$  they form  a basis of 
the two dimensional space generated  in the space $Y_{a, \eta}$ by $U(\pm \eta)$ and for $\eta=0$ a basis of the generalized kernel of $L(0).$  Let $g_1^{*}(\cdot, \eta), g_2^{*} (\cdot, \eta)\in Y_{a, \eta}^{*}$ be the corresponding dual basis  defined in \eqref{defbasis-dual}. We can then 
define the spectral projector 
\begin{align}
    \mathbb{P}(\eta_0)f=\sum_{k=1, \, 2}\int_{-\eta_0}^{\eta_0} \langle \cF_y(f) (\cdot, \eta), g_k^{*}(\cdot,\eta)\rangle_{_{Y_{a,\eta}\times Y_{a,\eta}^{*} } }\,g_k(\cdot,\eta) \, e^{i y\eta}\, \d \eta
\end{align}
which projects the elements in $X_a$ onto the spectral subspaces associated to the continuous family of eigenvalues $\{\lambda(\pm \eta)\}_{|\eta|\leq \eta_0}.$ We refer to \eqref{def-bracketYa} for the definition of $ \langle \cdot \rangle_{Y_{{a, \eta}}\times Y_{a,\eta}^{*}}.$
Define $\mathbb{Q}(\eta_0)=\Id-  \mathbb{P}(\eta_0).$
It stems from the symmetries of basis and dual basis that 
the projections $\mathbb{P}(\eta_0)$ and $\mathbb{Q}(\eta_0)$ 
indeed send real functions to real functions.
Our  second   result  which is the main technical  one is the obtention of  uniform resolvent bounds on  the space $\mathbb{Q}(\eta_0)X_a$,  which is an infinite dimensional spectral subspace supplementary to  the continuous resonant eigenspaces. %generalized eigenspaces,
These estimates will allow  to get exponential decay  for the semigroup on this  space. %linear time asympototics of

By  using  the notation $ \| \cdot \|_{B(X)} $
for the operator norm for linear operators on $X$ and $\rho(\cdot\,; X)$ for the resolvent set of an operator on $X$,  we get the following:
\begin{thm}\label{thm-resol-L}
Let $0<a=\hat{a}\ep < \f{\sqrt{3}}{4}\ep$ and $X_a=\colon L_a^2\times 
 H_{a,*}^{{1}/{2}}.$ 
 There exist $\ep_0>0, \beta_0=\beta_0(\hat{a})>0, \hat{\eta}_0=\hat{\eta}_0(\hat{a})$ such that for any $0<\ep\leq \ep_0, \, 0<\beta\leq\beta_0,\, $ we have 
 \beqs 
 \Omega_{\beta,\,\ep}=\colon \big\{ \lambda\in \mathbb{C}\,\big|\, \Re \lambda > -\beta \ep^3\big\} \subset \rho (L; \mathbb{Q}(\ep^2\hat{\eta}_0)X_a).
 \eeqs
Moreover, there is $C_0=C_0(\hat{a}, \hat{\eta}_0, \beta_0, \ep_0)>0,$ such that for any $\lambda \in \Omega_{\beta, \ep},$  the operator $(\lambda-L)$ is invertible on the space $\mathbb{Q}(\ep^2\hat{\eta}_0)X_a$ and
 \beq\label{uni-resol}
\|(\lambda-L)^{-1} \|_{B(\mathbb{Q}(\eta_0)X_a)}\leq C_0.
 \eeq
\end{thm}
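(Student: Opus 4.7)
The plan is to pass to the transverse Fourier side. Since $L$ has constant coefficients in $y$, Plancherel in $y$ realizes it on $X_a$ as the direct integral over $\eta$ of the fiber operators $L(\eta)$ on $Y_{a,\eta}$, and the projector $\mathbb{Q}(\ep^2\hat\eta_0)$ respects this decomposition: it is the identity for $|\eta|>\ep^2\hat\eta_0$ and projects onto the complement of the two-dimensional resonant eigenspace $\mathrm{span}\{U(\cdot,\pm\eta)\}$ for $|\eta|\le\ep^2\hat\eta_0$. The bound \eqref{uni-resol} therefore reduces to establishing, uniformly in $\eta\in\mathbb{R}$ and $\lambda\in\Omega_{\beta,\ep}$, a resolvent estimate for $L(\eta)$ on $Y_{a,\eta}$ (on the complement of $U(\cdot,\pm\eta)$ when $|\eta|\le\ep^2\hat\eta_0$). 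I would split the analysis according to three ranges of the transverse frequency: low $|\eta|\le\ep^2\hat\eta_0$, intermediate $\ep^2\hat\eta_0\le|\eta|\le M$, and high $|\eta|\ge M$, where $M$ will be chosen sufficiently large.

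For low $\eta$, I would use the long-wave weakly transverse rescaling $\hat x=\ep x$, $\hat\eta=\eta/\ep^2$, $\hat\lambda=\lambda/\ep^3$. In these variables $\ep^{-3}L(\eta)$ converges as $\ep\to 0$ to the linearization of the KP-II equation about the KdV soliton $\Psi_{\kdv}$; projected onto the complement of the two resonant modes, the resolvent of this limiting operator is bounded uniformly in $\hat\eta$ and in $\hat\lambda$ with $\Re\hat\lambda>-\beta_0$, as follows from the analysis of Mizumachi. The smoothness of $(\zeta_c,\vp_c)$ in $\ep$ provided by Theorem \ref{bealetheo} then transfers the estimate to $L(\eta)$ on $\mathbb{Q}(\ep^2\hat\eta_0)X_a$ by a Neumann-series perturbation argument. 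For intermediate $\eta$, no small parameter remains, and I would construct, case by case in the longitudinal frequency $\xi$, a family of energy functionals for $(\lambda-L(\eta))U=F$. The essential source of coercivity is the dissipative term generated by commuting the weight $e^{ax}$ with the skew-adjoint part of $L(\eta)$; multipliers built out of $d_c$, $w_c$, and the symbol $\sqrt{\xi^2+\eta^2}$ of $G_\eta[\zeta_c]$ are used to symmetrize the system and to compensate the loss of one derivative from the Dirichlet-Neumann term. The precise choice depends on whether $|\xi|\ll|\eta|$, $|\xi|\sim|\eta|$, or $|\xi|\gg|\eta|$, and on the sign of $\Re\lambda$; on the region where $\zeta_c,\vp_c$ are non-negligible, the exponential decay of Theorem \ref{bealetheo} is exploited through commutator and Poincaré estimates.

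For high $\eta$, the Dirichlet-Neumann operator $G_\eta[\zeta_c]$ is elliptic of order one with principal symbol $\sqrt{\xi^2+\eta^2}$, and I would paradifferentially symmetrize $L(\eta)$ by conjugating with a matrix of order-zero symbols. This reduces it to a first-order dispersive system whose principal eigenvalues behave like $i d_c\xi\pm i\sqrt{w_c\sqrt{\xi^2+\eta^2}}$; these are pushed away from $\Omega_{\beta,\ep}$ by a distance of order $\sqrt{|\eta|}$, and the resolvent estimate then follows from $L^2$ symbolic calculus combined with the Poincaré inequality $\|\nabla f\|_{L^2_a}\ge |a|\,\|f\|_{L^2_a}$.

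The main obstacle is clearly the intermediate regime: no small parameter is available to control the full coupled water-waves structure, the loss of one derivative in $G_\eta[\zeta_c]$ has to be absorbed by weighted energy arguments alone, and the resulting bounds have to glue with those from the KP-II approximation at $|\eta|\sim\ep^2\hat\eta_0$ and with those from the pseudodifferential reduction at $|\eta|\sim M$. The delicate orchestration of multipliers across the various $\xi$-bands, uniformly for $\lambda\in\Omega_{\beta,\ep}$, is where most of the technical effort will be spent.
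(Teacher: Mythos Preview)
Your three-region strategy matches the paper's, but several quantitative choices would make the argument fail as written.

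In the high-$\eta$ regime, the claim that the principal eigenvalues are ``pushed away from $\Omega_{\beta,\ep}$ by a distance of order $\sqrt{|\eta|}$'' is incorrect: $\Omega_{\beta,\ep}$ is a right half-plane, so only the \emph{real-part} gap is relevant for a uniform-in-$\lambda$ bound, and this gap is merely $O(a)=O(\ep)$ (see Lemma~\ref{lem-ev-HT}). Your paradifferential remainder must therefore be $o(\ep)$ in operator norm, but approximating $G_\eta[\zeta_c]$ by its principal symbol $\sqrt{\xi^2+\eta^2}$ leaves an $O(1)$ error coming from the finite-depth $\tanh$ correction; even with $M$ large but fixed, this error is a fixed constant $\sim Me^{-2M}$, not $o(\ep)$. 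The paper's key device here (Lemma~\ref{lem-error}) is to replace the principal symbol by the modified one $\lambda_{\zeta_c}^1\tanh\lambda_{\zeta_c}^1$, which brings the remainder down to $O(\ep^2)$ and allows the threshold to be placed at $|\eta|=2$.

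Second, placing the low/intermediate boundary at $|\eta|=\ep^2\hat\eta_0$ with $\hat\eta_0$ small is fatal: just above this threshold the damping produced by the weight is only $O(\hat\eta_0\ep^3)$, too weak to absorb the $O(\ep^3)$ commutator terms generated by the soliton coefficients in any energy estimate. The paper instead runs the KP-II approximation all the way up to $|\eta|=A\ep^2$ with $A$ \emph{large}---Mizumachi's resolvent bound on $\mathbb{Q}_{\kp}^{\hat a}$ is valid for all bounded $\hat\eta$, not only for $|\hat\eta|\le\hat\eta_0$---so that the intermediate energy argument starts where the damping is already $\gtrsim A\ep^3$. Finally, your low-$\eta$ argument is incomplete: the convergence of $\ep^{-3}L(\eta)$ to $L_{\kp}$ after rescaling holds only for longitudinal frequencies $|\xi|\lesssim K\ep$; a further splitting in $\xi$ is required, with the region $|\xi|\gtrsim K\ep$ handled by a separate energy estimate (Lemma~\ref{lem-HHF}) before the KP-II reduction can be invoked on the remaining piece.
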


In view of the above uniform resolvent estimate \eqref{uni-resol}, we have the following 
 semigroup estimate 
 which is a  consequence of the Gearhart-Prüss theorem \cite{Gearhart,Pruss,Proof-GP}: 
 \begin{cor}
   Let $\ep_0, \beta_0, \hat{\eta}_0$ be defined in Theorem \ref{thm-resol-L}. For any $\ep\leq \ep_0,\,\beta<\beta_0,$ it holds that:
 \beq\label{semigroup}
 \|e^{tL}\mathbb{Q}(\ep^2\hat{\eta}_0)\|_{B(X_a)}\lesssim e^{-\beta\ep^3 t}.
 \eeq
 \end{cor}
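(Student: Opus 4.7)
The plan is to apply the Gearhart--Prüss theorem \cite{Gearhart,Pruss,Proof-GP} on the Hilbert space $\mathbb{Q}(\ep^2\hat{\eta}_0)X_a$. Note that $X_a=L_a^2(\mR^2)\times H_{a,\star}^{1/2}(\mR^2)$ is a Hilbert space with the natural inner product corresponding to its norm, and $\mathbb{Q}(\ep^2\hat{\eta}_0)X_a$ is a closed subspace since $\mathbb{Q}(\ep^2\hat{\eta}_0)$ is a bounded projector. Since $\mathbb{P}(\ep^2\hat{\eta}_0)$ is the spectral projector associated with the family of eigenvalues $\{\lambda(\pm\eta)\}_{|\eta|\leq \ep^2\hat{\eta}_0}$ obtained from Theorem \ref{thm-resolmodes}, both $\mathbb{P}(\ep^2\hat{\eta}_0)$ and $\mathbb{Q}(\ep^2\hat{\eta}_0)$ commute with $L$, so $\mathbb{Q}(\ep^2\hat{\eta}_0)X_a$ is invariant under the action of $L$.

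The next step is to check that $L$ generates a $C_0$-semigroup on $X_a$, so that $L$ restricted to $\mathbb{Q}(\ep^2\hat{\eta}_0)X_a$ also generates a $C_0$-semigroup. This can be obtained by viewing $L$ as a perturbation of the operator corresponding to the linearization around the flat surface, which is a standard generator in the weighted space thanks to the conjugation with the exponential weight being equivalent to a Fourier multiplier shift; the perturbation terms coming from $v_c,\,Z_c,\,w_c$ are bounded multiplication operators with exponentially decaying coefficients (by Theorem \ref{bealetheo}), which preserves the generator property by bounded perturbation.

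Given generation, apply the Gearhart--Prüss theorem. For any fixed $\beta<\beta_0$, pick $\beta'\in(\beta,\beta_0)$. By Theorem \ref{thm-resol-L} applied with $\beta'$ in place of $\beta$, the closed half-plane $\{\Re\lambda\geq -\beta\ep^3\}$ is contained in $\Omega_{\beta',\ep}\subset \rho(L;\mathbb{Q}(\ep^2\hat{\eta}_0)X_a)$, with the uniform bound
\begin{equation*}
\sup_{\Re\lambda\geq -\beta\ep^3}\|(\lambda-L)^{-1}\|_{B(\mathbb{Q}(\ep^2\hat{\eta}_0)X_a)}\leq C_0.
\end{equation*}
The Hilbert space version of Gearhart--Prüss then yields $\|e^{tL}|_{\mathbb{Q}(\ep^2\hat{\eta}_0)X_a}\|\lesssim e^{-\beta\ep^3 t}$, and composing with the bounded projector $\mathbb{Q}(\ep^2\hat{\eta}_0)$ on $X_a$ gives \eqref{semigroup}.

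The main obstacle is the rigorous verification that $L$ generates a $C_0$-semigroup on the weighted space $X_a$; this does not follow immediately from standard well-posedness results for the linearized water-waves system (which are typically set in unweighted Sobolev spaces) because conjugation by $e^{ax}$ alters the nonlocal Dirichlet--Neumann operator. Once $C_0$-generation is established (either by a direct perturbation argument from the flat-surface linearization or by a suitable transformation back to an unweighted setting), the rest of the argument is a textbook application of Gearhart--Prüss using the uniform resolvent bound of Theorem \ref{thm-resol-L}.
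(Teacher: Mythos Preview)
Your approach is the same as the paper's: the corollary is stated there as an immediate consequence of the Gearhart--Pr\"uss theorem together with the uniform resolvent bound \eqref{uni-resol}, with no further argument given.

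One correction to your sketch of $C_0$-generation: the perturbation $L-L^0$ (equivalently $L_a^1$ in \eqref{def-La0-1}) is \emph{not} a bounded operator on $X_a$. The diagonal entries contain terms of the form $v_c\,\partial_x$, and the off-diagonal entry $G_a[\zeta_c]-G_a[0]$ is not a multiplication operator; the paper itself emphasizes (see the discussion after \eqref{defregions}) that these are unbounded perturbations, which is precisely why the resolvent analysis cannot be done by naive perturbation from the constant-coefficient case. So the ``bounded perturbation of a generator'' argument you outline does not apply as stated. Generation has to be obtained differently---for instance, by a relatively bounded perturbation argument (the perturbation is first-order while $L^0$ controls one derivative), or by the a priori energy estimates developed in Sections~3--4, which in particular produce a point of the resolvent set (e.g.\ $\lambda=1$, see the argument around \eqref{inverse1-La0}) together with the required dissipativity-type bounds. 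You correctly flag this as the main gap; the paper does not spell it out either.
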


 As a consequence of Theorem \ref{thm-resol-L}, and its proof,  we can also get the following spectral stability result in  unweighted spaces: %$X_0=L^2(\mR^2)\times H^{1/2}_{0,*}(\mR^2):$
\begin{thm}\label{thm-spectral-unweighted}
Let $L(\eta)$ be defined in \eqref{def-Leta} and denote $Y_0(\eta)=L^2(\mR)\times H^{1/2}_{0,*,\eta}(\mR).$
For every $\ep$, $0 <  \ep \leq \ep_{0}$, the spectrum of $L(\eta)$ %the linearized operator $L$ 
in the space  $Y_0(\eta)$ 
is contained  in the imaginary axis.
\end{thm}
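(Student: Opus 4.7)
The plan is to combine Theorem \ref{thm-resol-L} with the Hamiltonian symmetry of $L(\eta)$ and a classical exponential-decay argument for $L^2$-eigenfunctions. The linear change of variables $(\zeta,\varphi)\mapsto(V_1,V_2)=(\zeta,\varphi-Z_c\zeta)$ defining $L$ is symplectic with respect to the Zakharov--Craig--Sulem structure $J$ (one checks directly that $TJT^{*}=J$ with $T$ the associated matrix), so $L(\eta)$ retains the form $JM(\eta)$ with $J$ skew and $M(\eta)$ self-adjoint, and its spectrum obeys $\lambda\in\sigma(L(\eta))\Rightarrow-\overline{\lambda}\in\sigma(L(\eta))$. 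It therefore suffices to exclude spectrum in the open right half-plane $\{\Re\lambda>0\}$.

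For the essential spectrum, the exponential decay estimates of Theorem \ref{bealetheo} show that $d_c-1$, $w_c-\gamma$, $Z_c$, and the difference $G_\eta[\zeta_c]-G_\eta[0]$ all decay exponentially in $x$, so $L(\eta)-L_\infty(\eta)$ is relatively compact on $Y_0(\eta)$, where $L_\infty(\eta)$ denotes the constant-coefficient operator obtained by setting $\zeta_c\equiv\varphi_c\equiv 0$. A Fourier computation on $L_\infty(\eta)$ gives the dispersion relation $(\lambda-i\xi)^2=-\gamma|(\xi,\eta)|\tanh|(\xi,\eta)|$, whose $\lambda$-solutions are purely imaginary for $\xi\in\mathbb{R}$. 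Weyl's theorem then yields $\sigma_{\mathrm{ess}}(L(\eta);Y_0(\eta))\subset i\mathbb{R}$.

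To exclude point spectrum in $\{\Re\lambda>0\}$, fix such a $\lambda$ with $L(\eta)U=\lambda U$, $U\in Y_0(\eta)\setminus\{0\}$. Since $\lambda$ lies in the resolvent set of $L_\infty(\eta)$, the complex roots in $\xi$ of the dispersion relation all have imaginary parts bounded away from zero, so $L_\infty(\eta)-\lambda$ has an exponentially decaying Green's function at some rate $\delta>0$. Rewriting the eigenvalue equation as $(L_\infty(\eta)-\lambda)U=(L_\infty(\eta)-L(\eta))U$ and iterating the resulting convolution representation against the exponentially decaying coefficients on the right-hand side, one bootstraps the weight to conclude $U\in Y_{a,\eta}$ for some small $a>0$. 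Theorem \ref{thm-resol-L} together with Theorem \ref{thm-resolmodes} then implies that the only eigenvalues of $L(\eta)$ in $Y_{a,\eta}$ lying in $\{\Re\lambda>-\beta\ep^3\}$ are the resonant modes $\lambda(\pm\eta)$ (present only when $|\eta|\leq\ep^2\hat\eta_0$), with eigenspaces spanned by $U(\cdot,\pm\eta)$. Hence $U$ is proportional to one of $U(\cdot,\pm\eta)$; but the remark following Theorem \ref{thm-resolmodes} records that these modes do not belong to the unweighted space, contradicting $U\in Y_0(\eta)$. The Hamiltonian symmetry eliminates the left half-plane in the same way, and combined with the essential-spectrum bound this gives $\sigma(L(\eta);Y_0(\eta))\subset i\mathbb{R}$. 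The main technical step is the exponential-decay bootstrap for $U$: because $G_\eta[\zeta_c]$ is nonlocal, the classical ODE dichotomy must be adapted through the harmonic-extension representation of the Dirichlet--Neumann operator in the strip $\mathbb{R}\times[-1,\zeta_c]$, in the spirit of the pseudodifferential reductions used elsewhere in the paper for high transverse frequencies.
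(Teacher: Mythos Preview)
Your overall strategy---locate the essential spectrum on $i\mathbb{R}$, use the spectral symmetry to reduce to $\{\Re\lambda>0\}$, bootstrap an $L^2$-eigenfunction into the weighted space, and then contradict the weighted spectral picture---matches the paper's. But the two technical steps are executed differently, and one of yours has a genuine gap.

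\textbf{Essential spectrum.} The perturbation $L(\eta)-L_\infty(\eta)$ is \emph{not} relatively compact with respect to $L_\infty(\eta)$: its diagonal entries $-\partial_x(v_c\cdot)$ and $-v_c\partial_x$ are first order, and since $(\lambda-L_\infty(\eta))^{-1}$ gains exactly one derivative, the composition leaves a zero-order operator of the type $M_{v_c}\,B(D_x)$ with $B\in S^0$. Testing against $f_n=e^{inx}\phi(x)$ (fixed $\phi\in C_c^\infty$) shows this is not compact on $L^2(\mathbb{R})$. The paper circumvents this by absorbing the variable-coefficient transport into the reference operator: it writes $L(\eta)=M(\eta)+R(\eta)$ with the diagonal of $M(\eta)$ built from the symmetric form $\sqrt{d_c}\,\partial_x\sqrt{d_c}$ (and its $\sqrt{G_\eta[0]}$-conjugate). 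A further conjugation reduces $M(\eta)$ to $i\,\mathrm{diag}(S_-(\eta),S_+(\eta))$ with $S_\pm(\eta)=\sqrt{d_c}\,D_x\sqrt{d_c}\pm(\gamma G_\eta[0])^{1/2}$ self-adjoint on $L^2$, so $\sigma(M(\eta))\subset i\mathbb{R}$ directly. The remainder $R(\eta)$ is then genuinely zero order (commutators plus $G_\eta[\zeta_c]-G_\eta[0]$) with exponentially decaying coefficients, and \emph{that} is $M(\eta)$-compact.

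\textbf{Point spectrum.} Here your route is more direct than the paper's. Both arguments first bootstrap $U\in Y_{a,\eta}$: the paper writes $(\lambda-L^0(\eta))U=L^1(\eta)U$ and uses that the exponentially localized coefficients place $L^1(\eta)U$ in $\langle D_x\rangle Y_a(\eta)$ for $0<a\leq\tfrac{\sqrt{3}}{4}\ep$, which is exactly the range needed for Theorem~\ref{thm-resol-L}. You then conclude from the weighted resolvent picture that $\lambda\in\{\lambda(\pm\eta)\}$ and $U\propto U(\cdot,\pm\eta)\notin L^2$. This works, but be aware that Theorem~\ref{thm-resol-L} is stated for the two-dimensional operator on $X_a$; the \emph{fiberwise} statement you use is contained in the proofs (Propositions~\ref{prop-UTH}--\ref{prop-ITF} and Section~\ref{sec-lowtransverse} are carried out $\eta$ by $\eta$), not in the theorem as stated. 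The paper, choosing to cite only the 2D semigroup bound \eqref{semigroup}, instead runs analytic perturbation theory to produce a curve $\sigma(\eta)$ of unstable eigenvalues near $\eta_u$, assembles a wave packet $V_\delta(t,x,y)=\int_{I_\delta}e^{iy\eta}e^{L(\eta)t}\pi(\eta)U_u\,d\eta\in\mathbb{Q}(\ep^2\hat\eta_0)X_a$, and shows $\|V_\delta(\delta^{-1})\|_{X_a}\gtrsim e^{\Re\sigma_u/\delta}\|V_\delta(0)\|_{X_a}$, contradicting the semigroup decay. Your shortcut is legitimate once you cite the fiberwise resolvent bounds explicitly; the wave-packet detour is the price the paper pays for invoking only the packaged 2D result.
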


Note that one has to be careful with the space  $X_a=L_a^2\times H_{a,*}^{1/2}$ and the definition of the Dirichlet-Neumann operator on it. Indeed, due to the  possible exponential growth,   this space  is not contained
 in the Schwarz space of distributions and hence one cannot use   the Fourier transform. 
 We can nevertheless  use  the   Fourier transform of $e^{ax} f$ but not the one of $f$ when $f$ is in $L^2_{a}$.
 In order to study $L$ defined in  \eqref{def-L} on the weighted space $X_a=L_a^2\times H_{a,*}^{1/2},$
 one thus need  to be careful with the definition  and the properties of 
 the Dirichlet-Neumann operator $G[\zeta_c]$ 
 on $H_{a,*}^{1/2}.$
  %which is further based on solving the elliptic problem in exponentially weighted spaces.  
  It will actually be more convenient to work on an unweighted space
 by considering  the transformed operator $G_a[\zeta_c]=\colon e^{ax}\, G[\zeta_c]\, e^{-a x}$ and study it in the usual Sobolev space:
%It is also useful to define the unweighted space 
\begin{align}
  H_{\star}^{{1}/{2}}(\mR^2)= \bigg\{ f  \,\big|\, \|f\|_{ H_{\star}^{{1}/{2}}(\mR^2)}= \colon \bigg(\int \bigg(\big|\f{\nabla}{\langle D \rangle^{1/2} } f\big|^2+a^2 |f|^2 \bigg)(x,y)  \, \d x\d y\bigg)^{\f{1}{2}} <+\infty\bigg\}. 
\end{align}
Note that $H_{\star}^{{1}/{2}}(\mR^2)$ still depends on $a$ but we neglect this dependence in the notation  in order to distinguish it with the weighted space $H_{a,\star}^{{1}/{2}}(\mR^2).$ 
An equivalent definition of $G_a[\zeta_c]$ 
is that, for any $f\in H_{*}^{1/2},$ 
\beq\label{def-DN-T}
G_{a}[\zeta_c]f(x)=\sqrt{1+|\na \zeta_{c}|^2}\big(\p_z\Phi_{a}-\p_x\zeta_c\,(\p_x-a)\Phi_{a}\big),
\eeq
where $
\Phi_{a}(x,z)$ solves the following equation in $\Omega%^{\ep}
=\mR^2\times[-1, \zeta_c]:$
\beq \label{ellipticpb}
-\big((\p_x-a)^2+\Delta_{y,z}\big)\Phi_{a}=0, \quad \Phi_{a}|_{z=\zeta_c}=f, \quad \p_z \Phi_{a}|_{z=-1}=0.
\eeq
When the surface is flat, that is when $\zeta_c = 0,$ we can  find explicitly the  solution of the above problem by using  the Fourier transform which is now legitimate: 
\begin{align}\label{def-e0f}
    E[0](f)(\cdot, z)=\f{\cosh\big({\mu_a(D)(z+1)}\big)}{\cosh{\mu_a(D)}} f\,, \quad \text{ with \, }\mu_a(D)=\sqrt{(D_x+ i a)^2+D_y^2}\, ,
\end{align}
where $D$ stands for $(D_{x}, D_{y})$ and 
for a complex number $c,$ 
we define  $\sqrt{c}$ as the limit of the square root with positive real part
of the complex numbers $c+i\delta$ when $0<\delta\rightarrow 0^{+}.$ %we  refer to page 11 for the definition of the square root of a complex number.
%define the square root of a complex number $a$ as the limit of square root of $a+\delta,$ we always take the branch with positive real part if it is not zero 
Thanks to \eqref{def-e0f}, we derive that
$G_a[0]= (\mu_a \tanh \mu_a)(D)\,.$
By using the notation $\cS=\mR^2\times [-1,0]$, 
 note that  for any positive constant $a,$ the solution $E[0]f$ belongs only to the anisotropic homogeneous space 
$\dot{H}_{x,z}^1(\mathcal{S})=\big\{f\,|\, \p_x f, \p_z f \in L^2(\cS)\big\}.$  However,
%does not belong to the homogeneous space  since 
its derivative in the transverse variable $y$ cannot be bounded in $L^2(\cS).$ Indeed, the symbol $\mu_a^{-1}(\xi, \eta)=\big(\sqrt{(\xi+ia)^2+\eta^2}\big)^{-1}$ may  exhibit singularity around  nonzero points $(\xi, \eta)=(0, \pm a).$
  This degeneracy is  a source of  new difficulties
 in the study of the transverse stability, in particular, we cannot  choose our functional framework as $L_a^2 \times \sqrt{G_a[0]}^{-1}L_a^2$ as in \cite{Pego-Sun} where the asymptotic linear stability is studied for one-dimensional water waves.

To get useful properties of the Dirichlet-Neumann operator for a non-zero surface, as  it is classically done, we can  use a change of variable to reduce the problem to 
the fixed domain $\cS$ and investigate the solvability of the difference between the resulting unknown and $E[0]f.$ An important observation is that, although the corresponding linear operator is not elliptic, one can  take benefits of 
the Poincar\'e inequality (due to the zero upper boundary condition) when  studying the perturbation  to control its $L^2(\cS)$ norm by the $\dot{H}^1(\cS)$ norm. Another key point is that 
the source term does not contain any $\p_y$ derivative of $E[0]f$ which does not belong to $L^2(\cS).$ %consisting 
This provides the solvability and uniqueness of the solution to \eqref{ellipticpb} in the new variables in the space $H_{0,up}^1(\cS)+E[0]f.$ We refer to Lemma \ref{lem-elliptic} for more details.

The rough study of $G_a[\zeta_c]$ above also motivates us to consider the transformed linear operator
\beq\label{def-La}
 {L}_a =\colon e^{ax} L \,e^{-a x}=\left( \begin{array}{cc}
   (\p_x-a)(d_c\cdot)  &  G_a[\zeta_{c}]   \\[5pt]
 -  w_c& d_c(\p_x-a)
\end{array}\right). \,\,  %\text{ with } \,
\eeq
Since the space $X=\colon L^2(\mR^2)\times H_{\star}^{{1}/{2}}(\mR^2)$ can be identified with  $e^{ax}X_a,$
we see that the study of $L$ on the weighted space $X_a$ is equivalent to the study of $L_a$ on the unweighted space $X.$ Consequently, Theorem \ref{thm-resol-L} is equivalent to the following theorem: 
\begin{thm}\label{thm-resol-La}
Let $X=\colon L^2(\mR^2)\times H_{\star}^{{1}/{2}}(\mR^2), \, \mathbb{Q}_a(\eta_0)=e^{ax}\,  \mathbb{Q}(\eta_0) \, e^{-ax}.$ There exist $\ep_0>0, \beta_0=\beta_0(\hat{a})>0, \hat{\eta}_0=\hat{\eta}_0(\hat{a})>0, C_0>0$ such that for any $0<\ep\leq \ep_0, 0<\beta\leq \beta_0,\, $ and any $\lambda \in \Omega_{\beta,\,\ep},$ %such that $\Re \lambda > -\beta\ep^3,$
 the operator
 %There exist $\ep_0>0, \beta_0>0, C_0>0$ such that for any $\beta<\beta_0,$ for any $\lambda$ satisfying $\lambda>-\beta \ep^3,$
 $(\lambda-L_a)$ is invertible on the space $\mathbb{Q}_a(\ep^2\hat{\eta}_0)X$ and
 \beq\label{uni-resol-La}
\|(\lambda-L_a)^{-1}\|_{B(\mathbb{Q}_a(\ep^2\hat{\eta}_0)X)}\leq C_0\, .
 \eeq
\end{thm}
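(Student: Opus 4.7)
The plan is to reduce Theorem \ref{thm-resol-La} to a family of one-parameter uniform resolvent estimates by taking a partial Fourier transform in the transverse variable $y$. Writing the action as $\widehat{L_a U}(\cdot,\eta) = L_a(\eta) \widehat{U}(\cdot,\eta)$, where $L_a(\eta)$ is obtained from $L(\eta)$ in \eqref{def-Leta} by the weight conjugation $e^{ax}\cdot e^{-ax}$, and using that the spectral projector $\mathbb{Q}_a(\epsilon^2\hat{\eta}_0)$ acts fiberwise in $\eta$, Plancherel reduces \eqref{uni-resol-La} to the uniform bound
\beqs
\sup_{\eta \in \mathbb{R}} \sup_{\lambda \in \Omega_{\beta,\epsilon}} \| (\lambda - L_a(\eta))^{-1} \Pi(\eta) \|_{B(Y_{a,\eta})} \leq C_0,
\eeqs
where $\Pi(\eta)$ is the complementary projector to $\Span\{U(\cdot,\pm\eta)\}$ for $|\eta|\leq \epsilon^2\hat{\eta}_0$ and the identity otherwise. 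I would then treat three regimes of $\eta$ by different methods, as announced in the abstract.

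In the low-frequency regime $|\eta| \leq \epsilon^2 \hat{\eta}_0$, the long-wave rescaling $x \mapsto \epsilon^{-1} x$, $\eta = \epsilon^2\hat{\eta}$ turns $L_a(\eta)$ into a perturbation of the linearization of the KP-II equation about its soliton $\Psi_{\kdv}$ (in parallel with \eqref{solitarywave-0} and the remark after Theorem \ref{thm-resolmodes}). Using the spectral theory for the linearized KP-II operator around its soliton — in the spirit of Mizumachi's analysis of the Benney-Luke model cited in the introduction — one obtains a uniform resolvent bound on the complement of its resonant modes. Since those resonant modes are exactly captured by the eigenvalues $\lambda(\pm\eta)$ of Theorem \ref{thm-resolmodes}, the projector $\Pi(\eta)$ opens up a spectral gap of size $\epsilon^3$ in the rescaled problem. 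The higher-order perturbation between $L_a(\eta)$ and its KP-II reduction is absorbed, and rescaling back yields the desired bound on $\Omega_{\beta,\epsilon}$.

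For $|\eta| \geq \epsilon^2\hat{\eta}_0$, no projection is needed. In the intermediate regime (up to a threshold $\eta_h$ independent of $\epsilon$), I would proceed by an energy method: rewrite the resolvent equation $(\lambda - L_a(\eta))U = F$ and test against multipliers adapted to the Hamiltonian structure $J\,\delta\mathcal{H}$, combined with the weight conjugation. The replacement of $\partial_x$ by $\partial_x - a$ produces a coercive $O(\epsilon)$ damping term, while the spectral shift $\Re\lambda + \beta\epsilon^3 \geq 0$ supplies additional dissipation, and the solitary-wave coefficients $d_c = 1 + O(\epsilon^2)$, $w_c = \gamma + O(\epsilon^2)$ are treated as controllable perturbations of the flat-surface operator. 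Different ranges of the longitudinal Fourier variable require distinct multipliers, because the balance between the dispersive principal part and the solitary-wave terms changes across them. For $|\eta| \geq \eta_h$ I would instead use pseudodifferential calculus: $G_a[\zeta_c]$ differs from $G_a[0] = (\mu_a \tanh \mu_a)(D)$ by operators whose symbols are well-controlled thanks to the localization of $\zeta_c$, and one can conjugate $L_a(\eta)$ to a diagonal principal symbol of half-wave type from which the resolvent bound follows by standard symbolic estimates. The mapping properties of $G_a[\zeta_c]$ on $H_\star^{1/2}$ used here are obtained by straightening the free boundary to $\mathcal{S}$ and solving the elliptic problem \eqref{ellipticpb} in the decomposition $H^1_{0,up}(\mathcal{S}) + E[0]f$, as discussed in Lemma \ref{lem-elliptic}.

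The main obstacle is the intermediate regime. Neither the KP-II approximation nor the pseudodifferential calculus applies cleanly there, so one must design energy functionals by hand that simultaneously (i) extract the $O(\epsilon)$ coercive gain coming from the weight, (ii) control the non-local, non-symmetric solitary-wave perturbations involving $d_c$, $w_c$ and the truly two-dimensional operator $G_a[\zeta_c]$, and (iii) transition consistently between several longitudinal frequency sub-regions. In addition, the three regimes must be glued together: the weight parameter $\hat{a}$ and the thresholds $\epsilon^2\hat{\eta}_0$ and $\eta_h$ have to be chosen compatibly so that the KP-II estimate near $|\eta|\sim\epsilon^2$, the energy estimate at $|\eta|\sim 1$, and the pseudodifferential estimate at $|\eta| \gg 1$ combine into a single uniform bound on $\Omega_{\beta,\epsilon}$.
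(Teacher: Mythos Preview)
Your overall strategy coincides with the paper's: a three-regime splitting in the transverse frequency, with pseudodifferential diagonalization for large $|\eta|$, energy functionals for intermediate $|\eta|$, and a KP-II reduction for small $|\eta|$, glued together by compatible parameter choices. The paper executes exactly this plan (Propositions~\ref{prop-UTH}, \ref{prop-ITF}, and Section~\ref{sec-lowtransverse}).

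A few refinements in the paper's execution are worth flagging, since your sketch underestimates them. First, the low-transverse threshold is not $\epsilon^2\hat\eta_0$ but $A\epsilon^2$ with $A$ large (so $A^2\epsilon\leq 1$); the KP-II regime must extend well beyond where the resonant modes live, and within it one still needs a further split in the longitudinal frequency $\xi$, using energy estimates for $|\xi|\geq K\epsilon$ and the KP-II approximation only for $|\xi|\leq K\epsilon$ (Lemma~\ref{lem-HHF} and \eqref{prop-puL}--\eqref{prop-QuL}). Second, in the high-$|\eta|$ regime the naive principal symbol $\lambda_{\zeta_c}^1$ is \emph{not} a small enough perturbation of $G_{a,\eta}[\zeta_c]$; the paper replaces it by $\lambda_{\zeta_c}^1\tanh\lambda_{\zeta_c}^1$ to get an $O(\epsilon^2)$ remainder (Lemma~\ref{lem-error}), which is what makes the perturbative diagonalization close. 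Third, your claim of an $O(\epsilon)$ coercive gain from the weight is only valid in part of the intermediate region; in the sub-regions $R_\xi^I\cup R_{\eta,2}^I$ of Proposition~\ref{lem-lampm1} the gap degenerates to $O(A\epsilon^3)$, and this is precisely why the paper must design the functional $\mathcal{E}_r$ involving $\zeta_c$ and the multiplier $Q_a(D)=\sqrt{1-\tanh^2\mu_a}$ rather than a flat energy. Finally, the symbol $\mu_a(\xi,\eta)$ is singular near $(\xi,\eta)=(0,\pm a)$, so $\sqrt{G_a[0]}^{-1}$ is not globally available and a separate ``singular'' sub-region $\mathcal{S}_{K,\delta}$ (see \eqref{def-singset}) has to be handled by approximating $G_a[\zeta_c]$ by $-\Delta_a$ instead.
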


\subsection{Outline of the proof and main difficulties }
In this subsection, we explain the main issues and ideas for  the proof of  the existence of the resonant modes in Theorem \ref{thm-resolmodes} and  the resolvent estimate in Theorem \ref{thm-resol-La}.
\subsubsection{Resonant modes}
The resonant modes $$\big(\lambda(\eta), e^{i y\,\eta}U(\cdot,\eta)\big)_{-\eta_0\leq \eta\leq \eta_0}$$ are searched by looking for solutions of 
the eigenproblems
\beqs 
(\lambda(\eta)-L(\eta))\,U(\cdot, \eta)=0.
\eeqs
As mentioned previously, $L(0)$ corresponds to 
the linearized operator of one-dimensional water waves about  the 1-d soliton $(\zeta_c, \vp_c),$ which has a nontrivial generalized kernel of dimension two. 
To find the curves $\lambda (\eta)$ bifurcating from $\eta=0$ and the eigenfunctions $U(\cdot, \eta),$ we use the classical Lyapunov-Schmidt reduction approach. While featuring some algebraic differences, the proof share some similarities with the %rely on 
 the arguments presented in \cite{Mizumachi-BL-linear} where transverse linear stability of line solitary waves  is established for the Benney-Luke equation.  

\subsubsection{Uniform resolvent estimate}
We now give more details about  the proof of the uniform resolvent estimate \eqref{uni-resol-La}, which is  the core of this paper.
In order to get   \eqref{uni-resol-La}, the first thing we have to detect is the presence of  some damping effects once we work in the weighted space $X_a$ due to the sign of the group velocity  for the  constant coefficients part of  the original linearized operator $L,$ or equivalently the transformed operator $L_a$ on the unweighted space $X.$ This has been used since the pioneer work of Pego and Weinstein \cite{Pego-Weinstein-kdv,Pego-Weinstein-Boussinesq} on the asymptotic stability of KdV equation and Boussinesq equations. To be more precise, consider the constant operator $$L_a^0=\colon 
 \left( \begin{array}{cc}
   \p_x-a  &  G_a[0]   \\[5pt]
 -  \gamma& \p_x-a
\end{array}\right), $$ which is obtained by setting $(\zeta_c, \vp_c)=0$  in the definition of $L_a$ \eqref{def-La}. The two eigenvalues of its symbol  are 
\beqs 
\lambda_{\pm}^{0}=\colon i(\xi+i a)\pm \sqrt{-\gamma \, \mu_a \tanh \mu_a(\xi, \eta)}\,, \, \qquad  \big(\,\mu_a(\xi, \eta)=\sqrt{(\xi+ i a)^2+\eta^2}\,\big)\, .
\eeqs
Some algebraic computations show that for any $(\xi, \eta)\in \mR^2,$ there exists a positive constant $\beta(\hat{\alpha})=\cO(\hat{\alpha}),$ such that 
\beqs 
\Re \lambda_{\pm}^{0} \leq - \beta(\hat{\alpha})\, \ep^3\, .
\eeqs
Consequently, for any $\lambda$ such that $\Re \lambda \geq -\beta(\hat{\alpha})\, \ep^3/2,$ it holds that 
\beqs 
\big|(\lambda-\lambda_{\pm}^0)^{-1}\big|\leq \f{2}{\beta(\hat{\alpha}) \ep^3}\, , %(\beta(\hat{\alpha}) \ep^3)^{-1},
\eeqs
which enables us to conclude that  $(\lambda-L_a^0)$  is invertible and  that  the resolvent bounds 
\beqs 
\sup_{\Re \lambda \geq -\beta(\hat{\alpha})\, \ep^3/2} %\in \Omega_{\beta, \ep}}
 \|(\lambda-L_a^0)^{-1}\|_{B(X)}< +\infty \, 
\eeqs
holds. The next step is to obtain the same  result \eqref{uni-resol-La}  for $L_a$ in which the terms depending on  the solitary waves $(\zeta_c, \vp_c)$ are  nonzero but are small. %Clearly, it depends on how far between $L_a$ and the constant operator $L_a^0.$
The main difficulties in order to obtain this result in the water-waves case are the following.

%One difficulty arises from the quasilinear structure of the linear system. 

In the case of the water waves system which is fully nonlinear,  the part of the system involving coefficients
depending on the solitary wave cannot be seen as a lower order small perturbation
 as in the study of the linearization of semilinear problems, 
see for instance \cite{Mizumachi-BL-linear}. 
More precisely, here  the appearance of $v_c \p_x$ in the diagonal terms of $L_a$ results in operators that are not bounded and  
the difference between $G_a[\zeta_c]$ and $G_a[0]$ is not a bounded operator from $H_{*}^{\f12}(\mR^2)$ to $L^2(\mR^2).$ To 
address this issue, we cut frequencies and use  %semiclassical analysis
 pseudo-differential calculus
and energy estimates in the high-frequency regions. Further details will be provided later. 

Another difficulty lies in  the presence of  the Dirichlet-Neumann operator $G_a[\zeta_c].$ If $\zeta_c$ is $ C^{\infty}(\mR)$ smooth which is the case in the current study,  one can expect to adapt well-known results for the case $a=0$ (see \cite{Alazard-Metivier}
for example) to get that 
$G_{a}[\zeta_c]$ is  a pseudo-differential operator with symbol 
\beqs 
\lambda_{\zeta_c}(x, \xi, \eta)=\lambda_{\zeta_c}^1(x, \xi, \eta)+\lambda_{\zeta_c}^0(x, \xi, \eta)+\cdots \,,
\eeqs
where the principal symbol  is 
\beqs%\label{def-prsy}
\lambda
^1_{\zeta_c}(x, \xi, \eta)=\sqrt{(\xi+ia)^2+\eta^2 (1+(\p_x\zeta_c)^2)}\,.
\eeqs
However, there is no  exact formula for the symbol $\lambda_{\zeta_c}$. 
We thus need to use  approximations of $\lambda_{\zeta_c}$ in conjunction with appropriate perturbation arguments. As will be clear in the proof, 
here an approximation $\lambda_{\zeta_c}^{app}$ is  admissible if $\op(\lambda_{\zeta_c}-\lambda_{\zeta_c}^{app})$ is a bounded operator from $H^{1/2}_{*}(\mR^2)$ to $L^2(\mR^2)$ with  operator norm sufficiently small compared to $\ep,$ for instance
%if the operator ofit is bounded  from $H^{\f12}_{*}(\mR^2)$ to  
$$\|\op(\lambda_{\zeta_c}-\lambda_{\zeta_c}^{app})\|_{ B( H_{*}^{1/2},\, L^2%(\mR^2)
)}\leq C\ep^{\alpha}, \quad (\alpha>1).$$
It turns out that the needed quality of the approximation, that is to say  the minimal  $\alpha$  that we can allow, 
is different in  different  frequency regions. 

On the  one hand,
when focusing on  uniform (in $\ep$) high transverse frequencies $\{|\eta|\geq 2\}$, one may expect that the principal symbol $\lambda _{\zeta_c}^1$ could be used as an  admissible approximation. Nevertheless, although 
 $\op(\lambda_{\zeta_c}^1)$ approximates well the operator $G_{a}[\zeta_c]$ in terms of  regularity, it is far from being a good approximation  in terms of the size of the operator norm within $B(H^{1/2}(\mR^2), L^2(\mR^2)).$ Indeed,  the  operator norm of the difference is only $\cO(1).$ %since one only has:\beqs \|\tilde{\bI}_2(D_y)(G_{a}[\zeta_c]-\op(\lambda^1_{\zeta_c}))\|_{B( H_{*}^{1/2},\, L^2%(\mR^2))}\lesssim 1, \, %\,(\text{ rather than } \ep^{\alpha})\eeqswhere $\tilde{\bI}(\cdot)$ is the characteristic function on $\{|\cdot|\geq 2\}.$
This is not enough  to prove  resolvent bounds by  perturbation arguments. Nevertheless, we will show in
Lemma \ref{lem-error} that an admissible approximation can  be obtained by using a modification  
of the principal symbol $$\tilde{\lambda}_{\zeta_c}^1=\colon \lambda
^1_{\zeta_c}\tanh \lambda
^1_{\zeta_c}.$$ The operator norm of the difference between $G_{a}[\zeta_c]$ and $\op(\tilde{\lambda}_{\zeta_c}^1)$ is now  of order $\cO(\ep^2).$ We remark that unsurprisingly, such a  modification is needed due  to the fact that we are considering water waves in a domain with finite depth.

On the other hand,  in the region of lower  
transverse frequencies $\{|\eta|\leq 2\},$ the damping effect  is  quite  weak, it thus necessitates a more accurate approximation of  %it is more convenient to approximate
the Dirichlet-Neumann operator. %It turns out that 
Here, we shall adapt in the setting of weighted spaces its first order expansion in terms of $\zeta_c,$  which is a pseudo-differential operator with the symbol \cite{Lannes} :
\beq\label{def-symb-inter}
{\lambda}_{\zeta_c}
    ^{app}(x,\xi,\eta)=%\sqrt{
    \mu_a\tanh \mu_a (\xi,\eta)
    + \zeta_c(x) \mu_a^2 (\tanh^2 \mu_a -1) (\xi,\eta), \quad  \mu_a(\xi,\eta)=\sqrt{(\xi+ia)^2+\eta^2},
\eeq
the error belonging to $B( H_{*}^{1/2},\, L^2)$ with  norm of order $\cO(\ep^3).$

We are now in position to outline the
proof of uniform resolvent bounds \eqref{uni-resol-La}.  We first use  frequency cutoffs and  different  methods to establish the resolvent bounds in different frequency regions. We divide  the frequency space $(\xi, \eta)\subset \mathbb{R}^2$ into three regions, corresponding to (uniform) high, intermediate, and low \textbf{transverse} frequencies:
\begin{align}
\label{defregions}
    R^{UH}=\big\{ (\xi,\eta)\big|\, |\eta|\geq 2 \big\}, \quad
    R^I=\big\{ (\xi,\eta)\big|\, A\ep^2\leq |\eta|\leq 2 \big\}, \quad  R^L=\big\{ (\xi,\eta)\big|\,|\eta|\leq  A\ep^2 \big\}, 
\end{align}
where $A$ is a sufficiently large constant satisfying $A^2\ep \leq 1.$ 
 We shall call the union of the  uniform high frequency and intermediate frequency regions  the high frequency region. Note that at this stage we only cut the transverse frequencies and the longitudinal frequency $\xi$ is just a fixed parameter. We shall perform further localization in $\xi$ within the different regions when needed.

$\bullet$ \textbf{Uniform high transverse frequencies.}
In the uniform high transverse frequency region $R^{UH},$ 
we define a modified operator $\tilde{L}_a$ via replacing the Dirichlet Neumann operator in $L_a$ by $\op(\tilde{\lambda}_{\zeta_c}^1).$ We then diagonalize it  by using   pseudo differential  calculus 
and  thus reduce the existence of the resolvent problem as well as the uniform resolvent bounds to the  study of $\lambda-\lambda_{\pm}^1(x, \xi, \eta),$ where   \beqs% \label{deflpm}
\lambda_{\pm}^1=i \bigg(d_c(x)(\xi+ia)\pm \sqrt{w_c}(x)\sqrt{\lambda^1_{\zeta_c}\tanh \lambda^1_{\zeta_c}}(x,\xi,\eta)\bigg).
\eeqs 
It turns out that when $|\eta|\geq 2,$ for any $(x, \xi)\in \mR^2,$  any $\lambda \in \Omega_{\beta,\,\ep}$ with $\beta$ small enough,   it holds that 
$\Re (\lambda- \lambda_{\pm}^1(x,\xi,\eta))\gtrsim \ep.$
This, together with Lemma \ref{lem-error},  allows us to establish the invertibility of $(\lambda-L_a)$ and the corresponding resolvent bounds in this region by  perturbation arguments.

$\bullet$ \textbf{Intermediate transverse frequencies.} 
The study in the intermediate transverse frequency region $R^I$ is  more involved due to a  singularity in the previous  diagonalization process. %As mentioned previously, since the damping effects in weaker in this region, it necessitates a more accurate approximation of the Dirichlet Neumann operator by using 
 Indeed,  by using   $\op({\lambda}_{\zeta_c} ^{app})$ to approximate  $G_a[\zeta_c],$ we would  involve an operator of the form $\op\big(1/\sqrt{{\lambda}_{\zeta_c} ^{app}}\big)$ which 
 unfortunately does not belong to $B(L^2, H_{*}^{1/2}).$
This is due to  the singularity of the symbol $\mu_a(\xi, \eta)$ in the region
$$\cS_{K, \delta}=\bigg\{(\xi,\eta)\in \mR^2\big|\,|\xi%+i a
|\leq K\ep, \, \delta\leq 
\f{|\eta|}{\sqrt{\xi^2+a^2}}\leq 2\bigg\}, \qquad ( \,0<\delta<1 \text{ small },\, K\sim \sqrt{A} % K(K+1) \in [A/2, A]  
).$$ 
%Since
However, since this region is indeed in low frequencies both in %$x$ and $y,$
longitudinal and transverse variables, we will show that we   approximate $G_a[\zeta_c]$ by simply $-\Delta$ and perform directly  energy estimates to obtain the desired resolvent bounds. 
Since we would have to localize in  the longitudinal  frequencies and study different sub-regions separately, we found  more convenient to adopt in this whole region $R^I$ an energy-based approach, achieved through the design of appropriate energy functionals. %prove the resolvent estimates by performing the energy estimates 

$\bullet$ \textbf{Low transverse frequencies.} 
In the low transverse frequency region $R^L,$ as in the previous case, in order not to lose derivatives,
it requires a further splitting involving frequencies dual  to the $x$ variable:  
$$R_H^L=\{|\xi|\geq K\ep, \, |\eta|\leq A\ep^2\} \qquad   R_L^L=\{|\xi|\leq K\ep, \, |\eta|\leq A\ep^2\}.$$ 
In the region $R_H^L,$ we perform energy estimates by designing an energy functional consistent with the symbol approximation \eqref{def-symb-inter} of the  Dirichlet Neumann operator. In the region 
$R_L^L,$ we take benefits of the KP-II approximation (see Subsection \ref{subsec-KP-IIapprox}) and the resolvent bounds for the linearized operator of KP-II equation around line KdV soliton proved in \cite{Mizumachi-KP-nonlinear}. Due to the weak damping effects in this region (which is of order $O(\ep^3)$), instead of using the Fourier projector as in the study of the  region $R^I,$ we use  smooth cut-offs to split the longitudinal  frequencies. 
Let us remark finally that the arguments used in this region can provide  a  proof directly in cartesian coordinates of the linear asymptotic stability of  the 1d solitary waves in the  one-dimensional water waves system, which has been previously obtained  in \cite{Pego-Sun} in   holomorphic coordinates.

\textbf{Organization of the paper:}
We establish the existence of continuous resonant modes, as stated  in Theorem \ref{thm-resolmodes}, in Section 2. This section starts  with a formal derivation of the KP-II equation from  the water waves in the long-wave regime. Uniform resolvent estimates stated in Theorem \ref{thm-resol-La} are then derived in Sections 3 and 4, in the high (uniform high and intermediate) and low transverse frequency regimes, respectively. The proof of spectral stability in the unweighted space stated in Theorem \ref{thm-spectral-unweighted} is performed  in Section 5.
Additionally, various technical results used in  the main proofs are presented in the appendix. Specifically, in Appendix \ref{appendix-DN}, we prove several properties for the transformed Dirichlet-Neumann operator $G_a[\zeta_c]$, which are  of independent interest.

%We prove the existence of continuous resonant modes stated in Theorem \ref{thm-resolmodes} in Section 2, where we begin with a formal derivation of the KP-II equation from the water waves in the longwave regime. The uniform resolvent estimates are established in Section 3 and Section 4, in the high (uniform high and intermediate) and low transverse frequency regimes respectively. Finally, we show various results in appendix that are used along the main proof. Specifically, we prove in Appendix \ref{appendix-DN} various properties for the transformed Dirichlet-Neumann operator $G_a[\zeta_c]$ which maybe of independent interest.
%\textbf{Definition:} 
%we define $\sqrt{c}=\lim_{\delta\rightarrow 0^{+}}\sqrt{c+\delta}$ as the square root of $c$ where inside of the limit, we take the branch with positive real part when we take $\sqrt{\,

\section{Spectral study  
in the regime of low transverse frequency }
In this subsection, we prove Theorem \ref{thm-resolmodes}. To be more specific, we will establish that for sufficiently small values of $\eta_0$, there exists a spectral curve $\lambda(\eta)\in C([-{\eta_0}, {\eta_0}])$ and corresponding eigenmodes $U(\cdot, \eta)\in Y_{a,\eta}=L_a^2(\mR)\times H^{1/2}_{*,a,\eta}(\mR),$ such that
\beq \label{resolnanteq-1}
L(\eta)\,U(\cdot, \eta)=\lambda(\eta)\,U(\cdot, \eta).
\eeq
Given that $L(\eta)=e^{-iy\eta}\, L\, e^{iy\eta}$, this yields a set of continuous resonant modes $$\big(\lambda(\eta), e^{i y\,\eta}U(\cdot,\eta)\big)_{-\eta_0\leq \eta\leq \eta_0}$$ for the linearized operator $L$.
 
\subsection{KP-II approximation}\label{subsec-KP-IIapprox}
As mentioned in the introduction, it is well known that the Korteweg-de Vries (KdV) equation and the Kadomtsev-Petviashvili II (KP-II) equation are  effective approximate models for describing the  water wave dynamics in the long-wave region, %specifically 
in one and two dimensions respectively. %To see this
 %formally demonstrated
We thus expect that  two dimensional water waves system linearized  about  the one dimensional solitary water waves $(\zeta_c, \vp_c)$ can be approximated by  the linearized  KP-II equation about   the line KdV soliton.  %\eqref{def-L}
This can be rigorously  shown by implementing the following scaling:
\beq \label{scaling}
\begin{aligned}
    \hat{t}=\ep^3 t, \quad \, \hat{x}=\ep x, \, \quad \hat{y}=\ep^2 y, \qquad (\hat{\zeta}, \hat{\phi})(\hat{t}, \hat{x})=(\zeta, \phi)(t, x), 
\end{aligned}
\eeq
Indeed, if $(\zeta, \phi)^t$ solves the linearized equation \eqref{def-L}, then $(\hat{\zeta}, \hat{\phi})$ solves:
\beq\label{scaled-eq}
\left\{ \begin{array}{l}
      \ep^3 \p_{\hat{t}}   \hat{\zeta} - \ep  \p_{\hat{x}} (\hat{d}_{\ep}\hat{\zeta})-\hat{G}[\ep^2\hat{\zeta}_{\ep}] \hat{\phi} =0,  \\[3pt]
   \ep^3 \p_{\hat{t}}   \hat{\phi} - \ep \hat{d}_{\ep} \p_{\hat{x}} \hat{\phi}+ \hat{w}_{\ep}  \hat{\zeta} =0,
\end{array}\right.
\eeq
where 
\beq \label{def-dwep}
(\hat{v}_{\ep}, \hat{Z}_{\ep}, \hat{\zeta}_{\ep})(\hat{t}, \hat{x})=\ep^{-2}(v_c,Z_c, \zeta_c)(t,x) , \quad \hat{d}_{\ep}= 1-\ep^2 \hat{v}_{\ep}, \, \quad 
\hat{w}_{\ep}=\gamma-\ep^3 \hat{d}_{\ep} \p_x \hat{Z}_{\ep}.
\eeq
Moreover, the scaled Dirichlet-Neumann operator is defined as:
\beqs
\hat{G}[\ep^2\hat{\zeta}_{\ep}]f= \p_{{z}}\hat{\Phi}-\ep^4\p_{\hat{x}}\hat{\zeta}_{\ep}\p_{\hat{x}}\hat{\Phi} 
\eeqs 
where $\hat{\Phi}$ solves the elliptic problem
 in $\Omega=\mR\times[-1, \ep^2\hat{\zeta}_{\ep}]:$
$$\big(\ep^2\p_{\hat{x}}^2+\ep^4 \p_{\hat{y}}^2+\p_{{z}}^2\big)\hat{\Phi} =0, \quad \hat{\Phi} |_{z=\ep^2\hat{\zeta}_{\ep}}=f, \quad \p_z \hat{\Phi}|_{z=-1}=0.$$ 

Let us now skip the subscript $\hat{}$ for the sake of notational convenience. 
Plugging $\zeta=-w_{\ep}^{-1}\big(\ep^3\p_t-\ep d_{\ep} \p_x\big)\phi$ into the first equation of \eqref{scaled-eq}, we find that $\phi$ solves
\begin{align}\label{eq-scale-s}
  %\ep^4\big[
 \bigg( \f{ \ep^2}{w_{\ep}}\p_t^2-\f{2d_{\ep}}{w_{\ep}}\p_t\p_x- \p_x\big(\f{d_{\ep}}{w_{\ep}}\big)\p_t+\ep^{-2} \p_x \big(\f{d_{\ep}^2}{w_{\ep}}\p_x\big)+\ep^{-4}\hat{G}[{\ep^2\zeta_{\ep}}]\bigg)\phi=0.
\end{align}
We will do expansions with respect to $\ep$ in  the above equation. The expansion is usually performed by  assuming that a high enough Sobolev norm of the solution is bounded
on the appropriate time scale. Here we shall assume that the problem  is localized  in a bounded  frequency region with respect
to both variables, so that we  assume that all  the differential operators 
with positive order are bounded operators in $L_{\hat{a}}^2(\mR^2).$ We shall indeed perform this localization when we
will use this approximation argument.

It follows from  the definition that 
\beqs 
\f{d_{\ep}}{w_{\ep}}=-1+\cO(\ep^2), \qquad \p_x \big(\f{d_{\ep}^2}{w_{\ep}}\p_x\big)= \p_x^2+\ep^2\p_x((1-2v_{\ep})\p_x)+\cO(\ep^3).
\eeqs
Moreover, on the one hand, when the surface is flat, the Dirichlet-Neumann operator $\hat{G}[0]$ has the form
\begin{align*}
    \hat{G}[0]=\sqrt{-\ep^2(\p_x^2+\ep^2\p_y^2)}\tanh \sqrt{-\ep^2(\p_x^2+\ep^2\p_y^2)}\, 
\end{align*}
and thus has the expansion in a bounded  frequency region:
\begin{align*} 
\hat{G}[0]&=-\ep^2(\p_x^2+\ep^2\p_y^2)-\f13 \ep^4(\p_x^2+\ep^2\p_y^2)^2+\cO(\ep^6)\\
&=   -\ep^2\p_x^2-\ep^4(\p_y^2+\f13 \p_x^4)+\cO(\ep^6).
\end{align*}
On the other hand, by the expansion of the Dirichlet-Neumann operator in terms of the small amplitude waves $\ep^2\zeta_{\ep}$ (refer to Section 3.6.2 of \cite{Lannes}), we can  write,
in  a bounded  frequency region,
\begin{align*}
      \hat{G}[{\ep^2\zeta_{\ep}}]&=\hat{G}[0]-\hat{G}[0] (\ep^2\zeta_{\ep})\hat{G}[0]-\ep^4 \p_x( \zeta_{\ep}\p_x)+\cO(\ep^6)\\
      &=%+\cO(\ep^6)
    -\ep^2\p_x^2-\ep^4(\p_y^2+\f13 \p_x^4)-\ep^4\p_x(\zeta_{\ep}\p_x)+\cO(\ep^6).
\end{align*}
%\begin{align*}   \hat{G}[{\ep^2\zeta_{\ep}}]%=\hat{G}[{\ep^2\zeta_{\ep}}]-\hat{G}[0]+\hat{G}[0]\end{align*}
Consequently, the leading order of the equation \eqref{eq-scale-s} is:
\beqs 
-2\bigg(\p_t-\p_x\big(1-(2v_0+\zeta_0)\cdot\big)+\f13 \p_x^3+\p_y^2\p_x^{-1}\big)\bigg)\p_x\phi=0,
\eeqs
where $v_0=\zeta_0=\Psi_{\kdv}=\sech^2(\f{\sqrt{3}}{2}\cdot)$ is the soliton profile of the KdV equation:
\beqs
\pt \Psi- \p_{x} \Psi+3\Psi\p_x\Psi+\f13 \p_x^3 \Psi =0.
\eeqs
We thus find the linear operator arising in   the  KP-II equation linearized about  the line soliton $\Psi_{\kdv}:$
\beq \label{linear-kpII}
L_{\kp}= \p_x\big(1-\f13 \p_x^2-3\Psi_{\kdv}\cdot\big)%-\f13 \p_x^3
-\p_y^2\p_x^{-1}.
\eeq
\subsection{Existence of resonant modes--proof of Theorem \ref{thm-resolmodes} }% }%Spectral study %of linearized operator in the regeim of low transverse frequency}%$L(\eta)$ for small} %{Resonant modes} 
%In this subsection, we focus on the low transverse frequency and search theresonant modes of the operator $L$ in the weighted space $X_a.$ %from which we obtain the corresponding modes for the transformed operator $L_a$ in unweighted space $X.$ Specifically, where  $V(\cdot,\eta)\in \cY_a^2=L_a^2\times H^{1/2}_{*,a,\eta}$, $\Re \lambda(\eta)<0$ and $|\lambda(\eta)|=\cO(|\eta|)$ for $|\eta|\leq \eta_0.$ Denotethen \eqref{resonant-modes} is equivalent to 
%\begin{proof}

Denote $U(\cdot, \eta)=(\zeta, \phi)^t(\cdot, \eta),\,\, G_{\eta}[\zeta_c]=e^{-iy\eta}G_{\eta}[\zeta_c]e^{iy\eta},$  
then $\eqref{resolnanteq-1}$ can  be written as
\beqs\label{spectralpb}
\left\{ \begin{array}{l}
\lambda  {\zeta} - \p_{{x}} ({d}_{c}{\zeta})-{G}_{\eta}[\zeta_c] {\phi} =0\,,  \\[3pt]
   \lambda   {\phi} - d_c \p_{{x}} {\phi}+{w}_{c}  {\zeta} =0\,,
\end{array}\right.
\eeqs
or equivalently
\beqs 
\big(\lambda  - \p_{{x}} ({d}_{c}\cdot)\big)\big(w_c^{-1} \big(\lambda  - {d}_{c}\p_{{x}} \big)\big)\phi+G_{\eta}[\zeta_c] {\phi}=0\,.
\eeqs
Let us define $$\lambda=\ep^3 \Lambda, \,\, \quad \hat{\eta}=\ep^{-2} \eta, \, \, \quad \hat{\vp}(\hat{x}, \hat{\eta})=\vp(x, \eta), \quad  \hat{\psi}(\hat{x}, \hat{\eta})=(\p_{\hat{x}}\hat{\phi})(\hat{x}, \hat{\eta}).$$
By the change of variable and skipping the
subscript $\hat{}$ for clarity, we find in parallel to \eqref{eq-scale-s}
\beq\label{eigenpb-real}
\cT(\ep,\eta, \Lambda) \psi=\colon \big[L_0(\ep,\eta)+\Lambda (\eta) L_1(\ep)% \eta)
+ \Lambda^2(\eta)  L_2(\ep)\big]\psi=0\, ,
\eeq
where 
\beq\label{Ljepeta}
\begin{aligned}
   &\quad L_0(\ep,\eta)= \ep^{-2}\p_x \bigg(\f{d_{\ep}^2}{w_{\ep}}\cdot\bigg)+\ep^{-4}\hat{G}_{\ep^2\eta}[{\ep^2\zeta_{\ep}}]\p_x^{-1}, \\
  &   L_1(\ep)=-\p_x \bigg(\f{d_{\ep}}{w_{\ep}}\p_x^{-1}\bigg)- \f{d_{\ep}}{w_{\ep}}, \qquad L_2(\ep)=\ep^2  w_{\ep}^{-1}\p_x^{-1}.
\end{aligned}
\eeq
We refer to \eqref{def-dwep} for the definition of $ d_{\ep}$ and $w_{\ep}.$ Let us note that when $\ep=0,$ 
\beqs 
\cT(0, \eta, \Lambda)=-2 (\Lambda-L_{\kp}),
\eeqs
where $L_{\kp}(\eta)= \p_x\big(1-3\Psi_{\kdv}\cdot\big)-\f13 \p_x^3+\eta^2\p_x^{-1}.$ 
In the following,  we use as in  \cite{Mizumachi-BL-linear}  the Lyapunov-Schmidt method to prove the existence of $(\Lambda, \psi)$ and  an  expansion in terms of $\eta$ under the form:
\beq \label{expan-modes}
\begin{aligned}
&\Lambda(\eta)=i\eta\, \Lambda_{1\ep}-\eta^2\Lambda_{2\ep}+\cO(\eta^3),\\
&\psi(\cdot,\eta)=\psi_{0\ep}(\cdot)+ i\eta \psi_{1\ep}(\cdot)+\eta^2\psi_{2\ep}(\cdot, \eta).
\end{aligned}
\eeq
Plugging the the above expansion into the equation \eqref{eigenpb-real} and sorting out in term of $\eta^j,$ we find:
\begin{align}
   & L_0({\ep,0}) \, \psi_{0\ep}=0 \,, \notag \\
   &  L_0({\ep,0}) \, \psi_{1\ep}+ \Lambda_{1\ep} 
 L_1({\ep}) \, \psi_{0\ep}=0\,, \notag \\
  & L_0({\ep,0}) \, \psi_{2\ep}-\Lambda_{1\ep}  L_1({\ep}) \, \psi_{1\ep}-\big(\Lambda_{1\ep}^2 L_2(\ep)+\Lambda_{2\ep} L_1(\ep)-L_0^1(\ep)\big)\psi_{0\ep}=0\,, \label{sec-expand}
\end{align}
where we denote $L_0^1(\ep)=\f{L_0(\ep,\eta)-L_0(\ep, 0)}{\eta^{2}}|_{\eta=0}\,.$

Since $L_0(\ep,0)$ and $L_1(\ep,0)$ are related to the linearized operator of the one dimensional water wave system linearized about  the line soliton, it is shown in Appendix D (see \eqref{id-twomodes}) that 
\beq 
\psi_{0\ep}=\ep^{-3} v_c'\bigg(\f{\cdot}{\ep}\bigg), \qquad \psi_{1\ep}=-\Lambda_{1\ep} \vp_{1c}'\bigg(\f{\cdot}{\ep}\bigg),
\eeq
where $v_c$ is defined in \eqref{defZv} and $\vp_{1c}=\p_c(c\vp_c)-cZ_c\p_c\zeta_c\,.$ 
 Denote $L_{0}(\ep,0)^{*}\in B(L_{-\hat{a}}^2(\mR))$ the adjoint operator associated to  $L_{0}(\ep,0)\in B(L_{\hat{a}}^2(\mR))$ for  the inner product $\langle \cdot, \cdot\rangle_{L_{-\hat{a}}^2(\mR)\times L_{\hat{a}}^2(\mR)}$. By using the explicit expression  %of $L_{0}(\ep,0)$
in \eqref{Ljepeta},
it is direct to check that the operator $L_{0}(\ep, 0)$ has the structure 
%{\color{red} I don't really understand that} 
\beq 
L_{0}(\ep,0)^{*}=-\p_x^{-1}L_{0}(\ep,0)\p_x\,,
\eeq
from which we obtain that $L_{0}(\ep,0)^{*}v_{\ep}=0$, with $$v_{\ep}(\cdot)=\colon\p_x^{-1}\psi_{0,\ep}(\cdot)=\ep^{-2} v_c\big({\cdot}/{\ep}\big).$$ Consequently, it holds that
\beqs 
\langle L_{0}(\ep,0)\psi_{0\ep}, v_{\ep} \rangle=0\,,
\eeqs
 where throughout this proof,  we denote $\langle \cdot,\cdot\rangle=\langle \cdot,\cdot\rangle_{(L_{-\hat{a}}^2(\mR), L_{\hat{a}}^2(\mR))}.$
Moreover, since $ d_{\ep}=1-\ep^2 v_{\ep}, \, w_{\ep}=\gamma-\ep^3 Z_{\ep}\p_x d_{\ep},\, v_{\ep}$ are all even functions, we get by integration by parts that
\beqs 
\langle L_{1}(\ep)\psi_{0\ep}, v_{\ep} \rangle=0\, .
\eeqs
Therefore, it follows from \eqref{sec-expand}  that
\beq\label{eq-labda-1}
\Lambda_{1\ep}^2 \langle-L_1(\ep) \p_x\vp_{1\ep} +L_2(\ep) \psi_{0\ep}, v_{\ep} \rangle= \langle  L_0^1(\ep)\p_xv_{\ep}
, v_{\ep} \rangle ,
\eeq
where $\vp_{1\ep}(\cdot) = \ep \vp_{1c}(\cdot/\ep).$ 
We first note that when restricting to $\ep=0,$ %and noticing that 
\beq\label{L10-L01}
L_{1}(0)=-2\,\Id,  \qquad L_0^1(0)\,\p_x=\ep^{-4}\f{\hat{G}_{\ep^2\eta}[{\ep^2\zeta_{\ep}}] - \hat{G}_{0}[{\ep^2\zeta_{\ep}}]}{\eta^2}\big|_{\eta=\ep=0}=\Id.
\eeq
Moreover, thanks to \eqref{solitarywave-def}, \eqref{solitarywave-0} and the relation $\ep^2=1-\f{gh}{c^2},$ it holds that:
\begin{align*}
&\qquad \qquad \qquad v_0(x)=\Psi_{\kdv}(x)=\sech^2\bigg(\f{\sqrt{3}}{2}x\bigg), \\
&\p_x \vp_{10}\,(x)=c\p_c \vp_c'\big(\f{x}{\ep}\big)|_{\ep=0}=c\f{\p(\ep^2)}{\p c}\p_{\ep^2}\vp_c'\big(\f{x}{\ep}\big)|_{\ep=0}=2 \big(\Id+\f{x}{2}\p_x\big)\Psi_{\kdv}(x).
\end{align*}
Consequently, since $L_2(\ep)=\cO_{B(H^1_{\hat{a}}(\mR), L_{\hat{a}}^2(\mR))}(\ep^2)\,,$
\begin{align*}
    \langle-L_1(\ep) \p_x\vp_{1\ep} +L_2(\ep) \psi_{0\ep}, v_{\ep} \rangle&=2\langle\p_x \vp_{10}, v_0 \rangle+\cO(\ep)\\
   & =3\| v_{0}\|_{L^2}^2+\cO(\ep)>0\, ,
\end{align*}
provided that $\ep$ is small enough. %Moreover,
It then follows from \eqref{eq-labda-1} that:    %We thus find: 
%\beqs \Lambda_{10}^2=\f{\| v_{0}\|_{L^2}^2}{2\langle\p_x \vp_{10}, v_0 \rangle}=\f{\| v_{0}\|_{L^2}^2}{3\|v_{0}\|_{L^2}^2}=\f13,\eeqs and thus
\beq \label{Lbda1ep}
\Lambda_{1\ep}^2=\f{\langle  L_0^1(\ep)\p_xv_{\ep}, v_{\ep} \rangle } {\langle-L_1(\ep) \p_x\vp_{1\ep} +L_2(\ep) \psi_{0\ep}, v_{\ep} \rangle}= \f{\| v_{0}\|_{L^2}^2}{3\| v_{0}\|_{L^2}^2}+\cO(\ep)=\f13+\cO(\ep)>0.
\eeq
It now remains to show the existence of  the number $\Lambda_{2\ep}<0$ and the  function $\psi_{2\ep}(\cdot, \eta)\in L_{\hat{a}}^2(\mR),$ such that \eqref{expan-modes} holds and 
$$ \overline{\Lambda(\eta)}= {\Lambda(-\eta)},\qquad \overline{\psi_{2\ep}(\cdot, \eta)}=\psi_{2\ep}(\cdot, -\eta).\, $$
To use  the Lyapunov-Schmidt method,
it is more convenient to set  $\Lambda(\eta)=i\eta\,\tilde{\Lambda}(\eta)$ and look for $\tilde{\Lambda}(\eta),\, \kappa(\eta), \tilde{\psi}_{2\ep}(\cdot, \eta)$ such that 
\beq \label{realexp}
\psi(\cdot,\eta)=v_{\ep}'-\big(i\eta \tilde{\Lambda}(\eta)-\kappa(\eta)\eta^2 \big)\vp_{1\ep}'+\eta^2\tilde{\psi}_{2\ep}(\cdot, \eta)
\eeq
and 
\beqs 
\tilde{\psi}_{2\ep}(\cdot, \eta) \perp \mathrm{Span}\bigg\{v_{\ep},\, \int_{-\infty}^x \tilde{\psi}_{1\ep}(x') \,\d x'\bigg\}=\ker_g L_{0}^{*}(\ep, 0),
\eeqs
where $\tilde{\psi}_{1\ep}\in \ker_g L_{0}(\ep, 0)$ satisfies $L_{0}(\ep, 0)\tilde{\psi}_{1\ep}=-v_{\ep}', \, \tilde{\psi}_{1\ep}|_{\ep=0}=\f12 \vp'_{1\ep}|_{\ep=0}.$
It is  worth noting that the determination of $\tilde{\psi}_{1\ep}$ can be achieved through the Lyapunov-Schmidt approach. That is, one starts from the generalized kernel of $L_0(0, 0),$ does expansions in $\ep,$ and determines the remainders of order $\ep^2$ by the Implicit Function Theorem.
Since the procedure will be quite similar as in the subsequent discussion, we omit the specific details  for brevity. 

Plugging the expansion \eqref{realexp} into the equation \eqref{eigenpb-real} and looking at the  $\cO(\eta^2)$ term, we find the equation satisfied by $\tilde{\psi}_{2\ep}:$ 
\beq \label{eqpsi2ep}
\cT_0(\ep,\eta, \tilde{\Lambda}) \,\tilde{\psi}_{2\ep}+  \cT_1(\ep, \eta,  \tilde{\Lambda}, \kappa ) + i\eta\, \cT_2(\ep, \eta,  \tilde{\Lambda}, \kappa ) =0
\eeq
where 
\begin{align*}
&\cT_0(\ep,\eta, \tilde{\Lambda}) =  L_0(\ep,\eta)+ i\eta\,\tilde{\Lambda} L_1(\ep)
-\eta^2 \tilde{\Lambda}^2  L_2(\ep)\, ,\\
&\cT_1(\ep, \eta,  \tilde{\Lambda}, \kappa ) =\tilde{\Lambda}^2 \big( L_{1}(\ep)\vp_{1\ep}'-L_2(\ep)v_{\ep}'\big)+\kappa(\eta)L_0({\ep}, \eta)\vp_{1\ep}'+ \f{L_{0}(\ep,\eta)-L_{0}(\ep, 0)}{\eta^2}v_{\ep}'\,, \\
&\cT_2(\ep, \eta,  \tilde{\Lambda}, \kappa ) =\tilde{\Lambda}\bigg(\kappa(\eta) L_1(\ep)-\f{L_{0}(\ep,\eta)-L_{0}(\ep, 0)}{\eta^2} \bigg)\vp_{1\ep}'\, .
\end{align*}
Let $\mathbb{P}_{\ep}$ be the spectral projection in $L_{\hat{\alpha}}^2(\mR)$ onto $\ker_g L_{0}(\ep,0)$ and $ \mathbb{Q}_{\ep}=\Id-\mathbb{P}_{\ep},$ then the equation \eqref{eqpsi2ep} implies that
\beqs 
 \mathbb{Q}_{\ep} \cT_0(\ep,\eta, \tilde{\Lambda})  \mathbb{Q}_{\ep}\,\tilde{\psi}_{2\ep}+  \mathbb{Q}_{\ep}  \cT_1(\ep, \eta,  \tilde{\Lambda}, \kappa ) + i\eta\,  \mathbb{Q}_{\ep}  \cT_2(\ep, \eta,  \tilde{\Lambda}, \kappa ) =0.
\eeqs
On the one hand, it holds by  the definition of $\mathbb{Q}_{\ep}$ that 
\beqs 
\| \mathbb{Q}_{\ep} \cT_0(\ep, 0, \tilde{\Lambda}) ^{-1}\mathbb{Q}_{\ep} \|_{B(L_{\hat{\alpha}}^2(\mR), H_{\hat{\alpha}}^1(\mR))}<+\infty.
\eeqs
On the other hand, in view of the expressions of 
$L_0(\ep,\eta), L_1(\ep), L_2(\ep)$ in \eqref{Ljepeta}, one gets that
\begin{align*}
    \|\cT_0(\ep, \eta, \tilde{\Lambda})-\cT_0(\ep, 0, \tilde{\Lambda})\|_{B(L_a^2)}&\lesssim a^{-1}(\ep\eta)^2(1+|\tilde{\Lambda}(\eta)|^2)+\eta |\tilde{\Lambda}(\eta)|\\
    & \lesssim \eta_0^2(1+M^2%(\eta_0)
    )+\eta_0 M,
\end{align*}
where  $M=\sup_{\eta\in[-\eta_0, \eta_0]}| \tilde{\Lambda}(\eta)|.$ We thus conclude that, upon choosing $\eta_0$ small enough,
\beqs 
\| \mathbb{Q}_{\ep} \cT_0(\ep, \eta, \tilde{\Lambda}) ^{-1}\mathbb{Q}_{\ep} \|_{B(L_{\hat{\alpha}}^2(\mR), H_{\hat{\alpha}}^1(\mR))}<+\infty.
\eeqs
 The existence of $\tilde{\psi}_{2\ep}$ is thus guaranteed upon establishing the existence of the  smooth curves $\tilde{\Lambda}(\cdot), \kappa(\cdot): [-\eta_0,\, \eta_0]\rightarrow \mathbb{C},$ which is the task of the following.

Since $\tilde{\psi}_{2\ep} \perp \ker_g L_{0}^{*}(\ep, 0),$ we get from \eqref{eqpsi2ep} that 
%\beqs L_{0}^{*}(\ep, 0) v_{\ep}=0, \quad L_{0}^{*}(\ep, 0) \bigg( -\int_x^{+\infty} \tilde{\psi}_{1\ep}(x') \,\d x'\bigg)=v_{\ep}, \quad \eeqs
\beq \label{Implicitthm}
\begin{aligned}
   & \cH_1 (\ep, \eta,  \tilde{\Lambda}, \kappa  )=\colon
 \big\langle 
     \cT_1(\ep, \eta,  \tilde{\Lambda}, \kappa ) + i\eta\, \cT_2(\ep, \eta,  \tilde{\Lambda}, \kappa ) +\big(\cT_0(\ep, \eta)-\cT_0(\ep, 0)\big) \tilde{\psi}_{2\ep}, \, v_{\ep}\big\rangle=0\,,\\
    & \cH_2 (\ep, \eta,  \tilde{\Lambda}, \kappa  )=\colon
\big \langle 
     \cT_1(\ep, \eta,  \tilde{\Lambda}, \kappa ) + i\eta\, \cT_2(\ep, \eta,  \tilde{\Lambda}, \kappa ) \\
     & \qquad \qquad\qquad\qquad+\big(\cT_0(\ep, \eta)-\cT_0(\ep, 0)\big) \tilde{\psi}_{2\ep}, \, \int_{-\infty}^x \tilde{\psi}_{1\ep}(x')  \,\d x' \big\rangle=0\,.
\end{aligned}
\eeq
Looking at these two identities at $\eta=0$, we obtain the following relations 
 \begin{align*}
&0=\cH_1|_{\eta=0}=  \big\langle  \cT_1|_{\eta=0}, \, v_{\ep}\big\rangle=  \big\langle \tilde{\Lambda}^2(0) \big( L_{1}(\ep)\vp_{1\ep}'-L_2(\ep)v_{\ep}'\big)+ L_0^1(\ep) v'_{\ep},  \, v_{\ep}\big\rangle, \\
& 0= \cH_2|_{\eta=0}=  \big\langle  \cT_1|_{\eta=0}, \, \int_{-\infty}^x \tilde{\psi}_{1\ep}(x') \,\d x'\big\rangle\\
& \qquad\qquad =  \big\langle \tilde{\Lambda}^2(0) \big( L_{1}(\ep)\vp_{1\ep}'-L_2(\ep)v_{\ep}'\big)+ L_0^1(\ep) v'_{\ep}+\kappa(0)L_0(\ep, 0)\vp_{1\ep}',  \,\int_{-\infty}^x \tilde{\psi}_{1\ep}(x') \,\d x'\big\rangle,
 \end{align*} 
The first relation yields $$\tilde{\Lambda}^2(0) = \f{\langle  L_0^1(\ep)v'_{\ep}, v_{\ep} \rangle } {\langle-L_1(\ep) \vp_{1\ep}' +L_2(\ep) \psi_{0\ep}, v_{\ep} \rangle}=\f13+\cO(\ep)>0, $$ which is consistent with \eqref{Lbda1ep}. Next, since 
$$\vp_{10}'=2 v_0+ x\p_x v_0,\, \quad \tilde{\psi}_{10}=\f12 \vp_{10},\qquad L_0^*(\ep, 0)\int_{-\infty}^x \tilde{\psi}_{1\ep}(x') \,\d x'=v_{\ep},$$ it holds that
\beqs 
\big\langle L_0(\ep, 0)\vp_{1\ep}'\,,  \,\int_{-\infty}^x \tilde{\psi}_{1\ep}(x') \,\d x'\big\rangle
=\big\langle \vp_{1\ep}',  \, v_{\ep} \big\rangle=\big\langle \vp_{10}',  \, v_{0} \big\rangle+\cO(\ep)=\f32\,\|v_0\|_{L^2(\mR)}^2+\cO(\ep)\neq 0.
\eeqs
Moreover, in view of \eqref{L10-L01}, we have that
\begin{align*}
&\big\langle \tilde{\Lambda}^2(0) \big( L_{1}(\ep)\vp_{1\ep}'-L_2(\ep)v_{\ep}'\big)+ L_0^1(\ep) v'_{\ep},  \,\int_{-\infty}^x \tilde{\psi}_{1\ep}(x') \,\d x'\big\rangle\\
&=\big\langle v_0-\f23\,\vp_{10}',  \,\int_{-\infty}^x \tilde{\psi}_{1\ep}(x') \,\d x'\big\rangle+\cO(\ep)=\f{1}{12}\, \|v_0\|_{L^1(\mR)}^2+\cO(\ep).
\end{align*} 
It thus follows from the second relation that
\beq\label{kappa0}
\kappa(0)=-\f{1}{18} \f{\|v_0\|_{L^1(\mR)}^2}{\|v_0\|_{L^2(\mR)}^2}+\cO(\ep)\neq 0.
\eeq 
To determine the curves $\tilde{\Lambda}(\eta),\, \kappa(\eta)$ around $\eta=0,$ we compute  %by using the implicit theorem. Since 
\begin{align}
& \p_{\kappa} \cH_1|_{\eta=0}=\big\langle L_0(\ep, 0)\vp_{1\ep}', v_{\ep} \big\rangle=0, \notag \\
 &    \p_{\kappa} \cH_2|_{\eta=0}=\big\langle L_0(\ep, 0)\vp_{1\ep}',\, -\int_x^{+\infty} \tilde{\psi}_{1\ep}(x') \,\d x'  \big\rangle=\f32\,\|v_0\|_{L^2}^2+\cO(\ep)\neq 0, \notag\\
  &    \p_{\tilde{\Lambda}} \cH_2|_{\eta=0}= 2 \tilde{\Lambda}(0)\big\langle L_{1}(\ep)\vp_{1\ep}'-L_2(\ep)v_{\ep}',\, v_{\ep}  \big\rangle=-6 \tilde{\Lambda}(0) \|v_0\|_{L^2}^2+\cO(\ep)\neq 0, \label{ptLbda}
\end{align}
which lead to
\beqs
\det\big(D_{\kappa,  \tilde{\Lambda} }(\cH_1, \cH_2)\big)|_{\eta=0} \neq 0. 
\eeqs
Consequently, by the Implicit Function Theorem, there exists $\eta_0$ and two $C^1$ curves $\tilde{\Lambda}(\eta),\, \kappa(\eta): [-\eta_0,\, \eta_0]\rightarrow \mathbb{C},$ such that \eqref{Implicitthm} holds true.
%and $$
Moreover, it holds that
\beqs
%\mathrm{i}
i\,\Lambda_{2\ep}=\colon \p_{\eta} \tilde{\Lambda}\big|_{\eta=0}=-\f{\p_{\eta}\cH_1}{\p_{\tilde{\Lambda}} \cH_1}\big|_{\eta=0}\,.
\eeqs
In light of the expression of $\cH_1$ in \eqref{Implicitthm}, we compute
\begin{align*}
%\mathrm
{i}\,\p_{\eta}\cH_1\big|_{\eta=0}&=-\tilde{\Lambda}(0)\big\langle 
\big(\kappa(0) L_1(\ep)-%\f{L_{0}(\ep,\eta)-L_{0}(\ep, 0)}{\eta^2} 
L_0^1(\ep)\big) \,\vp_{1\ep}'+
L_1(\ep)\,\tilde{\psi}_{2\ep},\, v_{\ep}
\big\rangle  \\
&=\tilde{\Lambda}(0)\big\langle (2\kappa(0)+\p_x^{-1})\,\vp_{10}',\, v_{0}
\big\rangle+\cO(\ep) \\
&=\tilde{\Lambda}(0) \bigg( 3\kappa(0)\|v_0\|_{L^2(\mR)}^2-\f12\|v_0\|_{L^1(\mR)}^2 \bigg)+\cO(\ep).
\end{align*} 
Therefore, it follows from \eqref{kappa0} and \eqref{ptLbda} that for $\ep$ small enough, %As a result,  %by applying \eqref{ptLbda}
\beqs 
\Lambda_{2\ep}=\f{1}{9} \f{\|v_0\|_{L^1(\mR)}^2}{\|v_0\|_{L^2(\mR)}^2}+\cO(\ep)>0\, .
\eeqs
Finally, by the fact $\overline{L_0(\ep, \eta )}= {L_0(\ep, -\eta )},\,$ it holds that 
\beqs 
\overline{\cH_k(\ep,\eta, \kappa, \tilde{\Lambda})}=\cH_k\big(\ep, -\eta, \overline{\kappa}, \overline{\tilde{\Lambda}}\big), \qquad k=1,2. 
\eeqs
We thus get from the uniqueness of the curves that:
\beqs
\overline{\tilde{\Lambda}(\eta)}=\tilde{\Lambda}(-\eta), \qquad \overline{\kappa(\eta)}=\kappa(-\eta),
\eeqs
which further implies that
\beqs 
\overline{{\Lambda}(\eta)}={\Lambda}(-\eta), \qquad \overline{\tilde{\psi}_{2\ep}(\cdot, \eta)}=\tilde{\psi}_{2\ep}(\cdot, -\eta).
\eeqs

Let us summarize what we have obtained and their consequences, which lead to Theorem \ref{thm-resolmodes}.
\begin{lem}
There exists $\ep_0, \hat{\eta}_0>0,$  such that for any $\ep\leq \ep_0,$ 

(1) For any $\hat{\eta}\leq \hat{\eta}_0, $ one can find  
$\Lambda(\hat{\eta}), %\in C^{\infty}([-\hat{\eta_0}, {\eta_0}]), $ a function $
\, {\psi}(\cdot, \hat{\eta}) \in H_{\hat{a},*, \hat{\eta}}^{1/2} $  which  solve \eqref{eigenpb-real} and enjoy the expansion: 
\beq \label{expan-modes-1}
\begin{aligned}
&\Lambda(\hat{\eta})=i\hat{\eta}\, \Lambda_{1\ep}-\hat{\eta}^2\Lambda_{2\ep}+\cO(\hat{\eta}^3),  \\
&\psi(\cdot,\hat{\eta})=\p_x \bigg(v_{\ep}- \big(i\,\Lambda_{1\ep} \hat{\eta} - \tilde{\Lambda}_{2\ep}\hat{\eta}^2\big)\,\vp_{1\ep}\bigg)(\cdot)+\hat{\eta}^2 \tilde{\psi}_{2\ep}(\cdot, \hat{\eta}) %\hat{\eta}^2\psi_{2\ep}(\cdot, \hat{\eta}).
\end{aligned}
\eeq
where 
\begin{align*}
& \Lambda_{1\ep}=\f{1}{\sqrt{3}}+\cO(\ep)\,, \qquad
\Lambda_{2\ep}=\f{1}{9} \f{\|v_0\|_{L^1(\mR)}^2}{\|v_0\|_{L^2(\mR)}^2}+\cO(\ep), \,    \quad \tilde{\Lambda}_{2\ep}=\f{1}{18} \f{\|v_0\|_{L^1(\mR)}^2}{\|v_0\|_{L^2(\mR)}^2}+\cO(\ep),\\
& v_{\ep}(\cdot)=\ep^{-2}v_c (\ep^{-1} \,\cdot)\, , \, 
\qquad \vp_{1\ep}(\cdot)=\ep\, 
\big(\p_c(c\vp_c)-cZ_c\p_c\zeta_c\big)(\ep^{-1} \,\cdot)\,, \qquad \tilde{\psi}_{2\ep}(\cdot, \hat{\eta}) \perp v_{\ep}\,.
\end{align*}

(2) For any $\eta\in [-\ep^2\hat{\eta}_0, \ep^2 \hat{\eta}_0],$ the operator $L(\eta)$ has  two pairs of eigenmodes $\big(\lambda(\eta), U(\cdot, \eta)\big) $ and $\big(\lambda(-\eta), U(\cdot, -\eta)\big)$ in the space $Y_{a, \eta}$
where 
\begin{align}\label{defU-base}
   & \lambda(\eta)=\ep^3 \Lambda(\ep^{-2}\eta)=i \ep\Lambda_{1\ep}\eta-\ep^{-1}\Lambda_{2\ep}\eta^2+\cO(\eta^3)\, , \notag\\ 
     U(\cdot, \eta)&= \left( \begin{array}{c}
-{w}^{-1}_c\big(\lambda(\eta)+\ep \,d_c\,\p_{\hat{x}}\big)(\p_{\hat{x}}^{-1} \psi)    \\[3pt]
        \p_{\hat{x}}^{-1} \psi 
    \end{array} 
    \right)(\ep\, \cdot, \, \ep^{-2}\eta )=\colon \left( \begin{array}{c}
U_1 (\cdot, \, \eta )    \\[3pt]
U_2 (\cdot, \, \eta )
    \end{array} 
    \right)  \, .
\end{align}

(3) The eigenmode $(\lambda^{*}(\pm \eta), U^{*}(\cdot, \pm\eta))$ of $L^{*}_a(\eta)$ in the space $Y_{a, \eta}^{*}$ are characterized by 
\begin{align}\label{dualmodes}
    \lambda^{*}(\eta)=\lambda(-\eta), \qquad  U^{*}(\cdot, \eta)=(U_2, U_1)^t(-\,\cdot, -\eta)\,.
\end{align}
Moreover, it holds that 
\beqs
\overline{\lambda(\eta)}=\lambda(-\eta), \quad \overline{U(\cdot, \eta)}=U(\cdot, -\eta), \quad \big\langle U(\cdot, \eta),\, U^{*}(\cdot, -\eta)  \big\rangle_{_{Y_{a,\eta}\times Y_{a,\eta}^{*}}}=0 ,
\eeqs
where the inner product is defined as follows:
 for $f=(f_1,f_2)^t\in Y_{a,\eta}, \, f^{*}=(f_1^*, f_2^*)^t\in  Y_{a,\eta}^{*},$
\beq \label{def-bracketYa}
\big\langle f, f^{*}\rangle_{_{Y_{a,\eta}\times Y_{a,\eta}^{*}}} =: \langle f_1, f_1^{*} \rangle_{L_a^2(\mR)\times L_{-a}^2(\mR)}+\left  \langle  \f{(\p_x,\eta)}{\langle (D_x, \eta) \rangle^{1/2}} f_2, \f{(\p_x,\eta)}{\langle (D_x, \eta) \rangle^{1/2}} f^{*}_2\right \rangle_{L_a^2(\mR)\times L_{-a}^2(\mR)}.
%\left \langle  \f{(\p_x,\eta)}{\langle (D_x, \eta) \rangle^{1/2}} e^{ax} f, \f{(\p_x,\eta)}{\langle (D_x, \eta) \rangle^{1/2}} e^{-ax} f^{*}\right \rangle_{L^2(\mR)\times L^2(\mR)}
\eeq
\end{lem}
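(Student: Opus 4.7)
The lemma is essentially a packaging of the Lyapunov--Schmidt construction developed in the preceding subsection, together with a rescaling step and a structural symmetry argument.

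For \emph{part (1)}, the leading order $\psi_{0\ep} = v_\ep'$ and the first corrector $\psi_{1\ep} = -\Lambda_{1\ep}\vp_{1c}'(\cdot/\ep)$ already sit in the two-dimensional generalized kernel of $L_0(\ep,0)$, which coincides with the linearized one-dimensional water wave operator about the soliton and whose structure, recalled in Appendix D via \eqref{id-twomodes}, reflects the translational and Galilean invariances. Parametrizing $\Lambda(\hat\eta) = i\hat\eta\,\tilde\Lambda(\hat\eta)$ and decomposing $\psi$ as in \eqref{realexp}, I would invoke the Implicit Function Theorem on the scalar system $\cH_1 = \cH_2 = 0$ of \eqref{Implicitthm}, whose Jacobian in $(\kappa,\tilde\Lambda)$ at $\hat\eta = 0$ is nonzero by \eqref{ptLbda}; this produces smooth curves $\tilde\Lambda(\hat\eta),\, \kappa(\hat\eta)$ on $[-\hat\eta_0,\hat\eta_0]$. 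With $(\tilde\Lambda,\kappa)$ so determined, $\tilde\psi_{2\ep}(\cdot,\hat\eta)$ is recovered from the complementary equation via the invertibility of $\mathbb{Q}_\ep\cT_0\mathbb{Q}_\ep$ on $H_{\hat{a}}^1(\mR)$ already established above. Reading off the explicit constants from \eqref{Lbda1ep}, \eqref{kappa0} and the derivative $\p_{\hat\eta}\cH_1\big|_{\hat\eta=0}$ gives $\Lambda_{1\ep} = 1/\sqrt{3} + \cO(\ep)$ and $\Lambda_{2\ep} = \tfrac{1}{9}\,\|v_0\|_{L^1}^2/\|v_0\|_{L^2}^2 + \cO(\ep)$, while comparing the two expansions of $\psi$ yields $\tilde\Lambda_{2\ep} = \Lambda_{2\ep} + \kappa(0) = \tfrac{1}{18}\,\|v_0\|_{L^1}^2/\|v_0\|_{L^2}^2 + \cO(\ep)$.

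For \emph{part (2)}, I would undo the rescaling of Subsection~2.1: set $\lambda(\eta) = \ep^3\Lambda(\ep^{-2}\eta)$, $U_2(x,\eta) = (\p_{\hat x}^{-1}\psi)(\ep x, \ep^{-2}\eta)$, and solve the second scalar equation of the eigenproblem, $\lambda \phi - d_c\p_x\phi + w_c\zeta = 0$, for the first component, producing \eqref{defU-base} after rescaling. The two eigenmode pairs come from the two signs of $\hat\eta$. Membership of $U(\cdot,\eta)$ in $Y_{a,\eta}$ is immediate from the definitions once one notes that $\psi(\cdot,\hat\eta) - \psi_{0\ep}$ inherits exponential decay from $v_\ep,\,\vp_{1\ep},$ and $\tilde\psi_{2\ep}$.

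For \emph{part (3)}, I would exploit the structural identity $L^*(\eta) = RJ\,L(\eta)\,RJ$, where $R: f(x) \mapsto f(-x)$ is the reflection in $x$ and $J$ swaps the two components. Its verification is a direct integration-by-parts computation using that $d_c, w_c, \zeta_c$ are all even in $x$ and that $G_\eta[\zeta_c]$ depends on $\eta$ only through $\eta^2$ (so that it is invariant under reflection in $x$). Coupled with the reality identity $\overline{L(\eta)} = L(-\eta)$, which together with uniqueness in the Implicit Function Theorem already produces $\overline{\lambda(\eta)} = \lambda(-\eta)$ and $\overline{U(\cdot,\eta)} = U(\cdot,-\eta)$, this identity gives $U^*(\cdot,\eta) = RJ\,U(\cdot,-\eta) = (U_2,U_1)^t(-\,\cdot,-\eta)$ as an eigenmode of $L^*(\eta)$ with eigenvalue $\lambda(-\eta)$. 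The biorthogonality $\langle U(\cdot,\eta), U^*(\cdot,-\eta)\rangle_{Y_{a,\eta}\times Y_{a,\eta}^*} = 0$ then follows from the standard spectral identity for right and left eigenmodes associated to distinct eigenvalues, applied to the bilinear pairing \eqref{def-bracketYa}. The main subtlety I anticipate throughout is the careful bookkeeping of the weighted norm $H_{\hat{a},\star,\hat\eta}^{1/2}$ as $\hat\eta$ varies, in particular ensuring that the invertibility estimates invoked in part (1) are uniform on a full neighborhood of $\hat\eta = 0$.
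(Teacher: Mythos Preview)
Your plan is correct and follows the paper's approach closely. Parts (1) and (2) are exactly the summary of the preceding Lyapunov--Schmidt analysis and rescaling that the paper intends; your identification $\tilde\Lambda_{2\ep} = \Lambda_{2\ep} + \kappa(0)$ is the right bookkeeping. For part (3), your conjugation identity $L^*(\eta) = RJ\,L(\eta)\,RJ$ (with $R$ the $x$-reflection and $J$ the component swap) is a clean reformulation of what the paper does by writing out $L^*(\eta)$ explicitly and invoking the evenness of $d_c, w_c, \zeta_c$ in $x$ and of $G_\eta[\zeta_c]$ in $\eta$; the paper also records the self-adjointness relation $G_{a,\eta}^*[\zeta_c] = G_{-a,\eta}[\zeta_c]$ underlying this.

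One point to tighten: your spectral justification of the biorthogonality $\langle U(\cdot,\eta), U^*(\cdot,-\eta)\rangle = 0$ via ``distinct eigenvalues'' is not quite right as stated. The eigenvalue of $U(\cdot,\eta)$ under $L(\eta)$ is $\lambda(\eta)$, while $U^*(\cdot,-\eta) = RJ\,U(\cdot,\eta)$ is an eigenvector of $L^*(-\eta) = L^*(\eta)$ with eigenvalue $\lambda^*(-\eta) = \lambda(\eta)$ --- the \emph{same} value. With a bilinear duality pairing the usual identity $(\lambda-\mu)\langle u,v\rangle = 0$ is therefore vacuous here; in fact that argument, applied to $U^*(\cdot,\eta)$ instead, would give $\langle U(\cdot,\eta), U^*(\cdot,\eta)\rangle = 0$, contradicting the normalization computed later in Appendix~E. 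The way out is either to read the pairing as sesquilinear (so the relevant comparison is $\lambda(\eta)$ against $\overline{\lambda(\eta)} = \lambda(-\eta)$, which \emph{are} distinct for $\eta\neq 0$), or to verify the orthogonality by direct computation from the explicit form $U^*(\cdot,-\eta) = (U_2,U_1)^t(-\cdot,\eta)$. The paper's own proof is terse on this point as well, so this is a detail to make precise in your write-up rather than a structural flaw.
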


\begin{proof}
The first point and the existence of the  eigenmode $\big(\lambda(\eta), U(\cdot, \eta)\big) $ for $L(\eta)$ in the second point have been proven. The existence of the other eigenmode follows from the evenness of $L(\eta)$ in $\eta.$ %$\big(\lambda(\eta), U(\cdot, \eta)\big) $
%By the definition, $G_{\eta}[\zeta_c]$ is even in $\eta,$ so that $L(\eta)=L(-\eta).$ 
Now only the last point requires some clarification. In fact, as an operator taken on the space $Y_{a}^{*}(\eta),$ the dual operator $L^{\star}(\eta)$ takes the form
\beqs 
L^{*}(\eta)=\left(\begin{array}{cc}
  -d_c \p_x   & -w_c  \\[3pt]
  G_{\eta}[\zeta_c]   & -\p_x(d_c \cdot) 
\end{array}
\right).
\eeqs
Note that we have used the fact that $G^{*}_{a,\eta}[\zeta_c]=G_{-a,\eta}[\zeta_c].$  
The identities in \eqref{dualmodes} then follow from the evenness of $\zeta_c, \, d_c,\, w_c$ in $x$ and $G_{\eta}[\zeta_c]$ in $\eta.$ 
\end{proof}

We are now in position to define precisely  the basis of $L(\eta).$ %Given the evenness of $L(\eta)$ is $\eta,$ we see that it admits two eigenvalues $\lambda(\eta)$ and $\lambda(-\eta)$ and the associated eigenfunctions are
We already know that 
 $U(\cdot, \eta)$ and $\overline{U(\cdot, \eta)}$ are two eigenfunctions. 
 Nevertheless, their imaginary parts  vanishes when $\eta=0$ and thus they do not provide a basis of the two-dimensional algebraic kernel
  of $L(0)$. To resolve this degeneracy, inspired by
\cite{Mizumachi-BL-linear}, 
we first introduce a pair of functions with  purely real inner product:
\begin{align*}
     g(\cdot,\eta)=(\alpha(\eta)-{i}) \, U(\cdot, \eta), \quad g^{*}(\cdot, \eta)=-\f{\sqrt{3}}{4}\, U^{*} (\cdot, \eta)
\end{align*}
where $\alpha(\eta)=\f{\Re \langle U(\cdot, \eta), U^{*} (\cdot, \eta)\rangle}{\Im \langle U(\cdot, \eta), U^{*} (\cdot, \eta)\rangle}.$
%$c_0\in\mR$ is a constant to be chosen. 
Note that 
\beq \label{prop-geta}
\begin{aligned}
&\quad \overline{ g(\cdot,\eta)}=  -g(\cdot, -\eta), \qquad \overline{ g^{*}(\cdot,\eta)}=  g^{*}(\cdot, -\eta), \\
 & \langle g(\cdot, \eta), g^{*}(\cdot, -\eta) \rangle =0, \qquad \Im \langle g(\cdot, \eta), g^{*}(\cdot, \eta) \rangle =0.
\end{aligned}
\eeq
We then define the basis and dual basis:
\beq \label{defbasis-dual}
\begin{aligned}
   & g_1(\cdot, \eta)= -\f{1}{2i
}\big( g(\cdot, \eta)+ g(\cdot, -\eta)\big) =\big(\Re U-\alpha(\eta)\,\Im U\big)(\cdot, \eta), \\  & g_2(\cdot,\eta)=\f{1}{2
\kappa(\eta)}\big( g(\cdot, \eta)-g(\cdot, -\eta)\big)=\f{1}{\kappa(\eta)}\big(\Im U+\alpha(\eta)\Re U\big)(\cdot, \eta), \\
  &  g^{*}_1(\cdot, \eta)=-\f{1}{2i\kappa(\eta)} (g^{*}(\cdot, \eta)-g^{*}(\cdot, -\eta))=
    \f{\sqrt{3}}{4\,\kappa(\eta)}\Im U^{*} (\cdot, \eta),\\  & g_2^{*}(\cdot, \eta)=  \f{1}{2} (g^{*}(\cdot, \eta)+g^{*}(\cdot, -\eta))=\f{\sqrt{3}}{4}\, \Re  U^{*} (\cdot, \eta),
\end{aligned}
\eeq
where 
\begin{align*}
    \kappa(\eta)=\f12 \,\Re \langle g(\cdot, \eta), g^{*}(\cdot,\eta)\rangle_{_{Y_{a,\eta}\times Y_{a,\eta}^{*}}} =-\f{\sqrt{3}}{8} \bigg( \Im \langle U, U^{*} \rangle+\f{(\Re \langle U, U^{*} \rangle)^2}{\Im \langle U, U^{*} \rangle } \bigg)(\eta).
\end{align*}
Note that by \eqref{prop-geta}, it holds that 
\beqs 
\langle g_{j}(\cdot, \eta), g_k^{*}(\cdot, \eta)\rangle_{_{Y_{a,\eta}\times Y_{a,\eta}^{*}}} =\delta_{j k}, \quad \forall\, 1\leq j, k \leq 2; \, \eta\in [-\ep^2 \hat{\eta}_0, \ep^2 \hat{\eta}_0 ] \, .
\eeqs

%Once the basis are defined, we could define the projection of $L$ in the weighted space $X_a$ onto the continuous eigenmodes $(e^{iy\eta} g_k(\cdot, \eta))_{-\eta_0\leq \eta\leq \eta_0}:$
%\beqs \mathbb{P}(\eta_0) f=\sum_{k=1}^2\int_{-\eta_0}^{\eta_0} \inp{\mathcal{F}_yf(\cdot,\eta)}{g_k^{*}(\cdot,\eta)}\,g_k(\cdot,\eta)\,e^{iy\eta}\,\d \eta\eeqs

\section{ Spectral stability in the uniform high  and intermediate transverse frequency regions.}

\label{paragraph3}

In this section, we focus on the uniform  high  and intermediate  transverse frequency estimates
that is the regions $R^{UH}$ and $R^I$ defined in \ref{defregions}.
 Let $\mathbb{I}_K(\cdot):\mR\rightarrow \mR$ be the characteristic function of $[-K, K]$ and $\tilde{\mathbb{I}}_K=1-\mathbb{I}_K.$ 
For a large number $A$ such that $A\ep^2<1,$ we define the spaces $X^{UH}$ and $X^I_A$ as:
$$X^{UH}=\tilde{\mathbb{I}}_{2}(D_y) X ,
\quad 
X^I_{A}=\mathbb{I}_2\tilde{\mathbb{I}}
%\mathbb{I}
_{A\ep^2}(D_y)X,$$
    which are subspaces of $X$  that as introduced above, we refer to as the  uniform  high transverse frequency  region  (when  $\{|\eta|\geq 2\}$) and  the intermediate  transverse frequency region  (ie. $\{ A\ep^2\leq |\eta|\leq 2\}$). Denote $X_A^H= X^{UH}\cup X_A^I,$
 %\tilde{\mathbb{I}}_{{A\ep^2}}(D_y)X,$
%and high horizontal frequency (ie. $\{|\xi|\geq K\ep\}$) respectively.
our goal  in this section is to show the following proposition: 

\begin{prop}\label{prop-TH}
Assume that $0<\beta\leq \f12$ and $A$ sufficiently large. 
There exists $\ep_0$ small enough, such that for any $\ep\in(0,\ep_0],$ it holds that
$$\Omega_{\beta, \ep}=\colon\{\lambda\in \mathbb{C} \,|\, \Re \lambda\geq -\beta\ep^3\}\subset \rho(L_a; X_A%_{{A\ep^2}}
^H).$$ 
Moreover, for any $\lambda\in \Omega_{\beta,\ep}, $ 
\beqs 
 \|\big(\lambda- %\tchi_{_{A\ep^2}}(|D_y|)
 L_a\big)^{-1}\|_{B(X_A
 ^H)}  \lesssim A^{-1}%\f{1}{{A}
 \ep^{-3}, \label{HTF-0}
\eeqs
where $\lesssim \cdot$ denotes $\leq C\cdot $ for some constant $C$ that is independent of $A, \ep\,.$
\end{prop}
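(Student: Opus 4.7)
The plan is to split the estimate into two pieces according to the transverse-frequency decomposition $X_A^H = X^{UH}\oplus X_A^I$ and to handle each piece by a different method. Commutators of the transverse-frequency localizers $\tilde{\mathbb{I}}_2(D_y)$ and $\mathbb{I}_2\tilde{\mathbb{I}}_{A\ep^2}(D_y)$ with $L_a$ are lower order in $\ep$, since the $y$-dependence of the coefficients of $L_a$ is absent and the $x$-coefficients are spatially localized; after this reduction it suffices to prove the resolvent bound separately on each subspace, with the intermediate piece delivering the larger $A^{-1}\ep^{-3}$ bound.

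In $X^{UH}$ I would pursue a pseudodifferential diagonalization. The key preparatory step is to replace $G_a[\zeta_c]$ by $\op(\tilde\lambda_{\zeta_c}^1)$ with $\tilde\lambda_{\zeta_c}^1=\lambda_{\zeta_c}^1\tanh\lambda_{\zeta_c}^1$; Lemma \ref{lem-error} makes the error of order $\cO(\ep^2)$ in $B(H^{1/2}_\star,L^2)$. Symbolic diagonalization of the resulting modified matrix operator $\tilde L_a$ reduces its resolvent to scalar pseudodifferential inversions with symbols
\beqs
\lambda_\pm^1(x,\xi,\eta)=i\,d_c(x)(\xi+ia)\pm i\sqrt{w_c(x)\,\lambda_{\zeta_c}^1\tanh\lambda_{\zeta_c}^1(x,\xi,\eta)}.
\eeqs
A direct computation, paralleling the one given in the introduction for the frozen symbol $\lambda_\pm^0$, shows that $\Re(\lambda-\lambda_\pm^1)\gtrsim\ep$ uniformly in $(x,\xi,\eta)$ for $|\eta|\geq 2$ and $\lambda\in\Omega_{\beta,\ep}$ with $\beta$ small. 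Scalar pseudodifferential inversion then yields $\|(\lambda-\tilde L_a)^{-1}\|_{B(X^{UH})}\lesssim\ep^{-1}$, and a Neumann series recovers the bound for $\lambda-L_a$, absorbing the $\cO(\ep^2)$ approximation error; on $X^{UH}$ the resulting bound is much stronger than what is claimed.

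For the intermediate region $X_A^I$ the diagonalization breaks down, because $\op(1/\sqrt{\tilde\lambda_{\zeta_c}^1})$ is not bounded from $L^2$ to $H^{1/2}_\star$ due to the singularity of $\mu_a$ on the set $\cS_{K,\delta}$. I would instead perform energy estimates, with a further localization in the longitudinal frequency $\xi$. On the portion of the region where the symbol is non-singular, use the first-order expansion $\lambda_{\zeta_c}^{app}$ of \eqref{def-symb-inter} (whose error is $\cO(\ep^3)$ in $B(H^{1/2}_\star, L^2)$) and construct a quadratic energy functional whose dissipation along $(\lambda-L_a)U=F$ is controlled by the real part of the group velocity. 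On the singular sub-region inside $\cS_{K,\delta}$, replace $G_a[\zeta_c]$ by $-\Delta$ (valid at very low total frequency) and design instead an $L^2\times H^1$-type energy functional. In both cases the damping constant is of order $A\ep^3$, gained from the lower bound $|\eta|\geq A\ep^2$, and summing the resulting differential inequality yields $\|(\lambda-L_a)^{-1}\|_{B(X_A^I)}\lesssim A^{-1}\ep^{-3}$.

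The main technical obstacle is concentrated in the intermediate region: one must stitch together several distinct energy functionals across the longitudinal frequency sub-regions, verify the positivity of each and the sign of each dissipation contribution, and track every constant uniformly in both the large parameter $A$ and the small parameter $\ep$. The uniform high frequency estimate, once Lemma \ref{lem-error} and the correct modification $\tilde\lambda_{\zeta_c}^1$ are in hand, is a fairly standard application of pseudodifferential calculus.
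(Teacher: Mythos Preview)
Your proposal is correct and follows essentially the same strategy as the paper, which reduces Proposition~\ref{prop-TH} to Propositions~\ref{prop-UTH} and~\ref{prop-ITF} handling $X^{UH}$ and $X_A^I$ separately by pseudodifferential diagonalization and energy functionals respectively. Two minor corrections: the transverse-frequency localizers $\tilde{\mathbb{I}}_2(D_y)$ and $\mathbb{I}_2\tilde{\mathbb{I}}_{A\ep^2}(D_y)$ commute \emph{exactly} with $L_a$ (the coefficients are $y$-independent), so no commutator discussion is needed; and in the intermediate region the paper splits into \emph{three} longitudinal sub-regions (high $|\xi|\geq\delta$, the singular set $\cS_{K,\delta}$, and the remaining regular low-$\xi$ part), with only the last carrying the weak $A\ep^3$ damping that produces the final $A^{-1}\ep^{-3}$ bound.
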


%$$X_{a,A}^H=\tilde{\mathbb{I}}%\tilde{\chi}_{{A\ep^2}}(|D_y|)X_a, \quad  X_{a,K}^H =\tilde{\chi}_{_{K\ep}}(|D_x|){\mathbb{I}} %\tilde{\chi}_{{A\ep^2}}(|D_y|)X_a$$ 
This proposition is the direct consequence of the following two propositions concerning the resolvent estimates on the space $X^{UH}$ and $X_A^I$ respectively.

\begin{prop}\label{prop-UTH}
Suppose that $0<\beta\leq \f12$
and $\ep$ is sufficiently small. 
 For any $\lambda\in \Omega_{\beta, \ep},$ the operator
    $\big(\lambda- 
 L_a\big)$ is invertible on the space $X^{UH}$ and its inverse has the bound: 
 \beq 
 \|\big(\lambda-
 L_a\big)^{-1}\|_{B(X^{UH})} \leq  C\ep^{-1}\,,
 \eeq
 where the constant $C>0$ is independent of $\ep.$
\end{prop}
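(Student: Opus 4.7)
The plan is to replace $G_a[\zeta_c]$ in $L_a$ by a pseudo-differential operator with a symbol simple enough that the resulting system can be diagonalized by symbolic calculus, and then close by a scalar resolvent bound combined with a small perturbation argument. Precisely, write $L_a=\tilde L_a+R$, where in $\tilde L_a$ the operator $G_a[\zeta_c]$ is replaced by $\op(\tilde\lambda^1_{\zeta_c})$ with $\tilde\lambda^1_{\zeta_c}=\lambda^1_{\zeta_c}\tanh\lambda^1_{\zeta_c}$. By Lemma \ref{lem-error}, $\|R\|_{B(H_\star^{1/2},L^2)}=\cO(\ep^2)$ and hence $\|R\|_{B(X)}=\cO(\ep^2)$. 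If we manage to prove $\|(\lambda-\tilde L_a)^{-1}\|_{B(X^{UH})}\le C\ep^{-1}$, then for $\ep$ sufficiently small the Neumann series
\begin{equation*}
(\lambda-L_a)^{-1}=(\lambda-\tilde L_a)^{-1}\sum_{k\ge 0}\bigl(R\,(\lambda-\tilde L_a)^{-1}\bigr)^k
\end{equation*}
converges in $B(X^{UH})$ and yields the announced bound at the same rate.

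Next I would diagonalize $\tilde L_a$ on $X^{UH}$ by pseudo-differential calculus. Since $X^{UH}$ is spectrally localized in $|D_y|\ge 2$, Fourier transforming in $y$ reduces the task to a uniform estimate on the one-dimensional family $L_a(\eta)$ indexed by $|\eta|\ge 2$. The matrix symbol
\begin{equation*}
\Sigma(x,\xi,\eta)=\begin{pmatrix} d_c(x)(i\xi-a) & \tilde\lambda^1_{\zeta_c}(x,\xi,\eta)\\ -w_c(x) & d_c(x)(i\xi-a)\end{pmatrix}
\end{equation*}
has eigenvalues $\lambda^1_\pm(x,\xi,\eta)=i\bigl(d_c(x)(\xi+ia)\pm\sqrt{w_c(x)}\sqrt{\tilde\lambda^1_{\zeta_c}(x,\xi,\eta)}\bigr)$. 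In the region $|\eta|\ge 2$, the symbol $\tilde\lambda^1_{\zeta_c}$ is elliptic of order one, $w_c\approx\gamma>0$, and the eigenvalue gap $2\sqrt{w_c}\sqrt{\tilde\lambda^1_{\zeta_c}}\gtrsim 1$; a standard symbolic construction produces an elliptic diagonalizer $P\in S^0$ such that $P^{-1}\Sigma P=\diag(\lambda^1_+,\lambda^1_-)$ modulo $S^{-1}$. Conjugation at the operator level then decouples the two components of $\tilde L_a$ up to remainders bounded uniformly on $X^{UH}$.

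The heart of the argument is a pointwise lower bound $\Re(\lambda-\lambda^1_\pm)\gtrsim \ep$ valid for all $(x,\xi)\in\mR^2$, all $|\eta|\ge 2$, and all $\lambda\in\Omega_{\beta,\ep}$. The reason is that when $|\eta|$ is bounded away from zero (in particular $|\eta|\gg a=\hat a\ep$), $\tilde\lambda^1_{\zeta_c}(x,\xi,\eta)$ is close to a positive real number, so $\Im\sqrt{\tilde\lambda^1_{\zeta_c}}=\cO(\ep^2)$ and the dissipation $-d_c(x)\,a$ inherited from the weight is not cancelled; mirroring the flat-surface computation for $L_a^0$ performed in the introduction, one obtains $\Re\lambda^1_\pm(x,\xi,\eta)\le -c_0\hat a\ep$ uniformly, with $c_0>0$. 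Combined with $\Re\lambda\ge -\beta\ep^3$ and $\beta\le 1/2$, this gives the claimed gap, and scalar resolvent estimates for the decoupled operators $\op(\lambda^1_\pm)$ then furnish the bound $\cO(\ep^{-1})$.

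The main obstacle I expect is the bookkeeping of the pseudo-differential remainders during the diagonalization: one must ensure that the commutators and lower-order symbols generated by the conjugation $P\,\tilde L_a\,P^{-1}$ are bounded on $X^{UH}$ by quantities small enough to be absorbed by the $\cO(\ep^{-1})$ scalar estimate, which requires carefully tracking their dependence on both the weight $a=\hat a\ep$ and on the lower bound $|\eta|\ge 2$, and in particular checking that the half-order topology of $H_\star^{1/2}$ does not cost any additional powers of $\ep^{-1}$. Modulo this delicate symbolic calculus, the three steps above combine to yield Proposition \ref{prop-UTH}.
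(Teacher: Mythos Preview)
Your outline matches the paper's approach: replace $G_a[\zeta_c]$ by $\op(\tilde\lambda^1_{\zeta_c})$, diagonalize symbolically after Fourier transforming in $y$, establish a scalar resolvent bound of order $\ep^{-1}$, and close by the perturbation using Lemma~\ref{lem-error}. However, two specific points in your sketch are either incorrect or incomplete.

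First, your claim that $\Im\sqrt{\tilde\lambda^1_{\zeta_c}}=\cO(\ep^2)$ is wrong. The imaginary part of $(\xi+ia)^2+\eta^2(1+(\p_x\zeta_c)^2)$ is $2a\xi$, which for $\xi\sim 1$ is $\cO(\ep)$, and this propagates through the square roots: one only has $\Im\sqrt{\tilde\lambda^1_{\zeta_c}}=\cO(a)=\cO(\ep)$, the \emph{same} order as the damping $-d_c(x)\,a$. So your heuristic ``the dissipation is not cancelled because the competing term is lower order'' does not work. The actual argument (Lemma~\ref{lem-ev-HT}) is a careful algebraic computation showing that, for $|\eta|\ge 2$ and $\ep$ small, one has $\bigl|\Im\sqrt{\lambda^1_{\zeta_c}\tanh\lambda^1_{\zeta_c}}\bigr|\le\tfrac{3}{5}a$ with a constant strictly less than~$1$; this is what yields $\Re\lambda^1_\pm\le -\tfrac{a}{4}$, not any disparity in powers of $\ep$.

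Second, the ``main obstacle'' you flag---smallness of the diagonalization remainders---is not resolved by the $S^{-1}$ structure you invoke (an $\cO(1)$ bounded remainder cannot be absorbed by an $\ep^{-1}$ scalar estimate). The correct mechanism is that all the $x$-dependence in the symbols $g_\eta$, $d_c$, $w_c$, $\tilde\lambda^1_{\zeta_c}$ enters through $\zeta_c,\,\p_x\vp_c$, whose $x$-derivatives satisfy $\|\p_x(\zeta_c,\p_x\vp_c)\|_{W^{k,\infty}}\lesssim\ep^3$; hence every commutator and composition remainder in the symbolic calculus carries a factor $\ep^3$, which is comfortably absorbed. The paper also first conjugates by $\diag(1,\sqrt{G_a[0]})$ to work in $L^2\times L^2$ (harmless on $X^{UH}$ since there $|\sqrt{\mu_a\tanh\mu_a}|\approx |\na_a|/\langle D\rangle^{1/2}$), which sidesteps your concern about the half-order topology of $H_\star^{1/2}$.
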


\begin{prop}\label{prop-ITF}
Suppose that $0<\beta\leq \f12.$
There exists a large number $A>0$ 
such that for any $\ep$ sufficiently small so that 
$A\ep^2<1,$
 the following hold true: for any $\lambda\in \Omega_{\beta,\ep},$ the operator
$\big(\lambda- 
 L_a\big)$ is invertible on the space $X_{A}^I$ and its inverse has the bound: 
\begin{align}
 \|\big(\lambda- %\tchi_{_{A\ep^2}}(|D_y|)
 L_a\big)^{-1}\|_{B(X_{A}^I)}  \leq C A^{-1}%\f{1}{{A}
 \ep^{-3}, \label{ITF} % \\
 % \|\big(\lambda-%\tchi_{_{1, K\ep}}(|D_x|)  L_a\big)^{-1}\|_{B(X_{K}^H)}  \lesssim \f{1} {K \ep^{3}}, \label{HHF-0}
\end{align}
where $C$ is a constant independent of $A, \ep.$
\end{prop}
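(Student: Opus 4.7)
The plan is to establish a coercivity estimate of the form $\|(\lambda-L_a)U\|_X \gtrsim A\ep^3\|U\|_X$ for all $U\in X_A^I$ and $\lambda\in\Omega_{\beta,\ep}$, from which both invertibility and the resolvent bound \eqref{ITF} follow. The frequency constraint $|\eta|\geq A\ep^2$ on $X_A^I$, combined with the choice $A>\hat{\eta}_0$, ensures that the continuous resonant modes (which are localized at $|\eta|\leq \ep^2\hat{\eta}_0$) are automatically excluded, so no spectral projector is needed in this regime. The target damping size $A\ep^3$ matches the real part of the symbol eigenvalues $\lambda_\pm^0$ in this regime, which grows with $|\eta|$ and is of order $A\ep^3$ when $|\eta|\sim A\ep^2$.

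To build the estimate, I would introduce a smooth partition of unity in the longitudinal frequency $\xi$ splitting $R^I$ into three sub-regions: (a) a near-singular zone where $|\xi|\lesssim K\ep$ and $(\xi,\eta)$ lies inside or close to $\cS_{K,\delta}$, in which the Dirichlet--Neumann symbol $\mu_a\tanh\mu_a$ degenerates; (b) a moderate-$\xi$ zone where the first-order expansion $\lambda^{app}_{\zeta_c}$ of \eqref{def-symb-inter} is an admissible approximation of $G_a[\zeta_c]$ with $\cO(\ep^3)$ operator-norm error from $H_\star^{1/2}$ to $L^2$; and (c) a large-$\xi$ zone where the modified principal symbol $\tilde\lambda^1_{\zeta_c}=\lambda^1_{\zeta_c}\tanh\lambda^1_{\zeta_c}$, already used in the uniform high-frequency regime, provides an admissible approximation. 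The threshold $K$ and the cutoff parameters are tuned so that in each sub-region one has an approximation of $G_a[\zeta_c]$ modulo an operator error of size strictly smaller than $A\ep^3$.

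In each sub-region I would tailor an energy functional to the corresponding approximation: I test $(\lambda-L_a)U=F$ against a carefully designed multiplier $MU$ so that the real part of the resulting bilinear form approximately diagonalizes $L_a$ and yields $\Re\langle(\lambda-L_a)U, MU\rangle \gtrsim A\ep^3\|U\|_X^2$ up to controllable errors. In zones (b) and (c) the multiplier $M$ is a pseudodifferential operator built from the symbol approximation and designed so that the associated quadratic form reproduces the structure of the diagonalized $L_a$; in zone (a) it is a purely differential multiplier, since $G_a[\zeta_c]$ is replaced there by $-\Delta$, which is itself a valid leading-order approximation thanks to the joint low-frequency localization and the $\cO(\ep^2)$ smallness of $\zeta_c$. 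Commutator errors generated when the frequency cutoffs are moved across $L_a$ are lower order and absorbed by choosing $A$ large and $\ep$ small.

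The main obstacle is zone (a). Because $\mu_a^{-1}$ is genuinely singular inside $\cS_{K,\delta}$, neither of the pseudodifferential diagonalizations used in zones (b) and (c) can be deployed, and the only workable approximation of $G_a[\zeta_c]$ in this zone is the differential operator $-\Delta$. The discrepancy $(G_a[\zeta_c]+\Delta)U_2$ must then be controlled in $L^2$ by combining the frequency localization $|\xi|\lesssim K\ep$, $|\eta|\lesssim 2$ with the $\cO(\ep^2)$ smallness of $\zeta_c$, so that the resulting error is strictly smaller than the available damping $A\ep^3$. This quantitatively constrains the admissible range of $K$ and of $\ep$, and is the step where the interplay between the frequency cutoffs, the soliton profile, and the damping must be most carefully balanced in order to obtain a final constant in \eqref{ITF} independent of both $A$ and $\ep$.
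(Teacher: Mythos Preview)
Your three-zone energy strategy is essentially the one the paper uses (it splits $U=U_h+U_s+U_r$ into high longitudinal frequency, a ``singular'' low-frequency piece supported in $\cS_{K,\delta}$, and the remaining ``regular'' low-frequency piece, with a tailored energy functional in each). Your zones (c), (b), (a) correspond to the paper's $U_h$, $U_r$, $U_s$ respectively; in particular the paper also replaces $G_a[\zeta_c]$ by $-\Delta_a$ on the singular set and uses a $\zeta_c$-corrected functional consistent with the first-order expansion on the regular low-frequency set.

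There is, however, a genuine gap in your treatment of zone (a). You assert that the available damping there is $A\ep^3$, but this is incorrect: on the singular region $\cS_{K,\delta}$ one has the geometric constraint $|\eta|\geq \tfrac{\delta}{2}|\xi+ia|$, which forces $\|(\p_x-a)V_2\|_{L^2}\leq (1+\delta^2/4)^{-1/2}\|\na_a V_2\|_{L^2}$ and hence yields damping of order $\delta^2 a\sim \ep$, \emph{not} $A\ep^3$. This is precisely what makes the $-\Delta$ approximation viable there. Conversely, your own bookkeeping cannot close: the discrepancy $\pi_s(D)(G_a[\zeta_c]+\Delta_a)$ is of size $\sim (K\ep)^3\sim A^{3/2}\ep^3$ (using $K^2\sim A$), which is \emph{larger} than $A\ep^3$ for large $A$. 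So with the damping you claim, the error would not be absorbed. Relatedly, your stated frequency localization ``$|\eta|\lesssim 2$'' in zone (a) is too weak to control the error; on $\cS_{K,\delta}$ one actually has $|\eta|\lesssim K\ep$, and both the strong damping and the smallness of the error rely on this. The weak $A\ep^3$ damping occurs in the \emph{regular} low-frequency region (your zone (b), the paper's $U_r$), where the more accurate first-order expansion keeps the error at $\cO(\ep^3)$, absorbable by choosing $A$ large.

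A smaller point: the coercivity estimate alone gives injectivity and the a priori bound but not surjectivity. The paper handles existence separately by showing that $\lambda=1$ lies in the resolvent set (writing $L_a=L_a^0+L_a^1$, inverting $\mathrm{Id}-L_a^0$ explicitly via its constant-coefficient symbol, and checking $\|(\mathrm{Id}-L_a^0)^{-1}L_a^1\|_{B(X_A^I)}\lesssim\ep^2$), then invoking a continuation argument to cover all of $\Omega_{\beta,\ep}$. You should include such a step.
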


We will prove Propositions \ref{prop-UTH} and \ref{prop-ITF} in the following two subsections.
\subsection{Proof of Proposition \ref{prop-UTH}}
\begin{proof} [Proof of Proposition \ref{prop-UTH}]
  For any $f\in H^{1/2}_{*}(\mR^2)$  with transverse frequency $|\eta|\geq 2,$ it holds that 
$\|f\|_{H_{%a,
\star}^{{1}/{2}}(\mR^2)}\approx \|\sqrt{G_a[0]} f\|_{L^2(\mR^2)}$ since $a=\hat{a}\ep\leq 1/2.$ Therefore, to prove Proposition \ref{prop-UTH} , it suffices to show that  
$\Omega_{\beta, \ep}\subset \rho(\cL_a; \cX^{UH}),$
with the bound
\beq 
 \|\big(\lambda-
 \cL_a\big)^{-1}\|_{B(\cX^{UH})} \leq  C\ep^{-1}, \quad \forall \, \lambda \in 
 \Omega_{\beta, \ep},
 \eeq
where $\cX^{UH}=\tilde{\mathbb{I}}_2(D_y) (L^2(\mR^2)\times L^2(\mR^2))$ and 
\beq\label{defcL} 
\begin{aligned}
\cL_a&=\colon \diag(1, \sqrt{G_a[0]} )\, L_a \,\diag\big(1, \sqrt{G_a[0]}^{-1} \big)\\
&=\left( \begin{array}{cc}
   (\p_x-a)(d_c\cdot)  &  G_a[\zeta_{c}] \sqrt{G_a[0]}^{-1}  \\[5pt]
 -\sqrt{G_a[0]}  w_c&\qquad d_c(\p_x-a)+\big[\sqrt{G_a[0]}, d_c\big](\p_x-a) \sqrt{G_a[0]}^{-1}
\end{array}\right)
\end{aligned}
\eeq 
where $\left[ \cdot, \cdot \right]$ stands for the commutator of two operators.

For any $F\in \cX^{UH},$ and $\lambda\in\Omega_{\beta, \ep},$ consider the resolvent problem: 
%Let  $H_{%a,\star}^{{1}/{2}}(\mR^2)$ is equivalent to $\|\sqrt{G_a[0]} \cdot \|_{L^2(\mR^2)}.$
% For any $f\in L_a^2,$ consider the resolvent problem: 
    \beqs 
\big( \lambda-\cL_a\big) U=F,
    \eeqs
we shall prove the existence of $U\in \cX^{UH}$ and the resolvent estimate simultaneously. %As the background waves does not depend on the transverse variable, 
Let us set $\cL_a({\eta})=e^{-iy \eta}\cL_a e^{iy \eta}.$ By taking the Fourier transform in the transverse variable, 
we reduce the matter to solving the resolvent problem 
\beq \label{resol-HTF}
\big( \lambda-\cL_a(\eta)\big) u=f, \quad \, \forall \,|\eta|\geq 2
\eeq
in the space $\cY= L^2(\mR)\times L^2(\mR),$ and proving the resolvent bounds  $\|u\|_{\cY}\lesssim \ep^{-1}\|f\|_{\cY}$ uniformly for $\eta$ such that $|\eta|\geq 2.$

From its  definition, $\cL_a(\eta)$ takes the same  form as in  \eqref{defcL} with $G_a[\cdot]$ replaced by 
%\beq\label{defcLeta} \cL_a(\eta)= \left( \begin{array}{cc}  (\p_x-a)(d_c\cdot)  &  G_{a,\eta}[\zeta_{c}] \sqrt{G_{a,\eta}[0]}^{-1}  \\[5pt] -\sqrt{G_{a,\eta}[0]}  w_c& d_c(\p_x-a)+\big[\sqrt{G_{a,\eta}[0]}, d_c\big](\p_x-a) \sqrt{G_{a,\eta}[0]}^{-1}\end{array}\right)\eeq
$$G_{a,\eta}[\cdot]=\colon e^{-iy \eta}\,G_a[\cdot]\, e^{iy \eta}.$$ 
As $\zeta_c\in C^{\infty}(\mR)$ is  sufficiently smooth, we could 
translated in our framework the results of  \cite{Alazard-Metivier,Sylvester} for example,
to  get an approximation  of  the operator  $G_{a,\eta}[\zeta_c]$ by isolating the principal symbol
\beq\label{def-prsy}
\lambda
^1_{\zeta_c}(x, \xi, \eta)=\sqrt{(\xi+ia)^2+\eta^2 (1+(\p_x\zeta_c)^2)}\,.
\eeq
Note that for any $|\eta|\geq 2$, we have that $\lambda_{\zeta_c}^1$ does not vanish for any $(a, x,\xi)\in [0,\f12]\times  \mR^2,$ upon choosing $\ep$ small enough. Nevertheless, as mentioned in the introduction, although $\op(\lambda_{\zeta_c}^1)$ approximates well the operator $G_{a,\eta}[\zeta_c]$ in terms of the regularity, it is far from being a good approximation in terms of the size of the  operator norm within $B(H^{1/2}, L^2),$ since one only has:
\beqs 
\|G_{a,\eta}[\zeta_c]-\op(\lambda
^1_{\zeta_c})\|_{B(L^2(\mR))}\lesssim 1 \, \,(\text{rather than } \ep^2 )\,,
\eeqs
which prevents us to prove the resolvent bounds by  perturbation arguments. Nevertheless, as proved in 
Lemma \ref{lem-error}, the desired estimate holds true by taking a modification of the principal symbol which is more accurate
for low frequencies: 
\beqs
\|G_{a,\eta}[\zeta_c]-\op(\lambda
^1_{\zeta_c}\tanh \lambda
^1_{\zeta_c})\|_{B(L^2(\mR))}\lesssim \ep^2.
\eeqs

We thus define an approximate operator $\tilde{\cL}_{a}(\eta)$ by  replacing $G_{a,\eta}[\zeta_c]$ by $\op(\lambda_{\zeta_c}^1
\tanh \lambda
^1_{\zeta_c})$ in the definition of ${\cL}_{a}(\eta),$  that is: 
\beq\label{deftcLeta} 
\tilde{\cL}_a(\eta)=\colon \left( \begin{array}{cc}
   (\p_x-a)(d_c\cdot)  &  \op( \lambda^1_{\zeta_c}\tanh \lambda^1_{\zeta_c})\sqrt{G_{a, \eta}[0]}^{-1}  \\[5pt]
 -\sqrt{G_{a,\eta}[0]}  w_c& \qquad d_c(\p_x-a)+\big[\sqrt{G_{a,\eta}[0]}, d_c\big](\p_x-a) \sqrt{G_{a,\eta}[0]}^{-1}
\end{array}\right).
\eeq
The strategy is thus to  first prove  the invertibility  and  the resolvent bounds for  $\lambda-\tilde{\cL}_a(\eta)$ and  then, 
in a second step to  extend these results to $\lambda-{\cL}_a(\eta)$ through perturbation arguments.

Define the function 
\beq \label{def-g}
g_{\eta}(x,\xi)=\sqrt{\f{w_c(x) (\mu_a\tanh \mu_a)(\xi,\eta)}{\lambda^1_{\zeta_c}\tanh \lambda^1_{\zeta_c}(x,\xi,\eta)}},
\eeq 
where $(\mu_a\tanh \mu_a)(\xi,\eta)$ is the symbol of the operator $G_{a,\eta}[0]$  and $\mu_a(\xi,\eta)=\sqrt{(\xi+ia)^2+\eta^2}.$ 
It is shown in Lemma \ref{lem-g} that for $\ep$ small enough, both $g_{\eta}$ and ${1}/{g_{\eta}}$ belong to the symbol class with zero order ($S^0$ as defined in Appendix A).
Denote $\op(f)$ the operator  
associated to the symbol $f(x,\xi)$ by the standard quantization (see Appendix A).  Then by using a standard result for the $L^2$ continuity of
pseudodifferential operators with 
$S^0$ symbols %(in $S^0$ as defined in Appendix A), 
we get that 
 for any $|\eta|\geq 2,$  $\op(g_{\eta}), \op(1/g_{\eta})$ are  bounded operators on $L^2(\mR)$ with norm  uniformly bounded in  $\eta$ and $\ep.$ Moreover, by using  standard pseudo-differential calculus and the fact that $\|\p_x(\zeta_c, \p_x \varphi_c)\|_{W_x^{k,\infty}(\mR)}\lesssim \ep^3$ for $k$  as large as we need
   to approximate the 
composition of pseudo-differential operators \cite{Zworski-book}, it holds that: 
\beqs 
\op(g_{\eta})\circ  \op(1/g_{\eta})=\Id_{L^2(\mR)} + R_{1\eta}\, , \quad  \op(1/g_{\eta})\circ \op(g_{\eta}) =\Id_{L^2(\mR)}+ R_{2\eta} \,,
\eeqs
where  $R_{1\eta}, R_{2\eta}$ are two bounded operators in $L^2(\mR)$ with  operator norms not larger than $C\ep^3$ (C being independent of $\eta$).
%$\|R_{1\eta}, R_{2,\eta}\|_{B(L^2(\mR))}\lesssim \ep^3$
%(we denote its symbol as $\lambda_G (x, \xi, \eta)).$ 
%By the Fourier transform in the transverse variable, 
%it suffices to show that for any $f\in \cY= L^2\times L^2,$ any $\lambda\in\Omega_{\ep},$ there exists a unique solution $u_{\eta}$ such that $\big( \lambda-\cL_a(\eta)\big) u=f,$ and $\|u_{\eta}\|_{\cY}\lesssim \ep^{-1}\|f\|_{\cY}.$ 

 Let the operator valued matrices $P_1(\eta)$ and $P_2(\eta)$ be defined by:
  \begin{align*}
  P_1(\eta)=   \left(\begin{array}{cc}
   1  & 1 \\
   \op(g_{\eta})
   & -  \op(g_{\eta})
\end{array}  \right), \,\, P_2= \f12 \left(\begin{array}{cc} 1  &  \op({1}/{g_{\eta}})  \\ 1  & -\op(1/g_{\eta}) \end{array}  \right),
\end{align*}
%which satisfies $P_1(\eta)\circ P_2(\eta)=\Id +\cR_{\eta}$ 
 %  \begin{align*}
%  P(\eta)=   \left(\begin{array}{cc}   1  & 1 \\
  % \sqrt{w_c} \sqrt{-G_{a,\eta}[0]} \circ \op (\f{1}{\sqrt{\lambda_G}})%\sqrt{-G[\zeta_c]}^{-1} ,  & - \sqrt{w_c}\sqrt{-G_{a,\eta}[0]} \circ \op (\f{1}{\sqrt{\lambda_G}})\end{array}  \right), \,\, P^{-1}= \f12 \left(\begin{array}{cc} 1  &  \sqrt{w_c}^{-1}\op(\sqrt{\lambda_G})\circ\sqrt{-G_{a,\eta}[0]}^{-1} \\ 1  & - \sqrt{w_c}^{-1}\op(\sqrt{\lambda_G})\circ\sqrt{-G_{a,\eta}[0]}^{-1}\end{array}  \right)\end{align*}
%By using the fact $|\p_x(\zeta_c, \p_x \varphi_c)|\lesssim \ep^3$ and Lemma [?], 
we can  then diagonalize $\tilde{\cL}_a(\eta)$ as:
\beq\label{diagnolzation}
\tilde{\cL}_a(\eta)= P_1(\eta) \,\,\op \left(\begin{array}{cc}
   \lambda_{-}^1(x,\xi,\eta)  &  \\
     &   \lambda_{+}^1(x,\xi,\eta)
\end{array}  \right) P_2(\eta)+\cR_{\eta}\, ,
\eeq
where  \beq \label{deflpm}
\lambda_{\pm}^1=i \bigg(d_c(x)(\xi+ia)\pm \sqrt{w_c}(x)\sqrt{\lambda^1_{\zeta_c}\tanh \lambda^1_{\zeta_c}}(x,\xi,\eta)\bigg).
\eeq 
Hereafter, 
$\cR_{\eta}$  will always stand for a generic bounded linear operator in
$\mathcal{Y}=L^2(\mR)\times L^2(\mR)$ with  norm proportional to $\ep^3$: 
\beqs
\|\cR_{\eta}\|_{B(\cY)}\leq C \ep^3,\quad  \, \forall \, |\eta|\geq 2\, 
\eeqs
and which may change from line to line.
Applying $P_2(\eta)$ to %to both sides of %\eqref{resol-HTF},
the resolvent equation $(\lambda-\tilde{\cL}_a(\eta))u=\tilde{f}$ and noticing that $P_2(\eta) P_1(\eta)=\Id_{\cY}+\cR_{\eta},$ we find:  \beqs\label{resol-HTF-1} 
%\bigg(
 \op \left(\begin{array}{cc}
  \lambda- \lambda_{-}^1(x,\xi,\eta)   &  \\
     &  \lambda- \lambda_{+}^1(x,\xi,\eta) 
\end{array}  \right) %\bigg)
(P_2(\eta)u)=P_2(\eta)\tilde{ f}+\cR_{\eta} u\,.
\eeqs
To conclude, it will be enough  to show the following claim:

{\bf{Claim 1:}\label{claim-inverse}}
   % There exists $M>0$ large enough, 
 Let $\ep$ be sufficiently small. For any $|\eta|\geq 2,$ any $\lambda\in \Omega_{\beta,\ep},$ the operators $\op \big(\lambda-
    \lambda_{\pm}^1(\cdot, \cdot, \eta) \big) $ are invertible and their inverses have the bounds %in $L=B(\cY)$
\begin{align}
    \|\big(\op \big(\lambda-
    \lambda_{\pm}^1(\cdot, \cdot, \eta) %({\eta})
    \big) \big)^{-1}\|_{B(L^2(\mR))}\lesssim  %C(\hat{a}_0) 
    \ep^{-1}. \label{ev-HT-p} 
  %   \|\big(\op \big(\lambda-  \tchi_{_{A\ep^2}}\lambda_{-}     \big)^{-1}\|_{B(L_a^2(\mR^2))}\lesssim   \f{1}{\sqrt{A}\ep^3}. \label{ev-HT-m}  
    \end{align}
Indeed, once this claim is shown, we derive from \eqref{ev-HT-p} that
\beqs 
\|P_2(\eta)u\|_{\cY}\lesssim \ep^{-1}\bigg(\|P_2(\eta)\tilde{f}\|_{\cY}+C\ep^3 \|u\|_{\cY}
\bigg).
\eeqs 
%we could find a solution $u$ for the problem $(\lambda-\tilde{\cL}_a(\eta))u=\tilde{f}$by using the invertibility of the operator $P_2(\eta)$ and 
Upon choosing $\ep$ sufficiently small and using   the fact that  $P_1(\eta)P_2(\eta)=\Id+\cR_{\eta},$ we get  that $\|u\|_{\cY}\lesssim \ep^{-1}\|\tilde{f}\|_{\cY}\,.$ The existence %of resolvent problem  $(\lambda-\tilde{\cL}_a(\eta))u=\tilde{f}$ 
also follows from \eqref{ev-HT-p} and the invertibility of 
$P_2(\eta).$
We thus proved the invertibility  of $\lambda-\tilde{\cL}_a(\eta)$  as well the  resolvent bound
$\|(\lambda-\tilde{\cL}_a(\eta))^{-1}\|_{\cY}\lesssim \ep^{-1}.$ 

To get similar results for  the original operator, it suffices to show that $\tilde{\cL}_a(\eta)$  is a small perturbation of  ${\cL}_a(\eta)$ in the sense that, %for any $\eta,$ such that $|\eta|\leq 2,$
\beqs
\|{\cL}_a(\eta)-\tilde{\cL}_a(\eta)\|_{\cY}\lesssim \ep^2
\eeqs
uniformly for $\eta$ such that $|\eta|\geq 2.$ This is shown in Lemma \ref{lem-error}. The proof of Proposition \ref{prop-UTH} is  thus finished up to the proof of the above claim.

\end{proof}

We now come back to the proof of the Claim 1. It will be a  consequence of the following lemma:
\begin{lem}\label{lem-ev-HT}
%Let $\lambda^{1}_{\pm}$ be defined as in \eqref{deflpm} with $\sqrt{\lambda_{\zeta_c}}$ replaced by $\sqrt{\lambda_{\zeta_c}^1}.$
Assume %$M$ large enough and 
$\ep$ to be sufficiently small.  Then for any $\eta$ such that $|\eta|\geq 2$, any $(x, \xi)\in \mR^2,$
it holds that: 
\beq \label{es-ev}
\Re \lambda_{\pm}^1(x, \xi, \eta)\leq -\f{a}{4}=-\f{\hat{a}\ep}{4}.
\eeq
\end{lem}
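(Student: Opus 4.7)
The plan is to show $\Re \lambda_\pm^1(x,\xi,\eta) \leq -a/4$ for $|\eta|\geq 2$ and $\ep$ sufficiently small. From \eqref{deflpm}, I compute
\beqs
\Re \lambda_\pm^1 = -d_c(x)\, a \;\mp\; \Im\sqrt{w_c(x)\, T(z)}, \quad T(z) := z\tanh z, \quad z := \lambda^1_{\zeta_c}(x,\xi,\eta).
\eeqs
Since $v_c = \cO(\ep^2)$ and $w_c = \gamma + \cO(\ep^3)$ with $\gamma = 1 - \ep^2$, for $\ep$ small I have $d_c \geq 3/4$ and $w_c \in [3/4, 5/4]$, so the task reduces to showing $|\Im\sqrt{w_c\, T(z)}| \leq a/2$. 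I use the elementary inequality $|\Im\sqrt{u+iv}| \leq |v|/(2\sqrt{u})$ valid for $u>0$, which further reduces matters to bounding $|\Im T(z)|/\sqrt{\Re T(z)}$.

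I next unpack $z = A+iB$ with $A > 0$. From $z^2 = (\xi+ia)^2 + \eta^2(1+(\p_x\zeta_c)^2)$ I get $A^2 - B^2 = \xi^2 - a^2 + \eta^2(1+\cO(\ep^6))$ and $2AB = 2a\xi$, hence $B = a\xi/A$ and $A^2 \geq \Re z^2 \geq \xi^2 + \eta^2 - 1/4 + \cO(\ep^6)$. For $|\eta|\geq 2$ and $a \leq 1/2$, this gives $A \geq \sqrt{15}/2 > 1.9$ (large enough that $\cosh(2A)$ is large) and $|B| \leq a$. The key quantity is $|B|/\sqrt{A} = a|\xi|/A^{3/2}$; maximizing the single-variable function $|\xi|/(\xi^2+c)^{3/4}$ with $c = \eta^2 - 1/4 + \cO(\ep^6) \geq 15/4 + \cO(\ep^6)$, I find its supremum at $\xi^2 = 2c$, equal to $\sqrt{2}/(3^{3/4} c^{1/4})$, which is strictly below $1/2$.

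To control $T(z)$, I use
\beqs
\tanh(A+iB) = \f{\sinh(2A)+i\sin(2B)}{\cosh(2A)+\cos(2B)} =: P + iQ,
\eeqs
so that $\Re T = AP - BQ$ and $\Im T = AQ + BP$. Since $A \geq \sqrt{15}/2$ and $|B| \leq a \leq 1/2$, $P$ lies in the narrow interval $[\tanh A,\coth A]$ (close to $1$) and $|Q|$ is of order $|B|/\cosh(2A)$, hence exponentially small in $A$; this yields a uniform lower bound $\Re T(z) \geq A/2$ and an upper bound $|\Im T(z)| \leq C_0 |B|$ for an explicit absolute constant $C_0$ close to $1$. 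Combining these with the elementary square-root inequality and the optimization above,
\beqs
\bigl|\Im\sqrt{w_c(x)\, T(z)}\bigr| \leq \sqrt{\f{w_c}{2}}\cdot \f{C_0 |B|}{\sqrt{A}} = C_0\sqrt{\f{w_c}{2}}\cdot a \cdot \f{|\xi|}{A^{3/2}} \leq \f{a}{2}
\eeqs
for $\ep$ small enough, so that $w_c$ stays close to $1$ and the product of all prefactors remains below $1/2$. The main obstacle is not conceptual but numerical bookkeeping: one must carry explicit constants through both the hyperbolic-function analysis and the one-variable optimization to ensure enough margin below $1/2$ to absorb the $\cO(\ep^2)$ perturbations coming from $d_c - 1$, $w_c - \gamma$, and $(\p_x \zeta_c)^2$.
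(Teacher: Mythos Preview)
Your proof is correct and follows essentially the same route as the paper's: both write $z=\lambda_{\zeta_c}^1=A+iB$, expand $z\tanh z$ via the explicit formula for $\tanh(A+iB)$, bound its real and imaginary parts, apply the square-root inequality $|\Im\sqrt{w}|\le |\Im w|/(2\sqrt{\Re w})$, and then optimize the resulting $a|\xi|/A^{3/2}$ over $\xi$. The paper's bookkeeping differs only in minor numerics (it uses the slightly cruder $A\ge\sqrt{\xi^2+3}$ but compensates with the sharper $\Re T(z)\ge\tanh(\sqrt{3})\,A$ in place of your $A/2$), arriving at the explicit bound $|\Im\sqrt{T(z)}|\le\tfrac{3}{5}a$.
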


Let us postpone the proof of this lemma and first finish  the proof of Claim 1. By Lemma \ref{lem-ev-HT}, for any $\lambda$ such that $\Re \lambda >- \hat{a} \ep^3,$ we 
have for $\ep$ small enough, $|\lambda-\lambda_{\pm}|\geq |\Re( \lambda-\lambda_{\pm}^1)|>\f{\hat{a}}{8}\ep.$ By applying Lemma \ref{lem-inverse} in the appendix, we have, upon choosing $\ep$ smaller if necessary, that for any $|\eta|\geq 2,$ the operators $\op\big((\lambda-\lambda_{\pm}^1)(\cdot,\cdot, \eta)\big)$ are invertible and 
\beqs 
 \|\big(\op \big(\lambda-
    \lambda_{\pm}^1(\cdot, \cdot, \eta) %({\eta})
    \big) \big)^{-1}\|_{B(L^2(\mR))}\leq 
   \f{16}{\hat{a}\ep}\, .
\eeqs

\begin{proof}[Proof of Lemma \ref{lem-ev-HT}]

    It suffices to show that  for any $|\eta|\geq 2,$ 
    it holds that 
    \beq\label{Im-lambda1}
    \big|\Im (\sqrt{\lambda_{\zeta_c}^1\tanh \lambda_{\zeta_c}^1})\big|\leq \f{3}{5}a\, .
    \eeq
Indeed, once this is shown, we have 
\begin{align*}
\Re \lambda^1_{\pm}&=-a\, d_c(x)\mp \sqrt{w_c} \, \Im \big(\sqrt{\lambda_{\zeta_c}^1\tanh \lambda_{\zeta_c}^1}\big) \\
&= -a (1-v_c)\mp \sqrt{\gamma-(1-v_c)\p_x Z_c} \, \Im \big(\sqrt{\lambda_{\zeta_c}^1\tanh \lambda_{\zeta_c}^1}\big)\\
&=-a \mp \Im \big(\sqrt{\lambda_{\zeta_c}^1\tanh \lambda_{\zeta_c}^1}\big)+\cO(\ep^3)\leq -\f{a}{4}
\end{align*}
upon choosing $\ep$ small enough.
Note that in the above estimate  we have  used that as consequences of  Theorem \ref{bealetheo}:
    \beqs 
\|v_c\|_{L_x^{\infty}}\lesssim \ep^2, \quad \|\p_x Z_c\|_{L_x^{\infty}}\lesssim \ep^3, \quad \gamma={1-\ep^2}.
\eeqs
We now come back to the proof of \eqref{Im-lambda1} which follows from algebraic computations. By the definition of $\lambda_{\zeta_c}^1$ in \eqref{def-prsy}, one has: 
\beqs 
{\lambda_{\zeta_c}^1}=\sqrt{R+i I}=\colon u+iv, \, \text{  } \,
\eeqs
with $R=\xi^2+\eta^2(1+(\p_x\zeta_c)^2)-a^2, \, I=2 a \xi$
and 
\beq \label{defuv}
u=\sqrt{\f{R+\sqrt{R^2+I^2}}{2}}, \quad v=\f{I}{2u}.
\eeq
 Without loss of generality,  we assume $\xi>0$ so that $I>0, v>0.$ By choosing $\ep$ small enough, so that 
$\|\p_x \zeta_c\|_{L^{\infty}_x}\leq 1/4, \, a=\hat{a}\ep \leq 1/2,$ one has that $R\geq \xi^2+\f{3}{4}\eta^2\geq \xi^2+3$ for any $|\eta|\geq 2.$ Therefore $u\geq \sqrt{R}\geq \sqrt{\xi^2+3},\, 0< v\leq a.$

Since
\beqs
\tanh (u+i v)=\f{\sinh u \cos v+i\cosh u \sin v}{\cosh u \cos v+i\sinh u \sin v}=\f{\sinh (2u)+i\sin(2v)}{2\,(\cosh^2 u-\sin^2 v)},
\eeqs
we have that $\lambda_{\zeta_c}^1\tanh \lambda_{\zeta_c}^1=\tilde{u}+i\tilde{v},$ with
\beqs
\tilde{u}=\f{u\sinh 2u-v\sin(2v)}{2\,(\cosh^2 u-\sin^2 v)}, \qquad \tilde{v}= \f{v\sinh(2u)+u\sin (2v)}{2\,(\cosh^2 u-\sin^2 v)}.
\eeqs
As $a\leq \f12<\f{\pi}{4},$  and $0<v \leq a$, one has
that  $2\,(\cosh^2 u-\sin^2 v)=%\f12 
\cosh 2 u+(1-2\sin^2 v)>\cosh 2 u.$ %we have, as long as $\sin^2 v\leq  \f12,$ 
%which is true for $\ep$ (and thus $a$)small enough, that
which leads to 
\beq\label{prop-tv} 
\tilde{v}\leq v \big(\tanh 2u+\f{2u} {\cosh 2u}\big)\leq 2v.
\eeq
Moreover, by using the fact that $\f{\sinh x}{1+\cosh x}$ is an increasing function on $\mR_{+}$, it holds that 
\beq\label{proptu}
\tilde{u}\geq \f{u\sinh 2u-v\sin 2v}{1+\cosh 2u}\geq \f{\sinh 2\sqrt{3}}{1+\cosh 2\sqrt{3}}u+\cO(\ep^2).
\eeq
Let $\sqrt{{\lambda_{\zeta_c}^1}\tanh \lambda_{\zeta_c}^1}=\alpha+i\beta,$ then
\beqs 
\alpha=\sqrt{\f{\tilde{u}+\sqrt{\tilde{u}^2+\tilde{v}^2}}{2}}, \quad 
\beta=\f{\tilde{v}}{2\alpha}\, .
\eeqs
In view of the  identity \eqref{defuv} and the estimates \eqref{prop-tv}, \eqref{proptu},
%Since $\alpha\geq \sqrt{\tilde{u}}\geq \sqrt{\f{2 u\sinh 2u}{2+\cosh (2u)}},$ %\geq R^{\f14}\geq (2+\xi^2)^{1/4},$  it holds that
we have that
\beqs 
\beta\leq \f{I}{2\alpha {u}} \leq \sqrt{\f{1+\cosh 2\sqrt{3} }%\sqrt{2}
{\sinh %\sqrt{2}
2\sqrt{3}}} \f{I}{2u^{3/2}}+\cO(\ep^2)
\leq \sqrt{\f{1+\cosh 2\sqrt{3}}{\sinh 2\sqrt{3}} }\f{|\xi|}{(3+\xi^2)^{{3}/{4}}} a+\cO(\ep^2).%\leq \f{3a}{5}.
\eeqs
Since $\cosh 2\sqrt{3}\approx 16, \sinh 2\sqrt{3}\approx 16 $ and $\f{|\xi|}{(3+\xi^2)^{{3}/{4}}}\leq \f{\sqrt{6}}{9^{3/4}}\approx 0.47,$ one sees that, 
upon choosing $\ep$ smaller if necessary, 
$\beta \leq \f{3}{5}a,$ which is the desired result.
\end{proof}

\subsection{Proof of Proposition \ref{prop-ITF}}\label{subsec-ITF}
\begin{proof} [Proof of Proposition \ref{prop-ITF}] 
When focusing on the intermediate transverse frequencies 
$\{A\ep^2\leq |\eta|\leq 2\},$ there are two new issues.  The symbol of $\sqrt{G_a[0]}$ 
may vanish and the diagonalization like \eqref{diagnolzation} does not work anymore since $P_2(\eta)$ is not a bounded operator on $L^2(\mR)\times L^2(\mR).$ Moreover, it is not realistic to first prove the invertibility of the operator with constant coefficients and then use perturbation arguments. Indeed, due to the appearance of $v_c \p_x$ terms in the diagonal entries of $L_a,$ there would  be a  loss of derivatives.
In order to prove the resolvent bounds \eqref{ITF}, we shall use  an energy-based approach, achieved through appropriate energy functionals.

For any $\lambda \in \Omega_{\beta, \ep},$ $F\in X_A^I,$ consider the resolvent equation 
 \beq\label{reolveq-inter}
(\lambda I -L_a)\, U=F\, .  %\quad U, F\in X_A^I.
 \eeq
Let us first assume the existence of $U \in X_A^I$ and prove the resolvent estimates: 
\beq\label{resolvent-inter}
\|U\|_{X}\lesssim A^{-1}\ep^{-3}\,\|F\|_{X}. 
\eeq
To get this estimate, we shall  perform energy estimates 
which control quantities that are equivalent to $\|\cdot\|_X,$
%it is necessary to find different 
but are  different  according to frequency regions (both in the longitudinal frequency $\xi$ and the transverse frequency $\eta$).
On the  one hand, when handling the region (most of cases) where the 
the operator $|\f{\na_a}{\langle \na \rangle^{1/2} }|$ 
$(\na_a=(\p_x-a, \p_y)^t)$
is equivalent to $|\sqrt{G_a}[0]|=|\sqrt{\mu_a\tanh \mu_a}|,$ 
it is convenient to use a norm which is equivalent to 
$\|\cdot\|_{L^2}\times \|\sqrt{G_a}[0]\cdot\|_{L^2}.$
On the other hand, when focusing on the region where $\mu_a(\xi,\eta)$ may vanish, 
for instance the set 
\beq \label{def-singset}
\cS_{K, \delta}=\bigg\{(\xi,\eta)\in \mR^2\big|\,|\xi%+i a
|\leq K\ep, \, \delta\leq 
\f{|\eta|}{\sqrt{\xi^2+a^2}}\leq 2\bigg\}, \quad ( \,0<\delta<1 \text{ small, } K(K+1) \in [A/2, A]  ),
\eeq 
we have to instead introduce an alternative norm.
Indeed, in this region, we have  %$\sqrt{G_a[0]}^{-1}\approx \mu_a^{-1}$
 $$(G_a[\zeta_c]-G_a[0])\sqrt{G_a[0]}^{-1}=-\p_x\zeta_c \f{\p_x-a}{\sqrt{G_a[0]}}+\text{good operator},$$ which is not a bounded operator on $L^2(\mR^2
).$ Nevertheless, since the singularity happens only in a  low $\xi$  frequency region, 
we can simply use the norm  $\|\cdot\|_{L^2}\times \|\na_a\cdot\|_{L^2}.$
%{\color{red} the above paragraph is not very clear}

Let $\tilde{\mathbb{I}}_{\delta}:\mR\rightarrow \mR,$  %be the characteristic function on  
$\pi_{s}(\cdot), \, \pi_r(\cdot): \mR^2\rightarrow \mR$ be  respectively the characteristic functions  of the sets 
\beq \label{def-reguset}
\mR\backslash[-\delta, \delta] \,, \qquad  \cS_{K, \delta}\, , \qquad \cS_{K, \delta}^c=\big\{(\xi,\eta)\in \mR^2\big|\,  |\xi|\leq  \delta, \, A\ep^2\leq 
{|\eta|}\leq 2\big\} \backslash \cS_{K, \delta} \eeq
respectively. We split the solution $U$ to the problem \eqref{reolveq-inter} into three parts $U=U_h+U_s+U_r,$ where
\beqs 
U_h= \tilde{\mathbb{I}}_{\delta}(D_x) U, \quad U_s= \pi_{s}(D) U, \quad U_r= \pi_{r}(D)U 
\eeqs
denote a  high longitudinal  frequency part, a  low frequency ‘singular' part and a  low frequency ‘regular’ part. In order to prove \eqref{resolvent-inter}, it is enough  to show
%The resolvent estimate \eqref{resolvent-inter} could be derived from 
the following estimates on $U_h, U_s $ and $ U_r:$
\begin{align}
    \|U_h\|_{X}&\lesssim_{\delta} \ep^{-1}\|F\|_{X}+\ep \|(U_s, U_r)\|_{X}, \label{es-uh} \\
     \|U_s\|_{X}&\lesssim_{\delta}  \ep^{-1}\|F\|_{X}+K^3\ep^2 \|(U_h, U_r)\|_{X}, \label{es-us}\\
       \|U_r\|_{X}&\lesssim K^{-2} \ep^{-3}\big(\|F\|_{X}+\ep^2 \|(U_h, U_s)\|_{X}\big). \label{es-ur}
\end{align}
Indeed, we derive  from %once have been shown, we have
the above estimates  that
\beqs 
\|U\|_{X}\leq C K^{-2}\ep^{-3}  \|F\|_{X} + C(K\ep+K^{-2})\|U\|_{X}
\eeqs
where $C>0$ is independent of $K$ and $\ep.$  The estimate \eqref{resolvent-inter} then follows by choosing first $K$ large enough, and then $\ep$ sufficiently small.
We will prove \eqref{es-uh}-\eqref{es-ur} in the following subsections. 

Once the above estimates are obtained, to finish the proof of the Proposition \ref{prop-ITF}, we just need to prove  the existence for the resolvent equation \eqref{reolveq-inter}, that is that  any element of $%\lambda\in 
\Omega_{\beta,\ep}$  lies in the resolvent set of $L_a$ in the space $X_A^I.$  As in  \cite{Pego-Sun}, we can invoke
 an abstract result from Spectral Theory (Theorem III.6.7, \cite{Book-Kato}) in order to reduce the problem to the
 proof of  the existence of one element of the resolvent set in $\Omega_{\beta, \ep}$.
We shall thus prove that $\lambda=1$  lies in the resolvent set of $L_a.$
We split the operator $L_{a}$ into two parts: 
$L_a=L_{a}^0+L_{a}^1,$ where
%the matrix with constant coefficient: 
\beq\label{def-La0-1}
 {L}_{a}^0
 =\left( \begin{array}{cc}
   \p_x-a  &  G_a[0]   \\[5pt]
 -  \gamma& \p_x-a
\end{array}\right), \,\, L_a^1
 =\left( \begin{array}{cc}
   -(\p_x-a)(v_c\cdot)  &  G_a[\zeta_{c}] -G_a[0]  \\[5pt]
   d_c \p_x Z_c & -v_c(\p_x-a)
\end{array}\right).
\eeq
It is then sufficient to prove on the  one hand that $\Id- L_{a}^0$ is invertible in $X_A^I$ with the norm of the inverse uniformly bounded  in $\ep$ and on the other hand that $(\Id- L_{a}^0)^{-1}L_a^1\in B(X_A^I)$ with the norm 
\beq\label{smallperLa1}
\|(\Id- L_{a}^0)^{-1}L_a^1\|_{ B(X_A^I)}\lesssim \ep^2.
\eeq
Since $L_a^0$ is an operator with constant coefficient, the invertibility of the operator $\Id-L_{a}^0$ is equivalent to the invertibility of the complex valued matrix 
$\Id_{2} - L_{a}^0(\xi,\eta),$ $L_{a}^0(\xi,\eta)$ being the symbol of $L_{a}^0(D).$ 
%\beqs  L_{a}^0(\xi,\eta)= \eeqs

Denote $\lambda_0(\xi,\eta)=(\mu_a \tanh\mu_a)(\xi,\eta)$ the symbol associated to  $G_a[0],$ and 
$$\lambda_{\pm}^0(\xi,\eta)=i(\xi-ia)\pm \sqrt{-\gamma \lambda_0(\xi,\eta)}$$
the eigenvalues of $L_{a}^0(\xi,\eta).$ By \eqref{Relambdapm} in Appendix B, 
it holds that $\Re \lambda_{\pm}^0\leq a,$ for any $(\xi,\eta)\in \mR,$ provided $0<a<\f{\pi}{6}.$ Consequently, we find that
$\Re( 1- \lambda_{\pm}^0)$ never vanishes, which ensures the invertibility of 
$\Id_{2} - L_{a}^0(\xi,\eta).$ Moreover, straightforward computation show that
\beq\label{inverse1-La0}
\begin{aligned}
\big(\Id_{2} - L_{a}^0(\xi,\eta)\big)^{-1}&= \f{1}{(1-\lambda^0_+)(1-\lambda^0_-)}\left( \begin{array}{cc}
  \f{(1-\lambda^0_+)+(1-\lambda^0_-)}{2}  & \f{(1-\lambda^0_-)-(1-\lambda^0_+)}{2}  \sqrt{-\lambda_0/\gamma}   \\[5pt]
 -  \gamma&  \f{(1-\lambda^0_+)+(1-\lambda^0_-)}{2} 
\end{array}\right) \\
&=\colon \left(  \begin{array}{cc}
A_{11}, & A_{12}\\
A_{21} & A_{22}
\end{array}\right). 
\end{aligned}
\eeq
In light of this explicit expression, we have, by remembering  $X_A^I=\mathbb{I}_2\tilde{\mathbb{I}}
_{A\ep^2}(D_y)(L^2(\mR^2)\times H_{\star}^{\f12}(\mR^2)),$  that %Thanks to the fact $\|\sqrt{G_a[0]}f\|_{L^2}\lesssim \|f\|_{H_{*}^{1/2}},$ we have that:
\begin{align}\label{id-la0-inverse}
    \|\big(\Id_{2} - L_{a}^0(\xi,\eta)\big)^{-1}\|_{B(X_A^I)}\lesssim \max \sup_{\xi\in \mR, |\eta|\leq 2} \bigg\{ \f{1}{|1-\lambda^0_+|}\,,  \f{1}{|1-\lambda^0_-|}\,, \f{(1+\xi^2)^{1/4}}{|(1-\lambda^0_+)(1-\lambda^0_-)|}\bigg\}<+\infty.
\end{align}
Note that we have used the facts $\|\sqrt{G_a[0]}f\|_{L^2}\lesssim \|f\|_{H_{*}^{1/2}},$
and $|\Im (1-\lambda^0_{\pm}) |\thickapprox (1+\xi^2+\eta^2)^{\f12}$ for $|\xi|\gg 1.$ 
%and $|\eta|\leq 2.$  

To show \eqref{smallperLa1}, it suffices to prove the following estimates
\begin{align*}
    \|A_{11}(\p_x-a)+A_{12}(d_c\p_xZ_c  )\|_{B(L^2,L^2)}\lesssim \ep^2,  \\
    \|A_{11}(G_a[\zeta_c]-G_a[0])+A_{12}(v_c(\p_x-a))\|_{B(H_{*}^{\f12},L^2)}\lesssim \ep^2, \\
    \|A_{21}(\p_x-a)+A_{22}(d_c\p_x Z_c)\|_{B(L^2,H_{*}^{\f12})}\lesssim \ep^2, \\
   \|A_{21}(G_a[\zeta_c]-G_a[0])+A_{22}(v_c(\p_x-a))\|_{B(H_{*}^{\f12},H_{*}^{\f12})} \lesssim \ep^2.
\end{align*} 
Thanks to the the explicit expressions \eqref{inverse1-La0} and the estimates%$\|v_c\|_{W_x^{1,\infty}}\lesssim \ep^2,$ 
\beqs
\|v_c\|_{W_x^{1,\infty}}\lesssim \ep^2, \quad \|d_c\p_x Z_c\|_{{W_x^{1,\infty}}}\lesssim \ep^3, %\quad \big\|\f{1}{1-\lambda^0_{\pm}}(D)\big\|_{B(H^{-1},L^2)}\lesssim 1,
\eeqs
as well as the estimate
\begin{align*}
 \qquad     
\|\bI_2(D_y) (G_a[\zeta_c]-G_a[0])\|_{B(H_{*}^{\f12}(\mR^2), L^2(\mR^2))}\lesssim \ep^2
\end{align*}
whose proof is detailed in Lemma \ref{lem-Ga-Ga0-g},
the above four estimates are the consequences of the following algebraic computation:
%\, \|\p_x v_c\|_{L_x^{\infty0}}\lesssim \ep^3 $ %$\|\sqrt{G_a[0]}f\|_{L^2}\lesssim \|f\|_{H_{*}^{1/2}},$ we have, {\color{blue} by recalling that that
\begin{align*}
   % \|\big(\Id_{2} - L_{a}^0(\xi,\eta)\big)^{-1}\|_{B(X_A^I)}\lesssim 
   \sup_{\xi\in \mR, |\eta|\leq 2}  \big(\langle \xi\rangle+\sqrt{\lambda_0}\langle \xi\rangle^{1/2}%(1+\xi^2)^{1/4}\rangle
   \big)\max\bigg\{ \f{1}{|1-\lambda^0_+|}\,,  \f{1}{|1-\lambda^0_-|}\,, \f{%(1+\xi^2)^{1/4}
   1}{|(1-\lambda^0_+)(1-\lambda^0_-)|}\bigg\}<+\infty.
\end{align*}
%Note that for the boundedness of the last quantity, we have used that $|\Im (1-\lambda^0_{\pm}) |\thickapprox \langle \xi\rangle$ for $|\xi|\gg 1$ and $|\eta|\leq 2.$  %Thus \eqref{smallperLa1} 
 
 To summarize, we have shown that $1\in \rho(L_a; X_A^I)$ and thus the proof of Proposition \ref{prop-ITF} is complete.
\end{proof}

Before going into the details for the proof of \eqref{es-uh}-\eqref{es-ur} in the following three subsections, we first state  some algebraic properties for $\lambda_{\pm}^{0}$ whose proof is given in the appendix. Since they  depend on the localization in  frequencies, we  split the frequency domain  $R_{\eta}^L=\{(\xi, \eta)\in \mR^2\,| |\eta|\leq 2\}$ into the following subsets:
\begin{align*}
   & R_{\xi}^{H}=\{ (\xi, \eta)\in \mR^2 \,\big||\xi|\geq \delta, \, |\eta|\leq 2 \}, \\
   &  R_{\xi}^{I}=\{ (\xi, \eta)\in \mR^2 \,\big|K\ep\leq |\xi|\leq \delta, \, |\eta|\leq 2 \}, \\
    &  R_{\eta,1}^{I}=\{ (\xi, \eta)\in \mR^2 \,\big| |\xi|\leq K\ep, \, \delta |\xi+i a| \leq |\eta|\leq 2 \}, \\
    &  R_{\eta,2}^{I}=\{ (\xi, \eta)\in \mR^2 \,\big| |\xi|\leq K\ep, \, A\ep^2\leq |\eta|\leq  \delta |\xi+i a| \}, 
\end{align*}
where $K(K+1)\in [{A}/{2}, A].$ 
We begin with a lemma about  the properties of the  symbol of $\sqrt{-G_a[0]} \,.$
\begin{lem}\label{lem-sym-Ga0}
Let $\mu_a(\xi,\eta)=\sqrt{(\xi+ia)^2+\eta^2}.$ It holds that: 
\begin{align}\label{essym-Ga0}
    0\leq \Re \sqrt{-\mu_a \tanh \mu_a} \leq   
    \left\{ \begin{array}{c}
        a(1-C\delta), \qquad \forall \, (\xi,\eta)\in R_{\xi}^H \cup  R_{\eta,1}^{I}\, , \\[5pt]
         a(1-C A \ep^{2}), \qquad \forall \,(\xi,\eta)\in  R_{\xi}^{I}\cup R_{\eta,2}^{I}\,.
     \end{array} \right.
\end{align}
\end{lem}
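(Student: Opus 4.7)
The plan is to reduce the bound to an explicit algebraic inequality via the polar decomposition $\mu_a = u+iv$ and then estimate it in each of the four sub-regions. By evenness in $\xi$, I assume $\xi \geq 0$; then $u,v \geq 0$ are characterized by $u^2-v^2 = \xi^2-a^2+\eta^2$ and $uv = a\xi$. The classical formula $\tanh(u+iv) = (\sinh 2u+i\sin 2v)/(\cosh 2u+\cos 2v)$ gives $-\mu_a\tanh\mu_a = -R_*-iI_*$ with
\[
R_* = \frac{u\sinh 2u - v\sin 2v}{D}, \qquad I_* = \frac{u\sin 2v+v\sinh 2u}{D}, \qquad D = \cosh 2u+\cos 2v > 0,
\]
and the branch convention forces $\alpha =\colon \Re\sqrt{-\mu_a\tanh\mu_a} = \sqrt{(\sqrt{R_*^2+I_*^2}-R_*)/2} \geq 0$, which already gives the lower bound in \eqref{essym-Ga0}.

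For the upper bound I would first record the algebraic identity $(u^2+v^2)(\sinh^2 2u+\sin^2 2v) = (u\sinh 2u - v\sin 2v)^2 + (u\sin 2v+v\sinh 2u)^2$ (equivalently, $R_*^2+I_*^2 = |\mu_a|^2|\tanh\mu_a|^2$), which when $R_* > 0$ yields via rationalization the key bound
\[
\alpha^2 \;=\; \frac{I_*^2}{2(\sqrt{R_*^2+I_*^2}+R_*)} \;\leq\; \frac{I_*^2}{4R_*} \;=\; \frac{(u\sin 2v + v\sinh 2u)^2}{4D\,(u\sinh 2u-v\sin 2v)}.
\]
A short check shows $R_* > 0$ throughout $R_\xi^H\cup R_\xi^I$ (where $|\xi|\gg a$ forces $u > v$) and through the bulk of the $\eta$-type regions; the degenerate locus $R_* \leq 0$ is confined to the slice $\xi = 0$ with $|\eta| \leq a$, where $u = 0$, $v = \sqrt{a^2-\eta^2}$, and one has the closed form $\alpha = \sqrt{v\tan v}$.

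The quantitative decay is then extracted by Taylor expansion in the small-$|\mu_a|$ regime (which covers $R_\xi^I, R_{\eta,1}^I, R_{\eta,2}^I$ and the bounded part of $R_\xi^H$). Using $\mu_a\tanh\mu_a = \mu_a^2(1-\mu_a^2/3+O(\mu_a^4))$ together with the exact identity $\sqrt{-\mu_a^2} = v-iu = -i\mu_a$, one derives
\[
\Re\sqrt{-\mu_a\tanh\mu_a} \;=\; v\Bigl(1-\frac{u^2}{2}+\frac{v^2}{6}\Bigr)+O(|\mu_a|^5).
\]
Combining with $uv = a\xi$ and the lower bound $u \geq \sqrt{R}$ (when $R > 0$) gives a bound controlled by the ratio $\eta^2/(a^2+\xi^2+\eta^2)$. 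In $R_\xi^H\cup R_{\eta,1}^I$, the defining constraints bound this ratio below by a multiple of $\delta^2$, yielding $\alpha\leq a(1-C\delta)$ after $\sqrt{1-x}\leq 1-x/2$; in $R_\xi^I\cup R_{\eta,2}^I$, the tighter window $|\xi|\geq K\ep$ (resp.\ $|\eta|\geq A\ep^2$) with $K(K+1)\in[A/2,A]$ strengthens the ratio to order $A\ep^2/\hat a^2$, yielding $\alpha\leq a(1-CA\ep^2)$. For the unbounded portion of $R_\xi^H$ where $|\mu_a|\to\infty$ the Taylor expansion fails; there $u\geq \delta/\sqrt 2$ makes $|\tanh\mu_a-1|\lesssim e^{-2u}$ negligible, so $-\mu_a\tanh\mu_a\approx -\mu_a$ and $\alpha\lesssim a/\sqrt{|\xi|}$, comfortably within the bound.

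The main obstacle I anticipate is the handling of the degenerate locus $R\leq 0$ (parts of $R_{\eta,2}^I$ and of $R_{\eta,1}^I$ near the $\xi=0$ axis), since there the convenient rationalization $\alpha^2\leq I_*^2/(4R_*)$ either degrades or is unavailable. In those pockets one must work directly with $\alpha^2 = (\sqrt{R_*^2+I_*^2}-R_*)/2$, starting from the closed form $\alpha|_{\xi=0} = \sqrt{v\tan v}$ with $v\leq\sqrt{a^2-\eta^2}$, and propagating the estimate to nearby $\xi$ via the monotonicity of $b\mapsto b\tan b$ and a perturbation argument off the axis. Apart from this case distinction, the proof reduces to elementary trigonometric and hyperbolic manipulations together with careful bookkeeping of the constants $\delta, A, K, \hat a, \ep$.
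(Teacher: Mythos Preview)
Your plan has the right algebraic ingredients but a real gap in coverage. You assert the Taylor expansion $\alpha = v(1-u^2/2+v^2/6)+O(|\mu_a|^5)$ handles $R_\xi^I$, $R_{\eta,1}^I$, $R_{\eta,2}^I$ and the bounded part of $R_\xi^H$; this is not so. In $R_{\eta,1}^I$ and $R_\xi^I$ the transverse frequency $|\eta|$ ranges up to $2$, so $|\mu_a|$ is of order $2$ --- well beyond the radius $\pi/2$ where the $\tanh$ series converges. Only $R_{\eta,2}^I$ genuinely lives in the small-$|\mu_a|$ regime. A related slip: you say the gain in $R_\xi^H\cup R_{\eta,1}^I$ comes from the ratio $\eta^2/(a^2+\xi^2+\eta^2)$ being $\gtrsim\delta^2$, but $R_\xi^H$ contains points with $\eta=0$ where this ratio vanishes; the gain there must come instead from $u=\Re\mu_a$ being bounded below.

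The paper sidesteps all of this by quoting a single uniform inequality (Lemma~3 of \cite{Pego-Sun}),
\[
0 \;\leq\; \Re\sqrt{-\mu_a\tanh\mu_a} \;\leq\; \frac{|\Im\mu_a|}{\cos(\Im\mu_a)}\,\sqrt{\frac{\tanh(\Re\mu_a)}{\Re\mu_a}},
\]
valid for all $(\xi,\eta)$ without case distinctions. This separates the two improvement mechanisms cleanly: in the $\eta$-type regions $R_{\eta,1}^I,\,R_{\eta,2}^I$ one bounds $v=|\Im\mu_a|$ strictly below $a$ via the ODE $\partial_{\eta^2}v=-v/(2|\mu_a|^2)$, which integrates to $v\leq a\exp\bigl(-\eta^2/(a^2+\xi^2+\eta^2)\bigr)$; in the $\xi$-type regions $R_\xi^H,\,R_\xi^I$ one bounds $u=\Re\mu_a$ from below and uses that $\sqrt{\tanh u/u}$ is decreasing. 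No Taylor expansion, no degenerate-locus subcase, no intermediate-$|\mu_a|$ gap. Your rationalization bound $\alpha^2\leq I_*^2/(4R_*)$ is in fact a route to (a sharper form of) this same inequality; had you pushed it through in place of the Taylor expansion, it would close the gap and also dissolve the ``main obstacle'' you anticipate at the degenerate locus.
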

As a direct consequence of the above lemma, we have the following:
\begin{prop}\label{lem-lampm1}
    Let $\lambda_{\pm}^0(\xi,\eta)= i(\xi+ia)\pm \sqrt{1-\ep^2} \sqrt{-\mu_a \tanh \mu_a}(\xi,\eta),$ for any 
  $\lambda \in \Omega_{\beta, \ep},$
   such that $\Re \lambda>-\beta \ep^3,$  it holds that, 
  \beq\label{lda-ldap}
     \Re (\lambda-\lambda_{-}^0) \geq a-\beta\ep^3, \qquad  \forall \, (\xi,\eta)\in R_{\eta}^{L}.
  \eeq
 \beq
      \begin{aligned}
     \Re (\lambda-\lambda_{+}^0) \gtrsim \left\{ \begin{array}{c}
        \delta   a, \qquad \forall\, (\xi,\eta)\in R_{\xi}^H \cup  R_{\eta,1}^{I} ,  \\[4pt]
          A \ep^{3}, \qquad \forall\, (\xi,\eta)\in  R_{\xi}^{I}\cup R_{\eta,2}^{I} \, .
     \end{array} \right. 
    \end{aligned}
\eeq
\end{prop}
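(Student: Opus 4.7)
The proof is essentially an algebraic corollary of Lemma~\ref{lem-sym-Ga0}, so my plan is to decompose $\Re\lambda_\pm^0$ into an explicit constant part and the ``square-root'' part and then feed in the four regional bounds from that lemma.

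First I would record the basic identity $\Re\bigl[i(\xi+ia)\bigr]=-a$, from which
\[
\Re\lambda_\pm^0(\xi,\eta)=-a\pm\sqrt{1-\ep^2}\,\Re\sqrt{-\mu_a\tanh\mu_a}(\xi,\eta).
\]
For the bound on $\lambda-\lambda_-^0$ this is immediate: since Lemma~\ref{lem-sym-Ga0} gives $\Re\sqrt{-\mu_a\tanh\mu_a}\geq0$ on all of $R_\eta^L$, one has $\Re\lambda_-^0\leq -a$ throughout $R_\eta^L$, hence
\[
\Re(\lambda-\lambda_-^0)\geq \Re\lambda+a\geq a-\beta\ep^3,
\]
which is exactly \eqref{lda-ldap}.

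For the bound on $\lambda-\lambda_+^0$, the plan is to combine the inequality $\sqrt{1-\ep^2}\leq 1$ (or its refinement $\sqrt{1-\ep^2}\leq 1-\ep^2/2$ in the second case) with Lemma~\ref{lem-sym-Ga0}. On $R_\xi^H\cup R_{\eta,1}^I$, Lemma~\ref{lem-sym-Ga0} gives $\Re\sqrt{-\mu_a\tanh\mu_a}\leq a(1-C\delta)$, so
\[
\Re\lambda_+^0\leq -a+a(1-C\delta)=-Ca\delta,
\]
and therefore $\Re(\lambda-\lambda_+^0)\geq Ca\delta-\beta\ep^3\gtrsim \delta a$, since $a=\hat a\,\ep\gg\ep^3$ for small $\ep$. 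On $R_\xi^I\cup R_{\eta,2}^I$, Lemma~\ref{lem-sym-Ga0} yields $\Re\sqrt{-\mu_a\tanh\mu_a}\leq a(1-CA\ep^2)$; multiplying by $\sqrt{1-\ep^2}\leq 1-\ep^2/2$ and expanding,
\[
\sqrt{1-\ep^2}\,a(1-CA\ep^2)\leq a\bigl(1-(C A+\tfrac12)\ep^2+O(\ep^4)\bigr),
\]
so that $\Re\lambda_+^0\leq -CA\hat a\,\ep^3(1+o(1))$ and hence $\Re(\lambda-\lambda_+^0)\geq CA\hat a\,\ep^3-\beta\ep^3\gtrsim A\ep^3$, provided $\beta$ is chosen small enough relative to $\hat a$ (which is harmless because $\beta\leq \beta_0$ is at our disposal).

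There is no real obstacle; the statement is genuinely a repackaging of Lemma~\ref{lem-sym-Ga0}. The only subtlety to watch is the interplay between the scales $a=\hat a\,\ep$, $\beta\ep^3$ and $A\ep^2$, which is what forces the two separate cases in the bound for $\lambda-\lambda_+^0$: in the first region $\delta$ is an $O(1)$ parameter and the damping is of order $\ep$, while in the second region the damping is degraded to order $\ep^3$ because $\mu_a$ is nearly singular there, and the factor $A$ in the final estimate is precisely what compensates this degradation when $A$ is taken large enough.
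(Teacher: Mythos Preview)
Your proposal is correct and follows exactly the paper's approach: both reduce to the identity $\Re\lambda_\pm^0=-a\pm\sqrt{1-\ep^2}\,\Re\sqrt{-\mu_a\tanh\mu_a}$ and then feed in the regional bounds of Lemma~\ref{lem-sym-Ga0}. One small remark: in the second region the refinement $\sqrt{1-\ep^2}\leq 1-\ep^2/2$ is not actually needed, since already $\sqrt{1-\ep^2}\leq 1$ gives $\Re\lambda_+^0\leq -a\cdot CA\ep^2=-C\hat aA\ep^3$, which absorbs the $-\beta\ep^3$ for $A$ large.
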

\begin{proof}
It follows from Lemma \ref{lem-sym-Ga0} %the fact $\gamma=1-\ep^2$ %direct computation 
and the identity 
\beqs
      \Re (\lambda-\lambda_{\pm})=\Re \lambda -\bigg(-a\pm \sqrt{1-\ep^2\,}\, \Re \sqrt{-\mu_a \tanh \mu_a} \bigg).
\eeqs
\end{proof}
\subsubsection{Proof of \eqref{es-uh} }\label{subsub-h}
 We focus in this subsection on the %region that we  called the 
 uniform high longitudinal frequency region
 $\{|\xi|\geq \delta\}$ on which $m\big(\sqrt{G_a[0]}\big)=\sqrt{\mu_a\tanh \mu_a}(\xi)\approx_{\delta} m\big(\f{\na_a}{\langle \na \rangle^{1/2}}\big)$ (we denote $m(\cdot)$ the symbol of a Fourier multiplier). Therefore, for any $f\in \tilde{\mathbb{I}}_{\delta} H_{*}^{1/2},$
it holds that $\|f\|_{H_{*}^{1/2}}\approx \|\sqrt{G_a[0] }f\|_{L^2}.$ 
We thus define $(\tilde{U}, \tilde{F}) =\diag \,(1, \sqrt{G_a[0]}\,)(U,F)$ and aim to show the following estimate for $\tilde{U}_h=: \tilde{\mathbb{I}}_{\delta}\, \tilde{U}:$ 
%\diag \,(1, \sqrt{G_a[0]}\,)U^h:$
%To show \eqref{es-uh}, it suffices to prove: 
\beq \label{es-uh-1}
 \|\tilde{U}_h\|_{\cX}\lesssim_{\delta} \ep^{-1}\|\tilde{F}\|_{\cX}+\ep \|{U}\|_{X}, \quad (\cX=L^2(\mR^2)\times L^2(\mR^2)).
\eeq

 Taking  $\tilde{\mathbb{I}}_{\delta}(D_x)\,\diag \,(1, \sqrt{G_a[0]}\,)$ on both sides of \eqref{reolveq-inter} and using 
 $\tilde{\mathbb{I}}_{\delta}\circ \tilde{\mathbb{I}}_{\delta}=\tilde{\mathbb{I}}_{\delta},$
 we find that $\tilde{U}^h$ satisfies
 \beq\label{eq-tildeU}
\big(\lambda-\tilde{\mathbb{I}}_{\delta}\,\cL_a^0\,\tilde{\mathbb{I}}_{\delta}\big)\, \tilde{U}_h=\tilde{F}+ H(U) 
\eeq
where 
$$H(U)=\tilde{\mathbb{I}}_{\delta} \, \cL_a^1 \, {U}+\tilde{\mathbb{I}}_{\delta}\, \diag \,(1, \sqrt{G_a[0]})\,  \cL_a^2 \,{\mathbb{I}}_{\delta}\,   {U}$$ and 
\beq\label{defcL0-1} 
\small \cL_a^0= \left( \begin{array}{cc}
   (\p_x-a)(d_c\cdot)  &  \sqrt{G_a[0]} \\[5pt]
 -\sqrt{G_a[0]} ( w_c \cdot )& d_c(\p_x-a)%+
\end{array}\right), %
\quad \cL_a^1= \left( \begin{array}{cc} 0  & G_a[\zeta_c]-G_a[0]  \\[5pt] 0 & 0\end{array}\right), 
\quad \cL_a^2= \left( \begin{array}{cc}
   -(\p_x-a)(v_c \cdot)  & 0\\[5pt]
d_c \p_x Z_c  &  -v_c (\p_x-a)\end{array}\right).
\eeq
Denote $\tilde{U}_h=(\tilde{\zeta}_h, \tilde{\vp}_h).$
Taking the real part of the inner product between the equation \eqref{eq-tildeU} and $(w_c\, \tilde{\zeta}_h, \tilde{\vp}_h)$ in $\cX,$ we find the identity
\beq\label{EI-high}
\begin{aligned}
&\Re \lambda \int w_c |\tilde{\zeta}_h|^2+ |\tilde{\vp}_h|^2\, \d x \d y + a \int d_c \big(w_c |\tilde{\zeta}_h|^2+ |\tilde{\vp}_h|^2\big) \, \d x \d y\\
& \qquad  +\Re\int \big(\sqrt{G_a[0]}-\sqrt{G_{-a}[0]}\big) ( w_c \tilde{\zeta}_h) \, \overline{\tilde{\vp}_h} \, \d x\d y
   \\
   & = \cJ_1+\cJ_2+\cJ_3 
\end{aligned}
\eeq
where 
\begin{align*}
   \cJ_1&= \f12 \,\Re \int \big(2  w_c\p_x d_c-\p_x({d_c w_c})\big)|\tilde{\zeta}_h|^2-\p_x d_c| \tilde{\vp}_h|^2\, \d x\d y, \\
   \cJ_2&= \Re \int {\tilde{\vp}_h } \,\, \overline{\sqrt{G_{-a}[0]} \big[\tilde{\mathbb{I}}_{\delta} , w_c\big] \tilde{\mathbb{I}}_{\delta}\tilde{\zeta}_h}-d_c \tilde{\vp}_h \,\, \p_x\big[\tilde{\mathbb{I}}_{\delta} , w_c\big] \tilde{\mathbb{I}}_{\delta} \overline{\tilde{\zeta}_h}\, \d x \d y ,\\
   \cJ_3& =\Re \int (\tilde{F}+ H(U))\cdot \overline{(w_c \tilde{\zeta}_h, \tilde{\vp}_h)}\, \d x\d y\, .
\end{align*}
By using the Cauchy-Schwarz inequality, the real part of the left hand side of \eqref{EI-high} is larger than 
\begin{align*}
   \big( \Re \lambda+ a -\sup_{x\in \mR} \sqrt{w_c}(x)\sup_{ |\xi|\geq \delta, \, |\eta|\leq 2 } \, \Im \sqrt{\mu_a \tanh \mu_a}(\xi,\eta)\big) \big(\|\sqrt{w_c}\tilde{\zeta}_h\|_{L^2}^2+\|\tilde{\vp}_h\|_{L^2}^2\big),
\end{align*}
which  thanks to {Proposition \ref{lem-lampm1}, %provided $\ep$ be sufficiently small, 
is greater than
\beqs 
  a(1-C\delta) \big(\|\sqrt{w_c}\tilde{\zeta}_h\|_{L^2}^2+\|\tilde{\vp}_h\|_{L^2}^2\big)\geq \f{\hat{a}\ep}{2} \big(\|\sqrt{w_c}\tilde{\zeta}_h\|_{L^2}^2+\|\tilde{\vp}_h\|_{L^2}^2\big)
\eeqs
for any $ \lambda \in \Omega_{\beta,\ep}.$ We now control the terms $\cJ_1-\cJ_3$ in the right hand side of \eqref{EI-high}.
Since $\|\p_x(d_c, w_c)\|_{L_x^{\infty}}\lesssim \ep^{3},$ it holds that for sufficiently small $\ep,$
\beqs 
w_c(x)=\gamma-d_c\p_x Z_c={1-\ep^2}+\cO(\ep^3)\in [1/2, 1], \quad \forall\, x \in \mR\,,
\eeqs
we thus have that 
\beq 
\cJ_1\lesssim \ep^3 \big(\|\sqrt{w_c}\tilde{\zeta}_h\|_{L^2}^2+\|\tilde{\vp}_h\|_{L^2}^2\big).
\eeq
Next, it follows from the commutator estimate \eqref{es-commutator} stated in Lemma \ref{lem-commutator}  below,  that we use for $s=0$,  that
\beqs 
\|\big(\mathbb{I}_{2}(D_{y}) \sqrt{G_{-a}[0]}, \p_x\big) \big[\tilde{\mathbb{I}}_{\delta} , w_c\big] \tilde{\mathbb{I}}_{\delta}\|_{B(L^2)}\leq C \|\cF_{x\rightarrow \xi}(w_c)\|_{L_{\xi}^1}\lesssim \ep^2,
\eeqs 
where $C=\delta+\sup_{|\xi|\leq \delta, |\eta|\leq 2} |\sqrt{\mu_a\tanh \mu_a}(\xi,\eta)|.$
Consequently, by again the Cauchy-Schwarz inequality, it holds that
\beqs 
\cJ_2 \lesssim \ep^2 \big(\|\sqrt{w_c}\tilde{\zeta}_h\|_{L^2}^2+\|\tilde{\vp}_h\|_{L^2}^2\big).
\eeqs
Finally, thanks to Lemma \ref{lem-Ga-Ga0-g} and Lemma \ref{lem-clatla} in the appendix, we have
\beqs 
\|H(U)\|_{\cX}\lesssim \ep^2 \|U\|_{X}.
\eeqs
The Cauchy-Schwarz inequality then stems: 
\beqs 
\cJ_3\lesssim  (\|\tilde{F}\|_{\cX}+\ep^2 \|U\|_{X})\big(\|\sqrt{w_c}\tilde{\zeta}_h\|_{L^2}+\|\tilde{\vp}_h\|_{L^2}\big).
\eeqs
To summarize, it holds that
\beqs 
\cJ_1+\cJ_2+\cJ_3\lesssim \ep^2 \big(\|\sqrt{w_c}\tilde{\zeta}_h\|_{L^2}^2+\|\tilde{\vp}_h\|_{L^2}^2\big)+ (\|\tilde{F}\|_{\cX}+\ep^2 \|U\|_{X})\big(\|\sqrt{w_c}\tilde{\zeta}_h\|_{L^2}+\|\tilde{\vp}_h\|_{L^2}\big).
\eeqs
Since $\|\tilde{U}_h\|_{\cX}^2\approx \|\sqrt{w_c}\tilde{\zeta}_h\|_{L^2}^2+\|\tilde{\vp}_h\|_{L^2}^2,$ we find eventually that 
\beqs 
\|\tilde{U}_h\|_{\cX}\lesssim \ep \|\tilde{U}_h\|_{\cX}+ \ep^{-1}\|\tilde{F}\|_{\cX}+\ep \|U\|_{X},
\eeqs
 which yields  \eqref{es-uh-1} by assuming that  $\ep$ is  sufficiently small. 
 
In the next lemma, we state a commutator estimate that is used in the above proof.
It is a simple generalization of the Proposition 5.1 of \cite{Pego-Sun} in one dimension:
\begin{lem}\label{lem-commutator}
    Let $\mathcal{A}, \mathcal{B}, \mathcal{W} $ be three Fourier multipliers on the space $L^2(\mR^2)$ with symbols $A, B, W$ and let $f\in C^{\infty}(\mR)$ and $\cF(f)$ be sufficiently localized, for any $s\in \mR$ such that $C_{s}$
     and $C_{f,s}$ below are finite,  it holds that 
    \beq \label{es-commutator}
\|\mathcal{A}[ \mathcal{B}, f] \mathcal{W} \|_{B(L^2(\mR^2))}\lesssim C_s \, C_{f,s}
    \eeq
   where    
$$C_s=\sup_{(\xi,\xi',\eta)\in \mR^3}\f{A(\xi,\eta) \big(B(\xi,\eta)-B(\xi',\eta)\big)W(\xi',\eta)}{|\xi-\xi'|^s}, \qquad  C_{f,s}= \int_{\mR} |\xi|^s |\cF(f)|(\xi) \,\d \xi.$$
\end{lem}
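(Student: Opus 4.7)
The plan is to write the commutator as a Fourier integral operator whose kernel is controlled pointwise by $C_s$ times $|\xi-\xi'|^s|\widehat{f}(\xi-\xi')|$, and then to apply Young's convolution inequality in the longitudinal variable $\xi$ (with $\eta$ frozen as a parameter) and Plancherel.

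First I would compute, for $u\in L^2(\mR^2)$, the partial Fourier transform in $(x,y)$ of $\mathcal{A}[\mathcal{B},f]\mathcal{W}u$. Since $f$ depends only on $x$, multiplication by $f$ corresponds to convolution with $\widehat{f}$ in the $\xi$ variable only. Writing $\hat{u}(\xi,\eta)$ for the full Fourier transform, a direct computation yields
\begin{equation*}
\mathcal{F}\bigl(\mathcal{A}[\mathcal{B},f]\mathcal{W}u\bigr)(\xi,\eta)=\frac{1}{2\pi}\int_{\mR}\widehat{f}(\xi-\xi')\,A(\xi,\eta)\bigl(B(\xi,\eta)-B(\xi',\eta)\bigr)W(\xi',\eta)\,\hat{u}(\xi',\eta)\,d\xi'.
\end{equation*}
Thus the operator acts on the Fourier side as an integral operator in $\xi$ (with $\eta$ a parameter) whose kernel is $K(\xi,\xi',\eta)=(2\pi)^{-1}\widehat{f}(\xi-\xi')A(\xi,\eta)(B(\xi,\eta)-B(\xi',\eta))W(\xi',\eta)$.

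Second, using the definition of $C_s$, I would factor out $|\xi-\xi'|^s$ to obtain the pointwise bound
\begin{equation*}
|K(\xi,\xi',\eta)|\leq \frac{C_s}{2\pi}\,|\xi-\xi'|^s\,|\widehat{f}(\xi-\xi')|,
\end{equation*}
so the kernel is dominated by a convolution kernel in $\xi$ that is independent of $\eta$.

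Third, I would apply Young's inequality $\|g\ast h\|_{L^2_\xi}\leq \|g\|_{L^1_\xi}\|h\|_{L^2_\xi}$ with $g(\xi)=|\xi|^s|\widehat{f}(\xi)|$ and $h(\xi)=|\hat u(\xi,\eta)|$, for each fixed $\eta$, to get
\begin{equation*}
\bigl\|\mathcal{F}(\mathcal{A}[\mathcal{B},f]\mathcal{W}u)(\cdot,\eta)\bigr\|_{L^2_\xi}\leq \frac{C_s}{2\pi}\,\bigl\||\cdot|^s\widehat{f}\bigr\|_{L^1_\xi}\bigl\|\hat u(\cdot,\eta)\bigr\|_{L^2_\xi}=\frac{C_s\,C_{f,s}}{2\pi}\bigl\|\hat u(\cdot,\eta)\bigr\|_{L^2_\xi}.
\end{equation*}
Squaring, integrating in $\eta$, and invoking Plancherel then yields $\|\mathcal{A}[\mathcal{B},f]\mathcal{W}u\|_{L^2(\mR^2)}\lesssim C_s\,C_{f,s}\,\|u\|_{L^2(\mR^2)}$, which is the desired operator bound.

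There is no serious obstacle here; the argument is essentially Schur's test in the partial-Fourier picture, and the assumption that $\widehat{f}$ is sufficiently localized is precisely what guarantees $C_{f,s}<\infty$. The only points that need a little care are the bookkeeping of $2\pi$ factors in the convention for the Fourier transform, and the verification that the formal manipulations are justified, which follows by first taking $u$ in a dense subclass (e.g.\ Schwartz) and then extending by density using the uniform $L^2$ bound.
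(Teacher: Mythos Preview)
Your argument is correct and is the natural proof: write the operator on the Fourier side as an integral operator in $\xi$ with $\eta$ fixed, bound the kernel pointwise by $C_s|\xi-\xi'|^s|\widehat f(\xi-\xi')|$, and apply Young's inequality followed by Plancherel. The paper itself does not give a proof of this lemma, merely noting that it is a simple generalization of Proposition~5.1 in Pego--Sun to two dimensions; your write-up is exactly the kind of argument that reference would contain.
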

%Let us mention that the estimate \eqref{es-commutator} will be used only for those $s$ such that $C_s<+\infty.$

\subsubsection{Proof of \eqref{es-us} }
In this subsection, we prove the estimate \eqref{es-us} for the singular part $U_s=\pi_s({D}) U.$ Since $\pi_s$ localizes in  the low frequency region and $\|\pi_s(D) f\|_{H_{*}^{1/2}}\approx \|\na_a \pi_s(D) f \|_{L^2},$ it is convenient to split $L_a$ into two parts: 
\beqs 
 {L}_a= \left( \begin{array}{cc}
   \p_x-a  &  -\Delta_a   \\[5pt]
 -\gamma & \p_x-a
\end{array}\right)+\left( \begin{array}{cc}
   -(\p_x-a)(v_c\cdot)  &  G_a[\zeta_{c}] +\Delta_a  \\[5pt]
 - \p_x Z_c & -v_c(\p_x-a)
\end{array}\right)=\colon \tilde{L}_a^0+\tilde{L}_a^1\, ,
\eeqs
where $\Delta_a=(\p_x-a)^2+\p_y^2.$
Applying  the Fourier multiplier $\pi_s(D)$ on both sides of \eqref{reolveq-inter}, we find that $U_s$ solves the equation: 
\beqs 
  \big(\lambda-\tilde{L}_a^0 \big)U_{s}=\pi_s(D)(F+\tilde{L}_a^1 U)\, .
\eeqs
%Let $V_s=\colon e^{\lambda t} U_S
Denote $(U_{\lambda}, F_{\lambda})=e^{\lambda t}(U, F)\in  C([0,\infty); X)^2.$
Since $e^{\lambda t} U_s$ solves the evolution problem
 \beq\label{evoeq}
  \big( \pt  -  \tilde{L}_a^0 \big)V=\pi_s(D)(F_{\lambda} + \tilde{L}_a^1 U_{\lambda} )\,,
    \eeq
we first prove some a priori estimates for the equation \eqref{evoeq}.
Note that we could perform the energy estimates directly on the eigenvalue problem, 
but the structure is somewhat more clear if we go through the time evolution problem.

Let us set  $V=(V_1, V_2)^t$ and define the energy functional $$\cE_s (V(t))=\colon \f12 \int \big(\gamma|V_1(t)|^2+|\na_a V_2(t)|^2\big)\, %e^{2ax}
\, 
\d x \d y\, . $$ 
We get, after performing the straightforward energy estimate for the equation
\eqref{evoeq} in the space $L^2\times \na_a ^{-1}L^2,$ that
\begin{align}\label{EI}
\pt \,\cE_s(V%^{\ell}
)& = \cI_1+\cI_2 +\cI_3
\end{align} 
where
\begin{align*}
    \cI_1&=-2a\, \cE_s(V)+ 2 a \gamma\, \Re\int  V_1\, (\p_x-a)\overline{ V_2} \,
    \d x \d y,\\
    \cI_2&=\Re \int 
\big(F_{\lambda 1}  \overline{ V_1}+ \na_a F_{\lambda 2} \cdot \na_a  \overline{ V_2} \big) \,
\d x\d y,\\
\cI_3&= \Re \int 
\big(S_1 \overline{ V_1}+ \na_a S_2 \cdot \na_a \overline{ V_2}\big) \,
\d x\d y,
\end{align*}
and 
\beqs
S_1=\pi_s({D}) \big(-(\p_x-a)(v_c U_{\lambda 1})+(G_a[\zeta_c]+\Delta_a)U_{\lambda 2}\big)  ; \quad  S_2=-\pi_s({D})\big(d_c \p_x Z_c U_{\lambda 1}+v_c (\p_x -a)U_{\lambda 2}\big) .
\eeqs
Since $\pi_s(\xi,\eta)$ is supported in   $\{|\eta|\geq \f{\delta}{2}|\xi+ia|\},$ we have
$$\|(\p_x-a) V_2\|_{L^2}\leq \f{1}{\sqrt{1+{\delta^2}/{4}}}\|\na_a V_2\|_{L^2}.$$
It then follows from the Cauchy-Schwarz inequality that 
\beq\label{es-I1}
\cI_1\leq -2a\bigg(1-\sqrt{\f{\gamma}{1+\delta^2/4}}\bigg) \cE_s (V)\leq- \f{\delta^2a} {4} \cE_s(V). 
\eeq
Next, by using again the Cauchy-Schwarz inequality 
\beq \label{es-I2}
\cI_2\lesssim  \|(F_{\lambda1},\na_a F_{\lambda2})\|_{L^2} \|(V^{\ell}, \na_a V_2^{\ell})\|_{L^2}\lesssim_{\gamma}\|F_{\lambda}\|_{X}  \sqrt{\cE_a}(V). 
\eeq
We now estimate the term $I_3$ which could be bounded as $I_3\leq \f{1}{\gamma} \|(S_1, \na_a S_2)\|_{L^2} \sqrt{\cE_s}(V).$   
Recalling that $v_c=\cO_{L_x^{\infty}}(\ep^2), \, \p_x Z_c= \cO_{L_x^{\infty}}(\ep^3)$ %$\|v_c\|_{W_x^{1,\infty}}\lesssim \ep^2, \quad \|\p_x Z_c\|_{L_x^{\infty}}\lesssim \ep^3$
and  that $\pi_s(\xi, \eta)$ is supported on the region $\cS_{K,\,\delta}=\{|\xi|\leq K\ep,\, \delta|\xi|\leq |\eta|\leq 2|\xi+ia| \},$
 one readily gets that:
\beq\label{es-1}
\begin{aligned}
&\|\pi_s({D}) (\p_x-a)(v_c U_{\lambda 1})\|_{L^2}+\|\pi_s({D})\na_a\big(d_c \p_x Z_c U_{\lambda 1}\big)\|_{L^2}\lesssim K\ep^3 \|U_{\lambda1}\|_{L^2} ,\\
&\quad \|\pi_s({D})\na_a (v_c (\p_x -a)U_{\lambda 2})\|_{L^2}\\
&\lesssim \|\pi_s({D})\na_a\p_x\big(  v_c U_{\lambda 2}\big)\|_{L^2}+\|\pi_s({D})\na_a(\p_x v_c U_{\lambda 2})\|_{L^2}+K\ep^2\|v_c U_{\lambda 2}\|_{L^2}\\
&\lesssim
(K^2\ep^2 \|v_c\|_{L_x^{\infty}}+K\ep \|\p_x v_c\|_{L_x^{\infty}})\|U_{\lambda2}\|_{L^2}\lesssim K^2\ep^3 \|U_{\lambda2}\|_{H_{*}^{\f12}}.
\end{aligned}
\eeq
Moreover, we prove in Lemma  \ref{lem-G+Delta} that: 
\beqs 
\|\pi_{s}({D})  (G_a[\zeta_c]+\Delta_a)\|_{B(L^2)}\lesssim (K\ep)^3,
\eeqs
which, combined with \eqref{es-1}, yields 
\beqs 
\|(S_1, \na_a S_2)\|_{L^2}\lesssim (K\ep)^3 \|U_{\lambda}\|_{X}.
\eeqs
We thus conclude that 
\beq\label{es-I3}
\cI_3\lesssim (K\ep)^3 \|U_{\lambda}\|_{X} \sqrt{\cE_s}(V).
\eeq
Plugging the estimates \eqref{es-I1}-\eqref{es-I3} into \eqref{EI} and 
 applying the Gr\"onwall inequality, we find
\beq\label{EnergyIneq}
%\|V^{\ell}(t)\|_{X_a}\approx 
\sqrt{\cE_s}(V(t)) \leq e^{-\delta^2 a t/4} \sqrt{\cE_a}(V(0)) %\|V^{\ell}(0)\|_{X_a}
+C \int_0^t  e^{-\delta^2 a (t-s)/4} \big( \|F_{\lambda}(s)\|_{X}+ (K\ep)^3 \|U_{\lambda}(s)\|_{X}\big) \,\d s.
\eeq
As $e^{\lambda t} U_s$ solves \eqref{evoeq} with $(U_{\lambda}, F_{\lambda})=e^{\lambda t}(U, F),$
we then derive that %$U_s,$
\beqs
\sqrt{\cE_s}(U_s) 
%\|e^{\lambda t} u^{\ell}\|_{X_a}
\leq e^{-\alpha_{\lambda}t}\sqrt{\cE_s}(U_s) +C \big(\|F\|_{X}+ K^3 \ep^3 \|U\|_{X}\big)  \int_0^t e^{-\alpha_{\lambda}(t-s)}\, \d s\, 
\eeqs
where $\alpha_{\lambda}=\Re \lambda+\delta^2 a/4. $ As $a=\hat{a}\ep$ and $\Re \lambda>-\beta \ep^3,$ one has that $\alpha_{\lambda}\geq \delta^2 a/8>0 $ as long as $\ep$ being small enough.
Consequently, by taking the limit $t \rightarrow + \infty$, we find
\beqs 
\sqrt{\cE_s}(U_s) \lesssim \ep^{-1}\|F\|_{X}+K^3\ep^2 \|U\|_{X},
\eeqs
which leads to \eqref{es-us} since  
$\sqrt{\cE_s}(U_s)\approx \|U_s\|_{X}.$

\subsubsection{Proof of \eqref{es-ur}}\label{subsub-l}
In this subsection, we  estimate $U_r=\pi_r(D)U$ which is localized in   $\{(\xi,\eta)\in \mR\big|\, |\xi|\leq \delta, A\ep^2\leq |\eta|\leq 2\} \backslash \cS_{K, \delta}$ and $\|U_{r}\|_{X}\approx \|\diag \big(1, \sqrt{G_a[0]}\big)U_r\|_{\cX}.$ %In order to detect more carefully
Since the damping mechanism is weaker in this region (proportional to $\ep^3$), 
we need to use  an energy functional  which involves the background surface wave $\zeta_c$ and is equivalent to 
the norm $\|\diag \big(1, \sqrt{G_a[0]}\big)U_r\|_{\cX}.$

Taking the Fourier multiplier $\pi_r(D)$ on both sides of \eqref{reolveq-inter}, we find that $U_r$ solves the equation
\beqs 
  \big(\lambda-\pi_r(D) L_a \pi_r(D) \big)U_{r}=\pi_r(D)\big(F+%\tilde
  {L}_a^1 (U_h+U_s)\big),
\eeqs
where $L_a^1$ is defined in \eqref{def-La0-1}.
%\beq\label{deftla}  \tilde{L}_a= \left( \begin{array}{cc}   -(\p_x-a)(v_c \cdot)  & G_a[\zeta_c]-G_a[0]\\[5pt]d_c \p_x Z_c  &  -v_c (\p_x-a)\end{array}\right). \eeq
Denote $W_{\lambda}=\pi_r(D)  e^{\lambda t}\big(F+{L}_a^1 (U_h+U_s)\big).$ As in the estimate for $U_s,$ we consider the evolution problem: 
\beq\label{evo-ur}
  \big(\p_t- \pi_r (D) L_a \pi_r (D)\big)V= W_{\lambda}. %\pi_r(D)\big(F+\tilde{L}_a (U_h+U_s)\big),
\eeq
We shall use  the   multiplier $ Q_a(D)=\sqrt{1-\tanh^2 \mu_a(D)}$
and  the energy functional
\beqs 
\cE_r (V(t))=\colon \f12 \int \bigg(\gamma\,|V_1(t)|^2+|\sqrt{G_a [0]}V_2(t)|^2+\zeta_c\big|\na_a Q_a(D)V_2(t)\big|^2\bigg)\, 
\, \d x \d y.
\eeqs
By computing the time derivative of  the above functional and then by using  the equation \eqref{evo-ur}, we find the energy identity
\begin{align*}
    \pt \,\cE_r (V)=\cK_1+\cdots \cK_6\, ,
\end{align*}
where 
\begin{align*}
    \cK_1&=-a \int( 1-v_c(x))\bigg(\gamma\,|V_1(t)|^2+|\sqrt{G_a [0]}V_2(t)|^2+\zeta_c\big|\na_a Q_a(D)V_2(t)\big|^2\bigg)\, 
\, \d x \d y \\
  & \qquad\qquad   - \gamma\, \Re \int \big( \sqrt{G_a[0]}-\sqrt{G_{-a}[0]}\big) V_1 \, \overline{\sqrt{G_a[0]} V_2} \,
    \d x \d y,\\
    \cK_2&=\gamma \,  \Re \int \pi_r(D)\big(G_a[\zeta_c]-G_a[0]+Q_a(D)\div_a(\zeta_c\na_a Q_a(D))\big) \pi_r(D)V_2\,\,\overline{V_1} \, \d x\d y,\\
     \cK_3&=\gamma \,  \Re  \int  \div_a(\zeta_c\na_aQ_a(D)V_2) \,\,\overline{( Q_a(D)- Q_{-a}(D))V_1
 }\, \d x\d y,\\
    \cK_4&= 2a  \gamma\, \Re\int   \,\zeta_c (\p_x-a) Q_a(D)V_2 \, \,\overline{Q_a(D)V_1} \, \d x\d y ,\\
\cK_5&=\f{1}{2} \int \p_x d_c \big(\gamma |V_1|^2-|\sqrt{G_a[0]}V_2|^2\big) - \p_x (d_c\,\zeta_c) |\na_a Q_a(D)V_2|^2 \,\d x\d y, \\%-2 \Re  \na_a Q_a(D)\p_x(v_c V_2) \cdot \overline{\na_a Q_a(D)V_2 }, \\
\cK_6&= \Re \int  \pi_r(D)\sqrt{G_a[0]}(d_c\p_x Z_c V_1)\overline{\sqrt{G_a[0]} V_2} +  \zeta_c \pi_r(D) \na_a Q_a(D)(d_c\p_x Z_c V_1) \overline{\na_a Q_a(D)V_2 }\, \d x\d y, \\ 
\cK_7&= \Re \int 
\big(W_{\lambda1} \overline{\gamma V_1}+ \sqrt{G_a[0]}W_{\lambda2} \, \overline{\sqrt{G_a[0]} V_2} + \zeta_c \na_a Q_a(D) W_{\lambda2} \cdot \overline{\na_a %\sqrt{1-\tanh^2\mu_
Q_a(D) V_2}\big) \,\d x\d y .
\end{align*}
We now control $K_1-K_7$ term by term. At first,  by  the Cauchy-Schwarz inequality along with Lemma \ref{lem-sym-Ga0} and  the fact that  $\|v_c\|_{L^{\infty}_x}\lesssim \ep^2,$ we can find a constant $C>0$ such that, for $A$ large enough and $\ep$ sufficiently small,
\beqs
\cK_1\leq -2\bigg(a \inf_{x\in\mR} (1-v_c(x))-\gamma \sup_{|\xi|\leq \delta,\, A\ep^2\leq |\eta|\leq 2}\big|\Im \sqrt{\mu_a\tanh \mu_a}(\xi,\eta)\big|\bigg)\cE_r(V(t))\leq -C A \ep^3 \cE_r(V(t)).
\eeqs
Next, we apply  Lemma \ref{lem-remaider-sec} and the Cauchy-Schwarz inequality to find that
\beqs 
\cK_2 \lesssim \ep^3 \,\cE_r(V(t))\,.
\eeqs
Hereafter, we denote $\lesssim$ as $\leq C$ for some constant that is independent of $A$ and $\ep.$
Moreover, let us notice that  $Q_a(D)\in S^{\infty}$ and 
by \eqref{remga0},
\beqs 
|Q_a(\xi,\eta)-Q_{-a}(\xi,\eta)|\lesssim |\Im \mu_a | \lesssim a\lesssim \ep,  \qquad \forall \, (\xi,\eta)\in \Supp \pi_r\, ,
\eeqs
which, together with the fact
$ \|\zeta_c\|_{L_x^{\infty}}\lesssim \ep^2,\, $ yield: 
\beqs 
\cK_3 \lesssim \ep^3\, \cE_r(V(t))\,.
\eeqs
In addition, thanks to the facts $a=\hat{a}\ep$ and $\|\p_x (\zeta_c, d_c)\|_{L_x^{\infty}}\lesssim \ep^3,$ one readily gets that:
\beqs 
\cK_4+\cK_5 \lesssim \ep^3\, \cE_r(V(t))\,.
\eeqs
Furthermore, as $\|\pi_r(D)\sqrt{G_a[0]}\|_{B(L^2)}\lesssim 1,$ and $\|\p_x Z_c \|_{L_x^{\infty}}\lesssim \ep^3,$ we find also
\beqs 
\cK_6 \lesssim \ep^3 \cE_r(V(t))\, .
\eeqs
Finally, it stems from the Cauchy-Schwarz inequality and the fact  that 
$$\|\pi_r(D)\sqrt{G_a[0]}W_{\lambda 2}\|_{L^2}\approx \|\pi_r(D)\na_a W_{\lambda 2}\|_{L^2} \approx\| \pi_r(D)W_{\lambda 2}\|_{H_{*}^{1/2}} $$
that
\beqs 
\cK_7\lesssim \|W_{\lambda}\|_{X} \sqrt{\cE_r(V(t))}\,.
\eeqs
Therefore, upon choosing $A$ large enough, it holds that:
\beqs
\pt \,\cE_r(V(t))\leq -\f{C}{2} A\ep^3 \cE_r(V(t))+C_1\|W_{\lambda}\|_{X} \sqrt{\cE_r(V(t))}\,.
\eeqs
It then follows from the Gr\"onwall inequality that: 
\beqs\label{EnergyIneq}
\sqrt{\cE_r}(V(t)) \leq e^{-CA\ep^3 t/2} \sqrt{\cE_a}(V(0)) 
+C_1 \int_0^t  e^{-CA\ep^3  (t-s)/2}  \|W_{\lambda}(s)\|_{X} \,\d s\,.
\eeqs
Plugging $V(t)=e^{\lambda t} U_r$ into the above estimate and since we had set  $W_{\lambda}=\pi_r(D)  e^{\lambda t}\big(F+\tilde{L}_a (U_h+U_s)\big),$ we find the inequality
\beqs
\sqrt{\cE_r}(U_r) 
\leq e^{-\beta_{\lambda}t}\sqrt{\cE_r}(U_r) +C_1\big(\|F\|_{X}+  \|\pi_r(D){L}_a^1 (U_h+U_s)\|_{X}\big)  \int_0^t e^{-\beta_{\lambda}(t-s)}\, \d s\, ,
\eeqs
where $\beta_{\lambda}=\Re \lambda+CA\ep^3/2> CA\ep^3/4$ 
for any $ \lambda \in \Omega_{\beta,\ep}$ provided that $A$ is large enough. 
As a result, we deduce 
\beqs 
\sqrt{\cE_r}(U_r) \lesssim (A\ep^3)^{-1} \big(\|F\|_{X}+  \|\pi_r(D){L}_a^1(U_h+U_s)\|_{X}\big)\,.
\eeqs
The estimate \eqref{es-ur} then follows by applying estimate \eqref{es-tla}. 
 
 %We will prove in Lemma \ref{} that $\|\pi_r(D)\tilde{L}_a \|_{B(L^2)}\lesssim \sqrt{A}\ep^2,$ which, combined with the above estimate, yields \eqref{es-ur}.
%\section{Properties for the Dirichlet-Neumann operator}
\section{Spectral stability for low transverse frequencies}\label{sec-lowtransverse}
In this section, we focus on the low transverse frequency region $R^L$ defined in \eqref{defregions}.  Let us set  %Denote $X_A^I= ,$ and 
$Z=\mathbb{Q}_a(\ep^2\eta_0)\bI_{A\ep^2}(D_y)X,$ we want to show that for $\ep$ small enough, $\Omega_{\beta,\ep}\subset \rho( L_a; Z)$ and that there is a constant $C>0,$ such that for any $\lambda \in \Omega_{\beta, \ep},$ 
\beq\label{resolbdd-lowtran}
\|(\lambda I-L_a)^{-1}\|_{B(Z)}\leq C.
\eeq

We begin with a lemma for the resolvent estimate in the high longitudinal frequency region $\{|\xi|\geq K\ep\}:$
\begin{lem}\label{lem-HHF}
Let $U \in \tilde{\bI}_{K\ep}(D_x)X,\, F\in X$ which satisfy
%assume there exists a $u\in Z$ such that %$\tilde{\bI}_{A\ep^2}(D_x)(\lambda I-L_a)\tilde{\bI}_{A\ep^2}(D_x) u$
$\big(\lambda I-%\tilde{\bI}_{K\ep}(D_x)
L_a\big) U=F,$ then it holds that, for $K>1$ sufficiently large and $\ep$ small such that $K^4\ep\leq 1,$
\begin{align} \label{es-HHF}
\|U\|_{X}\lesssim K^{-2}\ep^{-3}\,\|F\|_{X}.
\end{align}  
\end{lem}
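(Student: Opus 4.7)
My plan is to proceed by an energy-functional approach modelled on Subsection 3.2.3 for the regular low-frequency part $U_r$, but exploiting the stronger damping available in the high longitudinal frequency region $\{|\xi|\geq K\ep\}$. The guiding observation is that by Proposition \ref{lem-lampm1}, for any $\lambda\in\Omega_{\beta,\ep}$ and any $(\xi,\eta)\in R_\xi^H\cup R_\xi^I$ one has $\Re(\lambda - \lambda_{\pm}^0) \gtrsim A\ep^3 \sim K^2\ep^3$, with the much stronger bound $\delta a \sim \ep$ on $R_\xi^H$. Thus the weakest constant-coefficient damping over the relevant set $\{|\xi|\geq K\ep,\,|\eta|\leq A\ep^2\}$ is of order $K^2\ep^3$, and once this coercivity is secured at the level of an appropriate quadratic form, the variable-coefficient remainders can be absorbed using $v_c,\zeta_c=\cO_{L^\infty_x}(\ep^2)$ and $\p_x Z_c=\cO_{L^\infty_x}(\ep^3)$.

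First, I would pass from the resolvent equation $(\lambda-L_a)U=F$ to an evolution problem: setting $V(t)=e^{\lambda t}U$, it satisfies $(\p_t-L_a)V=e^{\lambda t}F$ while inheriting the spectral support $|\xi|\geq K\ep$ from $U$. Then, exactly as in the analysis of $U_r$, I would introduce the energy functional
\beqs
\cE_H(V) = \tfrac12 \int \Bigl(\gamma|V_1|^2 + |\sqrt{G_a[0]}\,V_2|^2 + \zeta_c\,|\na_a Q_a(D)V_2|^2\Bigr)\,\d x\,\d y,
\eeqs
with $Q_a(D)=\sqrt{1-\tanh^2\mu_a(D)}$. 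The shape of $\cE_H$ is dictated by the symbol approximation \eqref{def-symb-inter}: the correction $\zeta_c\mu_a^2(\tanh^2\mu_a-1)=-\zeta_c(\mu_a/\cosh\mu_a)^2$ coming from the quadratic form associated with $G_a[\zeta_c]$ is matched by the $\zeta_c$-weighted term in $\cE_H$, so that differentiating $\cE_H(V(t))$ and using the equation yields essentially the $\cK_1+\cdots+\cK_7$ decomposition from Subsection 3.2.3. I would then bound $\cK_1\leq -cK^2\ep^3\cE_H(V)$ via Proposition \ref{lem-lampm1}, show $\cK_2+\cdots+\cK_6\lesssim \ep^3\cE_H(V)$ using Lemma \ref{lem-remaider-sec} and the $L^\infty_x$ smallness of the solitary-wave coefficients, and estimate $\cK_7\lesssim \|F\|_X\sqrt{\cE_H(V)}$ by Cauchy--Schwarz. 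Gr\"onwall's inequality applied to $\p_t\cE_H(V)\leq -cK^2\ep^3\cE_H(V)+C\|F\|_X\sqrt{\cE_H(V)}$, followed by the limit $t\to+\infty$, will give $\sqrt{\cE_H(U)}\lesssim K^{-2}\ep^{-3}\|F\|_X$, which translates to \eqref{es-HHF} through the equivalence $\cE_H(V)\approx \|V\|_X^2$ on $\tilde{\bI}_{K\ep}(D_x)X$.

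The hardest step will be the equivalence $\cE_H(V)\approx \|V\|_X^2$ together with the precise $\cO(\ep^3)$ bound on $\cK_2$: both require that the symbol of $G_a[\zeta_c]-G_a[0]+Q_a(D)\div_a(\zeta_c\na_a Q_a(D))$ be $B(H^{1/2}_{\star},L^2)$-small of order $\cO(\ep^3)$ uniformly on $\{|\xi|\geq K\ep,\,|\eta|\leq A\ep^2\}$, so that the $\zeta_c$-weighted correction genuinely cancels the cross term from $G_a[\zeta_c]-G_a[0]$ rather than producing merely an $\cO(\ep^2)$ error, which would be insufficient to close the estimate at the target $K^{-2}\ep^{-3}$ level. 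This is where the specific second-order symbol expansion \eqref{def-symb-inter} enters critically, and where I would rely on the appendix computations summarized in Lemma \ref{lem-remaider-sec}.
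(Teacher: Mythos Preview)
Your plan contains a real gap at the point you yourself flag as hardest. The single functional $\cE_H=\cE_r$ cannot deliver $\cK_2\lesssim\ep^3\cE_H(V)$ uniformly on $\{|\xi|\geq K\ep\}$. Lemma~\ref{lem-remaider-sec} is stated (and proved) only with a projector $\pi_r$ supported in \emph{bounded} longitudinal frequencies $|\xi|\le\delta$: the $\cO(\ep^3)$ bound there comes from the second-order Taylor remainder of $G_a[\zeta_c]$, which is controlled in $B(L^2)$ only after the cutoff makes $G_a[\cdot]$ bounded. On the sub-region $|\xi|\ge\delta$ the correction term $Q_a(D)\div_a(\zeta_c\na_a Q_a(D))$ is exponentially small (since $Q_a(\xi,\eta)=1/\cosh\mu_a$), so it cancels nothing, and you are left with $G_a[\zeta_c]-G_a[0]$ alone, which by Lemma~\ref{lem-Ga-Ga0-g} is only $\cO(\ep^2)$ in $B(H_*^{1/2},L^2)$. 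Since $K^4\ep\le 1$ forces $K^2\ep^3\ll\ep^2$, this $\ep^2$ error swamps the worst-case damping $K^2\ep^3$ that your single Gr\"onwall uses, and the estimate does not close. A second, related issue is $\cK_6$: with the $\gamma|V_1|^2$ weight of $\cE_r$ (rather than the $w_c|V_1|^2$ weight of $\cE_h$), the term $\int\sqrt{G_a[0]}(d_c\p_xZ_cV_1)\overline{\sqrt{G_a[0]}V_2}$ involves an uncontrolled half-derivative on $V_1$ once $\pi_r$ is removed.

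The paper's remedy is exactly to avoid a single functional: it splits $U=U^h+U^m$ with $U^h=\tilde{\bI}_\delta(D_x)U$ and $U^m=(\tilde{\bI}_{K\ep}\bI_\delta)(D_x)U$, and pairs each piece with the functional tailored to it. For $U^h$ one uses $\cE_h$ from Subsection~\ref{subsub-h} (weight $w_c$, no $\zeta_c$ correction), where the damping is $\sim\ep$ and the $\ep^2$ errors from $G_a[\zeta_c]-G_a[0]$ are harmless. For $U^m$ one uses $\cE_r$ from Subsection~\ref{subsub-l}, where $|\xi|\le\delta$ so Lemma~\ref{lem-remaider-sec} legitimately gives the $\cO(\ep^3)$ cancellation needed against the weaker $K^2\ep^3$ damping. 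The two pieces are then recombined using $\|\bI_\delta(D_x)L_a^1\|_{B(X)}+\|L_a^1\bI_\delta(D_x)\|_{B(X)}\lesssim\ep^2$ from Lemma~\ref{lem-La1}. Your proposal is on the right track in spirit, but the splitting is not a convenience; it is forced by the mismatch between the frequency support of the $\zeta_c$-cancellation mechanism and the region where the damping is weakest.
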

\begin{proof}
    The proof is close to the one  of Proposition \ref{prop-ITF}, detailed in Subsection \ref{subsec-ITF}. %It consists in
    The general idea is to split $U$ into two parts $U=U^h+U^m,$ where $U^h, U^m$ are localized respectively on the (uniform) high and intermediate longitudinal  frequencies : 
    \beqs 
   U^h=\tilde{\bI}_{\delta}(D_x) U, \qquad U^m=(\tilde{\bI}_{K\ep}{\bI}_{\delta})(D_x)U.
    \eeqs
By the definition, $U^h, U^m$ solves the equations: 
\begin{align*}
\left\{ \begin{array}{l}
\tilde{\bI}_{\delta}(D_x)\, (\lambda I-L_a)\, U^h=\tilde{\bI}_{\delta}(D_x) \,(L_a^1 U^m+ F), \\[4pt]
  (\tilde{\bI}_{K\ep}{\bI}_{\delta})(D_x) \,(\lambda I-L_a)\, U^m=(\tilde{\bI}_{K\ep}{\bI}_{\delta})(D_x) \,(L_a^1 U^h+ F)\, ,
\end{array} \right.
\end{align*}
where $L_a^1$ is defined in \eqref{def-La0-1}.
    As in the treatment of $U_h$ and $U_r$ in subsections \ref{subsub-h}, \ref{subsub-l}, we introduce the  functionals:
    \begin{align*}
      &  \cE_h (U^h)=\int |\sqrt{w_c} U_1^h|^2 + \|\sqrt{G_a[0]}U_2^h\| \,\d x\d y \, ,\\
&\cE_r (U^m)=\colon \f12 \int \bigg(\gamma\,|U_1^m|^2+|\sqrt{G_a [0]}U_2^m|^2+\zeta_c\big|\na_a Q_a(D)U_2^m\big|^2\bigg)\, 
\, \d x \d y\,,
    \end{align*}
and then  find by performing energy estimates that: 
\begin{align*}
&\sqrt{\cE_h (U^h)}\lesssim \ep^{-1} \big(\|F\|_{X}+\|L_a^1 U^m\|_{X}\big), \\
&   \sqrt{\cE_m (U^m)}\lesssim  K^{-2}\ep^{-3} \big(\|F\|_{X}+\|{\bI}_{\delta}(D_x) L_a^1 U^h\|_{X}\big).
\end{align*}
Note that we have used the algebraic property of  $\Re \sqrt{-\mu_a\tanh \mu_a}$ in the region $R_{\xi}^I$ stated in Lemma \ref{lem-sym-Ga0}. The estimate \eqref{es-HHF} can then be derived
 from the fact $\|{\bI}_{\delta}(D_x)L_a^1 \|_{B(X)}+\|L_a^1 {\bI}_{\delta}(D_x) \|_{B(X)}\lesssim \ep^{2}$ which is shown in Lemma \ref{lem-La1} .
\end{proof}

We are now in position to prove the resolvent bound \eqref{resolbdd-lowtran}:
\begin{prop}
 Assume that $\ep$ and $\eta_0$ are small enough.
  For any $\lambda \in \Omega_{\beta, \ep},\,$ any $ U, F \in Z=\colon\mathbb{Q}_a(\ep^2\eta_0)\bI_{A\ep^2}(D_y)X,$ such that $(\lambda I-L_a) U=F,$ it holds that 
    \beq\label{resol-lowfreq}
\|U\|_{X}\lesssim \ep^{-3}\|F\|_{X}.
    \eeq
\end{prop}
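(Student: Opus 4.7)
The overall plan is to decompose $U$ according to longitudinal frequency and treat the two pieces separately: a high longitudinal frequency part controlled by Lemma \ref{lem-HHF} and a low longitudinal frequency part controlled by the KP-II approximation developed in Subsection \ref{subsec-KP-IIapprox} together with the resolvent bound for the linearized KP-II operator around the line KdV soliton proved in \cite{Mizumachi-KP-nonlinear}. Unlike the treatment of the intermediate transverse region in Section \ref{paragraph3}, I will use smooth cutoffs $\chi^L(D_x), \chi^H(D_x)$ in $\xi$ rather than sharp Fourier projectors, since the damping in this region is only of order $\ep^3$ and I cannot afford the derivative loss coming from commutators like $[\bI_{K\ep}, v_c(\p_x-a)]$, whereas smooth cutoffs produce commutators with the coefficients of $L_a$ that are small. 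Pick $\chi^L \in C_c^\infty(\mR)$ with $\chi^L \equiv 1$ on $[-K\ep,K\ep]$ and $\Supp \chi^L \subset [-2K\ep, 2K\ep]$, and set $\chi^H = 1-\chi^L$, $U^H = \chi^H(D_x) U$, $U^L = \chi^L(D_x) U$.

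For the high longitudinal frequency piece, applying $\chi^H(D_x)$ to the resolvent equation gives
\begin{equation*}
(\lambda - L_a)\, U^H = \chi^H(D_x)\, F + [\chi^H(D_x), L_a]\, U.
\end{equation*}
Thanks to the bounds $\|\p_x(v_c, Z_c, \zeta_c)\|_{L^\infty_x} \lesssim \ep^3$ from Theorem \ref{bealetheo}, the commutator estimates of Lemma \ref{lem-commutator}, and the analysis of $G_a[\zeta_c]-G_a[0]$ in the appendix, the commutator is $O(\ep^2)$-bounded and essentially localized near $|\xi|\sim K\ep$, so it contributes only an $\ep^2\|U^L\|_X$ term on the right-hand side. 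Since $U^H$ is supported in $|\xi|\geq K\ep$, Lemma \ref{lem-HHF} then yields
\begin{equation*}
\|U^H\|_X \lesssim K^{-2}\ep^{-3}\|F\|_X + K^{-2}\ep^{-1}\|U^L\|_X.
\end{equation*}

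For the low longitudinal frequency piece, I would rescale via $\hat x = \ep x$, $\hat y = \ep^2 y$, $\Lambda = \ep^{-3}\lambda$ and $\hat\eta = \ep^{-2}\eta$. Under this scaling, $\chi^L(D_x)\bI_{A\ep^2}(D_y)$ restricts to the compact frequency box $|\hat\xi|\leq 2K$, $|\hat\eta|\leq A$, which is precisely the regime where the formal expansion of Subsection \ref{subsec-KP-IIapprox} is rigorous since all differential operators of positive order are bounded on this localization. The rescaled unknown $\hat U^L$ therefore satisfies
\begin{equation*}
(\Lambda - L_{\kp}(\hat\eta))\, \hat U^L = \hat F^L + R(\ep)\hat U + R'(\ep)\hat U^L,
\end{equation*}
where $L_{\kp}(\hat\eta)$ is the rescaled linearized KP-II operator from Subsection \ref{subsec-KP-IIapprox} evaluated at transverse frequency $\hat\eta$, and $\|R(\ep)\|_{B(\hat X)}+\|R'(\ep)\|_{B(\hat X)} \lesssim \ep^2$ thanks to the expansions in Subsection \ref{subsec-KP-IIapprox} and the commutator $[\chi^L(D_x), L_a]$. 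The key input is then the resolvent estimate of \cite{Mizumachi-KP-nonlinear}: for $\Re \Lambda \geq -\beta$, the operator $(\Lambda - L_{\kp}(\hat\eta))^{-1}$ is uniformly bounded on the spectral supplement $\mathbb{Q}_{\kp}(\hat\eta_0)\hat X$ of the two KP-II resonant modes for $|\hat\eta|\leq \hat\eta_0$. To apply it, I must show that $\hat U^L$ lies, up to an $O(\ep)\|U\|_X$ error, in the range of $\mathbb{Q}_{\kp}(\hat\eta_0)$; this follows from the hypothesis $\mathbb{Q}_a(\ep^2\hat\eta_0)U = U$ together with Theorem \ref{thm-resolmodes}, which guarantees that the water-waves basis and dual basis $(g_k,g_k^*)(\cdot, \eta)$ converge, in the rescaled variables, to the KP-II basis and dual basis as $\ep \to 0$. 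I would thus obtain $\|U^L\|_X \lesssim \|F\|_X + \ep\|U\|_X$.

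Combining the two estimates and choosing first $K$ large (so that $K^{-2}\ep^{-1}\|U^L\|_X$ is absorbed) and then $\ep$ small (to absorb the remaining $\ep\|U\|_X$) yields $\|U\|_X \lesssim \ep^{-3}\|F\|_X$. The main difficulty I anticipate is the third step: justifying the KP-II approximation at the resolvent level uniformly in $\Lambda \in \Omega_{\beta,1}$ and $\hat\eta \in [-A, A]$ on the full complement $\mathbb{Q}_a(\ep^2\hat\eta_0)Z$. This requires a careful identification of the exact water-waves projector $\mathbb{Q}_a(\ep^2\hat\eta_0)$ with the KP-II projector $\mathbb{Q}_{\kp}(\hat\eta_0)$ up to $O(\ep)$ (which amounts to establishing the convergence of the two-dimensional resonant eigenspaces through Section 2), together with a precise bookkeeping of all $O(\ep^2)$ remainders, including those coming from the truncated expansion of the Dirichlet--Neumann operator and from the commutators of the cutoffs with the non-constant coefficients of $L_a$.
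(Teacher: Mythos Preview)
Your overall architecture — smooth longitudinal cutoffs, Lemma \ref{lem-HHF} for the high-$\xi$ piece, KP-II approximation for the low-$\xi$ piece, projector comparison via Theorem \ref{thm-resolmodes} — matches the paper. But there is a genuine gap in your treatment of $U^L$, and a related bookkeeping error that would prevent the argument from closing.

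\textbf{The missing diagonalization.} You write that the rescaled two-component unknown $\hat U^L$ satisfies $(\Lambda - L_{\kp}(\hat\eta))\hat U^L = \hat F^L + \ldots$, but $L_{\kp}$ is a \emph{scalar} operator while $\hat U^L$ is a two-vector. The expansion of Subsection \ref{subsec-KP-IIapprox} does not reduce the $2\times 2$ system to a single KP-II equation directly; it first eliminates $\hat\zeta$ to obtain a \emph{second-order} equation \eqref{eq-scale-s} for $\hat\phi$, whose leading factorization yields KP-II on one factor only. Correspondingly, the paper first diagonalizes the constant-coefficient part $L_a^0$ via $P^{-1}$ (see \eqref{def-inverseP}), splitting $\tilde U^L = P^{-1}U^L$ into $\tilde U_1^L$ (associated to $\lambda_+^0$, with weak $O(\ep^3)$ damping) and $\tilde U_2^L$ (associated to $\lambda_-^0$, with strong $O(\ep)$ damping). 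Only $\tilde U_1^L$ requires the KP-II machinery (Lemma \ref{lem-KPapprox} and \eqref{prop-puL}--\eqref{prop-QuL}); the component $\tilde U_2^L$ is handled by a direct energy estimate \eqref{U2tL}. Without this splitting you cannot invoke the scalar KP-II resolvent bound of \cite{Mizumachi-KP-nonlinear}.

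\textbf{The $U^L$ bound and the closing.} Your claimed estimate $\|U^L\|_X \lesssim \|F\|_X + \ep\|U\|_X$ is too strong: since $\lambda=\ep^3\Lambda$, applying the $O(1)$ KP-II resolvent yields $\|U^L\|_X \lesssim \ep^{-3}\|F\|_X + \ldots$, which is what the paper obtains. With the correct $\ep^{-3}$, your coupling term $K^{-2}\ep^{-1}\|U^L\|_X$ in the $U^H$ estimate feeds back as $K^{-2}\ep^{-4}\|F\|_X$, worse than the target. The paper avoids this by the refined commutator estimate \eqref{La1-commu}, which gives $\|[\chi_1(D_x),L_a^1]\,U\|_X \lesssim \ep^3\big(K^{-1}\|U^L\|_X + \|U^H\|_X\big)$ rather than your $\ep^2\|U\|_X$; the extra $K^{-1}$ on the low-frequency part is exactly what makes the loop close after multiplying by $\ep^{-3}$.
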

\begin{proof}
For the proof, we shall perform different estimates  for  low longitudinal frequencies (${|\xi|\leq K\epsilon}$) and high   longitudinal frequencies (${|\xi|\geq K\epsilon}$). In the high-frequency regime, we apply Lemma \ref{lem-HHF}, while in the low-frequency regime, we use  the KP-II approximation. Due to the excessively large resolvent bound in the low longitudinal  frequency regime, which is of  order $\epsilon^{-3}$, we need to use a smooth cutoff function to localize in frequency.

Let $\chi, \chi_1, \chi_2:\mR \rightarrow [0,1]$ 
be two cut-off functions with the properties: 
\beq\label{def-smoothcutoff}
\begin{aligned}
 & \Supp \chi\subset [-2K\ep, 2K\ep] \, , \quad   \chi_1 \equiv 1 \text{ on } [-K\ep, K\ep] \,, \\
&\Supp \chi_1 \subset [-4K\ep, 4K\ep] \, , \quad   \chi \equiv 1 \text{ on } [-3K\ep, 3K\ep]\, ,  \\
 & \Supp \chi_2\subset [-8K\ep, 8K\ep]\, ,  \quad   \chi_2 \equiv 1 \text{ on } [-4K\ep, 4K\ep]\, ,
\end{aligned}
\eeq
where $K\geq 1$ is large and $\ep$ is small, such that $K^4\ep\leq 1.$
%and denote  %$\chi_{{K}}(\cdot)=\chi(\cdot/K\ep)$ and $\chi_{{1,K}}(\cdot)=\chi_1(\cdot/K\ep), \quad $\tilde{\chi}=1-\chi.$

We  define then
$$ U^H=(1-{\chi})(D_x)\, U,\, \qquad U^L=\chi_1(D_x) \,U,$$
which solve the equations
\begin{align}\label{eqULUH}
\left\{ \begin{array}{l}
\big (\lambda I- %\tilde{\bI}_{K\ep}(D_x)\,
L_a\big)\, U^H=(1-\chi)(D_x) F-[{\chi}(D_x), \, L_a^1] \,U, \\[5pt]
\big(\lambda- L_a\big)\,  %{\bI}_{2K\ep}(D_x) 
{\chi}_{2}(D_x)\, U^L= [{\chi}_1(D_x) , \, L_a^1] \,U+ {\chi}(D_x) F\, .
\end{array} \right.
\end{align}
We refer to \eqref{def-La0-1} for the definition of $L_a^1.$ 

Applying Lemma \ref{lem-HHF} and Lemma \ref{lem-Ga-ll}, we first have that:
\beqs
\begin{aligned}
    \| U^H \|_{X}&\lesssim K^{-2}\ep^{-3}\big(
    \big\|[{\chi}(D_x), \, L_a^1] \, U\big\|_{X} +\|F\|_{X}\big)\\
    & \lesssim K^{-2}\ep^{-3}\big( (K\ep^3+K^4\ep^5)\|U\|_{X} +\|F\|_{X}\big)\\
    & \lesssim (K^{-1}+\ep) \, \|U\|_{X}+K^{-2}\ep^{-3}\|F\|_{X}\,.
\end{aligned}
\eeqs
Note that we have used the assumption $K^4 \ep \leq 1$ for the final inequality. It follows from the fact $(1-\chi)(1-\chi_1)=(1-\chi_1)$ that $\|U\|_{X}\lesssim \|U^L\|_{X}+\|U^H\|_{X}.$ Therefore, upon choosing $K$ large enough and $\ep$ sufficiently small, 
\beq\label{es-UH}
 \| U^H \|_{X}  \lesssim (K^{-1}+\ep) \, \|U^L\|_{X}+K^{-2}\ep^{-3}\|F\|_{X}\,.
\eeq
It now remains to bound  $U^L.$ Since it is localized  in  low frequencies in both variables, %both in $x$ and $y$,
we can diagonalize smoothly  the constant matrix 
\begin{align}\label{def-inverseP}
{L}_{a}^0
 =\left( \begin{array}{cc}
   \p_x-a  &  G_a[0]   \\[5pt]
 -  \gamma& \p_x-a
\end{array}\right) =  P  \left( \begin{array}{cc}
   \lambda_{+}^0  & 0   \\[5pt]
     0   &  \lambda_{-}^0
\end{array}\right) P^{-1}, \quad  P^{-1}= \f12 \left(\begin{array}{cc}
   1  & -\sqrt{- G_a[0] /\gamma}  \\[5pt]
  1& \sqrt{- G_a[0] /\gamma}
\end{array}\right).
\end{align}
where 
$$\lambda_{\pm}^0(\xi,\eta)=i(\xi-ia)\pm \sqrt{-\gamma \mu_a \tanh\mu_a(\xi,\eta)}\,.$$  %$\lambda_0(\xi,\eta)=(\mu_a \tanh\mu_a)(\xi,\eta)$ being the symbol of  $G_a[0].$
%It is direct to check that 
Note that we have,  
\beq\label{equivalence-ll}
\|%\chi_1(D_x)
\bI_{A\ep^2}(D_y) f\|_{X}\approx \| %\chi(D_x)_1
\bI_{A\ep^2}(D_y)P^{-1} f \|_{\cX}\,, \quad (\cX=L^2(\mR^2)\times L^2(\mR^2)) ,
\eeq
and it thus suffices  to get an estimate for $\tilde{U}^L=\colon P^{-1}U^L $ in the space $\cX.$

Let us set  %$\tilde{U}^L=P^{-1}U^L, $ %and 
\beqs
R(x, D)=P^{-1} L_a^1\, P=\big( R_{ij}\big)_{1\leq i, j\leq 2} \, , \qquad \tilde{F}=P^{-1}\big([{\chi}(D_x) , \, L_a^1] \,U+ {\chi}(D_x) F \big).
\eeqs
In particular, $R_{11}$ takes the form
\beq\label{defR}
\begin{aligned}
    R_{11}(x, D)&=-\f12 \bigg( (\p_x-a) (v_c\, \cdot)+\sqrt{-G_a[0]} \,v_c\, (\p_x-a) \sqrt{-G_a[0]} ^{-1} \\
    &\qquad\qquad  +\sqrt{\gamma\,}\big(G_a[\zeta_c]-G_a[0]\big)\sqrt{-G_a[0]} ^{-1} -\sqrt{\gamma}^{-1}\sqrt{-G_a[0]} (d_c \, \p_x Z_c)\bigg).
\end{aligned}
\eeq
Moreover, thanks to   \eqref{equivalence-ll} and  \eqref{es-R}, we have that 
\beq\label{estimate-RxD}
\|\,R\,\|_{B(\cX)}\lesssim (K\ep^3+K^4\ep^5)\lesssim K\ep^3.
\eeq
We take $P^{-1}$ on both sides of  $\eqref{eqULUH}_2$ and translate it into the following system:
%Taking Equation is equivalent to %translates into
\beq \label{eq-tUL}
\begin{aligned}
\left\{ \begin{array}{l}
     \big(\lambda- \lambda_{+}^0(D)+R_{11}(x, D) \big)\,  %{\bI}_{2K\ep}(D_x) 
{\chi}_{2}(D_x)\,\tilde{ U}_1^L= -R_{12}\, \tilde{ U}_2^L + \tilde{F}_1\,,   \\[5pt]
      \big(\lambda- \lambda_{-}^0(D) \big)\,  %{\bI}_{2K\ep}(D_x) 
\,\tilde{ U}_2^L=-\big( R_{21}\, \tilde{ U}_1^L+ R_{22}\, \tilde{ U}_2^L \big)+ \tilde{F}_2\,.
\end{array}\right.
\end{aligned}
\eeq
Thanks to \eqref{lda-ldap} and \eqref{estimate-RxD}, we get by energy estimates that
\beq\label{U2tL}
\|\tilde{ U}_2^L\|_{L^2}\lesssim \ep^{-1}\big( \|\tilde{F}_2\|_{L^2}+ K\ep^3 \|\tilde{U}^L\|_{\cX} \big).
\eeq
To estimate $U_1^L,$ we need to use the KP-II approximation. 
As a preparation, we
introduce the scaling operator 
$S_{\ep}: L^2(\mR^2)\rightarrow L^2(\mR^2)$ as $S_{\ep}f(x,y)=\ep^{-3/2}f(\ep^{-1}x, \ep^{-2}y).$ 
Define then $$\cP=S_{\ep}^{-1}\,\mathbb{P}^{\hat{a}}_{\kp}(\hat{\eta}_0)\,S_{\ep}\,,\qquad  \cQ=S_{\ep}^{-1} \,\mathbb{Q}_{\kp}^{\hat{a}}(\hat{\eta}_0)\,S_{\ep}\, .$$
  where $\mathbb{P}^{\hat{a}}_{\kp}$ is the projection (see \eqref{projection-KP} for precise definition) to the resonant space associated to the operator $L_{\kp}$ and  $\mathbb{Q}^{\hat{a}}_{\kp}=\Id -\mathbb{P}^{\hat{a}}_{\kp}.$  We first claim that it suffices to prove the following two properties: 
%\begin{claim}
\begin{align}\label{prop-puL}
    \|\cP \, \tilde{U}_1^L\|_{L^2}\lesssim (\hat{\eta}_0^2+ \ep+K^3\ep^2+K^{-3}) \,\|\tilde{U}_1^L\|_{L^2}+\|\tilde{U}^L_2\|_{L^2}+\| U^H\|_{X}\, ,
\end{align}
%\end{claim}
%\begin{claim}
    \begin{align}\label{prop-QuL}
    \|\cQ \, \tilde{U}_1^L\|_{L^2}\lesssim %\big(+K^5(\ep^2+\hat{\eta}_0^2+\ep)\big)\, 
    K^{-3}\,\|\tilde{U}_1^L\|_{L^2} + K\|\tilde{U}^L_2\|_{L^2}+%(\hat{\eta}_0+ \ep^2) 
    \ep^{-3}\,\|\tilde{F}_1\|_{L^2}\, .
\end{align}
%\end{claim}
Indeed, once these two estimates are proven, we can  conclude that for $K$ large enough, $\ep, \hat{\eta}_0$ small enough, 
\beqs 
\|\tilde{U}_1^L\|_{L^2}\lesssim K  \|\tilde{U}^L_2\|_{L^2}+\|U^H\|_{X}+ \ep^{-3}\,\|\tilde{F}_1\|_{L^2}.
\eeqs
This, combined with the estimate \eqref{U2tL}, yields, for $K^4\ep\leq 1 $ and $\ep$ small enough, 
\begin{align*}
    \|U^L\|_{X}\lesssim \|\tilde{U}^L\|_{\cX}&\lesssim \|U^H\|_{X}+ \ep^{-3}\,\|\tilde{F}\|_{\cX}, \\
    & \lesssim \|U^H\|_{X}+ \ep^{-3}\,\big(\|F\|_{X}+ \|[{\chi}_1(D_x) , \, L_a^1] \,U\|_{X}\big).
\end{align*}
As is shown in \eqref{La1-commu}, %Lemma \ref{}, %for $K\ep^2\leq 1,$ we have 
\beqs 
\|[{\chi}_1(D_x) , \, L_a^1] \,\bI_2(D_y)\,U\|_{X}\lesssim \ep^3\,\big( \|U^H\|_{X}+ K^{-1} %e^{-\alpha K}
%K^3\ep^2
\|U^L\|_{L^2}\big).
\eeqs
%for some positive constant $\alpha.$ 
Therefore, choosing $K$ large enough, we find that
\beqs 
 \|U^L\|_{X}\lesssim \|U^H\|_{X}+ \ep^{-3}\,\|F\|_{X},
\eeqs
which, together with \eqref{es-UH}, allows us to conclude \eqref{resol-lowfreq} upon choosing $K$ larger and $\ep$ smaller if necessary. The proof is   thus finished  once \eqref{prop-puL}, \eqref{prop-QuL} are established. This is  done in the following two subsections.
\end{proof}

\subsection{Proof of  the estimate \eqref{prop-puL}}
Recall that $Y_{a,\eta}=L_a^2(\mR)\times H^{1/2}_{a,*,\eta}(\mR),$ where $H^{1/2}_{a,*,\eta}(\mR)$ is defined in \eqref{Hhalfeta}. 
The projection operator of $L$ in the weighted space $X_a$ is defined as
\beqs 
\mathbb{P}(\eta_0) f=\sum_{k=1}^2\int_{-\eta_0}^{\eta_0} 
\big\langle\mathcal{F}_yf(\cdot,\eta),\, g_k^{*}(\cdot,\eta)\big\rangle_{{Y_{a,\eta}\times Y_{a,\eta}^{*}}}\,
g_k(\cdot,\eta)\,e^{iy\eta}\,\d \eta\, ,
\eeqs
 where $g_k(x,\eta), g_k^{*}(x, \eta) $ are the basis and dual basis of $L_a(\eta)$ defined in \eqref{defbasis-dual}. 
Let $$Y_{\eta}= e^{a x} Y_{a, \eta}, \qquad 
g_{k,a}(x, \eta)=e^{a x} g_k(x, \eta),\, \qquad g^{*}_{k,a}(x, \eta)=e^{-a x} g_k^{*}(x, \eta), $$ 
the corresponding projection of the transformed operator $L_a =e^{a x}\,  L\, e^{-a x} $ in the unweighted space $X$  has the form
\beqs
\mathbb{P}_a(\eta_0)  f =e^{-a x} \mathbb{P}_a(\eta_0) e^{ax} f=\sum_{k=1}^2\int_{-\eta_0}^{\eta_0} 
\big\langle \mathcal{F}_y f(\cdot,\eta),\, g_{k,a}^{*}(\cdot,\eta)\big\rangle_{Y_{\eta}\times Y_{\eta}^{*}}\,
g_{k,a}(\cdot,\eta)\,e^{iy\eta}\,\d \eta\, .
\eeqs
At this stage,  it is useful to 
introduce the projection operator on the continuous resonant modes of  the  linearized operator $L_{\kp}$ defined in \eqref{linear-kpII} 
\begin{align}\label{projection-KP}
    \mathbb{P}_{\kp}^{\hat{a}}(\hat{\eta}_0)  f =\sum_{k=1}^2\int_{-\hat{\eta}_0}^{\hat{\eta}_0} 
\big\langle \mathcal{F}_y f(\cdot, \hat{\eta}),\, g_{0k,\hat{a}}^{*}(\cdot,\hat{\eta})\big\rangle_{L^2(\mR)\times L^2(\mR)}\,
g_{0k,\hat{a}}(\cdot,\hat{\eta})\,e^{iy\hat{\eta}}\,\d\hat{ \eta}.
\end{align}
where $g_{0k,\hat{a}}(x,\hat{\eta})=e^{\hat{a} x} g_{0k}(x,\hat{\eta}),\, g^{*}_{0k,\hat{a}}(x,\hat{\eta})=e^{-\hat{a} x} g^{*}_{0k}(x,\hat{\eta}) $ 
and $g_{0k,\hat{a}}(x,\hat{\eta}),  \,g^{*}_{0k,\hat{a}}(x,\hat{\eta})$ are generalized eigenmodes for $L_{\kp}(\hat{\eta})$ defined in \eqref{base-kp}.

We are now ready to prove \eqref{prop-puL}. Using the fact that $\mathbb{P}_a(\eta_0) U=0, $ we write
\beqs 
\cP \tilde{U}_1^L=\cP U_1^L-\big[ P^{-1}(D)\, \mathbb{P}_a(\eta_0) P(D)  \tilde{U}^L\big]_1-\big[ P^{-1}(D)\, \mathbb{P}_a(\eta_0) (1-\chi_1)(D)U \big]_1
\eeqs
where we use $\big[\,\,\big]_1$ to denote the first element of a vector. The last term in the above identity can be controlled directly, thanks to the equivalence of norms \eqref{equivalence-ll}, by $\|(1-\chi_1)(D)U \|_{X}\lesssim \|U^H\|_X.$ It thus remains to control the first two terms.
By the definition and the change of variables, we have on the  one hand
\begin{align*}
   & \qquad \big(S_{\ep} \, P^{-1}(D) \,\mathbb{P}_a(\eta_0) P(D) \tilde{U}^L\big)(x, y)\\
   & =\sum_{k=1}^2 \ep^{-{3}/{2}}\int_{-\eta_0}^{\eta_0} e^{i y\,\eta/\ep^2}  \bigg\langle(\cF_y \tilde{U}^L)(\cdot, \eta),\, P^{*}(D, \eta)\, g_{k,a}^{*}(\cdot, \eta)\bigg\rangle P^{-1}(\ep D_x, \eta) \, g_{k,a}(\ep^{-1}x, \eta)\, \d \eta\\
   &=  \sum_{k=1}^2 \int_{-\hat{\eta}_0}^{\hat{\eta}_0} e^{i {y}\,\hat{\eta}}  \bigg\langle\cF_y( S_{\ep}\tilde{U}^L)(\cdot, \hat{\eta}),\,P^{*}(\ep D_{{x}}, \ep^2\hat{\eta})\, g_{k,a}^{*}(\ep^{-1}\cdot, \ep^2\hat{\eta}) \bigg\rangle\, \ep^{-1} P^{-1}(\ep D_{{x}}, \ep^2\hat{\eta}) \, g_{k,a}(\ep^{-1}{x}, \ep^2\hat{\eta})\, \d \hat{\eta}\, ,
\end{align*}
and on the other hand, 
\begin{align*}
    &\qquad \big(\,\mathbb{P}_{\kp}^{\hat{a}}(\eta_0) \, S_{\ep}\tilde{U}_1^L\big)(x, y) \\
   &=   \sum_{k=1}^2 \int_{-\hat{\eta}_0}^{\hat{\eta}_0} e^{i {y}\,\hat{\eta}}  \bigg\langle\cF_y( S_{\ep}\tilde{U}_1^L)(\cdot, \hat{\eta}), \, g_{0k,\hat{a}}^{*}(\cdot, \hat{\eta}) \bigg\rangle\,   \, g_{0k,\hat{a}}({x}, \hat{\eta})\, \d \hat{\eta}\, .
\end{align*}
Note that the inner product in the above identities is taken in $L^2(\mR).$
Moreover, since $g_{0k, \hat{a}}$ and $g_{k,a}(\ep^{-1}\cdot)$ are smooth and exponentially decaying in $x,$ for any $\hat{\eta}\in [-\hat{\eta}_0, \hat{\eta}_0],$
it holds that 
\begin{align*}
& \| \big(1- \chi_2(\ep D_x)  \big) (g_{0k,\hat{a}}(\cdot, \hat{\eta}))\|_{L^2}\lesssim K^{-3}\|%(\cF_x (g_{0k,\hat{a}}))(\xi, \ep^2 \hat{\eta}))
 g_{0k,\hat{a}}(\cdot,  \hat{\eta})\|_{H^3}\lesssim K^{-3},  \\
&\| \big(1- \chi_2(\ep D_x)  \big) (g_{k,a}(\ep^{-1}\cdot, \ep^2 \hat{\eta}))\|_{L^2}\lesssim K^{-3}\|%\ep(\cF_x (g_{k,a}))(\ep\xi, \ep^2 \hat{\eta}))
g_{k,a}(\ep^{-1}\cdot, \ep^2 \hat{\eta}))\|_{H^3}\lesssim K^{-3}. 
\end{align*}
Consequently, since $S_{\ep}$ is a diffeomorphism on $L^2(\mR^2),$ it suffices to show that for any $\hat{\eta}\in[-\hat{\eta}_0, \hat{\eta}_0],$
\begin{align*}
    \bigg\|\chi_2(\ep D_x)\bigg(\ep^{-1}\big[%\chi_2( D_x)
    P^{-1}(D_x, \ep^2 \hat{\eta}) g_{k,a}\big]_1(\ep^{-1}\cdot, \ep^2 \hat{\eta})-g_{0k, \hat{a}}(\cdot, \hat{\eta})\bigg)\bigg\|_{L^2(\mR)}\lesssim (\ep+\hat{\eta}_0^2)\,,  \\
     \bigg\|\chi_2(\ep D_x)\bigg(\big[P^{*}(D_x, \ep^2 \hat{\eta}) g^{*}_{k,a}(\cdot, \hat{\eta})\big]_1(\ep^{-1}\cdot, \ep^2 \hat{\eta})-g_{0k, \hat{a}}^{*}\bigg)\bigg\|_{L^2(\mR)}\lesssim (\ep+\hat{\eta}_0^2)\,. 
\end{align*}
In view of the definition of $P^{-1}$ in \eqref{def-inverseP} as well as the fact $$\gamma=1+\cO(\ep^2), \quad  -\chi_2( D_x)\bI_{A\ep^2}(\eta)\,e^{i y\eta}\sqrt{-G_a[0]}\,e^{-i y\eta}=\ep(\p_x-a) +\cO_{B(L^2(\mR))}(K^3\ep^3)\,,$$
it amounts to verify that
\begin{align*}
    & \big\|(2\ep)^{-1}\big(g_{k,a}^1+%\sqrt{-\lambda^0( D_x, \ep^2 \hat{{\eta}})\,}
    (\p_x-a)\,g_{k,a}^2\big)(\ep^{-1}\cdot, \ep^2 \hat{\eta})-g_{0k, \hat{a}}(\cdot, \hat{\eta})\big\|_{L^2(\mR)}\lesssim (\ep^2+\hat{\eta}_0^2)\,,  \\
   &  \big\|\big(g^{*,1}_{k,a} -%\sqrt{-\lambda^0(D_x, \ep^2 \hat{\eta})\,}
   (\p_x-a)^{-1} \,g^{*,2}_{k,a}\big)(\ep^{-1}\cdot, \ep^2 \hat{\eta})-g_{0k, \hat{a}}^{*}(\cdot, \hat{\eta})\big\|_{L^2(\mR)}\lesssim (\ep^2+\hat{\eta}_0^2)\,.
\end{align*}
%where $\lambda^0(D)=(\mu_a \tanh \mu_a)(D).$ This 
%
Since they rely on direct but somewhat tedious algebraic calculations, we leave the details to Appendix \ref{approx-resonants-sec2}.
\subsection{Proof of the  estimate \eqref{prop-QuL}} 
Let us define  the operators
$$\lambda_{+,\,\ep}^0(D)= \lambda_{+}^0(\ep D_x, \ep^2 D_y), \quad R_{11}^{\ep}(x,D)=R_{11}\big(\ep^{-1}{x},\, \ep D_x,\, \ep^2 D_y\big), \quad \chi_{{\kp}}(D)=\chi_2(\ep D_x) \bI_{A}(D_y). $$
For the last one, note that we have used that  $ \bI_{A}(D_y) =  \bI_{A\ep^2}(\ep^2D_y)$  to simplify.
By applying  the scaling operator $S_{\ep}$ on both sides of 
$\eqref{eq-tUL}_1,$ 
we find that $u=\colon S_{\ep}\tilde{U}_1^L$ solves:
\beqs 
\big(\lambda- \lambda_{+,\ep}^0(D)- R_{11}^{\ep}(x,D)\big)\,\chi_{{\kp}}(D)\, u=f
%\lambda_{-}^0(\ep D_x, \ep^2 D_y)-R_{11}({x}/{\ep}, \ep D_x, \ep^2 D_y)
\eeqs
where $ f=S_{\ep}(\tilde{F}_1-R_{12}\, \tilde{ U}_2^L).$

%The main task in order to prove is the following 
Localizing in  the low frequency regimes, one expects that the linear operator $\lambda- \lambda_{+,\ep}^0(D)- R_{11}^{\ep}(x,D)$ can be well approximated by the linearized operator of the KP-II equation, this is justified in the following lemma.
\begin{lem}\label{lem-KPapprox}
    Let $\lambda=\ep^3 \Lambda, $ and $L_{\kp}^a=e^{\hat{a}x}L_{\kp}e^{-\hat{a}x},$ %the transformed 
    where $L_{\kp}$ is defined in \eqref{linear-kpII}.
    It holds that 
    \beq\label{id--KPapprox}
\big(\lambda- \lambda_{+,\,\ep}^0(D)- R_{11}^{\ep}(x,D)\big)\chi_{{\kp}}(D)=\f{\ep^3}{2} (2 \Lambda- L_{\kp}^{\hat{a}})\chi_{{\kp}}(D)+\cR
    \eeq
    where $\cR $ %\in B(L^2)$ 
    enjoys the property
    \beq\label{es-cR}
    \|\cR\|_{ B(L^2(\mR^2))}\lesssim K^{-3}\,\ep^{3}.
    \eeq
\end{lem}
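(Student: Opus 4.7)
The statement is a symbol-matching identity: on the support of $\chi_{\kp}(D)=\chi_2(\ep D_x)\bI_A(D_y)$, both the constant-coefficient Fourier multiplier $\lambda_{+,\ep}^0(D)$ and the variable-coefficient operator $R_{11}^\ep(x,D)$ should agree with the corresponding symbols of $\tfrac{\ep^3}{2}L_{\kp}^{\hat a}$ up to an error controllable in $B(L^2)$. Recalling $\lambda=\ep^3\Lambda$, the plan is to write $\cR=\cR_c+\cR_v$, where $\cR_c=[\tfrac{\ep^3}{2}L_{\kp,\mathrm{c}}^{\hat{a}}(D)-\lambda_{+,\ep}^0(D)]\chi_{\kp}(D)$ handles the constant part (here $L_{\kp,\mathrm{c}}^{\hat a}=\p_x-\hat a-\tfrac13(\p_x-\hat a)^3-\p_y^2(\p_x-\hat a)^{-1}$) and $\cR_v=[\tfrac{\ep^3}{2}L_{\kp,\mathrm{v}}^{\hat{a}}(x,D)-R_{11}^\ep(x,D)]\chi_{\kp}(D)$ handles the variable part ($L_{\kp,\mathrm{v}}^{\hat a}=-3(\p_x-\hat a)(\Psi_{\kdv}\cdot)$), and to bound each summand by $K^{-3}\ep^3$ in operator norm.

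\textbf{Constant-coefficient piece.} On $\Supp\chi_{\kp}$ one has $|\xi|\leq 8K$, $|\eta|\leq A$, so $\nu\colon=\mu_a(\ep\xi,\ep^2\eta)=\ep\sqrt{(\xi+i\hat a)^2+\ep^2\eta^2}$ satisfies $|\nu|\lesssim\ep K$, which is small thanks to $K^4\ep\leq 1$. Combining the Taylor expansion $\mu\tanh\mu=\mu^2-\tfrac13\mu^4+O(\mu^6)$, $\sqrt{1-\ep^2}=1-\tfrac{\ep^2}{2}+O(\ep^4)$, and the binomial expansion of $\sqrt{(\xi+i\hat a)^2+\ep^2\eta^2}$ (holomorphic because $|\xi+i\hat a|\geq\hat a>0$), the leading $\ep$-terms cancel and I expect to obtain
\begin{equation*}
\lambda_{+,\ep}^0(\xi,\eta)=\tfrac{\ep^3}{2}\Bigl[i(\xi+i\hat a)+\tfrac{i}{3}(\xi+i\hat a)^3-\tfrac{i\eta^2}{\xi+i\hat a}\Bigr]+O(\ep^5 K^5),
\end{equation*}
the bracketed quantity being precisely the symbol of $L_{\kp,\mathrm{c}}^{\hat a}$. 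The remainder $\cR_c$ is then a Fourier multiplier supported in $\Supp\chi_{\kp}$ whose symbol is of size $\ep^5 K^5$, so $\|\cR_c\|_{B(L^2)}\lesssim\ep^5 K^5\leq K^{-3}\ep^3$ using $(K^4\ep)^2\leq 1$.

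\textbf{Variable-coefficient piece.} For $R_{11}^\ep$ defined from \eqref{defR}, I use the KP-II-scale expansions $v_c(\ep^{-1}x)=\ep^2\Psi_{\kdv}(x)+O(\ep^3)$ and $\zeta_c(\ep^{-1}x)=\ep^2\Psi_{\kdv}(x)+O(\ep^3)$ coming from Theorem \ref{bealetheo} and the definitions \eqref{defZv}, together with $(d_c\p_x Z_c)(\ep^{-1}\cdot)=O(\ep^4)$, as well as the Lannes-type first-order expansion of the Dirichlet--Neumann operator recalled in Subsection \ref{subsec-KP-IIapprox}, namely $G_a[\zeta_c]-G_a[0]=-G_a[0]\zeta_c G_a[0]-(\p_x-a)(\zeta_c(\p_x-a)\cdot)+O(\zeta_c^2)$, combined with the low-frequency identity $G_a[0]=-\Delta_a+O(\Delta_a^2)$ valid on $\Supp\chi_{\kp}$. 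After rescaling $\p_x\mapsto\ep\p_x$, $\p_y\mapsto\ep^2\p_y$, each derivative landing on a function frequency-localized by $\chi_{\kp}$ costs at most a factor $K$. The first three summands of $R_{11}^\ep\chi_{\kp}(D)$ should then combine to $\tfrac{3\ep^3}{2}(\p_x-\hat a)(\Psi_{\kdv}\cdot)+O(K\ep^4)$, matching $-\tfrac{\ep^3}{2}L_{\kp,\mathrm{v}}^{\hat a}$, while the fourth summand is already $O(\ep^4)$. Hence $\|\cR_v\|_{B(L^2)}\lesssim K\ep^4\lesssim K^{-3}\ep^3$ under $K^4\ep\leq 1$.

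\textbf{Main obstacle.} The delicate step is the symbol-level cancellation producing $\cR_v$: identifying that at the required order-$\ep^3$ precision, the singular factor $\sqrt{-G_a[0]}^{-1}$ appearing in \eqref{defR} combines with its counterpart $\sqrt{-G_a[0]}$ in the adjacent summand and with the two-term expansion of $G_a[\zeta_c]-G_a[0]$ to produce \emph{exactly} $\tfrac{3\ep^3}{2}(\p_x-\hat a)(\Psi_{\kdv}\cdot)$, with the coefficient $3$ arising as the sum $1+1+1$ of three separate contributions. Verifying that the higher-order pieces genuinely land in $O(K\ep^4)$ in $B(L^2)$, despite the presence of the inverse $\sqrt{-G_a[0]}^{-1}$ near its zero set, will require the low-frequency Dirichlet--Neumann estimates already employed for the intermediate-transverse regime (e.g.\ Lemmas \ref{lem-Ga-Ga0-g} and \ref{lem-remaider-sec}) and the fact that the resulting remainder symbols, while variable-coefficient, have Schwartz-class coefficients coming from $\Psi_{\kdv}$ and its derivatives. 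Once these bookkeeping points are handled, both bounds combine to give \eqref{es-cR}.
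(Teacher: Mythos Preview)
Your decomposition into constant and variable pieces matches the paper's approach, and your expansion of $\lambda_{+,\ep}^0$ is correct. The gap is in the variable-coefficient error analysis: the claimed intermediate bound $\|\cR_v\|_{B(L^2)}\lesssim K\ep^4$ is too strong, and the mechanism you describe cannot deliver it. The heuristic ``each derivative costs at most a factor $K$'' is valid only for operators acting on functions that remain frequency-localized in $\Supp\chi_{\kp}$. But in the summand $\sqrt{-\hat G_{\hat a}[0]}\,v_\ep\,(\p_x-\hat a)\sqrt{-\hat G_{\hat a}[0]}^{-1}\chi_{\kp}(D)$ (and similarly after inserting the shape-derivative expansion of the Dirichlet--Neumann difference), multiplication by the Schwartz-class coefficient $v_\ep$ spreads frequencies \emph{outside} $\Supp\chi_{\kp}$, and the outermost factor $\sqrt{-\hat G_{\hat a}[0]}$ is unbounded on $L^2$ (its symbol grows like $\sqrt{\ep|\xi|}$ for large $\xi$), so it cannot be treated as a low-frequency derivative. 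Your ``main obstacle'' is also misidentified: $\sqrt{-G_a[0]}^{-1}$ is harmless here, since on $\Supp\chi_{\kp}$ one has $|\xi+i\hat a|\ge\hat a>0$ and $|\eta|\le A\ep^2$, keeping the symbol well away from zero.

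The paper's argument instead splits the Fourier integral for $(\ep^{-1}\sqrt{-\hat G_{\hat a}[0]}+(\p_x-\hat a))(v_\ep\chi_{\kp}(D)f)$ into $|\xi|\le 16K$ and $|\xi|\ge 16K$. On the low part, Taylor expansion of the symbol yields an error $O(K^5\ep^2)$; on the high-frequency tail, one exploits the rapid decay of $\cF_x v_\ep$ (noting $|\xi-\xi'|\ge 8K$ there since $|\xi'|\le 8K$) to extract $O(K^{-N})$ for any $N$, taking $N=3$. This produces the error $\ep^3(K^5\ep^2+K^{-3})$ for $\cR_v$, and the $K^{-3}\ep^3$ is the \emph{dominant} contribution: it is obtained directly from the regularity of $v_\ep$, not from converting $K\ep^4$ via $K^4\ep\le 1$. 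Indeed under that constraint one has $K^{-3}\ep^3\ge K\ep^4$, so your claimed bound would be strictly stronger than what is actually true.
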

Once this  lemma is proven, 
the estimate \eqref{prop-QuL} is the consequence of %this lemma and 
the invertibility of $ (\Lambda- L_{\kp}^{\hat{a}})\chi_{{\kp}}(D)$ on the space $\mathbb{Q}_{\kp}^a(\hat{\eta}_0)L^2$ which follows from Proposition 3.2,
\cite{Mizumachi-KP-nonlinear}: 
\begin{prop}[Proposition 3.2, \cite{Mizumachi-KP-nonlinear}]
     There is $\hat{\beta}_0$ such that for any $\Lambda$ such that 
$\Re \Lambda > -\hat{\beta}_0/2,$
\beq\label{revert-Lkp}
\|(2\Lambda- L_{\kp}^{\hat{a}})^{-1}\chi_{{\kp}}(D)\|_{B(\mathbb{Q}_{\kp}^{\hat{a}}(\hat{\eta}_0)L^2)}<+\infty\,.
\eeq
\end{prop}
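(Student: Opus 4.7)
The statement is quoted verbatim from \cite{Mizumachi-KP-nonlinear}, so the natural ``proof'' here is simply to invoke that reference; still, it is useful to sketch how one would reprove it. The plan is to diagonalize in the transverse variable, exploit the spectral gap produced by the weight $e^{\hat a x}$ on the constant-coefficient part of $L_\kp^{\hat a}$, treat the $\Psi_\kdv$-potential as a relatively compact perturbation, and finally quotient out the two small bifurcating eigenvalues via $\mathbb{Q}_\kp^{\hat a}(\hat\eta_0)$.

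First I would Fourier transform in $y$ to write $L_\kp^{\hat a} = \int^{\oplus} L_\kp^{\hat a}(\hat\eta)\,\d\hat\eta$ with
\[
L_\kp^{\hat a}(\hat\eta) = (\p_x-\hat a) - \tfrac{1}{3}(\p_x-\hat a)^3 - 3(\p_x-\hat a)(\Psi_\kdv \cdot) + \hat\eta^2(\p_x-\hat a)^{-1},
\]
which reduces the claim to uniform-in-$\hat\eta$ resolvent bounds on $L^2_x$ for $(2\Lambda - L_\kp^{\hat a}(\hat\eta))^{-1}\chi_2(D_x)$ restricted to the image of the one-dimensional projection $\mathbb{Q}_\kp^{\hat a}(\hat\eta_0)$, in the range $|\hat\eta|\le A$ fixed by $\bI_A(D_y)$. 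For the constant-coefficient part $L_0^{\hat a}(\hat\eta)$ obtained by dropping the $\Psi_\kdv$ term, the symbol is
\[
p(\xi,\hat\eta) = (i\xi-\hat a) - \tfrac{1}{3}(i\xi-\hat a)^3 + \tfrac{\hat\eta^2}{i\xi-\hat a},
\]
whose real part, for small $\hat a$, computes to
\[
\Re p(\xi,\hat\eta) = -\hat a\Bigl(1 + \xi^2 + \tfrac{\hat\eta^2}{\hat a^2+\xi^2}\Bigr) + \tfrac{\hat a^3}{3} \le -\hat a\bigl(1-\tfrac{\hat a^2}{3}\bigr) =: -\hat\beta_0(\hat a) < 0,
\]
uniformly in $(\xi,\hat\eta)$. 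Hence $(2\Lambda - L_0^{\hat a}(\hat\eta))$ is invertible on $L^2_x$ with a norm bounded by $C/(\Re(2\Lambda)+\hat\beta_0)$ for every $\Re\Lambda > -\hat\beta_0/2$.

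Next I would reinsert the zeroth-order term $-3(\p_x-\hat a)(\Psi_\kdv \cdot)$. Since $\Psi_\kdv$ is Schwartz, this is a relatively compact perturbation of $L_0^{\hat a}(\hat\eta)$, so Weyl's theorem forces the essential spectrum of $L_\kp^{\hat a}(\hat\eta)$ to remain in $\{\Re\Lambda\le -\hat\beta_0\}$. Any loss of invertibility on $\{\Re\Lambda > -\hat\beta_0/2\}$ can therefore come only from isolated eigenvalues. For $|\hat\eta|\ge \hat\eta_0$, the transverse spectral stability of the KdV line soliton for KP--II (Alexander--Pego--Sachs) excludes such eigenvalues. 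For $|\hat\eta|<\hat\eta_0$, the only eigenvalues that can enter are the resonant pair bifurcating from the two-dimensional generalized kernel of $L_\kp^{\hat a}(0)$ (spanned by the translational and Galilean modes), and these are precisely what $\mathbb{Q}_\kp^{\hat a}(\hat\eta_0)$ annihilates, so on the complementary spectral subspace the resolvent is uniformly bounded by analytic perturbation theory.

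The main obstacle is the limit $\hat\eta\to 0$: one must check that the fiber projections constructed on each $L_\kp^{\hat a}(\hat\eta)$ by perturbation from $\hat\eta=0$ glue into a genuine bounded projection on $L^2(\mR^2)$, so that the fiberwise inverses reassemble into a single bounded operator on $\mathbb{Q}_\kp^{\hat a}(\hat\eta_0)L^2$. This requires a careful analytic Fredholm argument combined with the $\hat\eta$-dependent basis $\{g_{0k,\hat a}(\cdot,\hat\eta)\}$ already used in Theorem~\ref{thm-resolmodes}, and is exactly the content of Mizumachi's proof in \cite{Mizumachi-KP-nonlinear}.
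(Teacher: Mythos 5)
The paper does not prove this proposition; it is quoted verbatim from Mizumachi's reference, so ``the paper's own proof'' here is simply the citation and there is nothing in the text to compare your sketch against. That said, your outline is a fair description of the mechanism and your constant-coefficient computation is correct: the symbol of the weighted free operator is $p(\xi,\hat\eta)=(i\xi-\hat a)-\tfrac13(i\xi-\hat a)^3+\hat\eta^2/(i\xi-\hat a)$, whose real part is $-\hat a\bigl(1+\xi^2+\hat\eta^2/(\hat a^2+\xi^2)\bigr)+\hat a^3/3\le -\hat a(1-\hat a^2/3)$, and you correctly single out the uniformity in $\hat\eta\to 0$ and the gluing of the fiber projections as the genuine difficulty. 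Two places where the sketch glosses over more than it seems: (i) invoking Alexander--Pego--Sachs only rules out point spectrum in $\{\Re\lambda>0\}$ for the \emph{unweighted} problem; upgrading this to a \emph{uniform} resolvent bound on the exponentially weighted space over the strip $\Re\Lambda>-\hat\beta_0/2$, down to frequencies $\hat\eta$ bounded away from zero, already requires Evans-function/scattering analysis and is a nontrivial part of Mizumachi's argument, not a corollary; (ii) Weyl's theorem fixes the essential spectrum under a relatively compact perturbation, but it does not prevent discrete eigenvalues from accumulating at the boundary of the shifted essential spectrum, and excluding that is again part of what has to be proved. Since you explicitly defer the heavy lifting to the cited reference, the proposal is honest; just be aware that the bridge from ``no unstable eigenvalues'' to ``uniform resolvent bound in a left half-plane'' is where the real work lies.
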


Indeed, since $\mathbb{Q}_{\kp}^{\hat{a}} L_{\kp}^{\hat{a}}=L_{\kp}^{\hat{a}} \mathbb{Q}_{\kp}^{\hat{a}} $ and $\chi_{\kp}(D) u=u,$ we have 
that 
\beqs 
\ep^3 (\Lambda- L_{\kp}^{\hat{a}})\,\mathbb{Q}_{\kp}^{\hat{a}} u =\mathbb{Q}_{\kp}^{\hat{a}}\,(f-\cR).
\eeqs
It then follows from \eqref{revert-Lkp} and \eqref{es-cR} that 
\beqs 
\|\mathbb{Q}_{\kp}^a u \|_{L^2}\lesssim K^{-3}\, \|u\|_{L^2}+\ep^{-3}\|f\|_{L^2},
\eeqs
which, together with \eqref{estimate-RxD} and  the fact that $S_{\ep}$ is isometric on $L^2$, implies \eqref{prop-QuL}.
%\beqs \|\cQ \, \tilde{U}_1^L\|_{L^2}\lesssim K^{-3} \|\tilde{U}_1^L\|_{L^2}+ K\|\tilde{U}^L_2\|_{L^2}+   \ep^{-3}\,\|\tilde{F}_1\|_{L^2}.\eeqs
It now remains to prove  Lemma \ref{lem-KPapprox}. Let us set
\beq\label{defmuep}
\mu_{\ep}({\xi, \eta})=\ep\sqrt{(\xi+i\hat{a})^2+\ep^2\eta^2}\,.
\eeq
For any $(\xi,\eta)\in\Supp\chi_{\kp}(\xi,\eta)=\big\{(\xi,\eta)\big|\, |\xi|\leq 8K, |\eta|\leq A\leq 2K^2\big\}, $ it holds that
\begin{align*}
    \sqrt{-\mu_{\ep}^2(\xi, \eta)}=-i\ep(\xi+ i\hat{a}) \sqrt{1+\f{\ep^2\eta^2}{(\xi+ i\hat{a})^2}}=-i\ep(\xi+ i\hat{a}) \bigg( 1+\f{\ep^2 \eta^2}{2(\xi+i \hat{a})}+\cO(K^4\ep^4)\bigg).
\end{align*}
Consequently, by using the Taylor expansion $\tanh x= x-\f13 x^3+\cO(x^5),$ we have that
\beq \label{G0-lowlow}
\begin{aligned}
\sqrt{-\mu_{\ep} \tanh \mu_{\ep}}(\xi,\eta)&=\sqrt{-\mu_{\ep}^2\big(1-\f{ \mu_{\ep}^2}{3}+\cO(\mu_{\ep}^4)\big)}\\
    &= \sqrt{-\mu_{\ep}^2\,}\big(1-\f{\mu_{\ep}^2}{6} +\cO(K^4\ep^4)\big) \\
    & = -i\ep \bigg((\xi+ i\hat{a})+\f{\ep^2\eta^2}{2(\xi+ i\hat{a})} - \f{\ep^2(\xi+ i\hat{a})^3}{6}\bigg)+\cO(K^5\ep^5)\,.
\end{aligned}
\eeq
This, together with the expansion $\sqrt{\gamma\,}=\sqrt{1-\ep^2}=1-\f{\ep^2}{2}+\cO(\ep^4),$  leads to 
\begin{align*}
    \lambda_{+,\ep}^0(\xi, \eta)&= i\ep (\xi+ i\hat{a})+
\sqrt{-\gamma\,\mu_{\ep} \tanh \mu_{\ep}}\\
&= \f{\ep^3}{2} \bigg(i (\xi+i \hat{a})+\f{i(\xi+i\hat{a})^3}{3}+\f{\eta^2}{2i(\xi+ i\hat{a})}\bigg)+\cO(K^5\ep^5) \\
& =\f{\ep^3}{2}\Lambda_{\kp,0}^{\hat{a}}(\xi, \eta)+\cO(K^5\ep^5)\,.
\end{align*}
We thus proved that
\beq\label{KP-nonlinear}
 \lambda_{-,\ep}^0(D)\chi_{\kp}(D)=\f{\ep^3}{2}\Lambda_{\kp,0}^{\hat{a}}(D)+\cR_1, \quad \text{ with } \, \|\cR_1\|_{B(L^2)}\lesssim (K \ep)^5 . %+K^{-3}.
\eeq
Note that $\Lambda_{\kp,0}^{\hat{a}}(D)$ is the  operator  obtained by linearizing the  KP-II equation  about $0$ in the moving frame. 
In view of the expression $L_{\kp}^{\hat{a}}= \Lambda_{\kp,0}^{\hat{a}}(D)-3 \,\p_x (\Psi_{\kdv}\cdot),$
we see that in order to prove
\eqref{id--KPapprox}, it suffices to show that 
\beqs
 R_{11}^{{\ep}}\chi_{\kp}(D)=-\f{\ep^3}{2}\big(3 \,\p_x (\Psi_{\kdv}\cdot)\big)+\cR_2\,, \quad \text{ with } \, \|\cR_2\|_{B(L^2)}\lesssim \ep^3\big(\ep+K^5\ep^2
 +K^{-3}\big) .
\eeqs
By introducing
\begin{align*}
   ( v_{\ep}, \zeta_{\ep}, Z_{\ep} )(\cdot)=\ep^{-2}\,(v_c, \zeta_c, Z_c)(\ep^{-1}\cdot)\,, \quad d_{\ep}=1-\ep^2\, v_{\ep}, \, \quad \hat{G}_{\hat{a}}[\ep^2\zeta_{\ep}]=S_{\ep} G_a[\zeta_c] S_{\ep}^{-1}\, .
\end{align*}
and by using  the expression \eqref{defR}, it holds that 
\beq\label{expr-R11}
\begin{aligned}
     & R_{11}^{\ep}(x,D)=R_{11}\big(\ep^{-1}{x},\, \ep D_x,\, \ep^2 D_y\big)\\
     &=-\f{\ep^3}{2} \bigg( (\p_x-\hat{a}) (v_{\ep}\, \cdot)+\sqrt{-\hat{G}_{\hat{a}}[0]} \,v_{\ep}\, (\p_x-\hat{a}) \sqrt{-\hat{G}_{\hat{a}}[0]} ^{-1} \\
    &\qquad\qquad  +\ep^{-3}\sqrt{\gamma\,}\big(\hat{G}_{\hat{a}}[\ep^2\zeta_{\ep}]-\hat{G}_{\hat{a}}[0]\big)\sqrt{-\hat{G}_{\hat{a}}[0]} ^{-1} -\sqrt{\gamma\,}^{-1}\sqrt{-\hat{G}_{\hat{a}}[0]} \big(d_{\ep} \, \p_x Z_{\ep}\big)\bigg).
\end{aligned}
\eeq
Note that by definition, 
$\hat{G}_{\hat{a}}[0]=(\mu_{\ep}\tanh\mu_{\ep})(D),$
with $\mu_{\ep}$ defined in \eqref{defmuep}. %Therefore, we could conclude that 
We first claim that the last term in the above identity can  be considered as a remainder. Indeed, we  write 
\beqs 
\sqrt{-\hat{G}_{\hat{a}}[0]} \big(d_{\ep} \, \p_x Z_{\ep}\big) \chi_{\kp}(D)=\sqrt{-\hat{G}_{\hat{a}}[0]} \bigg(\chi_{\kp}(D)\big(d_{\ep} \, \p_x Z_{\ep}\big) - %\sqrt{-\hat{G}_{\hat{a}}[0]}
\big[\chi_{\kp}(D),\,(d_{\ep} \, \p_x Z_{\ep}\big)\big] \bigg),
\eeqs
we have, on  the one hand, in view of \eqref{G0-lowlow} that ,
\beqs 
\big\|\sqrt{-\hat{G}_{\hat{a}}[0]} \, \chi_{\kp}(D)\big\|_{B(L^2)}\lesssim K\ep\, ,
\eeqs
and on the other hand, by applying \eqref{es-commutator} with $s=1,$ that 
\beqs 
\big\|\sqrt{-\hat{G}_{\hat{a}}[0]} \,\big[\chi_{\kp}(D),\,(d_{\ep} \, \p_x Z_{\ep}\big)\big]\big\|_{B(L^2)}\lesssim C_1 \|\cF_{x\rightarrow \xi}\big(\p_x(d_{\ep} \, \p_x Z_{\ep}\big)\|_{L_{\xi}^1}\lesssim \ep,
\eeqs
where 
\beqs 
C_1=%\sup_{\xi,\, \xi'}
\bigg(\sup_{|\xi|\leq 16K,\, \xi'\in \mR}+ \sup_{|\xi|\geq 16K,\, |\xi'|\leq 8K}\bigg)
\bigg| \sqrt{-\mu_{\ep}\tanh \mu_{\ep}}\, \f{\chi_{\kp}(\xi)-\chi_{\kp}(\xi')}{\xi-\xi'}\bigg| \lesssim K\ep \cdot K^{-1}+ \ep\lesssim \ep .
\eeqs
It thus holds that
\beq\label{term-4}
\big\| \sqrt{-\hat{G}_{\hat{a}}[0]} \big(d_{\ep} \, \p_x Z_{\ep}\big) \chi_{\kp}(D)\big\|_{B(L^2)}\lesssim K \ep\,.
\eeq
Next, thanks again to \eqref{G0-lowlow}, we have 
\begin{align}\label{Ga0-lowfre-0}
    \sqrt{-\hat{G}_{\hat{a}}[0]}\,\chi_{\kp}(D)=-\ep\, \big(\p_x-\hat{a}\big)\big(1+\cO_{B(L^2)}(K^4\ep^2) \big) \chi_{\kp}(D),
\end{align}
which yields
\begin{align}\label{Ga0-lowfreq}
 - \ep  \big(\p_x-\hat{a}\big)\sqrt{-\hat{G}_{\hat{a}}[0]}^{-1} \chi_{\kp}(D)=\Id + \cO_{B(L^2)}(K^4\ep^2)\,.
\end{align}
%Let  $\tilde{\chi}_{\kp}$ be a smooth function supported on $\big\{(\xi, \eta)\big|\, |\xi|\leq 20 K,\, |\eta|\leq A \ep^2 \big\}$ and equals to $1$ 
Moreover, by denoting $m(\xi, \eta)=\ep^{-1} \sqrt{-\mu_{\ep}\tanh \mu_{\ep}}(\xi, \eta)-(i\xi-\hat{a}),$ we write
\begin{align*}
&\quad \cF_{x%\rightarrow\xi
}\bigg(\big(\ep^{-1}\sqrt{-\hat{G}_{\hat{a}}[0]}+(\p_x-\hat{a}) \big)  \, \big(v_{\ep}\, \chi_{\kp}(D) f \big)\bigg)(\xi) \\
&=\bigg(\int_{|\xi|\leq 16K} + \int_{|\xi|\geq 16K}\bigg) \, m(\xi,\,\eta) (\cF_x v_{\ep})(\xi-\xi') \chi_{\kp}(\xi')\cF_x(f)(\xi', y) \,\d \xi'=\colon I_1+I_2 \, .
\end{align*}
It follows from \eqref{G0-lowlow} and Young's inequality that 
\begin{align*}
   \| I_1 \|_{L_{\xi}^2L_y^2}\lesssim K^5\ep^2\, \|\cF_x v_{\ep}\|_{L^1} \|f\|_{L^2}.
\end{align*}
For the second term $I_2 ,$ let us first notice that for any $\xi\in \mR,\, |\eta|\leq 2K^2\ep^2 ,$ 
\begin{align*}
 \ep^{-1} |\sqrt{-\mu_{\ep}\tanh \mu_{\ep}}(\xi, \eta)|\lesssim \ep^{-1}|\mu_{\ep}|\lesssim |\xi+i\hat{a}|(1+\cO(K^4\ep^2))\leq 2|\xi+i\hat{a}|
\end{align*}
as long as $K^4\ep \leq 1$ and $\ep$ small enough. 
Therefore, by using the fact that $\chi_{\kp}(\xi', \eta)$ is supported on $|\xi'|\leq 8K,$ we conclude that
\beqs 
\|I_2\|_{L_{\xi}^2L_y^2}\lesssim  K\|f\|_{L^2} \int_{|\xi|\geq 8K} |\cF_x v_{\ep}'|(\xi)\, \d \xi  \leq K^{-\f{2\ell-1}{2}} \|v_{\ep}'\|_{H^{\ell+1}}\, \|f\|_{L^2} , \quad \forall\, \ell \geq 1.
\eeqs
 We thus have verified that (taking $\ell=3.5$ in the estimate for $I_2$),
\beqs 
\big(\ep^{-1}\sqrt{-\hat{G}_{\hat{a}}[0]}-(\p_x-\hat{a}) \big)  \, v_{\ep}\, \chi_{\kp}(D) =\cO_{B(L^2)}\big(K^5\ep^2+K^{-3}\big)\, ,
\eeqs
which, together with \eqref{Ga0-lowfreq} yields: 
\beq\label{term2}
\sqrt{-\hat{G}_{\hat{a}}[0]} \,v_{\ep}\, (\p_x-\hat{a}) \sqrt{-\hat{G}_{\hat{a}}[0]} ^{-1}= (\p_x-\hat{a}) (v_{\ep} \cdot)+\cO_{B(L^2)}\big(K^5\ep^2+K^{-3}\big).\, 
\eeq

Finally, by using the shape derivative of the Dirichlet-Neumann operator (see Section 4, \cite{Lannes}), we have by the Taylor expansion that:
\beq\label{Ga-talor1er-scaled}
\begin{aligned}
  %\sqrt{\gamma\,}\sqrt{-\hat{G}_{\hat{a}}[0]} ^{-1} 
  %( G_a[\zeta_c]-G_a[0])\vp
  &\ep^{-3}\big(\hat{G}_{\hat{a}}[\ep^2\zeta_{\ep}]-\hat{G}_{\hat{a}}[0]\big) =-(\p_x-\hat{a})\,\zeta_{\ep} \, [\ep(\p_x-\hat{a})]-\ep^3\zeta_{\ep} \,\p_y^2 \\
  &\qquad \qquad \qquad +\int_0^1  \big(-\hat{G}_{\hat{a}}[s\ep^2\zeta_{\ep}]+s\ep^4(\p_x-\hat{a})(\p_x\zeta_{\ep}\cdot)\big)(\zeta_{\ep} Z_{\ep}(s\ep^2\zeta_{\ep},\cdot))\,\d s\, 
\end{aligned}
\eeq
    where $Z_{\ep}(s\ep^2\zeta_{\ep},\cdot)=\f{\ep^{-1}\hat{G}_{\hat{a}}[s\ep^2\zeta_{\ep}]+s\ep^3\p_x\zeta_{{\ep}}(\p_x-\hat{a})}{1+|s\ep^2\p_x\zeta_{\ep}|^2}.$ 
The direct consequence of the above formula is, 
\beqs 
\| \big(\hat{G}_{\hat{a}}[\ep^2\zeta_{\ep}]-\hat{G}_{\hat{a}}[0]\big)\chi_{\kp}(D)\|_{B(L^2, H^1)}\lesssim \ep^2(1+(K\ep)^5)\lesssim \ep^2,
\eeqs
 which in turn, combined with \eqref{Ga0-lowfre-0} and the formula \eqref{Ga-talor1er-scaled},
   yields that  
\beqs 
 \ep^{-3}\big(\hat{G}_{\hat{a}}[\ep^2\zeta_{\ep}]-\hat{G}_{\hat{a}}[0]\big)\chi_{\kp}(D)=-(\p_x-\hat{a})\,\zeta_{\ep} \, [\ep(\p_x-\hat{a})]+\cO_{B(L^2)}(K^4\ep^3)\, .
\eeqs
Consequently, using \eqref{Ga0-lowfreq} and the expansion $\sqrt{\gamma\,}=\sqrt{1-\ep^2}=1+\cO(\ep^2),$ we obtain that:
\beqs 
\ep^{-3}\sqrt{\gamma\,}\big(\hat{G}_{\hat{a}}[\ep^2\zeta_{\ep}]-\hat{G}_{\hat{a}}[0]\big)\sqrt{-\hat{G}_{\hat{a}}[0]} ^{-1}= (\p_x-\hat{a}) (v_{\ep} \cdot)+\cO_{B(L^2)}\big(K^4\ep^3\big).\,
\eeqs
Plugging this estimate and \eqref{term-4} \eqref{term2} into \eqref{expr-R11}, then using the fact $\|v_{\ep}-\Psi_{\kdv}\|_{H^1}=\cO(\ep)$
 (see Theorem 1) and the assumption 
$K^4\ep\leq 1,$ we find \eqref{KP-nonlinear}.

\section{Spectral stability in the unweighted space}
In this section, we prove Theorem \ref{thm-spectral-unweighted}, %which states
or precisely, that the spectra of the %linearized 
operators $L(\eta)$ defined \eqref{def-Leta} in the unweighted space 
$Y_0(\eta)=\colon L^2(\mR)\times H_{0,*,\eta}^{1/2}$
are contained in the imaginary axis.  Thanks to \cite{Pego-Sun}, we just need to study the case $\eta \neq 0$.
Note that for $\eta \neq 0$, we have  $Y_0(\eta)= L^2(\mR) \times H^{1 \over 2}(\mathbb{R}).$

%At first, by the reversibility of the water waves system, 
%we already know that the spectrum of $L(\eta)$ is symmetric with respect to the imaginary axis, so it suffices to show that there is no spectrum 
% of positive real part.
 
%  This will be done by contradiction. Suppose that $\lambda$ is in the  spectrum  of $L(\eta)$ with $\Re \lambda>0,$ we will first show that $\lambda$ must be an eigenvalue. Let $U$ be the corresponding eigenfunction, then we will  show that $U$ belongs indeed to the weighted space $Y_a$ for $0<a<\f{\sqrt{3}}{4}\ep$ and  $\ep$
%sufficiently small. Finally, we will
%prove that for each 
%$\eta\in \mR,$ the operator $L(\eta)$ has no eigenvalues with positive real part in the weighted space $Y_a(\eta).$  The last two facts lead to the contradiction.
%This is then in contradiction  with the fact that $\lambda$ lies in the resolvent sets of $L(\eta)$ in the weighted space $Y_a(\eta)$. 

We first show that the  essential  spectrum of $L(\eta)$ (seen as an unbounded operator on   $Y_0(\eta)$) is on the imaginary axis.
 We write $L(\eta)=M(\eta)+R(\eta),$ where  
\beqs
 M(\eta)
 =\left( \begin{array}{cc}
  \sqrt{d_c} \,\p_x\, \sqrt{d_c}  &  G_{\eta}[0]   \\[5pt]
 -  \gamma& \sqrt{G_{\eta}[0]}^{-1} \sqrt{d_c} \p_x  \sqrt{d_c} \sqrt{G_{\eta}[0]} \end{array}\right), \,\, \qquad  R(\eta)
 =\left( \begin{array}{cc}
 R_{11}  &  R_{12}   \\[5pt]
 R_{21} & R_{22}
\end{array}\right) 
\eeqs
where 
$$R_{11}= \sqrt{d_c} [\sqrt{d_c}, \p_x], \quad R_{12}= G_{\eta}[\zeta_{c}] -G_{\eta}[0],\qquad  R_{21}= d_c \p_x Z_c\,,  $$ 
$$R_{22}=\sqrt{G_{\eta}[0]}^{-1}\bigg[\sqrt{G_{\eta}[0]}, d_c\bigg]\p_x+ \sqrt{G_{\eta}[0]}^{-1}\sqrt{d_c} \bigg[ \sqrt{d_c}, \p_x \bigg]\sqrt{G_{\eta}[0]}\,.$$ 
We shall check  that $\lambda-M(\eta)$ is invertible and then check that $R(\eta)$ is a relatively compact perturbation.

Since $\sqrt{G_{\eta}[0]}: H^{1 \over 2}_{0, *, \eta}\rightarrow L^2(\mathbb{R})$ is an isomorphism,  the invertibility of the operator $ (\lambda-M(\eta))$ on  $Y_0(\eta)$ is equivalent to the invertibility 
of the operator $(\lambda - M_{1}(\eta))$ on $L^2 \times L^2$ where
$$ M_{1}(\eta)
 =\left( \begin{array}{cc}
  \sqrt{d_c} \,\p_x\, \sqrt{d_c}  &  (\gamma G_{\eta}[0])^{1 \over 2}   \\[5pt]
 -   (\gamma G_{\eta}[0])^{1 \over 2} & \sqrt{d_c} \p_x  \sqrt{d_c}  \end{array}\right).$$
 Next, we also observe that
 $$ M_{1}(\eta)= i P^{-1}D(\eta)P, \quad P= \left(\begin{array}{cc} 1 & i \\ 1 & -i \end{array} \right), \quad D(\eta)= \mbox{diag }( S_{-}(\eta), S_{+}(\eta))$$
 where $$ S_{\pm}(\eta)=   \sqrt{d_c} D_x\, \sqrt{d_c} \pm (\gamma G_{\eta}[0])^{1 \over 2}, \quad D_{x}= { 1 \over i} \partial_{x}.$$
 Consequently, it suffices to prove that $\lambda - i S_{\pm} (\eta)$ are invertible on $L^2$ for $\lambda \notin i \mathbb{R}$.
  Since  $S_{\pm}$ are clearly  self-adjoint operators on $L^2$ with domain $H^1$, their spectrum is real 
  and thus the spectrum of  $i S_{\pm} (\eta)$ is included in the imaginary axis.
% obtained that the spectum of $M(\eta)$ 
%Therefore, it holds that $\|(\lambda-M(\eta))^{-1}\|_{B(Y_0(\eta))}\lesssim (\Re \lambda)^{-1}.$
%It is then enough to show that %$\lambda-M(\eta)$
This show that $\lambda - M(\eta)$ is invertible on $Y_0(\eta)$ for $\lambda \notin i\mathbb{R}$.

Next, we shall prove that 
$R(\eta)$ is a relatively compact perturbation of $\lambda-M(\eta)$ on $Y_0(\eta)$  that is to say that  
$(\lambda-M(\eta))^{-1}R(\eta)$ is a compact operator on $Y_0(\eta)$ for $\lambda \notin i \mathbb{R}$
to get that the essential spectrum of $L(\eta)$ is on the imaginary axis.
  %so that they have the same essential spectrum, which implies that if $\lambda$ is a spectra, it must be an eigenvalue. 
First, since the domain of $\lambda-M(\eta)$ is $\langle D_x \rangle^{-1} Y_a(\eta),$ we know that  $(\lambda-M(\eta))^{-1}$ maps $Y_0(\eta)$ to $\langle D_x \rangle^{-1} Y_0(\eta).$ Therefore, we just need to show that
 %\eqref{def-Leta0-1}, \eqref{inverselabda-Leta0} to write down the explicit form of each entries $B_{ij}$  and show that $B_{11}, B_{22}, \sqrt{G_{\eta}[0]}B_{21}, B_{12}\sqrt{G_{\eta}[0]}^{-1}$ 
 $$\langle D_x \rangle^{-1} R_{11}, \quad  \langle D_x \rangle^{-1}\sqrt{G_{\eta}[0]}R_{21}, \quad \langle D_x \rangle^{-1}R_{12}\sqrt{G_{\eta}[0]}^{-1}, \quad \langle D_x \rangle^{-1} \sqrt{G_{\eta}[0]}R_{22}\sqrt{G_{\eta}[0]}^{-1}$$
 are compact operators on $L^2(\mR).$  Indeed, we observe that  each of the above %entry of $L^1(\eta)$ 
 operators contains the background waves $v_c,\p_x Z_c, \zeta_c$ which have exponential decay and thus belong to the Schwarz class.    By using the expansion \eqref{expansion-Gaeta} for $a=0$ as well as the fact that the multiplication by  a Schwarz function is compact from $H^t(\mR)$ to $H^s(\mR)$ for $t>s$ we get the desired property. 

It remains to study the presence of eigenvalues for $\lambda \notin i \mathbb{R}$.
By the reversibility of the water waves system, 
we already know that the spectrum of $L(\eta)$ is symmetric with respect to the imaginary axis, so it suffices to show that there are no eigenvalues
 of positive real part.
 We proceed by contradiction. Let us assume that for some $\eta_{u} \neq 0$, there exists a nontrivial $U_{u} \in H^1 \times H^{3 \over 2}$ and $\sigma_{u}$, $\Re\, \sigma_{u}>0$ such that 
 $$ \sigma_{u} U_{u}= L(\eta_{u}) U_{u}.$$
 Since $\sigma_{u}$ has to be an isolated eigenvalue, we can define the spectral projection associated to $\sigma_{u}$ by
  $$ \pi (\eta_u) = { 1 \over 2i \pi} \int_{\gamma} (\lambda - L(\eta_{u}))^{-1 } d \lambda\,,$$ 
  where $\gamma$ is a small circle enclosing $\sigma_{u}$ and no other eigenvalue.
  Since the dependence of $L(\eta)$ in $\eta$ is analytic, we can use the analytic perturbation theory in \cite{Book-Kato}.
  We can  assume that $\eta_{u}$ is not an exceptional point (otherwise we change it for a nearby non exceptional point), we get that for $\eta \in (\eta_{u}- \delta_{0}, \eta_{u}+\delta_{0})$
   for some small  $\delta_{0}>0$, there exists an analytic  curve $\sigma(\eta)$ of positive real part eigenvalues of $L(\eta)$  which are of constant algebraic multiplicity $m$, 
   such that $\sigma(\eta_{u}) = \sigma_{0}$.
   Moreover, the eigenprojector
   $\pi(\eta)$ is also analytic.  The (possibly trivial) eigennilpotent which is  $N(\eta)=( \lambda - L(\eta)) \pi(\eta)$ is thus also analytic.

As in 
\cite{Pego-Sun}, we can get  that any eigenfunction $U(x)\in Y_0(\eta)$ of $L(\eta)$ associated to a positive real part  eigenvalue $\lambda$ belongs also to the weighted space $Y_a(\eta).$  
 Let us rewrite $L(\eta)U=\lambda U$ as 
 $$(\lambda-L^0(\eta))U=L^1(\eta)U\,,$$
 where  
\beq\label{def-Leta0-1}
 {L}^0(\eta)
 =\left( \begin{array}{cc}
   \p_x  &  G_{\eta}[0]   \\[5pt]
 -  \gamma& \p_x
\end{array}\right), \,\, \qquad  L^1(\eta)
 =\left( \begin{array}{cc}
   -\p_x(v_c\cdot)  &  G_{\eta}[\zeta_{c}] -G_{\eta}[0]  \\[5pt]
   d_c \p_x Z_c & -v_c\p_x
\end{array}\right).
\eeq
The eigenvalues of the symbol of $L^0(\eta)$ are given by
$$\lambda_{\pm,0}^0(\xi,\eta)=i\bigg(\xi\mp \sqrt{\gamma \mu_0\tanh\mu_0}(\xi,\eta)\bigg), \quad \big(\mu_0=\sqrt{\xi^2+\eta^2}\big),$$
which are purely imaginary. Therefore, 
for any $\lambda$ with $\Re \lambda>0,$ we know that the $\lambda-L^0(\eta)$ is invertible on $Y_0(\eta)$ 
and the symbol of its inverse 
is given by 
\beq\label{inverselabda-Leta0}
\begin{aligned}
&\qquad\big(\lambda - L^0(\xi,\eta)\big)^{-1}\\
&= \f{1}{(\lambda-\lambda^0_{+,0})(\lambda-\lambda^0_{-,0})}\left( \begin{array}{cc} \f{(\lambda-\lambda^0_{+,0})+(\lambda-\lambda^0_{-,0})}{2}  & \f{(\lambda-\lambda^0_{-,0})-(\lambda-\lambda^0_{+,0})}{2}  \sqrt{-\mu_0\tanh \mu_0/\gamma}   \\[5pt] -  \gamma&  \f{(\lambda-\lambda^0_{+,0})+(\lambda-\lambda^0_{-,0})}{2} \end{array}\right). 
%&=\colon \left(  \begin{array}{cc}A_{11}, & A_{12}\\A_{21} & A_{22}\end{array}\right). 
\end{aligned}
\eeq
 Moreover, from  similar computations as in the proof of \eqref{id-la0-inverse}, we see that $\lambda-L^0(\eta)$ is invertible also in the weighted space $Y_a(\eta)$ and it maps $Y_a(\eta)$ to $\langle D_x \rangle^{-1} Y_a(\eta).$ 
 By using the exponential localization of $v_c,\p_x Z_c, \zeta_c$ as well as the expansion  \eqref{expansion-Gaeta} for $a=0,$ one can verify that $L^1(\eta)U\in \langle D_x \rangle Y_a(\eta),$ as long as $0<a\leq \f{\sqrt{3}}{4}\ep$ and $\ep$ is small enough. These two facts yield that $U\in Y_a(\eta).$ 
 
 Next by iterating the argument, we get that for $\Re \, \lambda>0$, any element of the generalized kernel has to be in the weighted
 space.
 
 By using this observation we thus get that  for 
 $\eta \in (\eta_{0}- \delta, \eta_{0}+\delta)$ the image of 
   $\pi(\eta)$ is included in the weighted space.  We also deduce that all the operators seen  as valued into the weighted space
   are continuous. 
   
   We then define for every $\delta >0$ sufficiently small the wave packet 
$$V_{\delta}(t,x,y)=\int_{I_\delta } e^{iy\eta\,} e^{L(\eta) t}\,\pi(\eta) U_{u}\,\d \eta\,, \quad I_{\delta}= (\eta_{u} - \delta^{m+1} , \eta_{u}+ \delta^{m+1}).$$
which solves the evolution problem $\pt V=LV.$ 
We recall that we have defined  $m$ as the constant algebraic multiplicity of the eigenvalues $\sigma(\eta)$.
Moreover, we notice that from the above argument $V_{\delta}(t)$ belongs to the  weighted space
 and actually to the image of  $\mathbb{Q}(\ep^2\hat{\eta}_0)X_a$.
 Indeed,  for each $\eta \in I_{\delta}$, even if $\eta \in (- \eta_{0}, \eta_{0})$, we have that  $\mathcal{F}_{y} V_{\delta}(t, \cdot, \eta)$ 
 is orthogonal to the subspace generated by  $(g_{k}^*(\cdot, \eta)_{k=1, \, 2},$ which is the subspace generated the  %which are
 eigenvectors $U^*(\pm \eta)$  of $L(\eta)^*$ associated to different eigenvalues.
  
  We then observe that  we can equivalently write
  $$V_{\delta}(t,x,y)=\int_{I_\delta } e^{iy\eta\,} e^{\sigma(\eta) t}  e^{N(\eta) t}\,\pi(\eta) U_{u}\,\d \eta\,$$
  and that from the Bessel identity, we have
  $$ \|V_{\delta}(0)\|_{X_{a}}^2 =\|e^{ax}V(t)\|_{X}^2= C\int_{I_\delta} \|\pi(\eta) U_{u}\|_{Y_a(\eta)}^2\, \d \eta>0$$
  for $\delta$ sufficiently small since $\pi(\eta_{u}) U_{u}= U_{u}$.
   Moreover, we also obtain that 
  $$   \|V_{\delta}( \delta^{- 1})\|_{X_{a}}^2 = C\int_{I_\delta}
  e^{ 2 \Re \sigma(\eta)\delta^{- 1} }
   \| e^{\delta^{- 1}N(\eta)}\pi(\eta) U_{u}\|_{Y_a(\eta)}^2\, \d \eta.$$
   Note that, for $\delta$ sufficiently small, we have that
   $\Re  \sigma(\eta) \geq \Re \sigma_{u}/2$ and that for $\eta \in I_{\delta}$
   $$  \|N(\eta)  \pi (\eta) U_{u}\|_{Y_{a}(\eta)} \lesssim | \eta - \eta_{u}| \|U_{u}\|_{Y_{a}(\eta)} \lesssim \delta^{m+1}\|U_{u}\|_{Y_{a}(\eta)}$$
   and hence since $N(\eta)^m=0$,  that 
   $$\| e^{ \delta^{-1} N(\eta)} U_{u} - U_{u}\|_{Y_{a}(\eta)} \lesssim \delta^{m+1} { 1 \over \delta^{m}} \lesssim \delta  \| U_{u}\|_{Y_{a}(\eta)}.$$
 This yields that for every $\delta$ sufficiently small, we have
$$  \|V_{\delta}( \delta^{- 1})\|_{X_{a}}^2  \geq C  e^{ \Re \sigma_{u} \over \delta} \|V_{\delta}(0)\|_{X_{a}}^2$$
for some $C>0$ independent of $\delta$.
%
%
%  since $I\cap [-\hat{\eta}_0\ep^2,\hat{\eta}_0\ep^2]=\varnothing.$
%Denote $c^{*}=\min\big\{\Re \lambda(\eta)|\,\eta\in I_0\big\}>0,$
%it then follows from the Parseval identity that for any $t\geq 0$
%\beqs 
%\|V(t)\|_{X_a}=\|e^{ax}V(t)\|_{X}= C\int_{I_1}e^{\lambda(\eta)t} \|e^{a\cdot}U(\cdot, \eta)\|_{Y_0(\eta)}\, \d \eta \gtrsim e^{c^{*}t}\|e^{ax}V(0)\|_{X}=e^{c^{*}t}\|V(0)\|_{X_a}.
%\eeqs
This is in contradiction  with the semigroup estimate \eqref{semigroup} for $\delta$ sufficiently small
(note that $\ep$ is fixed for this argument).
 This  ends the proof.

\appendix 

\section{A technical lemma using pseudodifferential calculus }
We first recall a lemma on  the invertibility  of   pseudodifferential operators. %For any symbol,
For any integer $m,$ we denote by  $S^m$ the symbol class whose elements $b(x,\xi)$ satisfy  the following: for any $j,k\in\mathbb{N},$ there exists $C_{j,k}>0,$ such that
\beqs  
|\p_x^{j}\p_{\xi}^{k}b(x,\xi)|\leq C_{j,k}(1+\xi^2)^{\f{m-k}{2}},\, \quad \forall \,(x,\xi)\in \mR^2.%\big\}
\eeqs
Given a symbol $b=b(x,\xi)\in S^m,$ we use the standard notation $\op(b)$ 
to denote the operator associated to $a$ which is defined by, for any $f$ belongs to the Schwarz class,
\beq 
\op(b)f=\f{1}{2\pi}\int b(x,\xi)\, e^{i\xi(x-x')} f(x')\, \d x' \d \xi\, .
\eeq
\begin{lem}\label{lem-inverse}
 Let $a=a(x,\xi,\eta):\mR^3\rightarrow \mathbb{C}$ be 
    a $S^1$ symbol 
    satisfying: 
 $$ |a|\geq \gamma>0,  \quad \forall \,(x,\xi,\eta)\in \mR^3, \quad %\p_x a(\cdot,\eta) \in S^0, \,\, %\|\p_x a(\cdot,\eta)\|_{S^0}\lesssim \delta, 
 \Sup_{j+k\leq M} \sup_{(x,\xi)\in \mR^2} (1+\xi^2)^{\f{k}{2}} \big|\p_x^{j}\p_{\xi}^{k} (\p_x a)(x,\xi,\eta) \big|\lesssim \delta,
 \quad \forall\, \eta\in\mR,$$
 where $M\geq 2$ is a sufficiently large integer.
 Then there exists $\delta_0>0$ small enough 
 such that: for any $\delta\in (0, \delta_0],$ any $\eta\in \mR,$
 \begin{align}
     \|\op(a(\cdot,\eta))\,u\|_{L^2}\geq \f{\gamma}{2} \|u\|_{L^2}.
 \end{align}
\end{lem}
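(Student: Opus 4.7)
The approach is a standard parametrix construction combined with a Neumann series argument. Since every hypothesis is uniform in $\eta$, that parameter plays a purely passive role and I suppress it in what follows; the resulting bound is then automatically uniform in $\eta$.

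First I would set $b(x,\xi) := 1/a(x,\xi)$. Using that $|a|\geq \gamma$ and $a\in S^1$, together with Leibniz and Fa\`a di Bruno for the derivatives of the reciprocal, $b$ lies in $S^{-1}$ with seminorms controlled in terms of $\gamma^{-1}$ and fixed seminorms of $a$. In particular $\op(b)$ is bounded on $L^2(\mR)$ by Calder\'on--Vaillancourt. Moreover, since the smallness hypothesis forces every $x$-derivative of $b$ to be $O(\delta)$ as well, $\op(b)$ is $O(\delta)$-close in $L^2$-operator norm to the Fourier multiplier with symbol $1/a(x_0,\cdot)$ for any reference point $x_0$, whose norm equals $\|1/a(x_0,\cdot)\|_{L^\infty}\leq 1/\gamma$. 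Consequently $\|\op(b)\|_{B(L^2)}\leq 1/\gamma + O(\delta)$.

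The key computation is the composition. The standard Moyal expansion gives
\beqs
\op(b)\circ \op(a) \;=\; \op(b\# a)\,, \qquad b\# a \;\sim\; \sum_{\alpha \geq 0}\frac{(-i)^\alpha}{\alpha!}\,\p_\xi^\alpha b \cdot \p_x^\alpha a\,.
\eeqs
The leading term is $b\cdot a\equiv 1$, and every other term in the expansion carries a factor $\p_x^\alpha a$ for some $|\alpha|\geq 1$. By hypothesis each of these factors has $S^0$-seminorms bounded by $C\delta$, up to total order $M$. Truncating the expansion at a fixed order $N\leq M$ and controlling the true remainder by a standard symbolic-calculus estimate then yields
\beqs
\op(b)\op(a) \;=\; I + R\,, \qquad \|R\|_{B(L^2(\mR))} \leq C_0\,\delta\,,
\eeqs
uniformly in $\eta$, where $C_0$ depends only on $\gamma$, on $N$, and on a fixed finite family of seminorms of $a$.

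For $\delta_0$ so small that $C_0\delta_0 \leq 1/4$ and the $O(\delta)$ correction in the previous estimate is $\leq 1/(2\gamma)$, the operator $I+R$ is invertible on $L^2$ with $\|(I+R)^{-1}\|\leq 4/3$; writing $u = (I+R)^{-1}\op(b)\op(a) u$ then gives $\|u\|_{L^2}\leq (2/\gamma)\|\op(a)u\|_{L^2}$, which is the conclusion. The delicate point is the remainder estimate: one must ensure that $C_0$ is controlled by a fixed finite family of seminorms and does not blow up as $N$ grows. This is arranged because the $N$-th order Moyal remainder gains $N$ orders of $\xi$-decay, so for $N$ large enough the tail can be bounded on $L^2$ by Calder\'on--Vaillancourt in terms of a bounded number of derivatives of $a$; the hypothesis, which controls $M$ derivatives of $\p_x a$ in weighted $L^\infty$, is precisely what provides the smallness $O(\delta)$ in those seminorms.
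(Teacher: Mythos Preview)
Your approach---set $b=1/a$, use the symbolic composition formula to write $\op(b)\op(a)=\Id+R$ with $\|R\|_{B(L^2)}=O(\delta)$ (every term beyond the leading one carries at least one $x$-derivative of $a$), then invert $\Id+R$ by a Neumann series---is exactly the paper's. The paper is terser (it simply cites Zworski for the composition step) but the content is identical.

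One gap worth flagging: your argument that $\|\op(b)\|_{B(L^2)}\leq 1/\gamma+O(\delta)$ by freezing $x$ at a reference point $x_0$ does not work as written. The difference symbol $b(x,\xi)-b(x_0,\xi)=\int_{x_0}^x\p_x b(s,\xi)\,ds$ has $O(\delta)$ $x$-derivatives, but its $L^\infty$ norm is not $O(\delta)$ since $x$ ranges over all of $\mR$; Calder\'on--Vaillancourt applied to this difference therefore yields only an $O(1)$ bound. The paper glosses over the very same step (it writes $\|\op(b)\op(a)u\|\leq\gamma^{-1}\|\op(a)u\|$ without justification). This issue affects only the explicit constant $\gamma/2$ in the statement: one certainly has $\|\op(b)\|_{B(L^2)}\leq C$ from Calder\'on--Vaillancourt with $C$ depending on $\gamma$ and on fixed seminorms of $a$, which gives $\|\op(a)u\|\geq c\|u\|$ for some $c>0$, and that is all the paper actually needs in the application to Claim~1.
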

\begin{proof}
One could refer for example  to Theorem 4.29 in \cite{Zworski-book} for the proof. For the convenience of the readers, we  recall the proof here. Define $b=\f{1}{a},$ then by applying the pseudodifferential calculus for the composition of operators,  
(Theorem 4.12, \cite{Zworski-book}), 
one has
\beqs 
\op\big(b(\cdot,\eta)\big)\,\circ\, \op\big(a(\cdot,\eta)\big) =\Id +r_{\eta} \,, 
\eeqs
  where $r_{\eta} \in S^0$ and $\|r_{\eta}\|_{B(L^2)}\lesssim \delta.$  Therefore, as long as $|\delta|\leq \delta_0$ is small enough, it holds that $\Id+r_{\eta}$ is invertible and 
  \beqs
  \|(\Id+r_{\eta})^{-1}\|_{B(L^2)}\leq 2 \,.
  \eeqs
It then follows that 
\begin{align*}
\|u\|_{L^2}&\leq\|(\Id+r_{\eta})^{-1} \op\big(b(\cdot,\eta)\big)\,\circ\, \op\big(a(\cdot,\eta)\big) \|_{L^2}\\
&\leq 2 \|\op\big(b(\cdot,\eta)\big)\,\circ\, \op\big(a(\cdot,\eta)\big) \|_{L^2}\leq \f{2}{\gamma}  \|\op\big(a(\cdot,\eta)\big)\,u\|_{L^2}\, .
\end{align*}
\end{proof}
In the following lemma, we will prove that the function $g_{\eta}$ defined in \eqref{def-g} belongs to the symbol class $S^0,$ for any $|\eta|\geq 2.$
\begin{lem}\label{lem-g}
    Assume that $\ep$ is small enough, then for any $|\eta|\geq 2,$ $g_{\eta}(x,\xi), \f{1}{g_{\eta}}(x,\xi)$ are both smooth in $(x,\xi)\in\mR^2$ and belong to the symbol class $S^0.$ Moreover, the corresponding seminorms are bounded uniformly for $|\eta|\geq 2.$
\end{lem}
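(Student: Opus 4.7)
The plan is to reduce the lemma to two tasks: showing that the auxiliary function
\[
h_\eta(x,\xi) \;=\; \f{w_c(x)\,(\mu_a\tanh\mu_a)(\xi,\eta)}{\lambda^1_{\zeta_c}\tanh\lambda^1_{\zeta_c}(x,\xi,\eta)}
\]
takes values in a fixed compact subset of $\mC\setminus(-\infty,0]$, uniformly in $(x,\xi)\in\mR^2$, $|\eta|\geq 2$ and small $\ep$; and then showing that $h_\eta$ itself is an $S^0$ symbol in $\xi$ with seminorms uniform in $\eta$. Since $g_\eta=\sqrt{h_\eta}$ and $1/g_\eta=\sqrt{1/h_\eta}$, the smoothness and $S^0$ estimates for $g_\eta$ and $1/g_\eta$ then follow by Faà di Bruno applied to the analytic branch of $\sqrt{\cdot}$ chosen on the range of $h_\eta$.

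The quantitative control of $h_\eta$ rests on the identity $(\lambda^1_{\zeta_c})^2 = \mu_a^2 + \eta^2(\p_x\zeta_c)^2$. Since $a=\hat a\ep\leq 1/2$ and Theorem \ref{bealetheo} gives $\|\p_x^k\zeta_c\|_{L^\infty}\lesssim \ep^3$ for every $k\geq 0$, the explicit formulas \eqref{defuv} used in Lemma \ref{lem-ev-HT} imply $\Re\mu_a\geq \sqrt{\xi^2+3}$ and $|\Im\mu_a|\leq a\leq 1/2$ whenever $|\eta|\geq 2$. In particular $|\mu_a|\asymp \sqrt{\xi^2+\eta^2}$ and $\mu_a$ sits in a conic sector of the right half-plane staying away from the zeros and poles of $\tanh$, so $|\tanh\mu_a|$ is pinched above and below by positive constants. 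The relative perturbation $|\eta^2(\p_x\zeta_c)^2/\mu_a^2|\lesssim \ep^6\eta^2/(\eta^2-1)\lesssim \ep^6$ is uniformly small, hence $\lambda^1_{\zeta_c}=\mu_a(1+O(\ep^6))$ lies in the same sector and $\tanh\lambda^1_{\zeta_c}/\tanh\mu_a = 1 + O(\ep^6) + O(e^{-2\Re\mu_a})$. Combined with $w_c=1-\ep^2+O(\ep^3)\in[1/2,1]$, this forces $|h_\eta|\asymp 1$ and $|\arg h_\eta|$ uniformly small, the geometric constraint needed to apply the analytic square root.

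For the symbol estimates, I would argue inductively. Each $\p_\xi$ applied to $\mu_a$ produces $(\xi+ia)/\mu_a$, and the bound $|\mu_a|\gtrsim \langle\xi\rangle$ (valid because $|\eta|\geq 2$) shows that this differentiation decreases the symbolic order by one; higher derivatives only involve rational combinations of $\mu_a$, $\xi+ia$ and $\eta$ with the same gain, so $\mu_a\in S^1$ uniformly in $|\eta|\geq 2$. The $x$-derivatives only hit $\p_x\zeta_c$, which are uniformly $O(\ep^3)$, and therefore $\lambda^1_{\zeta_c}\in S^1$ by the same argument. Composition with $\tanh$, whose derivatives are uniformly bounded on the sector identified above, preserves the order, so $\mu_a\tanh\mu_a$ and $\lambda^1_{\zeta_c}\tanh\lambda^1_{\zeta_c}$ are $S^1$ symbols whose denominator has modulus $\gtrsim \langle\xi\rangle$; their quotient is then in $S^0$, and multiplying by $w_c$ (smooth in $x$ alone) keeps it in $S^0$. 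Composing finally with the analytic branches of $\sqrt{\cdot}$ and $1/\sqrt{\cdot}$ on the range of $h_\eta$ yields $g_\eta,\,1/g_\eta\in S^0$ with uniform seminorms. The main technical point is keeping all constants uniform as $|\eta|\to\infty$; the exponential saturation $|\tanh\mu_a-1|\lesssim e^{-2\Re\mu_a}$ at large $|\mu_a|$ is what prevents the seminorms from degenerating in that limit.
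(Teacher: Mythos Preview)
Your argument is correct and follows essentially the same route as the paper: both reduce the lemma to showing that the quantity under the square root is close to $1$ (you obtain $h_\eta=1+O(\ep^6)+O(e^{-2\Re\mu_a})$, the paper writes it as $1+O(\ep^2)$ via an interpolation $\omega(s)$ between $\mu_a$ and $\lambda^1_{\zeta_c}$ and an explicit computation of $\p_s(\omega\tanh\omega)$), and both rely on the equivalence $|\mu_a|\asymp\sqrt{\xi^2+\eta^2}$ for the $S^0$ seminorms. Your direct Taylor argument and the paper's interpolation are interchangeable here; if anything, your write-up is more explicit about the Fa\`a di Bruno step for the analytic square root and about why the $\tanh$ saturation keeps the seminorms uniform as $|\eta|\to\infty$, a point the paper dispatches in one line. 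One small slip: Theorem~\ref{bealetheo} gives $\|\zeta_c\|_{L^\infty}\lesssim\ep^2$, not $\ep^3$; your claim $\|\p_x^k\zeta_c\|_{L^\infty}\lesssim\ep^3$ is only valid for $k\geq 1$, but since only $\p_x\zeta_c$ enters $\lambda^1_{\zeta_c}$ this does not affect the argument.
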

\begin{proof}
    For the convenience of the reader, we rewrite down here the expression of $g_{\eta}:$
    $$g_{\eta}(x,\xi)=\sqrt{\f{w_c(x) (\mu_a\tanh \mu_a)(\xi,\eta)}{\lambda^1_{\zeta_c}\tanh \lambda^1_{\zeta_c}(x,\xi,\eta)}}\,,$$ 
    where $\mu_a(\xi,\eta)=\sqrt{(\xi+ia)^2+\eta^2}, \, \lambda
^1_{\zeta_c}(x, \xi, \eta)=\sqrt{(\xi+ia)^2+\eta^2 (1+(\p_x\zeta_c)^2)}\,.$ 
Let us first remark that since $|\eta|\geq 2, \,0< a\leq \f12$ for $\ep$ small enough, it holds that $\Re \big((\xi+ia)^2+\eta^2\big)=\xi^2+\eta^2-a^2\geq \xi^2+3$ so that $\mu_a$ is smooth in $\xi.$ The similar fact holds for $\lambda_{\zeta_c}^1.$

We will prove that the function inside  the square root in the definition of $g_{\eta}$ is of form $1+f(x,\xi),$ where $f$ is a smooth function in $\xi$ with the property $|f|=\cO(\ep^2),$ this will lead to both the smoothness and the lower and upper boundedness of  $g_{\eta}\,.$ On the one hand, we have by definition  $w_c=\gamma-d_c\p_x Z_c=1+\cO(\ep^2).$ On the other hand, defining   
$\omega(s)=\sqrt{(\xi+ia)^2+\eta^2 (1+s^2(\p_x\zeta_c)^2)},$ so that $\omega(0)=\mu_a,\, \omega(1)=\lambda_{\zeta_c},$
 it holds after some calculations that 
\beqs 
\f{\omega(0)\tanh \omega(0)}{\omega(1)\tanh \omega(1)}=1-\f{\int_0^1 \p_s \big(\omega(s)\tanh \omega(s) \big) \d s}{\omega(1)\tanh \omega(1)}=1+\cO(\ep^4).
\eeqs
Indeed, the above estimate follows from the identity $$\p_s \big(\omega(s)\tanh \omega(s) \big)=\p_s\omega(s)\bigg(\tanh \omega(s)+\f{\omega(s)}{\cosh^2 \omega(s)} \bigg)$$
and the estimate 
\begin{align*}
   \bigg|\f{\p_s \omega(s)}{\omega(1)\tanh \omega(1)}\bigg|=\bigg|\f{\eta^2 s }{\omega(s) \,\omega(1)\tanh \omega(1)}\bigg| (\p_x\zeta_c)^2\lesssim(\p_x\zeta_c)^2\lesssim \ep^4, \quad \forall\, s\in[0,1].
\end{align*}
Note that there exists a constant $C>1,$ such that as long as $\ep$ is small enough, it holds that for any $|\eta|\geq 2,\, s\in[0,1],$  
\beq \label{fact-omega}
\f{1}{C} \sqrt{\xi^2+\eta^2}\leq |\omega(s)|\leq C \sqrt{\xi^2+\eta^2}.
\eeq

It now remains to show that for any $(j, k)\in \mathbb{Z}^2,$ 
there exist finite numbers $C_{j,k}^1, C_{j,k}^2$ which are independent of $\eta$ when $|\eta|\geq 2,$ such that:
\beqs 
|\p_x^{j}\p_{\xi}^{k}g(x,\xi)|\leq C_{j,k}^1(1+\xi^2)^{-\f{k}{2}},\, \quad |\p_x^{j}\p_{\xi}^{k}g^{-1}(x,\xi)|\leq C_{j,k}^2(1+\xi^2)^{-\f{k}{2}},\quad \forall \,(x,\xi)\in \mR^2.
\eeqs
These are  direct consequences of the smoothness of $w_c$ in the variable $x$ and  \eqref{fact-omega}.
\end{proof}

\section{Some algebraic properties for the
spectrum of the linearized operator }% with constant coefficient }
In this section, we give the proof of Lemma \ref{lem-sym-Ga0},
from which we can derive some algebraic properties for the 
spectrum of the linearized operator with constant coefficients, see Proposition \ref{lem-lampm1}.

\begin{proof}[Proof of Lemma \ref{lem-sym-Ga0}]
Thanks to the following formula which is shown in  Lemma 3, \cite{Pego-Sun}:
\begin{align}\label{remga0}
   0\leq  \Re \sqrt{-\mu_a \tanh \mu_a} (\xi,\eta) \leq \f{|\Im \mu_a|}{\cos (\Im \mu_a) } \sqrt{\f{\tanh \Re \mu_a}{\Re \mu_a}},
\end{align}
it is enough  to study the imaginary and real parts of the symbol $\mu_a.$

We first show that $0\leq |\Im \mu_a|\leq a$ and that  it is a decreasing function in terms of $s=\eta^2.$ Indeed, by differentiating the identities: $(\Re \mu)^2-(\Im \mu)^2=\xi^2+\eta^2-a^2, \, \Re \mu \,\Im \mu= a \xi,$ we find that:
\beqs
\p_s (\Im \mu_a)=-\f{\Im \mu_a}{2 |\mu_a|^2} \quad \text{ or equivalently } \p_s (|\Im \mu_a|)=-\f{|\Im \mu_a|}{2 |\mu_a|^2}.
 %|\xi+ia|^2+\eta^2 }%((\Re \mu)^2+(\Im \mu)^2)}
\eeqs
Moreover, when $\eta^2=0, \, |\Im \mu_a|=a. $
As a result, 
\beqs
|\Im \mu_a|(\eta^2) \leq a  \exp\bigg(-\int_0^{\eta^2} \f{1}{a^2+\xi^2+s}\,\d s\bigg)\leq a  \exp\bigg( -\f{\eta^2}{a^2+\xi^2+\eta^2} \bigg).
\eeqs
Therefore, when $|\eta|\geq \kappa |\xi+ia|,$ it holds that, %or $$|\eta|\geq K |\xi+ia|,$$
\beqs 
|\,\Im \mu_a| \leq a  \exp\bigg( -\f{\kappa^2}{1+\kappa^2} \bigg)\leq a \big(1-\kappa^2/e\big).
\eeqs
 Note that we have used the inequality  $\exp(-\f{x}{1+x})\leq 1-\f{x}{e}$ for any $x>0.$ 
 
Let $\kappa=\delta$ or $K \ep$ and notice that $\cos(\Im \mu_a)\geq \cos (a)$ and
$\sqrt{\f{\tanh \Re \mu_a}{\Re \mu_a}}\leq 1,$ we get \eqref{essym-Ga0}
 for the regions $R_{\eta,1}^{I}$ and $R_{\eta,2}^{I}$ from \eqref{remga0}.

Next, since  $\sqrt{\f{\tanh x}{x}}$ is decreasing for $x>0,$ we have for any $\Re \mu_a \geq \kappa,$ $\sqrt{\f{\tanh \Re \mu_a}{\Re \mu_a}}\leq \sqrt{\f{\tanh \kappa}{\Re \kappa}}.$
On the other hand, 
when $\Re \mu_a^2=\xi^2+\eta^2-a^2\geq 0,$ and so that 
$\Re \mu_a \geq  \sqrt{\Re \mu_a^2}\geq
(K-\hat{a})\ep^2\geq (K-1)\ep,$  { if } $|\xi|\geq K\ep$ and $\Re \mu_a \geq \sqrt{\delta^2-a^2}\geq \delta^2/2 $ if $|\xi|\geq \delta.$ We thus proved \eqref{essym-Ga0} for the regions $R_{\xi}^H $ and  $R_{\xi}^I$ by using the fact 
$\sqrt{\f{\tanh \kappa}{\Re \kappa}}\leq 1-C\kappa$ for $\kappa$ small and the estimate $|\Im \mu_a|\leq a.$

Finally, recall that $\lambda_{\pm}^0(\xi,\eta)= i(\xi+ia)\pm \sqrt{1-\ep^2} \sqrt{-\mu_a \tanh \mu_a}(\xi,\eta),$ for any 
  $\lambda \in \Omega_{\beta, \ep},$ it follows from \eqref{remga0}
  that for $0<a\leq \f{\sqrt{3}}{4}\ep$ and $\ep$ small enough,
 \beq \label{Relambdapm}
\Re \lambda_{\pm}^0\leq a\bigg(\f{\sqrt{1-\ep^2}}{\cos a}-1\bigg)\leq a \bigg(\f{1- \ep^2/2}{1-3\ep^2/32}-1\bigg)\leq -a \ep^2/4.
    \eeq
\end{proof}
\section{Study of the transformed Dirichlet-Neumann operator}\label{appendix-DN}
In this section, we study carefully the transformed Dirichlet-Neumann opearator $G_a[\zeta_c]$  defined in \eqref{def-DN-T}. 

The first lemma concerns the solvability of the  problem \eqref{ellipticpb}. Before stating the result, we first introduce a change of variable which is a diffeomorphism between the domain $\Omega$ delimited by the surface defined by the solitary wave and the fixed strip $\cS=\mR^2\times [-1,0].$ As it is known that the definition of the  Dirichlet-Neumann operator does not depend on this diffeomorphism (Lemma 3.1, \cite{Lannes}), we will just choose one that is most convenient to us. 
Let $\vp: \Omega\rightarrow \cS$ be defined by $\vp(x,y,z)=(x, y, \f{z-\zeta_c}{1+\zeta_c}).$
Then
$\Psi_a=\colon \Phi_a \circ \vp^{-1}$ %(x,y,%\tilde{z})=\Phi_a\big(x,y, {z}(1+\zeta_c)+\zeta_c\big),$ then $\Psi_a$ 
solves the following  problem in $\cS:$ 
\beq\label{elliptic-newcor}
-(\Delta_a^g+\p_y^2)\Psi_a=0, \qquad \Psi_a|_{{z}=0}=f, \qquad \p_{{z}} \Psi_a|_{{z}=-1}=0,
\eeq
where $\Delta_a^g=\big(\det (g)\big)^{\f12}\div_a \big(\big(\det (g)\big)^{-\f12} g \na_a\big)$ and $$g(x,z)=\left(\begin{array}{cc}
   1  &  -\f{(z+1)\p_x\zeta_c}{1+\zeta_c}\\[4pt]
 -\f{(z+1)\p_x\zeta_c}{1+\zeta_c}  &\f{1+(z+1)^2(\p_x\zeta_c)^2}{(1+\zeta_c)^2}
\end{array}\right).$$

\begin{lem}\label{lem-elliptic}
 For any $f\in H_*^{1/2}(\mR^2),$ let $E[0]f$ be defined in \eqref{def-e0f} and 
 \beqs 
H_{0,up}^1(\cS)=\big\{u\in H^1(\cS)\big|\, u|_{z=0}=0 \big\}.
 \eeqs
 There is a unique solution $\Psi_a$ to the equation \eqref{elliptic-newcor} in the space  $H_{0,up}^1(\cS)+
    E[0](f).$
 Moreover, it holds that
\beqs
\|\Phi_a(f)-E[0](f) \|_{H^1(\cS)}\lesssim \ep^2\,\|f\|_{H_*^{1/2}(\mR^2)}.
\eeqs

\begin{proof}
On the one hand, when the surface is flat, ie, $\zeta_c=0,$  the Fourier transform together with the Poincaré inequality yield that  $E[0](f)$ is the unique solution to the equation \eqref{elliptic-newcor}. Moreover, it follows from the explicit formulae that:
\beq\label{factflat}
\|(\p_x,\p_z)E[0]f\|_{L^2(\cS)}\lesssim \|f\|_{H_*^{1/2}(\mR^2)}\, .\quad 
\eeq
%where the strip $\cS=\mR^2\times [-1,0].$ For instance, %by noticing that $\sqrt{|\mu_a(\xi,\eta)|}\lesssim a^{-1}(a+\f{|(\xi,\eta)|}{\langle\xi,\eta\rangle^{1/2}})$ 
For instance, by direct computation,
we can  control $\p_z E[0]f$ as:
\begin{align*}
\|\p_z E[0]f\|_{L^2(\cS)}& \lesssim \bigg(\int_{\mR^2}  \f{\mu_a^2}{{\cosh^2 \mu_a}} \int_{-1}^0 \sinh ^2(\mu_a(z+1)) f \, \d z \d x\d y\bigg)^{\f12}  \\ 
&\lesssim \|\sqrt{\mu_a \tanh\mu_a} \,f\|_{L^2{(\mR^2)}}+ \big\|\f{\mu_a}{\cosh \mu_a}f\big\|_{L^2{(\mR^2)}}\lesssim  \|f\|_{H_*^{1/2}(\mR^2)}.
\end{align*}
On the other hand, when $\zeta_c\neq 0,$ we define $\tilde{\Psi}_a={\Psi}_a-E[0]f,$ which solves 
\beqs 
-\big((\p_x-a)^2+\p_z^2+\p_y^2\big)\tilde{\Psi}_a=(\Delta_a^g-(\p_x-a)^2-\p_z^2)\Psi_a, \qquad \Psi_a|_{{z}=0}=0, \qquad \p_{{z}} \Psi_a|_{{z}=-1}=0.
\eeqs
It follows from the standard energy estimate that (we denote $L^2(\cS)$ by $L^2$ for short) 
\beq\label{EE-ep}
\|\na \tilde{\Psi}_a \|_{L^2}^2-a^2 \|\tilde{\Psi}_a \|_{L^2}^2 \lesssim \Lambda( \|\zeta_c\|_{W^{2,\infty}}) \|\na \tilde{\Psi}_a \|_{L^2} 
\big(\|\na \tilde{\Psi}_a \|_{L^2} +\|\na_{x,z} E[0]f \|_{L^2} \big)\, ,
\eeq
where $\Lambda$ is a polynomial which vanishes at the origin. On the one hand, as $\tilde{\Psi}_a $ vanishes on $\{z=0\},$ we have by 
%and use 
the Poincaré inequality $\|\tilde{\Psi}_a\|_{L^2(\cS)}\leq \|\p_z \tilde{\Psi}_a\|_{L^2(\cS)},$ thus the left hand side of \eqref{EE-ep} is larger than $\f14 \|f\|_{H^1}$ as long as $a<\f12.$ We then use the estimate \eqref{factflat}, the fact $\|\zeta_c\|_{L^{\infty}}\lesssim \ep^2,$ and the Young's inequality to find that 
$$\|{\Psi}_a-E[0]f\|_{H^1(\Omega)}\lesssim\|\tilde{\Psi}_a\|_{H^1(\cS)}\lesssim \ep^2 \,\|f\|_{H_*^{1/2}(\mR^2)}.$$
\end{proof}
\end{lem}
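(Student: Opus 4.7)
The plan is to reduce to a perturbation of the flat-bottom problem. First I would handle the flat case $\zeta_c = 0$: since $\mathcal{S}$ is a strip, I can take the Fourier transform in $(x,y)$ (legitimized by the $e^{ax}$ weighting, which corresponds to shifting the $\xi$ variable by $ia$) and solve the resulting two-point boundary value problem in $z$ explicitly. This produces the candidate $E[0](f)$ given in \eqref{def-e0f}, and a direct computation bounds $\|\nabla_{x,z} E[0](f)\|_{L^2(\mathcal{S})} \lesssim \|f\|_{H^{1/2}_\star(\mathbb{R}^2)}$. It is essential here that only $\partial_x$ and $\partial_z$ derivatives (shifted by $a$) appear on the left-hand side — the transverse derivative $\partial_y E[0](f)$ need not be in $L^2(\mathcal{S})$ because the symbol $\mu_a(\xi,\eta)^{-1}$ can be singular, and one must avoid multiplying $E[0](f)$ by a $\partial_y^2$ factor in any source term.

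Next, for the general case I would set $\tilde\Psi_a = \Psi_a - E[0](f)$ and look for $\tilde\Psi_a \in H^1_{0,up}(\mathcal{S})$ solving
\begin{equation*}
-(\Delta_a^g + \partial_y^2)\tilde\Psi_a = \bigl(\Delta_a^g - (\partial_x-a)^2 - \partial_z^2\bigr) E[0](f), \qquad \tilde\Psi_a|_{z=0}=0,\quad \partial_z\tilde\Psi_a|_{z=-1}=0.
\end{equation*}
The point is that the right-hand side involves only $\partial_x^2$, $\partial_x\partial_z$ and $\partial_z^2$ on $E[0](f)$ (the transverse $\partial_y^2$ contribution cancels between $\Delta_a^g$ and the flat operator), together with coefficients that are $O(\|\zeta_c\|_{W^{2,\infty}}) = O(\epsilon^2)$. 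I would prove existence and uniqueness by Lax–Milgram on $H^1_{0,up}(\mathcal{S})$: the bilinear form associated to $-(\Delta_a^g+\partial_y^2)$ is coercive because $\|\nabla\tilde\Psi_a\|_{L^2}^2 - a^2\|\tilde\Psi_a\|_{L^2}^2$ is controlled from below using the Poincaré inequality $\|\tilde\Psi_a\|_{L^2} \leq \|\partial_z\tilde\Psi_a\|_{L^2}$ (valid because $\tilde\Psi_a$ vanishes at $z=0$), and the constraint $a = \hat a \epsilon < 1/2$ makes this lower bound a definite fraction of $\|\nabla\tilde\Psi_a\|_{L^2}^2$.

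Finally the quantitative estimate follows from the standard energy identity: testing the equation against $\tilde\Psi_a$ itself gives
\begin{equation*}
\|\nabla\tilde\Psi_a\|_{L^2}^2 - a^2\|\tilde\Psi_a\|_{L^2}^2 \lesssim \Lambda(\|\zeta_c\|_{W^{2,\infty}}) \,\|\nabla\tilde\Psi_a\|_{L^2}\bigl(\|\nabla\tilde\Psi_a\|_{L^2} + \|\nabla_{x,z}E[0](f)\|_{L^2}\bigr),
\end{equation*}
with $\Lambda$ vanishing at zero. Combining Poincaré, the flat-case bound, and Young's inequality then yields $\|\tilde\Psi_a\|_{H^1(\mathcal{S})} \lesssim \epsilon^2 \|f\|_{H^{1/2}_\star}$.

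I expect the subtle point to be the structure of the source term: one must verify carefully that no $\partial_y^2 E[0](f)$ factor survives in $(\Delta_a^g - (\partial_x-a)^2 - \partial_z^2)E[0](f)$, since such a term would lack an $L^2(\mathcal{S})$ bound in terms of $\|f\|_{H^{1/2}_\star}$ and would destroy the whole scheme. The particular diffeomorphism $\varphi(x,y,z) = (x,y,(z-\zeta_c)/(1+\zeta_c))$, which is independent of $y$, is chosen precisely so that the transverse variable is not mixed by the change of variables and the second $y$-derivative in $\Delta_a^g + \partial_y^2$ is exactly $\partial_y^2$.
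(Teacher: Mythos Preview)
Your proposal is correct and follows essentially the same approach as the paper: reduce to the flat case via the explicit formula $E[0](f)$, subtract it off, and close by an energy estimate on the difference using Poincar\'e (since $\tilde\Psi_a$ vanishes at $z=0$) together with the smallness $\|\zeta_c\|_{W^{2,\infty}}\lesssim\epsilon^2$. Your emphasis on the absence of $\partial_y^2 E[0](f)$ in the source term is exactly the structural point the paper singles out, and your explicit invocation of Lax--Milgram for existence is a welcome addition that the paper leaves implicit. The only cosmetic difference is that the paper keeps the \emph{flat} operator on the left and writes the right-hand side as the perturbation acting on the full $\Psi_a$ (so the unknown reappears there), whereas you keep the \emph{full} operator on the left with the source depending only on $E[0](f)$; after testing against $\tilde\Psi_a$ both yield the same energy inequality.
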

In the next lemma, we show that the Dirichlet-Neumann operator $G_a[\zeta_c]$ can be well approximated by $\op(\tilde{\lambda}_c^1),$ the operator associated to the  symbol 
$$\tilde{\lambda}_{\zeta_c}^1=({\lambda}_{\zeta_c}^1\tanh {\lambda}_{\zeta_c}^1) (x, \xi, \eta),\qquad  \qquad \big(
 {\lambda}_{\zeta_c}^1=\sqrt{(\xi+ia)^2+\eta^2 (1+(\p_x\zeta_c)^2)}\big)\, $$
in the sense that: 
\beqs 
\|\tilde{\mathbb{I}}_2(D_y) \big(G_{a,\eta}[\zeta_c]-\op\big(%{\lambda}_{\zeta_c}^1\tanh \lambda_{\zeta_c}^1
\tilde{\lambda}_{\zeta_c}^1\big)\big)\|_{B(L^2(\mR^2))}\lesssim \ep^2.
\eeqs
We recall that the definitions of the localization operators in frequency are given in the beginning of Section \ref{paragraph3}.
\begin{lem}\label{lem-error}
  For any $\eta,$ such that $|\eta|\geq 2,$ there is a constant $C>0$ independent of $\ep,$ such that
\begin{align}
    \|G_{a,\eta}[\zeta_c]-\op\big(\tilde{{\lambda}}_{\zeta_c}^1(\cdot,\cdot, \eta)\big)\|_{B(L^2(\mR))}\leq C \ep^2.
    \label{err-high}
\end{align}
\end{lem}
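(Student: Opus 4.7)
My plan is to construct a pseudodifferential parametrix for $\tilde\Phi_a := \Phi_a\circ\varphi^{-1}$, with $\varphi$ as in Lemma~\ref{lem-elliptic}, whose normal trace at $z=0$ recovers $\op(\tilde{\lambda}_{\zeta_c}^1)f$ up to an $\cO(\ep^2)$ remainder in $B(L^2(\mR))$. In the flattened coordinates, $\tilde\Phi_a$ solves the elliptic problem
\[
-(\Delta_a^g-\eta^2)\tilde\Phi_a = 0 \text{ in } \mR\times[-1,0],\qquad \tilde\Phi_a|_{z=0}=f,\qquad \p_z\tilde\Phi_a|_{z=-1}=0,
\]
and a direct change-of-variable computation, using $\p_x\tilde\Phi_a|_{z=0}=\p_x f$ on the surface, yields
\[
G_{a,\eta}[\zeta_c]f = \sqrt{1+(\p_x\zeta_c)^2}\left[\frac{1+(\p_x\zeta_c)^2}{1+\zeta_c}\,\p_z\tilde\Phi_a\big|_{z=0}\;-\;\p_x\zeta_c\,(\p_x-a)f\right].
\]
Since $\sqrt{1+(\p_x\zeta_c)^2}=1+\cO(\ep^6)$ and $\p_x\zeta_c(\p_x-a)$ is of size $\cO(\ep^3)$ in $B(H^1,L^2)$, the task reduces to producing a sufficiently accurate approximation of $\p_z\tilde\Phi_a|_{z=0}$.

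The ansatz is $\tilde\Phi_a^{app}(x,z)=\op(K_0+K_1)f$ where
\[
K_0(x,\xi,z) = \frac{\cosh\bigl(\lambda_{\zeta_c}^1(x,\xi,\eta)(z+1)\bigr)}{\cosh\lambda_{\zeta_c}^1(x,\xi,\eta)}
\]
exactly satisfies both boundary conditions (the bottom because $\p_z K_0|_{z=-1}=0$, the top because $K_0|_{z=0}=1$) and produces the principal normal trace $\op(\tilde{\lambda}_{\zeta_c}^1)f$ at $z=0$. The subprincipal symbol $K_1$, of order $-1$ in $\xi$ with coefficients of size $\cO(\ep^2)$, is then determined so as to absorb the first-order $L^2$-unbounded contribution generated by the geometric prefactor $\tfrac{1+(\p_x\zeta_c)^2}{1+\zeta_c}=1-\zeta_c+\cO(\ep^4)$, and simultaneously to cancel the $\zeta_c$-dependent coefficients of $\Delta_a^g$ at top order. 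Standard pseudodifferential composition, performed uniformly in $|\eta|\geq 2$ using that $\lambda_{\zeta_c}^1$ is bounded below by a positive constant independent of $\ep$ (as in Lemma~\ref{lem-g}) and using the bounds $\|\p_x^k\zeta_c\|_{W^{m,\infty}}\lesssim\ep^{k+2}$, then produces an elliptic residual $F := (-\Delta_a^g+\eta^2)\tilde\Phi_a^{app}$ with $\|F\|_{L^2(\mR\times[-1,0])}\lesssim\ep^2\|f\|_{L^2(\mR)}$.

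Setting $v := \tilde\Phi_a-\tilde\Phi_a^{app}$, the function $v$ solves the same elliptic problem with trivial boundary data and source $F$. The Poincaré-based energy estimate of Lemma~\ref{lem-elliptic} yields $\|v\|_{H^1}\lesssim\ep^2\|f\|_{L^2}$; elliptic regularity up to the boundary (using the smoothness of $\zeta_c$ and hence of $g$) upgrades this to $\|v\|_{H^2}\lesssim\ep^2\|f\|_{L^2}$, and the trace theorem then bounds $\|\p_z v|_{z=0}\|_{L^2(\mR)}\lesssim\ep^2\|f\|_{L^2}$. Combining this with the DN-of-the-ansatz calculation, which by construction of $K_1$ gives $\sqrt{1+(\p_x\zeta_c)^2}\bigl[\tfrac{1+(\p_x\zeta_c)^2}{1+\zeta_c}\p_z\tilde\Phi_a^{app}|_{z=0}-\p_x\zeta_c(\p_x-a)f\bigr]=\op(\tilde{\lambda}_{\zeta_c}^1)f+\cO(\ep^2)_{B(L^2)}$, yields \eqref{err-high}.

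The hard part will be the design of $K_1$ and the accompanying symbol bookkeeping. The danger is that the naive parametrix built from $K_0$ alone produces, in the DN computation, a first-order contribution of the form $\zeta_c\,\op(\tilde{\lambda}_{\zeta_c}^1)$, which is not $L^2$-bounded merely from $\zeta_c=\cO(\ep^2)$ since $\op(\tilde{\lambda}_{\zeta_c}^1)$ is of order~$1$. The correction $K_1$ has to be chosen so that, after composition, every surviving bad term pairs $\zeta_c$ (or $\p_x\zeta_c$) with a symbol of nonpositive order, producing a genuinely $L^2$-bounded remainder of size $\cO(\ep^2)$. The assumption $|\eta|\geq 2$ is used throughout in an essential way: it keeps $\lambda_{\zeta_c}^1$ uniformly bounded below, so that $1/\lambda_{\zeta_c}^1$, $1/\cosh\lambda_{\zeta_c}^1$ and the various negative-power symbols appearing in the parametrix all lie in $S^0$ with seminorms independent of $\ep$ and $\eta$, exactly as in Lemma~\ref{lem-g}.
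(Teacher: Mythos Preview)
Your high-level strategy---build a parametrix for the flattened elliptic problem, estimate the difference by an energy/elliptic argument, and read off the Dirichlet--Neumann trace---is exactly the paper's. The substantive difference is in the parametrix itself. The paper does \emph{not} use your $z$-independent ansatz $K_0=\cosh(\lambda_{\zeta_c}^1(z+1))/\cosh\lambda_{\zeta_c}^1$; instead it introduces the $z$-dependent symbol $\nu(z)=\sqrt{(\xi+ia)^2+\eta^2(1+(z+1)^2(\p_x\zeta_c)^2)}$ and factorizes (an $\cO(\ep^3)$-approximation of) $-\Delta_a^g+\eta^2$ as $-\alpha^{-1}(\p_z-\sigma_-(z))(\p_z-\sigma_+(z))$. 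The approximate solution is then written explicitly via the propagators $\Gamma_\pm(z',z)=\exp(\mp\int_z^{z'}\sigma_\pm)$, and the normal trace at $z=0$ is computed symbolically to equal $\op(\lambda_{\zeta_c}^1\tanh\lambda_{\zeta_c}^1)+\cO(\ep^2)_{B(L^2)}$ directly.

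Your simpler ansatz has a genuine gap at the point you yourself flag as ``the hard part.'' The claim that the residual satisfies $\|F\|_{L^2(\mR\times[-1,0])}\lesssim\ep^2\|f\|_{L^2(\mR)}$ after a single subprincipal correction $K_1$ of order $-1$ does not hold as stated. The dominant contribution of $(-\Delta_a^g+\eta^2)$ applied to $K_0$ comes from $(1-\alpha')(\lambda_{\zeta_c}^1)^2K_0\approx 2\zeta_c(\lambda_{\zeta_c}^1)^2K_0$; since $\int_{-1}^0|K_0|^2\,dz\sim|\lambda_{\zeta_c}^1|^{-1}$, this produces $\|F\|_{L^2}\sim\ep^2\|f\|_{H^{3/2}}$, not $\ep^2\|f\|_{L^2}$. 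Solving the resulting ODE in $z$ for the corrector gives a $K_1$ which is of order at least $0$ (it contains a factor $\lambda_{\zeta_c}^1\tanh\lambda_{\zeta_c}^1$), not order $-1$; and after this correction the next residual is $\cO(\ep^4)$ but of still higher order in $(\xi,\eta)$, so the iteration does not terminate with an $L^2$-bounded remainder after one step. The paper's factorization bypasses exactly this difficulty: by building the $z$-dependence of the metric coefficients into $\sigma_\pm(z)$ from the outset, the parametrix solves the approximate operator \emph{exactly}, so that the only residual is the operator error $(-\Delta_g+\eta^2)-\Delta_{g,\eta}^{app}$, which is one order lower with an $\cO(\ep^3)$ coefficient. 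If you want to push your direct ansatz through, you will need either a full symbolic expansion to arbitrary order (with a Borel-type summation) or to switch to the $z$-dependent factorization.
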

\begin{proof}[Proof of Lemma \ref{lem-error} ] 
The proof relies on the construction of a parametrix for the elliptic problem in $\tilde{\cS}=\mR\times[-1,0]:$
\beqs 
(-\Delta_a^g+\eta^2)\Psi_{a,\eta}=0, \qquad \Psi_{a,\eta}|_{{z}=0}=f, \qquad \p_{{z}} \Psi_{a,\eta}|_{{z}=-1}=0.
\eeqs
Define the functions $$%\alpha=
\alpha(x,z)=\f{1+(z+1)^2(\p_x\zeta_c)^2}{(1+\zeta_c)^2}, \,  \qquad \kappa(x,z)%=\kappa
=\f{\p_x\zeta_c (z+1)}{(1+\zeta_c)},$$ and %define 
the operators 
\begin{align*}
\nu(z)=\nu(z)(x, D_x, \eta)=\op\big(\sqrt{(\xi+ia)^2+\eta^2 (1+(z+1)^2(\p_x\zeta_c)^2)}\big)\,, \\
\sigma_{\pm}(z)=\sigma_{\pm}(z) (x, D_x, \eta)=\alpha^{-1}\big(\kappa \p_x \pm (1+\zeta_c)^{-1} \nu(z)(x, D_x, \eta)\big)\, .
\end{align*}
Note that by the definition, 
\begin{align*}
    \nu(0)=\op({\lambda}_{\zeta_c}^1), \quad \nu(-1)=\op(\sqrt{(\xi+ia)^2+\eta^2 })=:\op(\mu_a)\,.
\end{align*}
By using  pseudo-differential calculus, we find that 
\beqs 
\Delta_{g,\eta}^{{app}}=\colon -\alpha^{-1}(\p_z-\sigma_{-}(z))(\p_z-\sigma_{+}(z))
\eeqs
approximates the elliptic operator $-\Delta_g+\eta^2,$ in the sense that for any $s\geq 0,$
\beq\label{property1}
\begin{aligned}
  \|\big((-\Delta_g+\eta^2) - \Delta_{g,\eta}^{{app}}\big)\|_{H^s(\tilde{\cS})}&\lesssim 
  \|\p_x \zeta_c\|_{W^{s+2,\infty}}\big(\|f\|_{H^{s+1}(\tilde{\cS})}+\eta \|f\|_{H^s(\tilde{\cS})}\big)\\
 &\lesssim  \ep^3 \big(\|f\|_{H^{s+1}(\tilde{\cS})}+\eta \|f\|_{H^s(\tilde{\cS})}\big).
\end{aligned}
\eeq
  Consider the elliptic problem 
\beqs 
\Delta_{g,\eta}^{{app}} \Phi =0 \,\, \text{ in } \tilde{\cS} , \qquad \Phi|_{z=0}=f, \qquad \p_z\Phi|_{z=-1}=0.
\eeqs
The solution map $$   \begin{aligned}
\Phi_{a,\eta}^{app}:  \quad & L^2(\mR)\rightarrow H^{\f12}(\tilde{\cS})\\
&\qquad 
  f \mapsto \Phi_{a,\eta}^{app}(f) \end{aligned} $$ 
is given by 
\beq\label{def-phiapp}
\Phi_{a,\eta}^{app}= \bigg(\Gamma_{+}(0,z)+  \int_z^0 \Gamma_{+}(z',z)\Gamma_{-}(z',-1)\, \sigma_{+}(-1) \,A^{-1}\,\Gamma_{+}(0,-1)\bigg)\, \d z' , %\bigg(\exp\big(-\int_z^0 \sigma_{+}(z') \,\d z'\big) + \int_z^0 \exp\big(-\int_{z}^{z'}\,\sigma_{+}(z'')\, \d z''+\int_{-1}^{z'}\sigma_{-}(z'') \d z''\big) \,\d z' A(x, D, \eta)\exp\big(-\int_{-1}^0 \sigma_{+}(z') \,\d z'\big)\bigg) 
\eeq
where 
\begin{align*}
 &\Gamma_{\pm}(z', z)= \Gamma_{\pm}(z', z)(x, D_x, \eta)= \exp\bigg( \mp \int_{z}^{z'} \sigma_{\pm} (z'')(x, D_x, \eta)\, \d z''\bigg),\\
& A=A(x, D_x, \eta)= %\bigg( 
 \Id-\int_{-1}^0  \Gamma_{+}(z',-1)\Gamma_{-}(z',-1)  \,\sigma_{+}(-1) \,\d z'. %\bigg)^{-1}.
\end{align*}
Let us notice that the symbols of $\Gamma_{\pm}(z', z)$ satisfy 
\beq\label{es-Gammapm}
\big|\Gamma_{\pm}(z', z)(x, \xi, \eta)\big|\lesssim \exp\bigg( - \int_{z}^{z'} \Re \sigma_{\pm} (z'')(x, \xi, \eta)\, \d z''\bigg)\lesssim \exp\bigg( - \tau(z'-z) \Re \mu_a(x, \xi, \eta) %\f{(z'-z)(1+\zeta_c)}{1+(\p_x\zeta_c)^2}\Re \mu_a 
\bigg),
\eeq
where we denote $\tau= \tau (x) %\xi, \eta)
=\f{1+\zeta_c}{1+(\p_x\zeta_c)^2}.$
Consequently, we get that
\begin{align*}
&\bigg|\int_{-1}^0  \Gamma_{+}(z',-1)\Gamma_{-}(z',-1) \, \sigma_{+}(-1)  (x, \xi,\eta)\,\d z'\bigg|\\
&\leq \int_{-1}^0 \exp\big(-2\tau(z'+1)\Re \mu_a%\f{2(z'+1)(1+\zeta_c)}{1+(\p_x\zeta_c)^2}
\big)\, 
    (1+\zeta_c) |\mu_a| \,\d z'\\
    &\leq \f12 \big(1-%\exp\big(%{-\f{2(1+\zeta_c)}{1+(\p_x\zeta_c)^2}\Re \mu_a}
    e^{-2\tau \Re \mu_a}\big) %\f{}%{2}
    \big(1+(\p_x\zeta_c)^2\big) \f{|\mu_a|}{\Re \mu_a}\leq \f23
\end{align*}
for any $\eta$ such that $|\eta|\geq 2,$ provided that $\ep$ is small enough. The last inequality comes  from the fact that $\Re \mu_a \geq \sqrt{\xi^2+\eta^2-a^2}, \,\, |\Im \mu_a|\leq a=\hat{a}\ep.$ Therefore, thanks to Lemma \ref{lem-inverse}, the 
operator $A$ is invertible in $B(L^2(\mR))$ and thus $A^{-1}$ is well-defined. 
%the inverse in the definition $A$ is well defined 
Moreover, it follows from again \eqref{es-Gammapm} that: 
\begin{align*}
  \big|  \int_z^0 \Gamma_{+}(z',z)\Gamma_{-}(z',-1)\, \d z'\big|\lesssim e^{-(1-z)\tau} \int_z^0 e^{-2z'\tau} \d \tau\leq -\f{e^{-(1-z)\tau}-e^{-(1+z)\tau}}{2\tau}.
\end{align*}
%One solution matrix associated to $\Delta_{g,\eta}^{{app}}$ could be 
%\beqs \Phi_{a,\eta}^{app}=\exp\big(-\int_z^0 \sigma_{+}(x,z', D_x, \eta)\,\d z'\,\big).\eeqs
%Note that since $\Re \sigma_{+}(x,z,\xi,\eta)\gtrsim \sqrt{\xi^2+\eta^2/2} $
Therefore, for any  $\ep$ small enough, $\Phi_{a,\eta}^{app}$ enjoys some regularizing 
effects
\beq \label{property2}
\|\Phi_{a,\eta}^{app} f\|_{H^{s+\f12}(\tilde{\cS})}\lesssim \|f\|_{{H^{s}({\mR})}}, \qquad \forall \,|\eta|\geq 2, \quad s\in \mR\, .
\eeq
By the definition \eqref{def-phiapp}, it holds that
\beqs 
\p_z \Phi_{a,\eta}^{app} (f)|_{z=0}=\sigma_{+}(0)f-\Gamma_{-}(0,-1) \sigma_{+}(-1)  A^{-1} \Gamma_{+}(0,-1)f.
\eeqs
In view of the classical commutator and composition estimates for  pseudo-differential operators, we have that
\begin{align*}
A^{-1}= \op\big(\f{2}{1+e^{-2\tau\mu_a}}\big)+\cR\, ,\qquad  \Gamma_{-}(0,-1)\Gamma_{+}(0,-1)=\op(e^{-2\tau\mu_a})+\cR
\end{align*}
and thus
\beqs 
  \Gamma_{-}(0,-1) \sigma_{+}(-1)  A^{-1} \Gamma_{+}(0,-1)=\op\big( \f{2  \mu_a}{e^{2\mu_a}+1}\big)+\cR\,, %=\op\big( \f{2 \tau \mu_a}{e^{2\tau\mu_a}+1}\big)+\cR, %= \Gamma_{-}(0,-1)\Gamma_{+}(0,-1) \sigma_{+}(-1)  A^{-1} 
\eeqs
where we denote by  $\cR$ a generic bounded operator on $L^2(\mR)$ with  operator norm of order $\cO(\ep^2),$ uniformly in $\eta$ such that $|\eta|\geq 2.$  Note that we have used that the commutators of the above involved operators contain always some derivatives of $\zeta_c$ and that $\|(\ep^{-2}\zeta_c, \ep^{-3}\p_x\zeta_c))\|_{W^{1,\infty}}\lesssim 1.$
%By using the fact that $\tau(x)=1+\cO(\ep^2),$and 
 Consequently, by the identity $\lambda_{\zeta_c}^1-\mu_a=\f{(\p_x\zeta_c)^2\eta^2}{\lambda_{\zeta_c}^1+\mu_a}$
% by using again the fact that 
and the fact $\Re \mu_a \geq \sqrt{\xi^2+\eta^2-a^2},$ we conclude that
\begin{align*}
\p_z \Phi_{a,\eta}^{app} (f)|_{z=0}&=\tau(x)\bigg(\p_x\zeta_c\p_x+\op\big(\lambda_{\zeta_c}^1\big(1-\f{2}{e^{2\mu_a}%\lambda_{\zeta_c}^1}
+1}\big)\big) \bigg) f+\cR f \\
  %\op\big( \f{2 \tau \mu_a}{e^{2\tau\mu_a}+1}\big)=  \op\big( \f{2  \lambda_{\zeta_c}^1}{e^{2 \lambda_{\zeta_c}^1}+1}\big)+\cR, 
&=\tau(x)\bigg(\p_x\zeta_c\p_x+\op\big(\lambda_{\zeta_c}^1\tanh \lambda_{\zeta_c}^1\big) \bigg)f+\cR f,
\end{align*}
or equivalently, 
\beqs
\op\big(\lambda_{\zeta_c}^1\tanh \lambda_{\zeta_c}^1\big) f=\bigg(-\p_x \zeta_c \p_x+\f{1+(\p_x\zeta_c)^2}{1+\zeta_c}\p_z\bigg)\Phi_{a,\eta}^{app}(f)\big|_{z=0}+\cR f. 
\eeqs
%It is important to notice that on one hand, 
%On the other hand,
%It follows from 
Recall that $ G_{a,\eta}[\zeta_c] $ is defined as
\begin{align*}
   G_{a,\eta}[\zeta_c] f =\bigg(-\p_x \zeta_c \p_x+\f{1+(\p_x\zeta_c)^2}{1+\zeta_c}\p_z\bigg) \Psi_{a,\eta}(f)|_{z=0} \, ,
\end{align*}
where $\Psi_{a,\eta}(f)$ solves the problem \eqref{elliptic-newcor}. Therefore,
to show \eqref{err-high}, it  suffices to prove that for any %$\eta$ such that 
$|\eta|\geq 2,$ there is a constant $C,$ such that
\beqs 
\|\na_{x,z} (\Psi_{a,\eta}-\Phi_{a,\eta}^{app})(f) (x,0)\|_{L^2(\mR)}%_{H^{\f12}(\tilde{\cS})}
\leq C\ep^2 \|f\|_{L^2(\mR)},
\eeqs
which is based on for $-\Delta_{g}+\eta^2$ which are similar to the one  performed in the previous lemma and the estimates \eqref{property1}, \eqref{property2}.
%By applying  the result in \cite{Alazard-Metivier}, we have that: 
%\beqs \lambda_{\zeta_c}(x,\xi,\eta)=\lambda_{\zeta_c}^1 (x,\xi,\eta)+R(x,\xi,\eta)\eeqs
%where $R\in S^0$ and for any $|\eta|\geq 2,$ {\color{red} to verify
%\beq\label{es-firstGN}
%\big|\p_x^m \p_{\xi}^{\ell}R\big|(\eta)\lesssim_{m,\ell} \sum_{k\leq m} \sup_{x} |\p_x^k \zeta_c|\lesssim \ep^2, \quad \forall (m,\ell) \in \mathbb{Z}^2 .\eeq}
%Moreover, for $|\eta|\geq 2$ and  $\ep$ small enough 
%$$|(\lambda_{\zeta_c}^1)^2|(x,\xi,\eta)\geq \Re (\lambda_{\zeta_c}^1)^2(x,\xi,\eta)\geq \eta^2(1+(\p_x\zeta_c)^2)-a^2\geq \eta^2/2\geq 2. $$ Consequently, \beqs |\sqrt{\lambda_{{\zeta_c}}}+\sqrt{\lambda_{\zeta_c}^1}|\geq 1.\eeqsThis, combined with \eqref{es-firstGN} leads to:\beqs \big|\p_x^m \p_{\xi}^{\ell} \big(\sqrt{\lambda_{{\zeta_c}}}-\sqrt{\lambda_{\zeta_c}^1}\big)\big|\lesssim_{m,\ell, M} \lesssim \ep^2,  \quad \forall \, (m,\ell) \in \mathbb{Z}^2.\eeqsThe estimate \eqref{err-high} then follows. 
\end{proof}

The next lemma shows that when localizing on the bounded  transverse frequencies, the Dirichlet-Neumann operator 
can be approximated by  %could be well approximated by 
$G_a[0]:$
\begin{lem}\label{lem-Ga-Ga0-g}
 It holds that:
    \beqs 
\|\bI_2(D_y) (G_a[\zeta_c]-G_a[0])\|_{B(H_{*}^{\f12}(\mR^2), L^2(\mR^2))}\lesssim \ep^2.
    \eeqs
\end{lem}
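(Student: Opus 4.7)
The plan combines Lemma \ref{lem-elliptic} with the $y$-translation invariance of the problem. Since $\zeta_c$ depends only on $x$, both $G_a[\zeta_c]$ and $G_a[0]$ commute with every Fourier multiplier in $y$, hence
\begin{align*}
\bI_2(D_y)\bigl(G_a[\zeta_c]-G_a[0]\bigr) = \bigl(G_a[\zeta_c]-G_a[0]\bigr)\bI_2(D_y),
\end{align*}
so one may assume throughout that $f = \bI_2(D_y) f$. For such inputs every $y$-derivative acts as a bounded multiplier on $H_*^{1/2}(\mR^2)$, eliminating losses in the transverse variable.

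After straightening the surface via the diffeomorphism $\vp$ of Lemma \ref{lem-elliptic} and setting $\Psi_a = \Phi_a\circ\vp^{-1}$, a direct change-of-variables computation yields
\begin{align*}
G_a[\zeta_c] f = A(x)\,\p_z\Psi_a\big|_{z=0} + B(x)\,(\p_x - a)\Psi_a\big|_{z=0},
\end{align*}
with $A(x) = (1 + (\p_x\zeta_c)^2)^{3/2}/(1 + \zeta_c)$ and $B(x) = -\sqrt{1 + (\p_x\zeta_c)^2}\,\p_x\zeta_c$; Theorem \ref{bealetheo} gives $\|A - 1\|_{W^{k,\infty}(\mR)} \lesssim \ep^2$ and $\|B\|_{W^{k,\infty}(\mR)} \lesssim \ep^3$ with Schwarz-class decay. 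Since $\Psi_a = E[0]f$ and $A = 1$, $B = 0$ when $\zeta_c = 0$, subtraction yields
\begin{align*}
G_a[\zeta_c] f - G_a[0] f = \p_z(\Psi_a - E[0] f)\big|_{z=0} + (A - 1)\,\p_z\Psi_a\big|_{z=0} + B\,(\p_x - a)\Psi_a\big|_{z=0}.
\end{align*}
The first summand is bounded in $L^2$ by $\ep^2\|f\|_{H_*^{1/2}}$ by upgrading Lemma \ref{lem-elliptic} to a higher Sobolev estimate: $\Psi_a - E[0]f$ vanishes on $\{z = 0\}$ and solves an elliptic problem whose source is of size $\ep^2$ in every $H^s$-norm (thanks to the smoothness of $\zeta_c$), so a standard up-to-the-boundary regularity bootstrap upgrades the $H^1(\cS)$-bound to $H^2(\cS)$, whence the trace theorem provides $L^2$-control of $\p_z(\Psi_a - E[0]f)|_{z=0}$. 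The other two summands enjoy the coefficient gain $\|A - 1\|_{L^\infty} + \|B\|_{L^\infty}\lesssim \ep^2$; after extracting an additional $\ep^2$-small piece involving $\Psi_a - E[0]f$ (handled as above), the matter reduces to bounding $(A - 1) G_a[0] f$ and $B\,(\p_x - a) f$ in $L^2(\mR^2)$ with $|\eta|\leq 2$ on the output.

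The main obstacle lies in this final reduction: both $G_a[0] f$ and $(\p_x - a) f$ lose one $x$-derivative relative to $f \in H_*^{1/2}$ and hence fail to belong to $L^2(\mR^2)$. The resolution is a Bony paralinearization in $x$, exploiting the smoothness and $\ep^2$-smallness of the coefficients $A - 1$ and $B$: writing $(A - 1)G_a[0] f = T_{A-1}(G_a[0] f) + R$ with $T$ the paraproduct and $R$ a smoother remainder, the paraproduct term absorbs the derivative loss against $\|A-1\|_{W^{s,\infty}}\lesssim \ep^2$ (using the equivalence $\|\sqrt{G_a[0]}f\|_{L^2}\approx \|f\|_{H_*^{1/2}}$), while $R$ gains regularity and is straightforward. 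The product $B\,(\p_x - a) f$ is treated analogously, the even smaller size $\|B\|_{L^\infty}\lesssim \ep^3$ compensating the derivative loss and Leibniz moving derivatives onto $B$ whenever this yields a cleaner estimate.
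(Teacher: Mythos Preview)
Your approach has a genuine gap at both of its key steps, and the fix the paper uses is structurally different.

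\textbf{The $H^2$ upgrade of Lemma \ref{lem-elliptic} fails.} To get $\|\p_z(\Psi_a-E[0]f)|_{z=0}\|_{L^2}\lesssim\ep^2\|f\|_{H_*^{1/2}}$ via the trace theorem you need $\tilde\Psi_a=\Psi_a-E[0]f\in H^{3/2}(\cS)$ (or $H^2$), hence the source $(\Delta_a^g-\Delta_a)\Psi_a$ in $H^{-1/2}$ (or $L^2$). But that source contains $\ep^2$-small coefficients multiplying \emph{second} derivatives of $E[0]f$, and $\|\na_{x,z}^2 E[0]f\|_{L^2(\cS)}$ requires $f\in H^{3/2}$, not $H_*^{1/2}$. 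Smoothness of $\zeta_c$ only controls the coefficients; it cannot manufacture the missing half-derivative on $f$.

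\textbf{The paralinearization does not close either.} With $|\eta|\le2$ fixed, $G_{a,\eta}[0]f\in H^{-1/2}$ when $f\in H^{1/2}$, and the paraproduct $T_{A-1}$ maps $H^{-1/2}\to H^{-1/2}$ with norm $\|A-1\|_{L^\infty}$; extra smoothness of $A-1$ buys nothing here. Commuting does not help: $G_{a,\eta}[0]\big((A-1)f\big)$ needs $(A-1)f\in H^1$, which again costs a half-derivative beyond what $f\in H^{1/2}$ provides. The same obstruction hits $B\,(\p_x-a)f$.

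\textbf{What the paper does instead.} The paper uses the shape-derivative (Taylor) expansion \eqref{expansion-Gaeta} of $G_{a,\eta}[\zeta_c]-G_{a,\eta}[0]$. On low $\xi$-frequencies this is trivially $\ep^2$-small; on high $\xi$-frequencies, after inserting $G_{a,\eta}[s\zeta_c]=\mu_{a,\eta}(D_x)\tilde\chi(D_x)+R_\eta$ with $R_\eta\in B(H^{-1},L^2)$ (Lemma \ref{lem-Gaeta}), one expands and finds that the second-order and first-order contributions cancel in pairs ($\cT_{11}-\cT_{21}+\cT_{31}$, $\cT_{12}-\cT_{22}$, $\cT_{13}-\cT_{23}$, $\cT_{14}-\cT_{24}+\cT_{34}$), leaving only $\ep^2$-small zeroth-order remainders. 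These cancellations are the whole point: the individual terms $(A-1)G_a[0]f$ or $\zeta_c(\p_x-a)^2$ are genuinely unbounded from $H_*^{1/2}$ to $L^2$, and only their specific algebraic combination is controlled. Your decomposition separates precisely the pieces that must stay together.
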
 
In order to prove this lemma, we need to use the following result which states that when focusing on the bounded  transverse frequencies, the Dirichlet-Neumann operator $G_a[\zeta_c]$ can  be written as a main order $\mu_a(D)$ plus a remainder which sends a $H^{-1}$ function to $L^2.$
 %the main order whose symbol is the principle 
%concerning the expansions of the Dirichlet-Neumann operator 

\begin{lem}\label{lem-Gaeta}
    Denote $G_{a,\eta}[\cdot]=\colon e^{-iy \eta}\,G_a[\cdot]\, e^{iy \eta}$ and $\mu_{a,\eta}(D_x)
    =\sqrt{(D_x+ia)^2+\eta^2}.$ Let $\chi: \mR\rightarrow \mR$ be a smooth function  with compact support and equals to one on $[-10,10]$ and denote $\tilde{\chi}=1-\chi.$ 
    Then we have that: 
\begin{align*}
        G_{a,\eta}[\cdot]=\mu_a(D_x,\eta)\tilde{\chi}(D_x
        )+R_{\eta}
\end{align*}
    where $R_{\eta}\in B(H^{-1},L^2)$ uniformly for $\eta$ such that $|\eta|\leq 2.$
\end{lem}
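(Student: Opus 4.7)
The plan is to split the operator via a frequency cutoff in $x$: write
\[
R_\eta \;=\; G_{a,\eta}[\zeta_c]\,\chi(D_x) \;+\; \bigl(G_{a,\eta}[\zeta_c] - \mu_a(D_x,\eta)\bigr)\tilde{\chi}(D_x),
\]
and treat each piece separately. For the low-frequency part, since $\chi$ has compact support in $\xi$, the multiplier $\chi(D_x)$ is smoothing: it maps $H^{-1}(\mathbb{R})$ continuously into $H^N(\mathbb{R})$ for every $N\geq 0$, and does so uniformly in $\eta$ (it does not depend on $\eta$). Combined with the fact that $G_{a,\eta}[\zeta_c]$ is a first-order operator acting continuously from $H^1(\mathbb{R})$ to $L^2(\mathbb{R})$ uniformly for $|\eta|\leq 2$ (which follows by adapting Lemma \ref{lem-elliptic}, using that bounded $\eta$ gives uniform ellipticity of \eqref{elliptic-newcor}), the composition belongs to $B(H^{-1},L^2)$ uniformly in $\eta$.

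For the high-frequency part, the idea is to build a parametrix for the elliptic problem \eqref{elliptic-newcor} in the strip, analogous to the one used in the proof of Lemma \ref{lem-error}: factorize the elliptic operator as $-\alpha^{-1}(\partial_z-\sigma_-(z))(\partial_z-\sigma_+(z))$ with $\sigma_\pm$ built from the principal symbol $\lambda^1_{\zeta_c}(x,\xi,\eta)=\sqrt{(\xi+ia)^2+\eta^2(1+(\partial_x\zeta_c)^2)}$. On the support of $\tilde{\chi}$ one has $|\xi|\geq 10$ and hence $\Re\lambda^1_{\zeta_c}\geq 9-C\varepsilon$, so the construction is valid uniformly for $|\eta|\leq 2$, and one obtains
\[
G_{a,\eta}[\zeta_c]\,\tilde{\chi}(D_x) \;=\; \mathrm{Op}\bigl(\lambda^1_{\zeta_c}\tanh\lambda^1_{\zeta_c}\bigr)\,\tilde{\chi}(D_x) + \mathcal{S}_\eta,
\]
with $\mathcal{S}_\eta$ collecting sub-principal corrections. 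On $\tilde{\chi}$'s support, $\tanh\lambda^1_{\zeta_c}=1+O(e^{-\Re\lambda^1_{\zeta_c}})$, so one may replace $\lambda^1_{\zeta_c}\tanh\lambda^1_{\zeta_c}$ by $\lambda^1_{\zeta_c}$ modulo a smoothing error. The remaining principal-symbol difference
\[
\lambda^1_{\zeta_c}-\mu_a \;=\; \frac{\eta^2(\partial_x\zeta_c)^2}{\lambda^1_{\zeta_c}+\mu_a}
\]
is a symbol of order $-1$ in $\xi$ with Schwartz coefficients in $x$ (uniformly for $|\eta|\leq 2$), so its quantization is bounded $H^{-1}\to L^2$ uniformly in $\eta$.

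The main obstacle is controlling the sub-principal remainder $\mathcal{S}_\eta$ in $B(H^{-1},L^2)$, not merely in $B(L^2)$. A priori the next term in the symbolic expansion $\lambda^0_{\zeta_c}$ is of order $0$, which only gives $B(H^{-1},H^{-1})$. The saving grace is that for the Dirichlet--Neumann operator the sub-principal symbol carries an explicit factor $1/\lambda^1_{\zeta_c}$ (see the Alazard--M\'etivier or Lannes expansion); on $\tilde{\chi}$'s support where $\lambda^1_{\zeta_c}$ is bounded below uniformly in $|\eta|\leq 2$, this factor gives an effective gain of one order in $\xi$, so that $\mathrm{Op}(\lambda^0_{\zeta_c})\tilde{\chi}(D_x)\in B(H^{-1},L^2)$, and all lower-order terms are even better. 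To avoid invoking the full classical expansion, the cleanest implementation is to construct a two-term parametrix directly: refine the approximate kernel $\Phi^{app}_{a,\eta}$ of Lemma \ref{lem-error} to include the next-to-leading-order correction, and estimate the residual solution to the non-homogeneous elliptic problem in the strip by the Poincar\'e/energy argument of Lemma \ref{lem-elliptic}, gaining an extra half-derivative at the boundary from elliptic regularity, which upgrades the final bound from $B(L^2)$ to $B(H^{-1},L^2)$ uniformly in $|\eta|\leq 2$.
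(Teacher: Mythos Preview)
Your strategy coincides with the paper's: split off the low-frequency piece $G_{a,\eta}[\zeta_c]\,\chi(D_x)$ (harmless since $\chi(D_x)$ is smoothing and $G_{a,\eta}$ is first order uniformly in $|\eta|\leq 2$), then on high $\xi$-frequencies compare to the principal symbol and invoke the Alazard--M\'etivier sub-principal expansion. The principal-symbol difference $\lambda^1_{\zeta_c}-\mu_a=\eta^2(\partial_x\zeta_c)^2/(\lambda^1_{\zeta_c}+\mu_a)$ is indeed $S^{-1}$ for bounded $\eta$, exactly as in the paper.

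Where your argument is thinner is the sub-principal step. You correctly flag that $\lambda^0_{\zeta_c}$ is \emph{a priori} only $S^0$, but then assert that the explicit factor $1/\lambda^1_{\zeta_c}$ in the Alazard--M\'etivier formula upgrades it to $S^{-1}$. That inference is not automatic: the numerator in that formula is $b\,\partial_x^2\zeta_c\,A_1 - i\,\partial_\xi a_1\cdot\partial_x A_1$ with $A_1=b(i\xi\,\partial_x\zeta_c+\lambda^1_{\zeta_c})$, and each summand is individually $O(|\xi|)$, so naively the quotient stays in $S^0$. The paper does the algebra and shows the $O(|\xi|)$ contributions cancel: after simplification one is left with terms involving $\eta^2\,\partial_x(\partial_x\zeta_c)^2$ (bounded since $|\eta|\leq 2$) and $(\xi^2-(\lambda^1_{\zeta_c})^2)/\lambda^1_{\zeta_c}$ (bounded since $\xi^2-(\lambda^1_{\zeta_c})^2=-2ia\xi+O(1)$), so the numerator is $O(1)$ and $\tilde\lambda^0_{\zeta_c}\in S^{-1}$. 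This cancellation is specific to a one-dimensional surface with bounded transverse frequency; it is the substance of the lemma and has to be checked somewhere, whether through the explicit symbol as the paper does or inside your proposed two-term parametrix construction.
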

Note that in the above Lemma we just get an approximation in terms of order of the operator, the remainder is not
necessarily small.
\begin{proof}
    Since  $\zeta_c\in C^{\infty}(\mR),$ %we know from \cite{Alazard-Metivier} that   
    the operator $G_{a,\eta}[\zeta_c]$ can be expanded under the form
    \beqs 
 G_{a,\eta}[\zeta_c]=\op (\tilde{\chi}(\xi )\lambda_{\zeta_c}^1(x,\xi,\eta))+\tilde{R}_{\eta}\,, 
\eeqs
where $\lambda%_{\zeta_c}
^1_{\zeta_c}(x, \xi, \eta)=\sqrt{(\xi+ia)^2+\eta^2 (1+(\p_x\zeta_c)^2)}$ and $\tilde{R}_{\eta} $ is a bounded operator on $L^2_x$ for any $\eta \in \mR.$ 
This follows again from a study of the modified equation which is very similar to the one that is done when 
$a=0$ (see \cite{Lannes} for example).
For any $|\eta|\leq 2,$  straightforward computations show that:
\beqs 
|\lambda_{\zeta_c}^1-\mu_{a,\eta}|(\xi)=\bigg|\f{\eta^2(\p_x\zeta_c)^2}{\lambda_{\zeta_c}^1+\mu_a}\bigg|\lesssim \f{|\eta \p_x\zeta_c|}{\sqrt{1+\xi^2}},
\eeqs
which enables us to write
\beqs 
 G_{a,\eta}[\zeta_c]=%$\op(\lambda_{\zeta_c}^1)
 \mu_a(D_x, \eta)\tilde{\chi}(D_x )+{R}_{\eta}\,, 
\eeqs
where ${R}_{\eta}-\tilde{R}_{\eta}=\op\big(\tilde{\chi}
        (\lambda_{\zeta_c}^1-\mu_a)\big)\in B(H_x^{-1},L_x^2).$  We thus only need  to show that $\tilde{R}_{\eta}\in  B(H_x^{-1},L_x^2)$ for any $\eta$ such that $|\eta|\leq2 .$ 
Let us write 
$$\tilde{R}_{\eta}=\chi(D_x) \circ \tilde{R}_{\eta}+\tilde{\chi}(D_x)\circ \tilde{R}_{\eta} \circ \chi(D_x)+ \tilde{\chi}(D_x)\circ \tilde{R}_{\eta} \circ \tilde{\chi}(D_x).$$ It is clear that the first two terms in the right hand side belong to  $B(H_x^{-1},L_x^2).$ For the last term, we 
follow  the computation
in \cite{Alazard-Metivier} (see Section 4.3-4.4) to get that
%we need to know the main term of $\tilde{R},$ 
\beqs 
\tilde{\chi}(D_x)\circ \tilde{R}_{\eta} =\op(\tilde{\lambda}_{\zeta_c}^0)+ \tilde{R}_{\eta,-1}
\eeqs
with $\tilde{\lambda}_{\zeta_c}^0=\tilde{\chi}^2(\xi)\f{b\p_x^2\zeta_c A_1-i\p_{\xi}a_1\cdot \p_x A_1}{2b\lambda_{\zeta_c}^1},$ 
where 
\beqs 
b=\f{1}{1+(\p_x\zeta_c)^2}, \quad A_1=b\big(i\xi\p_x\zeta_c+\lambda_{\zeta_c}^1\big), \quad a_1=b(i\xi\p_x\zeta_c-\lambda_{\zeta_c}^1).
\eeqs
%It follows from the direct computation that:
We compute 
\begin{align*}
&\qquad\qquad \qquad
\big|(\lambda_{\zeta_c}^1)^{-1} \big|\approx \langle \xi \rangle^{-1},\\[3pt]
\big|b\p_x^2\zeta_c A_1-i\p_{\xi}a_1\cdot \p_x A_1\big|&= \bigg|\f{-(i\p_{\xi} a_1)b\eta^2\p_x(\p_x\zeta_c)^2+2b^2\p_x^2\zeta_c\big(2b  (\p_x\zeta_c)^2-1\big)\big(\xi^2-(\lambda_{\zeta_c}^1)^2\big)} 
{2\lambda_{\zeta_c}^1}\bigg|
\leq C 
\end{align*}
on the support of $\tilde{\chi}(\xi),$ uniformly for $|\eta|\leq 2.$ 
As a result, we see that $\op\big(\tilde{\lambda}_{\zeta_c}^0\big)\in B(H_x^{-1},L_x^2).$
The proof is now complete.
\end{proof}
\begin{rmk}
    We see in the proof that %it holds 
   $\tilde{R}_{\eta}$ actually  belongs to  $ B(H_x^{-2},L_x^2).$
\end{rmk}
\begin{proof}[Proof of Lemma \ref{lem-Ga-Ga0-g}]
%Denote $G_{a,\eta}[\cdot]=\colon e^{-iy \eta}G_a[\cdot] e^{iy \eta},$ 
It suffices to show that for any $\eta \in [-2, 2],$ %it holds that: 
   \beqs 
   \big\|G_{a,\eta}[\zeta_c]-G_{a,\eta}[0]\big\|_{B(H_{*,\eta}^{\f12}, L_x^2)}\lesssim \ep^2.   
   \eeqs
By the Taylor expansion of $G_{a,\eta}[\zeta_c]$ in terms of $\zeta_c,$ we have that: 
\beq\label{expansion-Gaeta}
\begin{aligned}
 & \qquad ( G_{a,\eta}[\zeta_c]-G_{a,\eta}[0])\vp\\
 & =\int_0^1  \big(-G_{a,\eta}[s\zeta_c]+(\p_x-a)(s\p_x\zeta_c\cdot)\big)(\zeta_c Z_{a,\eta}(s\zeta_c,\vp))-(\p_x-a)(\zeta_c(\p_x-a)\vp)+\eta^2\zeta_c\vp \, \d s, \quad 
\end{aligned}
\eeq
    where $Z_a(s\zeta_c,\vp)=\f{G_{a,\eta}[s\zeta_c]\varphi+s\p_x\zeta_{c}(\p_x-a)\vp}{1+|s\p_x\zeta_{c}|^2}.$ Since for all $s\in[0,1],$ $G_{a,\eta}[s \zeta_c]\in B(H^1_x, L_x^2),$ we get directly from $\|\zeta_c\|_{W_x^{2,\infty}}\lesssim \ep^2$
  that: 
  \beqs 
\|\chi(D_x) ( G_{a,\eta}[\zeta_c]-G_{a,\eta}[0]) \vp \|_{L_x^2}+\| ( G_{a,\eta}[\zeta_c]-G_{a,\eta}[0]) \chi(D_x)\vp \|_{H_x^1}\lesssim \ep^2 \|\vp\|_{L_x^2}.
  \eeqs
This in turn, by using \eqref{expansion-Gaeta} again, gives that
\beqs 
\| ( G_{a,\eta}[\zeta_c]-G_{a,\eta}[0]) \chi(D_x)\vp \|_{L_x^2}\lesssim \ep^2 \|\big(G_{a,\eta}[\zeta_c],(\p_x-a),\eta\big)(\chi(D_x)\vp)\|_{L_x^2}\lesssim \ep^2\|\vp\|_{H_{*,\eta}^{\f12}}.
\eeqs
  It now remains for us to show that 
\beqs 
\|\tilde{\chi}(D_x) \big( G_{a,\eta}[\zeta_c]-G_{a,\eta}[0]\big) \tilde{\chi}(D_x)\|_{B(L_x^2)}\lesssim \ep^2.
\eeqs
To achieve such a goal, we expand each term in the right side of \eqref{expansion-Gaeta} and identify cancellations for terms involving two derivatives and one derivative. 
 For brevity, let's denote:
$$b_s=\f{1}{1+(s\p_x\zeta_c)^2}\, , \quad B_s=s\,b_s\,\zeta_c\,\p_x\zeta_c\, .$$
Thanks to Lemma \ref{lem-Gaeta}, we can  conclude that: 
\begin{align*}
    %-\tilde{\xhi}(D_x)
   &\qquad  G_{a,\eta}[s\zeta_c](\zeta_c Z_{a,\eta})
    =\mu_{a,\eta}(D_x)( b_s \zeta_c\mu_{a,\eta}(D_x)+B_s \p_x )\\
    &=b_s \zeta_c \mu_{a,\eta}^2(D_x)+B_s\mu_{a,\eta}(D_x)(\p_x-a)+[\mu_{a,\eta}(D_x), B_s](\p_x-a)+[\mu_{a,\eta}(D_x), b_s \zeta_c]\mu_{a,\eta}(D_x)+\cR\\
    & =\colon \cT_{11}+ \cT_{12}+ \cT_{13}+ \cT_{14}+\cR\, , 
    \\[4pt]
     &\,\,\qquad(\p_x-a)(s\p_x\zeta_c\cdot)\big)(\zeta_c Z_{a,\eta})\\
    &=s B_s\p_x\zeta_c (\p_x-a)^2 +B_s(\p_x-a)\mu_{a,\eta}(D_x)+(\p_xB_s) \mu_{a,\eta}(D_x)+\p_x(s B_s\p_x\zeta_c)(\p_x-a)+\cR\\
    &=\colon \cT_{21}+ \cT_{22}+ \cT_{23}+ \cT_{24}+\cR\, .
\end{align*}
Hereafter we denote by  $\cR$  some operator such that 
$\tilde{\chi}(D_x)\cR \tilde{\chi}(D_x)\in B(L_x^2)$ with operator norm proportional to $\ep^2$ and independent of $\eta$ for  $|\eta|\leq 2.$ 
We write also
\begin{align*}
 (\p_x-a)(\zeta_c(\p_x-a)) =\zeta_c(\p_x-a)^2+\p_x  \zeta_c (\p_x-a)^2=\colon \cT_{31}+\cT_{34}.
\end{align*}
By direct computations, we find that
\begin{align*}
    \cT_{11}-\cT_{21}+\cT_{31}=\cR, \quad \cT_{12}-\cT_{22}=0
\end{align*}
and thus the second order vanishes. 
Next, by using pseudodifferential calculus and in particular the classical fact that the principal symbol of the commutator of two pseudo differential operators is  the Poisson bracket of their symbols,  we get that
\beqs 
%\tilde{\chi}(D_x)%\tilde{\chi}(D_x)=
[\mu_{a,\eta}(D_x), B_s]=\f{1}{i}\op\big(\p_x B_s \p_{\xi}\mu_{a,\eta}\big)=-\op \bigg(\p_x B_s \, \f{i\xi-a}{\mu_{a,\eta}}\bigg)+\cR\, . %- \f{\p_x-a}{\mu_{a,\eta}}+\cR
\eeqs
Consequently, we have that 
\beqs
\cT_{13}-\cT_{23}=\p_x B_s\,\op \bigg(\f{(\xi+ia)^2-\mu_{a,\eta}^2}{\mu_{a,\eta}}\bigg)+\cR=\cR\, .
\eeqs
Similarly, we compute that
\begin{align*}
    \cT_{14}-\cT_{24}+\cT_{34}=\p_x(b_s \,\zeta_c)\big[(\p_x-a)+\f{1}{i}\op\big(\mu_{a,\eta}\,\p_{\xi}\mu_{a,\eta}\big)\big]+\cR =\cR\, .
\end{align*}
We see henceforth that the first order vanishes and  the proof is thus finished.
\end{proof}

\begin{lem}\label{lem-G+Delta}
Let $\pi_r(\xi,\eta)$ be the characteristic function of the set $\cS_{K,\delta}$ defined in \eqref{def-singset}, it holds that: 
\beqs 
\|%\rho_{_{K,\delta}}
\pi_s({D})  \big(G_a[\zeta_c]+\Delta_a\big)\|_{B(L^2)}\lesssim (K\ep)^3.
\eeqs
\end{lem}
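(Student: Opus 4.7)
The plan is to split the operator as
$$G_a[\zeta_c]+\Delta_a \;=\; \bigl(G_a[0]+\Delta_a\bigr)\;+\;\bigl(G_a[\zeta_c]-G_a[0]\bigr)$$
and bound each piece separately using the low-frequency localization $\Supp\pi_s\subset\cS_{K,\delta}$, on which $|\mu_a|\lesssim K\ep$ (since $|\xi|\leq K\ep$ and $|\eta|\leq 2|\xi+ia|\lesssim K\ep$). The first piece is a Fourier multiplier with symbol $\mu_a\tanh\mu_a-\mu_a^2=-\mu_a^4/3+\cO(\mu_a^6)$ (from the Taylor expansion of $x\tanh x$ at $0$), and Plancherel immediately gives $\|\pi_s(D)(G_a[0]+\Delta_a)\|_{B(L^2)}\lesssim (K\ep)^4$, comfortably within the required bound.

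For the second piece I would use the first-order shape-derivative expansion which, isolating from \eqref{expansion-Gaeta} the $s$-independent part, takes the form
$$G_a[\zeta_c]-G_a[0] \;=\; -\div_a(\zeta_c\,\na_a\,\cdot)\;-\;G_a[0]\bigl(\zeta_c\,G_a[0]\,\cdot\bigr)\;+\;R,$$
where $R$ is an integral remainder at least quadratic in $\zeta_c$. Using $\|\zeta_c\|_{W^{k,\infty}}\lesssim\ep^2$ and the explicit form of $R$ from \eqref{expansion-Gaeta}, one obtains $\|\pi_s(D)R\|_{B(L^2)}\lesssim\ep^4\lesssim(K\ep)^3$, so the task reduces to controlling the two principal linear terms. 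For these I would decompose the input $u\in L^2(\mR^2)$ as $u=u_\ell+u_h$, with $\hat{u_\ell}$ supported in a slightly enlarged box $\{|\xi|,|\eta|\leq CK\ep\}$ containing $\Supp\pi_s$.

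The low-frequency part $u_\ell$ is handled directly: $\pi_s(D)G_a[0]$ and $\pi_s(D)(\p_x-a)^\alpha\p_y^\beta$ are bounded on $L^2$ with norms $\lesssim(K\ep)^2$ and $\lesssim(K\ep)^{\alpha+\beta}$ respectively, while $G_a[0] u_\ell$, $\na_a u_\ell$, $\Delta_a u_\ell$ have $L^2$-norms of the corresponding orders in $K\ep$ times $\|u\|_{L^2}$. Expanding $\div_a(\zeta_c\na_a u_\ell)=\zeta_c\Delta_a u_\ell+(\p_x\zeta_c)(\p_x-a)u_\ell$ (using $\p_y\zeta_c=0$) and combining with $\|\zeta_c\|_{L^\infty}\lesssim\ep^2$, $\|\p_x\zeta_c\|_{L^\infty}\lesssim\ep^3$ yields contributions of order $K^2\ep^4$; an analogous treatment of $G_a[0](\zeta_c G_a[0] u_\ell)$ gives $K^4\ep^6$, both well within $(K\ep)^3$.

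The main obstacle is the high-frequency piece $u_h$, where one might fear that multiplication by $\zeta_c(x)$ scatters high-frequency energy into the low-frequency region on which $\pi_s$ is supported. The saving grace is the smoothness and exponential localization of $\zeta_c$ (Theorem \ref{bealetheo}): from $\zeta_c(x)=\ep^2\Theta(\ep x,\ep)$ and integration by parts one has $|\hat\zeta_c(\tau)|\lesssim_N \|\p_x^N\zeta_c\|_{L^1}/|\tau|^N\lesssim\ep^{N+1}/|\tau|^N$ for any $N$. For $\xi\in\Supp\pi_s$ and $\xi'\in\Supp\hat{u_h}$ one has $|\xi-\xi'|\gtrsim K\ep$, so $|\hat\zeta_c(\xi-\xi')|\lesssim\ep\cdot K^{-N}$; a Young-convolution estimate on the ensuing Fourier convolution then bounds the contribution of $u_h$ by $K^{-N}\|u\|_{L^2}$ for any $N$, which is negligible compared to $(K\ep)^3$. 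Assembling these estimates concludes the proof.
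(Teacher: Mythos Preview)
Your proof is correct and shares the paper's overall strategy: the same splitting $G_a[\zeta_c]+\Delta_a=(G_a[0]+\Delta_a)+(G_a[\zeta_c]-G_a[0])$, the same Fourier-multiplier bound for the first piece via the Taylor expansion of $\mu_a\tanh\mu_a-\mu_a^2$, and the same use of the shape-derivative Taylor formula \eqref{Ga-talor1er} for the second piece.

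Where you diverge is in the treatment of $G_a[\zeta_c]-G_a[0]$. The paper dispatches this in one line: it cites \eqref{Ga-talor1er} together with $\|\zeta_c\|_{L^\infty}\lesssim\ep^2$, $\|\p_x\zeta_c\|_{L^\infty}\lesssim\ep^3$ and records the bound $K^2\ep^4$. The implicit mechanism is that each term in the formula is a second-order operator sandwiching a multiplication by $\zeta_c$ (or $\p_x\zeta_c$); since $\zeta_c$ depends only on $x$, one commutes all derivatives to the left of the multiplication (picking up harmless lower-order terms involving $\p_x\zeta_c$), so that $\pi_s(D)$ composed with a second-order Fourier multiplier contributes $(K\ep)^2$, and the remaining multiplication contributes $\ep^2$. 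You instead isolate the two linear-in-$\zeta_c$ terms explicitly, bound the quadratic remainder by size, and then handle the potential derivative loss by a low/high frequency decomposition of the \emph{input}, exploiting the rapid decay of $\widehat{\zeta_c}$ to make the high-frequency tail negligible. Both routes are valid; the paper's commutation trick is shorter, while your Fourier-side argument makes the mechanism for avoiding derivative loss fully explicit and would generalize more readily to coefficients that are merely smooth rather than arising from a specific scaling. One small remark: your treatment of the quadratic remainder $R$ is stated tersely and faces the same derivative-loss issue as the linear terms, so strictly speaking it needs the same low/high decomposition (or commutation) you apply later; this is only an expository gap, not a mathematical one.
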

\begin{proof}
   We write $G_a[\zeta_c]+\Delta_a=G_a[\zeta_c]-G_a[0]+G_a[0]+\Delta_a.$
   Let $\mu_a=\sqrt{(\xi+ia)^2+\eta^2}.$ On the support of $\pi_s(\xi,\eta),$ it holds that
$|\mu_a|\leq 5K\ep.$ By using a  Taylor expansion, we can express
   $$m(G_a[0]+\Delta_a)=\mu_a (\tanh \mu_a-\mu_a)=\f{1}{3}\mu_a^3(1+\cO(\mu_a))\,.$$
 This leads to the conclusion that:
   \beqs 
\|  \pi_s({D})  (G_a[0]+\Delta_a)\|_{B(L^2)}\lesssim (K\ep)^3.
   \eeqs
Moreover,  by the
 Taylor formula for $G_a[\zeta_c]$ in terms of $\zeta_c:$  %{\color{red}check
\begin{align}\label{Ga-talor1er}
  ( G_a[\zeta_c]-G_a[0])\vp=\int_0^1  \big(-G_a[s\zeta_c]+s(\p_x-a)(\p_x\zeta_c\cdot)\big)(\zeta_c Z_a(s\zeta_c,\vp))-\div_a(\zeta_c\na_a\vp) \, \d s, \quad 
\end{align}
    where $Z_a(s\zeta_c,\vp)=\f{G_a[s\zeta_c]\varphi+s\p_x\zeta_{c}(\p_x-a)\vp}{1+|s\p_x\zeta_{c}|^2}.$
    Recalling $\|(\ep^{-2}\zeta_c, \ep^{-3}\p_x \zeta_c)\|_{L_x^{\infty}}\lesssim 1,$ we find that:
\beqs 
\|\pi_{s}({D}) \big(G_a[\zeta_c]-G_a[0]\big)\|_{B(L^2)}\lesssim K^2\ep^4.
\eeqs
\end{proof}

\begin{lem}\label{lem-remaider-sec}
Let $\pi_r(\xi,\eta)$ be the characteristic function of the set $\cS^c_{K,\delta}$ defined in \eqref{def-reguset}, the following property holds:
    \begin{align}\label{remainder-sec}
       \| \pi_r(D)\big(G_a[\zeta_c]-G_a[0]+Q_a(D) \div_a(\zeta_c\na_a Q_a(D))\big) \pi_r(D)\|_{B({L^2})}\lesssim \ep^3.
    \end{align}
\end{lem}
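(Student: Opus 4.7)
The plan is to Taylor expand $G_a[\zeta_c]-G_a[0]$ to first order in $\zeta_c$, check that the symbolic principal contribution is algebraically cancelled by $Q_a\div_a(\zeta_c\na_a Q_a)$, and control the resulting commutator terms using Lemma \ref{lem-commutator}.

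First, from the shape-derivative expansion \eqref{Ga-talor1er}, I would write
\beqs
G_a[\zeta_c]-G_a[0]=-G_a[0](\zeta_c G_a[0]\cdot)-\div_a(\zeta_c\na_a\cdot)+R_2(\zeta_c),
\eeqs
where $R_2(\zeta_c)$ collects the contributions that are at least quadratic in $\zeta_c$ (produced by Taylor-expanding $G_a[s\zeta_c]$ and $Z_a(s\zeta_c)$ in $s$). Since $\pi_r(D)$ localises in the bounded frequency region $\cS_{K,\delta}^c$, where the symbols $\mu_a$, $\tanh\mu_a$, $Q_a$ are smooth and uniformly bounded, the operator norm of $\pi_r(D)R_2(\zeta_c)\pi_r(D)$ on $L^2$ is controlled by $\|\zeta_c\|_{L^\infty}^2+\|\zeta_c\|_{L^\infty}\|\p_x\zeta_c\|_{L^\infty}\lesssim\ep^4$, which is sharper than required.

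Second, I combine the linear-in-$\zeta_c$ part with $Q_a\div_a(\zeta_c\na_a Q_a)$. Using $G_a[0]=(\mu_a\tanh\mu_a)(D)$, $Q_a^2(D)=1-\tanh^2\mu_a(D)$, $\div_a\na_a=-\mu_a^2(D)$, and the fact that $G_a[0]$, $Q_a(D)$ and $\Delta_a$ are Fourier multipliers that commute pairwise, a direct normal-ordering computation gives
\beno
&&\hspace{-2em}-G_a[0]\zeta_c G_a[0]-\div_a(\zeta_c\na_a)+Q_a\div_a(\zeta_c\na_a Q_a)\\
&=&-[G_a[0],\zeta_c]G_a[0]+[Q_a,\zeta_c]\Delta_a Q_a+[Q_a,\p_x\zeta_c]\,Q_a(\p_x-a)-(\p_x\zeta_c)\tanh^2\mu_a(D)(\p_x-a),
\eeno
all the would-be main symbols $\pm\zeta_c\mu_a^2$ and $\pm\zeta_c\mu_a^2 Q_a^2$ cancelling pairwise.

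Third, each remaining term sandwiched between $\pi_r(D)$ has operator norm $O(\ep^3)$ on $L^2$. Since $\zeta_c(x)=\ep^2\Theta(\ep x,\ep)$, a change of variables yields $\|\cF(\p_x^k\zeta_c)\|_{L^1_\xi}\lesssim\ep^{k+2}$, in particular $\|\cF(\p_x\zeta_c)\|_{L^1}\lesssim\ep^3$. The symbols $\mu_a\tanh\mu_a$, $Q_a$ and $\xi+ia$ are smooth with uniformly bounded $\xi$-derivatives on $\Supp\pi_r$, so Lemma \ref{lem-commutator} applied with $s=1$ yields $\ep^3$ bounds for the two commutator terms $\pi_r[G_a[0],\zeta_c]G_a[0]\pi_r$ and $\pi_r[Q_a,\zeta_c]\Delta_a Q_a\pi_r$; the multiplicative term is controlled by $\|\p_x\zeta_c\|_{L^\infty}\cdot\|\tanh^2\mu_a(D)(\p_x-a)\pi_r(D)\|_{B(L^2)}\lesssim\ep^3$; and the term $[Q_a,\p_x\zeta_c]Q_a(\p_x-a)$, carrying one extra derivative on $\zeta_c$, is $O(\ep^4)$.

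The main obstacle I anticipate is the algebraic bookkeeping in the second step: after expanding $Q_a\div_a(\zeta_c\na_a Q_a)$ in normal-ordered form, one must check carefully that every leftover term is either a commutator with $\zeta_c$ or already carries an explicit factor of $\p_x\zeta_c$, so that the derivative-based estimates of the third step apply uniformly. In particular, exhibiting the term $(\p_x\zeta_c)(Q_a^2-1)(\p_x-a)=-(\p_x\zeta_c)\tanh^2\mu_a(D)(\p_x-a)$ requires commuting $Q_a$ past $(\p_x\zeta_c)$ before using the relation between $Q_a^2$ and $\tanh^2\mu_a$; once this is done the rest is a routine application of Lemma \ref{lem-commutator}.
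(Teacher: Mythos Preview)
Your proof is correct and follows essentially the same two–step strategy as the paper: (i) reduce to the linear–in–$\zeta_c$ part plus an $O(\ep^4)$ remainder via the Taylor formula \eqref{Ga-talor1er}, and (ii) observe that the linear part cancels algebraically with $Q_a\div_a(\zeta_c\na_a Q_a)$ up to commutator–type terms of size $O(\ep^3)$. The paper simply asserts the identity $Q_a\div_a(\zeta_c\na_a Q_a)=G_a[0]\zeta_c G_a[0]+\div_a(\zeta_c\na_a)+\cR$ with $\|\cR\|_{B(L^2)}\lesssim\ep^3$, whereas you spell out explicitly what $\cR$ consists of and bound each piece via Lemma~\ref{lem-commutator}; this is a welcome expansion of the argument. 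One minor remark: the paper's statement of the first identity is claimed globally on $L^2$ (without $\pi_r$), which in your decomposition would require the further symbolic cancellation between $-[G_a[0],\zeta_c]G_a[0]$ and $-(\p_x\zeta_c)\tanh^2\mu_a(D)(\p_x-a)$ at high $\xi$; since you keep the $\pi_r(D)$ localisation throughout, each of your four residual terms is individually bounded and this extra observation is unnecessary for the lemma as stated.
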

\begin{proof}
    On the one hand, it holds that: 
    \begin{align*}
        Q_a(D) \div_a(\zeta_c\na_a Q_a(D))=G_a[0] \zeta_c G_a[0]+\div_a(\zeta_c \na_a)+\cR\, ,
    \end{align*}
    where $\|\cR\|_{B(L^2)}\lesssim \ep^3.$ On the other hand,  using the Taylor formula \eqref{Ga-talor1er} together with Lemma \ref{lem-Ga-Ga0-g}, one readily gets that:
   \begin{align*}
       \| \pi_r(D)\big(G_a[\zeta_c]-G_a[0]+G_a[0] \,\zeta_c \,G_a[0]+\div_a(\zeta_c \na_a)\big) \pi_r(D)\|_{B({L^2})}\lesssim \ep^4.
    \end{align*}
    The estimate \eqref{remainder-sec} then follows.
\end{proof}
\begin{lem}\label{lem-Ga-ll}
    Let $ \chi_{\ell}: \mR^2\rightarrow \mR $ be a function supported on 
 $\big\{(\xi,\eta)\big|\,|\xi|\leq 4K\ep, |\eta|\leq A\ep^2\big\},$ it then holds that:
 \begin{align*}
      \| \big(G_a[\zeta_c]-G_a[0]\big) \chi_{\ell}(D)\|_{B(H_{*}^{\f12}, L^2)}\lesssim (K\ep^3+A^2\ep^5)\,.
 \end{align*}
\end{lem}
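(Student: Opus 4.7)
The plan is to start from the first-order Taylor expansion \eqref{Ga-talor1er}
\[
\bigl(G_a[\zeta_c]-G_a[0]\bigr)\varphi \,=\, -\div_a(\zeta_c\nabla_a\varphi)\,+\,\int_0^1 \bigl(-G_a[s\zeta_c]+s(\p_x-a)(\p_x\zeta_c\cdot)\bigr)\bigl(\zeta_c Z_a(s\zeta_c,\varphi)\bigr)\,\d s,
\]
apply it with $\varphi$ replaced by $\chi_\ell(D)\varphi$, and bound each of the two pieces in $L^2$ by $\|\chi_\ell(D)\varphi\|_{H_{*}^{1/2}}$. Two baseline inequalities drive the argument: on $\Supp\chi_\ell$ the weight $\xi^2+\eta^2+a^2$ is comparable to the $H_*^{1/2}$-symbol, giving $\|(\p_x-a)^j\chi_\ell(D)f\|_{L^2}\lesssim (K\ep)^{j-1}\|\chi_\ell(D)f\|_{H_{*}^{1/2}}$ for $j\ge 1$ and $\|\p_y^2\chi_\ell(D)f\|_{L^2}\lesssim A^2\ep^4\|\chi_\ell(D)f\|_{L^2}$; and since $\xi^2+\eta^2+a^2\ge a^2\gtrsim\ep^2$ on this set, also $\|\chi_\ell(D)f\|_{L^2}\lesssim\ep^{-1}\|\chi_\ell(D)f\|_{H_{*}^{1/2}}$.

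For the divergence piece, $\p_y\zeta_c=0$ lets us write
\[
\div_a(\zeta_c\nabla_a\chi_\ell(D)\varphi) = (\p_x\zeta_c)(\p_x-a)\chi_\ell(D)\varphi + \zeta_c(\p_x-a)^2\chi_\ell(D)\varphi + \zeta_c\p_y^2\chi_\ell(D)\varphi.
\]
The bounds $\|\p_x\zeta_c\|_{L^\infty}\lesssim\ep^3$ and $\|\zeta_c\|_{L^\infty}\lesssim\ep^2$, combined with the baseline inequalities, produce contributions of size $\ep^3$, $K\ep^3$ and $\ep^2\cdot A^2\ep^4\cdot\ep^{-1}=A^2\ep^5$ respectively, which is already the claimed bound for this piece.

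For the integral remainder the task reduces to controlling $h_s:=\zeta_c Z_a(s\zeta_c,\chi_\ell(D)\varphi)$ in both $L^2$ and $H^1$, in order to apply the classical boundedness $G_a[s\zeta_c]\colon H^1\to L^2$. In $L^2$: on $\Supp\chi_\ell$ one has $|\mu_a\tanh\mu_a|\lesssim|\mu_a|^2\lesssim K\ep\,(\xi^2+\eta^2+a^2)^{1/2}$, so $\|G_a[0]\chi_\ell(D)\varphi\|_{L^2}\lesssim K\ep\|\chi_\ell(D)\varphi\|_{H_{*}^{1/2}}$; combined with Lemma \ref{lem-Ga-Ga0-g} applied to $(G_a[s\zeta_c]-G_a[0])\chi_\ell(D)\varphi$ (which contributes $O(\ep^2)\|\chi_\ell(D)\varphi\|_{H_{*}^{1/2}}$ since $\chi_\ell$ forces $|\eta|\le 2$) and the trivial bound on $s\p_x\zeta_c(\p_x-a)\chi_\ell(D)\varphi$, this gives $\|Z_a\|_{L^2}\lesssim K\ep\|\chi_\ell(D)\varphi\|_{H_{*}^{1/2}}$ and therefore $\|h_s\|_{L^2}\lesssim K\ep^3\|\chi_\ell(D)\varphi\|_{H_{*}^{1/2}}$. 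For $\|\nabla h_s\|_{L^2}$, applying the product rule, derivatives on $\zeta_c$ cost an extra $\ep$, while derivatives on $Z_a$ are propagated through $G_a[s\zeta_c]$ using that $\p_y$ commutes with $G_a[s\zeta_c]$ (since $\zeta_c$ is $y$-independent) and that $\|[\p_x-a,G_a[s\zeta_c]]\|_{H^1\to L^2}\lesssim\ep^3$ (standard pseudodifferential calculus; the principal symbol of the commutator involves $\p_x\zeta_c$). Under the working hypothesis $K\ep\le 1$, this gives $\|h_s\|_{H^1}\lesssim K\ep^3\|\chi_\ell(D)\varphi\|_{H_{*}^{1/2}}$ and hence $\|G_a[s\zeta_c]h_s\|_{L^2}\lesssim K\ep^3\|\chi_\ell(D)\varphi\|_{H_{*}^{1/2}}$; the $(\p_x-a)(\p_x\zeta_c h_s)$ contribution is strictly smaller and negligible.

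The main obstacle will be precisely this $H^1$ control of $h_s$: unlike the divergence piece, $h_s$ is no longer frequency-localized because multiplication by $\zeta_c$ spreads frequencies, so the bound $|i\xi-a|\lesssim K\ep$ cannot be invoked directly on $h_s$. Producing the clean $K\ep^3$ bound therefore hinges on systematically tracking where each derivative falls (either on the smooth $\zeta_c$ factor, yielding an extra $\ep$, or on the frequency-localized function, where the $K\ep$ bound applies) and on the commutator estimate for $[\p_x-a,G_a[s\zeta_c]]$. Summing the divergence and remainder contributions then yields exactly the stated bound $K\ep^3+A^2\ep^5$.
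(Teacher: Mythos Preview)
Your argument is correct and follows exactly the route the paper intends: the paper's own proof is the single line ``It follows directly from \eqref{Ga-talor1er}'', and you have supplied the details behind that line --- splitting into the divergence term and the integral remainder, reading off the $K\ep^3$ and $A^2\ep^5$ contributions from the three pieces of $\div_a(\zeta_c\nabla_a\chi_\ell(D)\varphi)$, and then controlling $G_a[s\zeta_c](\zeta_c Z_a)$ via $\|\zeta_c Z_a\|_{H^1}\lesssim K\ep^3\|\chi_\ell(D)\varphi\|_{H_*^{1/2}}$.

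One small comment on your handling of the $H^1$ bound for $h_s$: the commutator claim $\|[\partial_x-a,G_a[s\zeta_c]]\|_{H^1\to L^2}\lesssim\ep^3$ is fine (the $x$-dependence of the full symbol of $G_a[s\zeta_c]$ enters only through $\zeta_c$ and its derivatives, so $\partial_x$ of that symbol picks up at least one factor of $\partial_x\zeta_c\sim\ep^3$), but an equally clean alternative that avoids invoking the full symbol expansion is to write $\|G_a[s\zeta_c]\psi\|_{H^1}\le\|G_a[0]\psi\|_{H^1}+\|(G_a[s\zeta_c]-G_a[0])\psi\|_{H^1}$, observe that $G_a[0]\psi$ remains frequency-localized so its $H^1$ and $L^2$ norms coincide, and control the second piece by the obvious $H^1$-level analogue of Lemma~\ref{lem-Ga-Ga0-g} (which costs $\ep^2\|\psi\|_{H_*^{3/2}}\approx\ep^2\|\psi\|_{H_*^{1/2}}$ since $\psi$ is low-frequency). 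Either way the estimate closes with the stated bound.
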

\begin{proof}
    It follows directly from \eqref{Ga-talor1er}.
\end{proof}
\begin{lem}\label{lem-clatla}
   Let the operator $\cL_a^2$ %\tilde{L}_a$
   be defined in  \eqref{defcL0-1} and $\bI_{\delta}(\cdot): \mR\rightarrow \mR$ be the characteristic function on $[-\delta, \delta].$ 
    It holds that  
\beq\label{es-cLa2}
\|\cL_a^2 \, \bI_{\delta}(D_x) \|_{B(X)}+\|\bI_{\delta}(D_x)\, \cL_a^2\|_{B(X)} \lesssim \ep^2.
\eeq
\end{lem}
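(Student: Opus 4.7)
The plan is to establish \eqref{es-cLa2} by decomposing $\cL_a^2$ into its four matrix entries and estimating each one separately. The two essential ingredients are the smallness and smoothness of the solitary-wave coefficients inherited from Theorem \ref{bealetheo}, namely
$\|v_c\|_{W_x^{k,\infty}} \lesssim \ep^2$ and $\|d_c \p_x Z_c\|_{W_x^{k,\infty}} \lesssim \ep^3$ for every $k\geq 0$, together with $a = \hat{a}\ep \lesssim \ep$; and the frequency-localization bound $\|(\p_x - a) \bI_{\delta}(D_x)\|_{B(L^2)} \leq \delta + a$, which follows from the compact $\xi$-support of $\bI_{[-\delta, \delta]}$.

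For the diagonal entries, a Leibniz expansion
\[
(\p_x - a)(v_c \bI_{\delta}(D_x) U_1) = (\p_x v_c) \bI_{\delta}(D_x) U_1 + v_c (\p_x - a) \bI_{\delta}(D_x) U_1
\]
gives the $L^2$-bound $(\|\p_x v_c\|_{L^\infty} + \|v_c\|_{L^\infty}(\delta + a)) \|U_1\|_{L^2} \lesssim \ep^2 \|U_1\|_{L^2}$ for the $(1,1)$ entry, and the analogous estimate for the $(2,2)$ entry $-v_c(\p_x-a)\bI_{\delta}(D_x) U_2$ in the $H_\star^{1/2}$-norm follows from the same calculation once one knows that multiplication by the Schwartz function $v_c(x)$ is bounded on $H_\star^{1/2}(\mR^2)$ with norm $\lesssim \ep^2$; this latter fact I would prove by a commutator argument exchanging $v_c$ with the Fourier multiplier $\nabla/\langle D\rangle^{1/2}$, controlled by the commutator estimate in Lemma \ref{lem-commutator}.

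The $(2,1)$ entry $d_c\p_x Z_c\,\cdot :L^2 \to H_\star^{1/2}$ is the main obstacle, since multiplication by a small function of $x$ does not a priori supply the transverse half-derivative required by the $H_\star^{1/2}$-norm. The approach is to exploit the compact $\xi$-support of $\bI_{[-\delta,\delta]}$ in the convolution representation
\[
\cF\bigl(d_c\p_x Z_c \cdot \bI_{\delta}(D_x) U_1\bigr)(\xi,\eta) = \bigl(\widehat{d_c\p_x Z_c} *_\xi \bI_{[-\delta,\delta]}\hat U_1\bigr)(\xi,\eta),
\]
together with the rapid decay of $\widehat{d_c\p_x Z_c}$ (whose $L^1$-norm is $\lesssim \ep^3$), and to commute the multiplication operator through $\nabla/\langle D\rangle^{1/2}$; the commutator is of lower order and controlled by Lemma \ref{lem-commutator} with norm $\lesssim \ep^3$, while the remaining multiplicative piece is absorbed using the $\xi$-localization of $\bI_{\delta}(D_x)$ combined with $\|d_c\p_x Z_c\|_{W_x^{N,\infty}}\lesssim \ep^3$.

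Finally, the symmetric estimate $\|\bI_{\delta}(D_x) \cL_a^2\|_{B(X)} \lesssim \ep^2$ is obtained by the same strategy after commuting $\bI_{\delta}(D_x)$ through the multiplications by $v_c$ and $d_c\p_x Z_c$; the resulting commutators $[\bI_{\delta}(D_x), v_c]$ and $[\bI_{\delta}(D_x), d_c\p_x Z_c]$ have $B(L^2)$-norms bounded by the $L^1$-norms of $\cF(v_c)$ and $\cF(d_c\p_x Z_c)$, respectively of order $\ep^2$ and $\ep^3$ (Lemma \ref{lem-commutator} with $s=0$), which absorbs the losses into the desired $\ep^2$ bound.
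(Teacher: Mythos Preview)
Your overall plan—entry-by-entry control using $\|v_c\|_{W^{k,\infty}}\lesssim\ep^2$, $\|d_c\p_x Z_c\|_{W^{k,\infty}}\lesssim\ep^3$, and the bound $\|(\p_x-a)\bI_\delta(D_x)\|_{B(L^2)}\leq\delta+a$—is exactly what the paper does. The paper is simply more direct: it records four one-line bounds without bringing in the commutator machinery of Lemma~\ref{lem-commutator}. For the $(2,2)$ entry, for instance, the paper just writes $\|v_c(\p_x-a)\bI_\delta\|_{B(H_*^{1/2})}\lesssim\|v_c\|_{W^{1,\infty}}\|(\p_x-a)\bI_\delta\|_{B(H_*^{1/2})}$, and for the reverse order rewrites $\bI_\delta v_c\p_x=\bI_\delta\p_x(v_c\cdot)-\bI_\delta(\p_x v_c)$ before bounding. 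Your commutator route also works there but is heavier than needed.

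There is, however, a genuine gap in your treatment of the $(2,1)$ entry—and the same point is glossed over in the paper's one-line proof. After commuting $w:=d_c\p_x Z_c$ through $\nabla/\langle D\rangle^{1/2}$, your ``remaining multiplicative piece'' is
\[
w\,\frac{\nabla}{\langle D\rangle^{1/2}}\,\bI_\delta(D_x)\,U_1,
\]
whose $\p_y$-component has symbol $\dfrac{|\eta|}{(1+\xi^2+\eta^2)^{1/4}}\,\bI_{[-\delta,\delta]}(\xi)\sim|\eta|^{1/2}$ as $|\eta|\to\infty$; this is \emph{not} bounded on $L^2(\mR^2)$. Neither the rapid $\xi$-decay of $\widehat w$ nor $\|w\|_{W_x^{N,\infty}}$ helps, since both concern the $x$-variable only and the $\eta$-dependence of $\cF(wU_1)$ is inherited unchanged from $\hat U_1$. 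The correct resolution—implicit in the paper—is that the lemma is only ever invoked inside the proof of Proposition~\ref{prop-ITF}, where the whole argument lives on $X_A^I=\bI_2\tilde\bI_{A\ep^2}(D_y)X$, hence $|\eta|\le 2$. On that subspace the offending symbol is bounded by $2$, and both your argument and the paper's direct bound $\|\bI_\delta(d_c\p_x Z_c)\|_{B(L^2,H_*^{1/2})}\lesssim\|d_c\p_x Z_c\|_{W^{1,\infty}}$ go through immediately. You should make this transverse-frequency restriction explicit instead of trying to close the estimate from $\xi$-localization alone.
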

\begin{proof}%[Proof of \eqref{es-cLa2}]
%Let us give the proof of \eqref{es-cLa2}. That 
%It is the consequence of the following estimates: 
We can control the operator norm of each entry of $\cL_a^2$ in the following manner: 
 \begin{align*}
&\|\bI_{\delta}(\p_x-a)(v_c )\|_{B(L^2)}+\|(\p_x-a)(v_c \bI_{\delta} )\|_{B(L^2)} \lesssim \|v_c\|_{W^{1,\infty}_x}\lesssim \ep^2, \\ 
&\|\bI_{\delta}(d_c \p_x Z_c )\|_{ B(L^2, H_{*}^{\f12})}+ \|(d_c \p_x Z_c ) \bI_{\delta} \|_{ B(L^2, H_{*}^{\f12})}\lesssim \|d_c \p_x Z_c  \|_{W^{1,\infty}_x}\lesssim \ep^3, \\
&\|v_c (\p_x-a) \bI_{\delta}\|_{B( H_{*}^{\f12})}\lesssim \|v_c\|_{W^{1,\infty}_x}\| (\p_x-a) \bI_{\delta}\|_{B( H_{*}^{\f12})}\lesssim \ep^2, \\
&\|\bI_{\delta}\, v_c\, \p_x \tilde{\bI}_{\delta}\|_{B( H_{*}^{\f12})}\lesssim \|\bI_{\delta}\,\p_x (v_c \tilde{\bI}_{\delta})\|_{B( H_{*}^{\f12})}+\|\bI_{\delta}\,\p_x v_c \tilde{\bI}_{\delta})\|_{B( H_{*}^{\f12})}\lesssim \|v_c\|_{W^{1,\infty}_x}\lesssim \ep^2.
 \end{align*}
\end{proof}

In the following lemma, we study the remainder of the linearized operator $L_a$ after subtracting the constant part, denoted by  the operator $L_a^1$  defined in \eqref{def-La0-1}. We  present various estimates useful for establishing the resolvent bounds. Notably, %we underscore the significance of the estimate \eqref{La1-commu}, 
  we emphasize that
the estimate \eqref{La1-commu}, although quite natural and not hard to prove, is particularly useful  in analyzing the low transverse frequency regime detailed in Section \ref{sec-lowtransverse}. The essence of this estimate lies in the appearance of an additional factor of $K^{-1}$ in front of  the low-frequency component.
 %The point of this estimate is that one could get an extra $K^{-1}$ in front of the low frequency part. 
\begin{lem}\label{lem-La1}
Let $L_a^1$ be defined in \eqref{def-La0-1}. Let $\pi_r: \mR^2\rightarrow \mR, $ be a function supported on 
%be a characteristic function on 
%be such that $\Supp \pi_r \subset
$\big\{(\xi,\eta)\big|\,|\xi|\leq \delta, |\eta|\leq 2\big\}$ 
and $\chi, \chi_1: \mR\rightarrow \mR$ be two smooth functions defined in \eqref{def-smoothcutoff}
%two smooth function 
%supported on $\big\{\xi\big|\,|\xi|\leq 2K\ep \big\}$
% $\big\{\xi\big|\,|\xi|\leq 4K\ep\big\}$ respectively, and satisfying 
then 
 \beq \label{es-tla}
\|\pi_r(D){L}_a^1\|_{B(X)}\lesssim \ep^2\, ,
\eeq
    \beq\label{es-R}
  %  \|L_a^1( D)\chi_{\ell}(D)\|_{B(X)}\lesssim (K\ep^3+A^2\ep^5)\, .
\|L_a^1(x, D)\,\chi_1(D_x)\,\bI_{A\ep^2}(D_y)\|_{B(X)}\lesssim (K\ep^3+A^2\ep^5)\, .
\eeq
Moreover, for any $U\in X,$ it holds that for any $K^4\ep\leq 1,$
\beq \label{La1-commu}
\|[\chi_1(D_x),\,L_a^1(x, D)]\,\bI_{A\ep^2}(D_y) U\|_{X}\lesssim \ep^3\big( K^{-1}\|\chi(D_x)\, U\|_{X} +\|(1-\chi_1)(D_x) U\|_{X} \big) .%\chi(D_x)\|_{B(X)}\lesssim K^{-1}\ep^3
\eeq
\end{lem}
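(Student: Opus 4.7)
The approach for all three estimates is to work entry by entry in the $2\times 2$ matrix $L_a^1$, using the following inputs: the profile bounds $\|v_c\|_{W^{k,\infty}}\lesssim \ep^2$, $\|d_c\p_x Z_c\|_{W^{k,\infty}}\lesssim \ep^3$, $\|\zeta_c\|_{W^{k,\infty}}\lesssim \ep^2$; the Dirichlet--Neumann difference bounds of Lemma \ref{lem-Ga-Ga0-g} (when the transverse frequency is bounded) and of Lemma \ref{lem-Ga-ll} (when both frequencies are low); the fact that any Fourier multiplier commutes with every constant-coefficient differential operator; and the commutator estimate of Lemma \ref{lem-commutator}.

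For \eqref{es-tla}: since $\pi_r$ is supported where $|\xi|\leq \delta$ and $|\eta|\leq 2$, the composition of $\pi_r(D)$ with any first-order derivative is a uniformly bounded Fourier multiplier, and on $\Supp\pi_r$ (which in the main-text application excludes the singular set $\cS_{K,\delta}$) the $H_*^{1/2}$ norm is equivalent to the $L^2$ norm. The $(1,2)$ entry is bounded by Lemma \ref{lem-Ga-Ga0-g}. The three remaining entries are handled by the Leibniz identity $(\p_x-a)(v_c\,\cdot)=v_c(\p_x-a)+\p_x v_c$ and its analogs, which reduce them to products of multiplication by an $\cO(\ep^2)$ (or $\cO(\ep^3)$) coefficient with a bounded Fourier multiplier.

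For \eqref{es-R}: the right-side cutoff $\chi_1(D_x)\bI_{A\ep^2}(D_y)$ localizes the input to $|\xi|\lesssim K\ep$, $|\eta|\leq A\ep^2$, so $(\p_x-a)\chi_1(D_x)\bI_{A\ep^2}(D_y)$ has operator norm $\lesssim K\ep$ and $\p_y\bI_{A\ep^2}(D_y)$ has operator norm $\lesssim A\ep^2$. The entries involving $v_c$ and a derivative then yield $\ep^2\cdot K\ep=K\ep^3$; the $d_c\p_x Z_c$ entry yields $\ep^3$; the $(1,2)$ entry is bounded by Lemma \ref{lem-Ga-ll} to produce exactly $K\ep^3+A^2\ep^5$.

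The principal obstacle is \eqref{La1-commu}. Since $\chi_1(D_x)$ commutes with every constant-coefficient differential operator, $[\chi_1(D_x),L_a^1]$ reduces to commutators of $\chi_1(D_x)$ with the multiplication operators $v_c$, $\p_x v_c$, $d_c\p_x Z_c$, and with the DN-difference $G_a[\zeta_c]-G_a[0]$. A naive application of Lemma \ref{lem-commutator} with $s=1$ gives only $\|[\chi_1(D_x),v_c]\|_{B(L^2)}\lesssim \|\chi_1'\|_{L^\infty}\|\cF(\p_x v_c)\|_{L^1}\lesssim K^{-1}\ep^2$, which combined with the $K\ep$ loss from the inner derivative leaves $\ep^3$ but no additional factor of $K^{-1}$. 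To recover the announced $K^{-1}\ep^3$ on the low-frequency piece $\chi(D_x)U$, I exploit the relation $\chi_1\chi=\chi$ to rewrite
\[
[\chi_1(D_x),v_c]\,\chi(D_x)=(\chi_1(D_x)-1)\,v_c\,\chi(D_x).
\]
On the Fourier side, $(\chi_1(\xi)-1)$ is supported where $|\xi|\gtrsim K\ep$, while the spectral support $|\xi'|\lesssim K\ep$ of $\chi(D_x)U$ forces $|\xi-\xi'|\gtrsim K\ep$ in the convolution against $\widehat{v_c}(\xi-\xi')=\ep\,\widehat{v_0}((\xi-\xi')/\ep)$; since $\widehat{v_0}\in\cS$, its tail decay gives $|\widehat{v_c}(\xi-\xi')|\lesssim \ep K^{-N}$ for every $N$, and a Schur-test argument then yields $\|[\chi_1(D_x),v_c]\chi(D_x)\|_{B(L^2)}\lesssim \ep^2 K^{-N}$. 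Multiplied by the $K\ep$ loss from the derivative this becomes $K^{1-N}\ep^3\leq K^{-1}\ep^3$ upon choosing $N=2$. The DN-difference commutator is treated analogously via the Taylor expansion $G_a[\zeta_c]-G_a[0]=-\div_a(\zeta_c\nabla_a)-G_a[0]\zeta_c G_a[0]+\cO(\ep^3)$, which reduces it to commutators of $\chi_1(D_x)$ with multiplication by $\zeta_c$ sandwiched between bounded operators. On the complementary piece $(1-\chi_1(D_x))U$, no such gain is available, but the bare bound from Lemma \ref{lem-commutator} (together with the $K\ep$ derivative loss) already delivers the $\ep^3$ contribution allowed by the statement.
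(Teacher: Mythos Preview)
Your plan for \eqref{es-tla} and \eqref{es-R} matches the paper's (the paper just refers back to the arguments for Lemma~\ref{lem-clatla} and Lemma~\ref{lem-Ga-ll}).  For the commutator estimate \eqref{La1-commu} your route to the crucial $K^{-1}$ gain on the low-frequency piece is genuinely different.  The paper applies Lemma~\ref{lem-commutator} with $s=2$: since $\chi_1\equiv 1$ on $\Supp\chi$, the constant
\[
C_2=\sup_{\xi\in\mR,\ |\xi'|\le 2K\ep}\Big|\frac{\xi\,(\chi_1(\xi)-\chi_1(\xi'))}{|\xi-\xi'|^{2}}\Big|\lesssim \|\chi_1'\|_{L^\infty}\lesssim (K\ep)^{-1},
\]
and together with $\|\cF(\p_x^2 v_c)\|_{L^1}\lesssim\ep^4$ this gives $\|\p_x[\chi_1,v_c]\chi(D_x)\|_{B(L^2)}\lesssim K^{-1}\ep^3$.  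Your algebraic identity $[\chi_1,v_c]\chi=(\chi_1-1)v_c\chi$ (valid because $\chi_1\chi=\chi$) followed by the enforced frequency gap $|\xi-\xi'|\gtrsim K\ep$ and the Schwartz tail of $\widehat{v_0}$ is a clean alternative that avoids the second-order commutator lemma and in fact yields any power $K^{-N}$; the paper's argument is more systematic (it stays inside the single framework of Lemma~\ref{lem-commutator}) but gives only $K^{-1}$.

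There is one imprecision to tighten on the complementary (high-frequency) piece.  Your phrase ``bare bound from Lemma~\ref{lem-commutator} together with the $K\ep$ derivative loss'' is not literally how it works for the entries where the derivative sits on the \emph{output} side, e.g.\ $[\chi_1,-(\p_x-a)(v_c\cdot)]=-(\p_x-a)[\chi_1,v_c]$: the output of $[\chi_1,v_c]$ is not frequency-localized, so you cannot simply trade $(\p_x-a)$ for a factor $K\ep$.  The correct argument (this is the paper's \eqref{es-com-0}) is to absorb the $\xi$-factor into the commutator constant itself: writing $|\xi(\chi_1(\xi)-\chi_1(\xi'))|\le \|\chi_1'\|_{L^\infty}|\xi|\,|\xi-\xi'|$ when $|\xi|\lesssim K\ep$, and using $\chi_1(\xi)=0$ with $|\xi-\xi'|\gtrsim|\xi|$ when $|\xi|\gtrsim K\ep$, one gets $\|\p_x[\chi_1,v_c]\|_{B(L^2)}\lesssim \|\cF(\p_x v_c)\|_{L^1}\lesssim\ep^3$ directly.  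With this fix your treatment of the high-frequency piece (and the analogous point for the $H_*^{1/2}\to H_*^{1/2}$ entry after rewriting $v_c(\p_x-a)=(\p_x-a)(v_c\cdot)-\p_x v_c$) goes through.
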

\begin{proof}
%The proof is the consequence of  Lemma \eqref{lem-Ga-ll}and the similar arguments in the proof of \eqref{es-cLa2}. 
The proofs of \eqref{es-tla} and \eqref{es-R}
come  from applying Lemma \eqref{lem-Ga-ll} and using  arguments analogous to the ones  in the proof of \eqref{es-cLa2}, thus the details are omitted. We now explain how to get \eqref{La1-commu}.   
First, by applying Lemma \ref{lem-commutator} for $s=1,$ we 
have that 
\beqs 
\big\|\big[\chi_1(D_x),\, d_c \p_x Z_c\big] \big\|_{B(L^2, H_{*}^{\f12})} \lesssim \|\chi_1'\|_{L_{\xi}^{\infty}} \|\cF_{x\rightarrow \xi}(\p_x( d_c \p_x Z_c))\|_{L_{\xi}^1}\lesssim (K\ep)^{-1}\ep^4=K^{-1}\ep^3.
\eeqs
Note that $\|\chi_1'\|_{L_{\xi}^{\infty}} \lesssim (K\ep)^{-1}$ and 
that we have by definition that $\p_x( d_c \p_x Z_c)$
has the form $\ep^4 f_{\ep}(\ep x),$ where $f$ is a smooth and exponentially localized function. %function with exponentially localization. 
Next, it follows  again from Lemma \ref{lem-commutator} that 
\begin{align}\label{es-com-0}
  \|\p_x [\chi_1, v_c]\|_{B(L^2)}\lesssim K\ep  \|\chi_1'\|_{L_{\xi}^{\infty}} \|\cF(\p_x v_c)\|_{L_{\xi}^1}\lesssim \ep^3. 
\end{align}
Moreover, by applying \eqref{es-commutator} for $s=2,$ we have that, since $ \chi(\xi)=0 $ for $ |\xi|\geq 2K\ep,$
\begin{align*}
     \|\p_x [\chi_1(D_x), v_c] \chi(D_x)U\|_{L^2}\lesssim C_2  
    \, \|\cF(\p_x^2 v_c)\|_{L_{\xi}^1}\, \|\chi(D_x) U\|_{L^2} \, % \lesssim \ep^3.
\end{align*}
where 
\beqs 
C_2=\sup_{\xi\in \mR, \, |\xi'|\leq 2K\ep}
\bigg| \f{ \chi_1(\xi)-\chi_1(\xi')}{|\xi-\xi'|^2}  \xi\bigg|\, .
\eeqs
Since $
\chi_1(\xi)=1$ for $|\xi|\leq 3K \ep,$ it holds that
\beqs 
C_2 \lesssim \big\|\chi_1'\big\|_{L_{\xi}^{\infty}}\lesssim (K\ep)^{-1}.
\eeqs
We thus get that 
\beq \label{es-commu-lowfreq}
 \|\p_x [\chi_1(D_x), v_c] \chi(D_x)U\|_{L^2}\lesssim K^{-1}\ep^3 \|\chi(D_x) U\|_{L^2} \, .
\eeq
Writing $v_c \, \p_x\cdot =\p_x (v_c \, \cdot)-\p_x v_c \cdot,$ we have  similar estimate on the operator norm for 
$ [\chi_1(D_x), v_c \p_x]$ in ${B\big(H_{*}^{\f12}, H_{*}^{\f12}\big)}$. 
We are now left to control $[\chi_1(D_x), G_a[\zeta_c]-G_a[0]]$ in $B(H_{*}^{\f12}, L^2),$ which is based on the expansion \eqref{Ga-talor1er}.  Let us sketch the estimate for  $[\chi_1(D_x), \, \div_a[\zeta_c \na_a )], $ the other ones can be controlled in the same way upon applying Lemma \ref{lem-Ga-Ga0-g}.
%enjoy better property. 
Let us first notice that for $A^2\ep\leq 1,$
$$\|\p_y^2 \, \bI_{A\ep^2}(D_y) (\zeta_c \, \cdot)\|_{B(H_{*}^{\f12}, L^2)}\lesssim A^2\ep^4\cdot \ep^2 \cdot \ep^{-1}\lesssim \ep^4.$$
Next, write $\p_x \big(\zeta_c\, \p_x \big)= \p_x \big(\zeta_c \, \chi(D_x)\p_x \big)+\p_x^2(\zeta_c (1-\chi)(D_x))-\p_x\big(\p_x \zeta_c (1-\chi)(D_x)\big). $
We have on the one hand, as in  the proof of \eqref{es-commu-lowfreq} that
\beqs 
\|[\chi_1(D_x), \, \zeta_c ] \, \chi(D_x)\p_x U\|_{L^2}\lesssim K^{-1}\ep^3\|\chi(D_x)\p_x U\|_{L^2}\lesssim K^{-1}\ep^3 \|\chi(D_x) U\|_{H_{*}^{\f12}},
\eeqs
and on the other hand, as in  the proof of \eqref{es-com-0} that,
\beqs 
\|\big[\chi_1(D_x), \p_x^2(\zeta_c\cdot)-\p_x(\p_x \zeta_c)\big]\|_{B(L^2)}\lesssim K\ep^4+\ep^3 \lesssim \ep^3.
\eeqs
%by using the similar arguments as in the 

\end{proof}
\section{Generalized kernel of $L(0)$} %Study of the one dimensional solitary water waves}
Let $L(0)$  be the (modified) operator for the  one dimensional water waves linearized about  the solitary wave:
\beqs 
 L(0)=:\left( \begin{array}{cc}
   \p_x(d_c\cdot)  &  G_0[\zeta_{c}]   \\[5pt]
 -  w_c& d_c\p_x
\end{array}\right)\, ,
\eeqs
where $ G_0[\zeta_{c}]$ is the usual Dirichlet-Neumann operator for one dimensional water waves.
 In this appendix, we look for the generalized kernel of $L(0)$ in the weighted space $Y_a.$ This is done by using the translational and Galilean invariance of the original linear operator 
 \beq \label{relationLtL}
 \tilde{L}(0)=:\left( \begin{array}{cc}
   1   & 0 \\
   -Z_c   & 1
 \end{array}\right)^{-1}L(0)\left( \begin{array}{cc}
   1   & 0 \\
   -Z_c   & 1
 \end{array}\right)
 \eeq
and then transforms to $L(0).$ By definition, the background 1-d waves $(\zeta_c, \vp_c)^t$ solves the equation
\beqs \label{profile eq}
    \left\{ \begin{array}{l}
       \p_x \zeta_c+ G_0[\zeta_c]\varphi_c=0, \\
         \p_x\vp_c-\f12 (\p_x\varphi_c)^2+\f12 \f{(G[\zeta]\varphi_c+\p_x\vp\cdot\p_x\zeta_c)^2}{\sqrt{1+|\p_x\zeta_c|^2}}-\gamma\zeta_c=0\, . 
    \end{array}
    \right. 
\eeqs
First, by the translational invariance, one gets by differentiating the above profile equations that 
\beqs 
\tilde{L}(0)\left(\begin{array}{c}
     \zeta_c'  \\
    \vp_c'  
\end{array}\right)=0\, .
\eeqs
To find the other element in the generalized kernel, it is convenient to use the  %reverse the 
change of variable %\eqref{changeofvarible}:
\beqs 
X= hx, \quad Z=hz, \quad \tilde{\zeta}(X)=h\,\zeta(x), \quad \tilde{\vp}(X)=ch\,\vp(x)
\eeqs
and study the obtained  system where the wave speed $c$ appears explicitly: 
\beqs %\label{ww-sur-mov}
    \left\{ \begin{array}{l}
      c\p_X \tilde{\zeta}_c+ G_0[\tilde{\zeta}_c]\tilde{\varphi}_c=0\, , \\
        c\p_x\tilde{\vp}_c-\f12 (\p_X \tilde{\varphi}_c)^2+\f12 \f{(G_0[\tilde{\zeta}_c]\tilde{\varphi}_c+\p_X\tilde{\vp}_c\cdot\p_X\tilde{\zeta}_c)^2}{\sqrt{1+(\p_X\tilde{\zeta}_c)^2}}-g\tilde{\zeta}_c=0\, . 
    \end{array}
    \right. 
\eeqs
Since $\tilde{\zeta}_c,\, \tilde{\vp}_c $ depends smoothly on the wave speed $c,$ we can
differentiate the above equation in terms of $c$ to find that
\beq\label{diff-c-ori}
\begin{aligned}
\left\{ \begin{array}{l}
c\p_X\p_c \tilde{\zeta}_c+G_0[\tilde{\zeta}_c]\p_c\tilde{\varphi}-G_0[\tilde{\zeta}_c]\big(\tilde{Z}_c\,\p_c\tilde{\zeta}_c\big)-\p_X\big(\tilde{v}_c\p_c\tilde{\zeta}_c\big)=-\p_X \tilde{\zeta}_c\,,\\[3pt]
c\p_X\p_c \tilde{\vp}_c-\tilde{v}_c\p_X\p_c \tilde{\vp}_c+ \tilde{Z}_c \, G_0[\tilde{\zeta}_c]\big(\p_c\tilde{\vp}_c-\tilde{Z}_c\p_c \tilde{\zeta}_c\big)+\tilde{\zeta}_c\p_X \tilde{v}_c \p_c \tilde{\zeta}_c-g \p_c \tilde{\zeta}_c=\p_X \tilde{\vp}_c,\, 
 \end{array}
    \right. 
\end{aligned}
\eeq
where 
\beqs
\tilde{Z}_c=\f{G_0[\tilde{\zeta}_c]\tilde{\varphi}_c+\p_X\tilde{\vp}_c\cdot\p_X\tilde{\zeta}_c}{{1+(\p_X\tilde{\zeta}_c)^2}}, \qquad \tilde{v}_c=\p_X \tilde{\vp}_c-\tilde{Z}_c\p_X \tilde{\zeta}_c\, .
\eeqs
Since $\p_X\tilde{\vp}_c(X)=c\,\p_x \vp_c(x),\, \p_X\tilde{\zeta}_c(X)=\p_x \zeta_c(x),$ it holds that
$\tilde{Z}_c(X)=c Z_c(x), \, \tilde{v}_c(X) = c v_c(x).$
Therefore, changing back to the variable $x,$ we get from \eqref{diff-c-ori} that
\beqs
\tilde{L}(0)\left(\begin{array}{c}
 c\p_c \zeta_c\\ [3pt] % \p_c \tilde{\zeta}_c/h  \\
    \p_c(c {\zeta}_c)
\end{array}\right)=-\left(\begin{array}{c}
{\zeta}_c' \\ [3pt]
{\vp}_c'
\end{array}\right). 
\eeqs
It then stems from the relation \eqref{relationLtL} the basis of the generalized kernel of $L(0):$ 
\beqs
V_0=\left(\begin{array}{c}
  \zeta_c'\\ [3pt] % \p_c \tilde{\zeta}_c/h  \\
\vp'_c-Z_c{\zeta}_c'
\end{array}\right)=\left(\begin{array}{c}
  \zeta_c'\\ [3pt] % \p_c \tilde{\zeta}_c/h  \\
v_c
\end{array}\right), \qquad V_1=\left(\begin{array}{c}
 c\p_c \zeta_c\\ [3pt] % \p_c \tilde{\zeta}_c/h  \\
-cZ_c\p_c{\zeta}_c+\p_c(c\vp_c)
\end{array}\right),
\eeqs
which satisfy:
\beqs%\label{id-generalkernel}
L(0)V_0=0\,, \qquad \quad \, L(0)V_1=-V_0 \, .
\eeqs
Let us remark that the fact that the kernel of $L(0)$ is of algebraic multiplicity $2$ is related to the same property of the linearized operator of KdV equation around KdV soliton, see Appendix C in \cite{Pego-Sun}.
At this stage, it is also useful to notice that, by the  above identities, $v_c$ and $\vp_{1c}=\colon -cZ_c\p_c{\zeta}_c+\p_c(c\vp_c) $  solve respectively the equation: 
\beqs
\begin{aligned}
&\qquad\qquad\bigg(\p_x \bigg(\f{d_{c}^2}{w_{c}}\p_x\bigg)+{G}_{0}[{\zeta_{c}}] \bigg) v_c=0 \,, \\
&\p_x\bigg( \f{d_c}{w_c}(v_c+d_c\p_x \vp_{1c})\bigg)+G_0[\zeta_c]\vp_{1c}=- \f{d_c}{w_c}\p_x v_c\, . 
\end{aligned}
\eeqs
Consequently, it follows from the change of variable \eqref{scaling} that, % and unknown \eqref{def-dwep} 
  $\hat{v}_{\ep}(\hat{x})=:\ep^{-2}v_c(\hat{x}/\ep), \, \hat{\vp}_{1\ep}(\hat{x})=: \ep%^{-3}
{\vp}_{1c}(\hat{x}/\ep)$ satisfy:
\beq\label{id-twomodes}
\begin{aligned}
&\qquad \qquad \qquad \bigg(\ep^{-2}\p_{\hat{x}} \bigg(\f{\hat{d}_{\ep}^2}{\hat{w}_{\ep}}\p_{\hat{x}}\bigg)+\ep^{-4}\hat{G}_{0}[{\zeta_{c}}] \bigg) \hat{v}_{\ep}=0,\\
&\bigg(\ep^{-2}\p_{\hat{x}} \bigg(\f{\hat{d}_{\ep}^2}{\hat{w}_{\ep}}\p_{\hat{x}}\bigg)+\ep^{-4}\hat{G}_{0}[{\zeta_{c}}] \bigg) \hat{\vp}_{1\ep}=-\bigg(\p_{\hat{x}} \bigg(\f{\hat{d}_{\ep}}{\hat{w}_{\ep}}\bigg)+\f{2\hat{d}
_{\ep}}{\hat{w}_{\ep}}\p_x\bigg)\hat{v}_{\ep},
\end{aligned}
\eeq
where $\hat{d}_{\ep}, \hat{w}_{\ep}$
are defined in \eqref{def-dwep}.

\section{The error  between the resonant modes %generalized eigenfunctions
of water waves and KP-II } \label{approx-resonants}%
In this appendix, %Approximation of the
we approximate the  resonant modes for  the transformed linearized operator $L_a(\ep^2\hat{\eta})$ for the water waves by those of $L^{\hat{a}}_{\kp}(\hat{\eta})=e^{\hat{a}x} L_{\kp}(\hat{\eta}) e^{-\hat{a} x}.$ 
\subsection{Resonant modes for the linearized operator of KP-II equation} 
In this subsection, we recall some results proven in 
\cite{Mizumachi-KP-nonlinear},
pertaining to the resonant modes of   $L_{\kp}-$ the linearized operator for the  KP-II equation about  the line KdV soliton. %$\Psi_{\kdv}(\cdot)=\sech^2\big( \f{\sqrt{3}}{2}\cdot\big)$
This is  useful in approximating the water waves equation by {KP-II} in the low frequency regime. 

Let 
\beqs 
L_{\kp}(\eta)=e^{-iy\eta}\,L_{\kp}\,e^{iy\eta}=\p_x \big( 1-\f13 \p_x^2-3\Psi_{{\kdv}}\cdot \big)+\eta^2 \p_x^{-1}.
\eeqs
Performing the change of variable, 
\beqs 
\tilde{x}=\f{\sqrt{3}}{2} x, \,\quad  \tilde{\eta}=\f43 \,\eta, \quad \tilde{f}(\tilde{x}, \tilde{\eta})=f(t, x)\,.
\eeqs
Then $(L_{\kp}(\eta) f)(x,\eta)=-\f{\sqrt{3}}{8}\big( \p_{\tilde{x}}^3-4\p_{\tilde{x}}+6\p_{\tilde{x}}(\sech^2 \cdot)+3\tilde{\eta}^2\p_{\tilde{x}}^{-1}\big)\tilde{f}=\colon \f{\sqrt{3}}{8} \tilde{L}_{\kp}(\tilde{\eta})\tilde{f}.$ 
Based on the result in Lemma 2.1, \cite{Mizumachi-KP-nonlinear}, we know that the eigenmodes of $ \tilde{L}_{\kp}(\tilde{\eta}), \, \big(\tilde{L}_{\kp}(\tilde{\eta})\big)^{*}$ are 
\begin{align*}
\tilde{\lambda}_{\kp}(\tilde{\eta})=4i \tilde{\eta} \sqrt{1+i\tilde{\eta}}, \qquad \tilde{g}_0(\tilde{x}, \tilde{\eta})=\f{1}{\sqrt{1+i\tilde{\eta}}} \p_{\tilde{x}}^2\big( e^{-\sqrt{1+i\tilde{\eta}}\,\tilde{x}}\sech (\tilde{x})\big);\\
\overline{\tilde{\lambda}_{\kp}}(\tilde{\eta})=-4i \tilde{\eta} \sqrt{1-i\tilde{\eta}}, \qquad \tilde{g}^{*}_0(\tilde{x}, \tilde{\eta})=\f{i}{2\tilde{\eta}} \p_{\tilde{x}}\big( e^{\sqrt{1-i\tilde{\eta}}\,\tilde{x}}\sech (\tilde{x})\big).
\end{align*}
Changing back to the variable $(x,\eta),$ we find that
\beq\label{resomode-kp}
\begin{aligned}
  &  {\lambda}_{\kp}(\tilde{\eta})=-\f{i {\eta} }{2\sqrt{3}}\sqrt{1+4i\eta/3}\,, \qquad {g}_0(x, {\eta})=\f{1}{\sqrt{1+4i{\eta}/3}} \p_{\tilde{x}}^2\big( e^{-\sqrt{1+4i{\eta}/3}\,\tilde{x}}\sech (\tilde{x})\big)\,;\\
&\overline{{\lambda}_{\kp}}({\eta})=\f{i\eta}{2\sqrt{3}}  \sqrt{1-4i{\eta}/3}\,, \qquad \qquad {g}^{*}_0(x\, \tilde{\eta})=\f{3i}{8{\eta}} \p_{\tilde{x}}\big( e^{\sqrt{1-4i{\eta}/3}\,\tilde{x}}\sech (\tilde{x})\big)\,.
\end{aligned}
\eeq
As in \cite{Mizumachi-KP-nonlinear}, to resolve the degeneracy of $g_0(x, \eta)$ and the singularity of $g_0^{*}(x, \eta)$ when $\eta=0,$ we define the basis and dual basis 
\begin{align*}
g_{_{01}}(\cdot, \eta)= g_0(\cdot, \eta)+ g_0(\cdot, -\eta), \qquad g_{_{02}}(\cdot, \eta)=\f{1}{ i \eta} \big(g_0(\cdot, \eta)- g_0(\cdot, -\eta)\big); \\
g_{_{01}}^{*}(\cdot, \eta)=\f12 \big(g_0^{*}(\cdot, \eta)+ g_0^{*}(\cdot, -\eta)\big), \qquad g_{_{02}}^{*}(\cdot, \eta)=\f{\eta}{2 i} \big(g_0^{*}(\cdot, \eta)- g_0^{*}(\cdot, -\eta)\big)\,. 
\end{align*}
As is clear in the definitions, all these functions  are even in terms of $\eta,$ which leads to 
\begin{align*}
   \| g_{_{0k}}(\cdot, \eta)-  g_{_{0k}}(\cdot, 0)\|_{L^2}\lesssim \eta^2,\, \qquad  \| g^{*}_{_{0k}}(\cdot, \eta)-  g^{*}_{_{0k}}(\cdot, 0)\|_{L^2}\lesssim \eta^2 .
\end{align*}
Let $$v_0(x)=\Psi_{\kdv}(x)=\sech^2\big( \f{\sqrt{3}}{2} x\big)\,, \quad \vp_{10}(x)=2v_0 (x)+{x}\p_x v_0 (x)\,,$$ 
it follows from direct calculations that
\beq\label{base-kp}
\begin{aligned}
&\qquad g_{_{01}}(\cdot, 0)=v_0'\, , \qquad\qquad  g_{_{02}}(\cdot, 0)=-\f{1}{\sqrt{3}}v_0'-\f{\vp_{10}}{2}\,; \\
&g_{_{01}}^{*}(\cdot, 0)=-\f{\sqrt{3}}{4}%\p_x^{-1}
\int_{-\infty}^x \vp_{10}\,, \qquad g_{_{02}}^{*}(\cdot, 0)=-\f{\sqrt{3}}{2}v_0\, .
\end{aligned}
\eeq
\subsection{Expansions of the basis and dual basis for water waves}\label{approx-resonants-sec2}
%Now, for later use, 
We study the expansions of the basis in terms of $\ep$ and $\hat{\eta}_0.$
By the definition \eqref{defU-base}, expansion \eqref{expan-modes-1} as well as the facts $%\lambda(\eta)=\cO(\ep^3),\,
w_c=1+\cO(\ep^2),\, d_c=1+\cO(\ep^2),$ it holds that
% and the expansion \eqref{expan-modes-1}, we have that \beqs (\p_x U_2)(\ep^{-1}x, \ep^{2}\hat{{\eta}})=\ep\, \psi (x, \hat{\eta})=\ep \p_x \big(v_{\ep}- i \Lambda_{1\ep} \hat{\eta}  \,\vp_{1\ep}\big)(x)+\cO(\ep\hat{\eta}^2).\eeqs
%Moreover, since it holds that\beq \label{U1}U_1(\ep^{-1}x, \ep^{2}\hat{{\eta}})=\ep\, \psi (x, \hat{\eta})+\cO(\ep^2)=\ep \p_x \bigg(v_{\ep}-\big(i\,\Lambda_{1\ep} \hat{\eta} - \tilde{\Lambda}_{2\ep}\hat{\eta}^2\big)\,\vp_{1\ep}+\hat{\eta}^2 \tilde{\psi}_{2\ep}(\cdot, \hat{\eta}) \bigg)(x)+\cO(\ep^2)%+\ep\hat{\eta}^2).\eeq
%Consequently, %$V=\diag (1, \p_x) U,$ it then holds that
\beq\label{U}
\begin{aligned}
\big(\diag (1, \p_x) U\big)(\ep^{-1}x, \ep^2\hat{\eta})&=\ep\, \psi(x, \hat{\eta})\left(\begin{array}{c}
   1  \\
    1 
\end{array} \right)+\cO\big(\ep(\ep+\hat{\eta}^2)\big)\\
&= \ep \bigg(v'_{\ep}- i\,\Lambda_{1\ep} \hat{\eta}\, \vp'_{1\ep} \bigg) \left(\begin{array}{c}
   1  \\
    1 
\end{array} \right)+\cO\big(\ep(\ep+\hat{\eta}^2)\big) \, .
\end{aligned}
\eeq
%\beqs \Re V =\ep\, v_{\ep}' \left(\begin{array}{l}  1  \\   1 \end{array} \right)+\cO(\ep^2+\hat{\eta}^2)\,, \qquad \Im V =- \ep\,\hat{\eta} \,\Lambda_{1\ep} \, \vp_{1\ep}' \left(\begin{array}{l}  1  \\   1 \end{array} \right)+\cO(\ep^2+\hat{\eta}^2)\, .\eeqs
Next, %by the definition,
as $U^{*}(\cdot, \eta)=(U_2, U_1)(-\,\cdot, -\eta),$ we have by using the fact that $v_{\ep},\, \vp'_{1\ep}$ are even that %, $\vp_{1\ep}$ is odd that
\beq \label{U*}
\big(\diag (1, \p_x^{-1}) U^{*}\big)(\ep^{-1}x, \ep^2\hat{\eta}) = \bigg(v_{\ep}-i\,\Lambda_{1\ep} \,\hat{\eta}\,%\big(i\,\Lambda_{1\ep} \,\hat{\eta} +\tilde{\Lambda}_{2\ep}\hat{\eta}^2\big) \int_{-\infty}^x 
\vp_{1\ep}' %\vp_{1\ep} 
\bigg) \left(\begin{array}{c}
     1  \\
   - 1 
\end{array} \right)+\cO(\ep^2+\hat{\eta}^2) \, .
\eeq
We now compute 
\begin{align*}
   & \langle U(\cdot, \eta), U^{*}(\cdot, \eta)  \rangle=2 \int U_1(x, \eta)\, \overline{U_2(x, \eta)}\, \d x \\
   & =2\ep^{-1}\int U_1(\ep^{-1}x, \ep^2\hat{\eta})\, \overline{U_2(\ep^{-1}x, \ep^2\hat{\eta})}\, \d x =2 \int \psi(x, \hat{\eta}) \int_{-\infty}^x \overline{\psi(x', \hat{\eta})} \,\d x' \d x .
\end{align*}  
 In view of the expansions of $\psi(\cdot, \hat{\eta})$ in \eqref{expan-modes-1},  we find after some calculations that
 \begin{align*}
    \Re \big\langle U(\cdot, \eta), U^{*}(\cdot, \eta) \big \rangle &=2
\, \hat{\eta}^2 \bigg( \Lambda_{1\ep}^2 \int {\vp_{1\ep}'} \int_{-\infty}^x {\vp_{1\ep}'} \, \d x' \d x+2 \tilde{\Lambda}_{2\ep} \int v_{1\ep} \vp_{1\ep}'+\cO(\hat{\eta}^2)\bigg)\\
& =2\, \hat{\eta}^2 \bigg(\f12 \Lambda_{10}^2 \|v_{0}\|_{L^1(\mR)}^2+3 \tilde{\Lambda}_{20} \|v_0\|_{L^2(\mR)}^2+\cO(\ep+\hat{\eta}^2) \bigg) \\
&=\f23\, \hat{\eta}^2 \|v_{0}\|_{L^1(\mR)}^2+\cO\big(\hat{\eta}^2(\ep+\hat{\eta}^2)\big)
=\hat{\eta}^2 \bigg(\f{32}{9}+\cO\big(\ep+\hat{\eta}^2\big)\bigg),
 \end{align*}
and 
\begin{align*}
\Im  \big\langle U(\cdot, \eta), U^{*}(\cdot, \eta)  \big\rangle &=\hat{\eta} \big(-4\,\Lambda_{1\ep}  \int v_{\ep} \, \vp_{1\ep}' +\cO(\hat{\eta}^2) \big)\\
&=-6\,\hat{\eta}\,\Lambda_{10}\, \|v_0\|_{L^2(\mR)}^2+\cO\big(\hat{\eta}(\ep+\hat{\eta}^2)\big) =\hat{\eta}\bigg(-\f{16}{3}\, +\cO\big(\ep+\hat{\eta}^2\big)\bigg)\, .
\end{align*} 
The last equality comes  from the fact that since  $v_0(\cdot)=\Psi_{\kdv}(\cdot)=\sech^2(\f{\sqrt{3}}{2}\cdot)$, we have  that $\|v_0\|_{L^1}=\f{4\sqrt{3}}{3},\, \|v_0\|_{L^2}^2=\f{8\sqrt{3}}{9}.$
Consequently, we obtain
\beq \label{alpha-kpa}
\alpha(\eta)=\hat{\eta}\bigg( -\f23 +\cO\big(\ep+\hat{\eta}^2\big)\bigg)\, , \quad \kappa(\eta)=\hat{\eta}\bigg( \f{2\sqrt{3}}{3} +\cO\big(\ep+\hat{\eta}^2\big)\bigg)\, .
\eeq
Plugging \eqref{U}, \eqref{U*} and \eqref{alpha-kpa} into \eqref{defbasis-dual}, we obtain that:
\begin{align*}
&  ({2\ep})^{-1} (1, 1) \cdot \big(\diag (1, \, \p_x) g_1\big)(\ep^{-1}x, \ep^2\hat{\eta})= v_{\ep}' + \cO\big(\ep+\hat{\eta}^2\big), \\
 &  ({2\ep})^{-1} (1, 1) \cdot \big(\diag (1, \, \p_x) g_2\big)(\ep^{-1}x, \ep^2\hat{\eta})= -\f{1}{\sqrt{3}}v'_{\ep}-\f{\vp_{1\ep}}{2} + \cO\big(\ep+\hat{\eta}^2\big), \\
&(1,1) \cdot \big(\diag (1, \, -\p_x^{-1}) g_1^{*}\big)(\ep^{-1}x, \ep^2\hat{\eta})=-\f{\sqrt{3}}{4}
\int_{-\infty}^x \vp_{1\ep} + \cO\big(\ep+\hat{\eta}^2\big), \\
&(1,1) \cdot \big(\diag (1, \, -\p_x^{-1}) g_2^{*}\big)(\ep^{-1}x, \ep^2\hat{\eta})=-\f{\sqrt{3}}{2}v_{\ep}+\cO\big(\ep+\hat{\eta}^2\big).
\end{align*}
In view of the definitions \eqref{base-kp}, we finally obtain  that
\begin{align*}
& \big\|({2\ep})^{-1} (1, 1) \cdot \big(\diag (1, \, \p_x) g_k\big)(\ep^{-1}\cdot, \ep^2\hat{\eta}) -g_{0k}(\cdot, \hat{\eta})\big\|_{L_{\hat{a}}^2(\mR)}\lesssim \cO\big(\ep+\hat{\eta}^2\big), \, \\
&\big\|( (1, 1) \cdot \big(\diag (1, \, -\p_x^{-1}) g_k^{*}\big)(\ep^{-1}\cdot, \ep^2\hat{\eta}) -g_{0k}^{*}(\cdot, \hat{\eta})\big\|_{L_{-\hat{a}}^2(\mR)}\lesssim \cO\big(\ep+\hat{\eta}^2\big)\, .
\end{align*} 

\section*{Acknowledgement}
The research of the first author is supported by the ANR BOURGEONS project,  ANR-23-CE40-0014-01.
Part of this work was conducted during the postdoctoral stay of the second author at Institute of Mathematics of Toulouse and math department of INSA Toulouse. We express our sincere appreciation for the outstanding research environment provided by these institutions.

\bibliographystyle{abbrv}

\nocite{*}
\bibliography{ref}
\end{document}